\let\oldabstract\abstract
\let\oldendabstract\endabstract
\renewenvironment{abstract}
{%
               {\list{}{\addtolength{\leftmargin}{-1em}
                        \listparindent 1.5em%
                        \itemindent    \listparindent%
                        \rightmargin   \leftmargin%
                        \parsep        \z@ \@plus\p@}%
                \item\relax}%
               {\endlist}%
\oldabstract}
{\oldendabstract}
\title{Synthetic Equivariant Spectra for Finite Abelian Groups and Motivic Homotopy Theory}
\author{Keita Allen, Lucas Piessevaux}
\date{October 22, 2025}
\begin{document}
\maketitle
\begin{abstract}
 We prove a topological reconstruction result for the category of cellular $A$-equivariant motivic spectra over the complex numbers where $A$ is a finite abelian group: after completion at an arbitrary prime, this is equivalent to the completion of a category of synthetic $A$-equivariant spectra. The latter is a deformation of equivariant spectra which categorifies the equivariant perfect even filtration and is closely related to the equivariant Adams--Novikov spectral sequence. Our main computational input is a description of the bigraded homotopy groups of equivariant algebraic cobordism in terms of equivariant formal group laws.
\end{abstract}

\begingroup\tosfstyle
\let\oldcontentsline=\contentsline
\def\contentsline#1#2#3#4{\oldcontentsline{#1}{#2}{\texttosf{#3}}{#4}}
\tableofcontents
\endgroup
\section{Introduction}
Motivic homotopy theory was developed in order to apply techniques from algebraic topology to problems of algebro-geometric origin. However, over the years, there were insights which suggested that over algebraically closed fields of characteristic zero, a large portion of motivic homotopy theory is controlled by purely topological phenomena. Let $\SH(\bbC)^{\cell} \subset \SH(\bbC)$ the smallest subcategory of the $\bbC$-motivic stable homotopy category containing the motivic spheres $(\bbP^1)^{\otimes n}$ and closed under tensor products, desuspensions and colimits.
\begin{theorem*}[{\cite[Theorem 6.12]{gheorghe2022c}} at $p = 2$, {\cite[Theorem 1.4]{pstragowski_synthetic_2022}} at all primes]\label{synmotequiv}
    Let $p$ be an arbitrary prime, then there is an equivalence $$\SH(\bbC)^{\cell}_p \simeq \Syn_p,$$ where $\Syn$ is the category of synthetic spectra. 
\end{theorem*}
One can think of the category of synthetic spectra as a certain \emph{one-parameter deformation} of the category of spectra $\Sp$, encoding the reconstruction of a spectrum from its Adams-Novikov spectral sequence. In $\Syn$, we have bigraded spheres $S^{n, m}$ and a special map $$\tau: S^{0, -1} \to S^{0, 0}.$$ This map can be thought of as a deformation parameter; when we invert $\tau$, we recover the category of spectra, and when we kill $\tau$, we obtain an even variant of Hovey's derived category of stable $\MU_*\MU$ comodules, algebraic data which form the input to the Adams-Novikov spectral sequence. In this way, one can think of $\Syn$ as a ``categorification" of the Adams-Novikov spectral sequence, as illustrated in Diagram \cref{eq:categorif}.
\begin{equation}\label{eq:categorif}
    \begin{tikzcd}
	{\mathrm{Stable}^\even_{\MU_{\ast}\MU}} & \arrow[l, "\tau=0"']\Syn\arrow[r, "\tau^{-1}"] & \Sp\\
\end{tikzcd}
\end{equation}

From this perspective, the surprising fact that $\SH^{\cell}(\bbC)$ is equivalent to $\Syn$ after $p$-completion serves as a conceptual explanation for results like that of \cite{levine2015adams}, which tells us that the slice tower of the motivic sphere recovers the Adams-Novikov spectral sequence, or those from \cite{dugger2010motivic} and \cite{gheorghe2021special}, which tell us that inverting or killing $\tau$ in $\bbC$-motivic homotopy theory recover familiar objects from algebraic topology. This in turn has powered innovations such as those of \cite{isaksenstable23}, using $\bbC$-motivic homotopy theory to drastically advance the computation of the stable homotopy groups of spheres.

The main goal of this paper is to provide an extension of this theory to the equivariant setting, where the group of equivariance is a finite abelian group. In fact, our ambition is twofold: on one hand we obtain a certain reconstruction result for cellular equivariant motivic spectra over $\C$, while on the other hand we introduce a suitable category of synthetic equivariant spectra. The definition of the latter is in a sense tailor-made for the reconstruction result, but in \cref{sec:featuresofsynthetic} and \cref{sec: filtered reconstruction} we will prove several key properties about this synthetic category that indicate that it is the correct extension of synthetic spectra to the equivariant setting.

Let $A$ be a finite abelian group viewed as a constant group scheme over $\C$ and define the subcategory of cellular $A$-equivariant motivic spectra over $\C$ as the full subcategory
\[\SH^A(\C)^\cell\subset\SH^A(\C)\]
generated under colimits by objects of the form $\Th(V)\otimes (A/K)_+$ where $\Th(V)$ ranges over the motivic Thom spectra of virtual complex $A$-representations, and $A/K_+$ ranges over the orbits corresponding to subgroups $K$ of $A$. As in the nonequivariant setting, this really is a proper subcategory since schemes generically can not be equipped with (equivariant) cell structures, but it contains several key players of interest.
  \begin{introthm}[\ref{thm: synthetic reconstruction}, \ref{thm: generic fiber}, \ref{cor: modules over cofiber of tau}]\label{introthm:main}
    There is a category of \emph{synthetic $A$-spectra} $\Syn^A$, described purely topologically, such that 
    \begin{enumerate}
        \item \label{introthm:mainpart1} There is a functor
    \[\SH^A(\C)^\cell\to \Syn^A\]
    which induces an equivalence on subcategories of $p$-complete objects for an arbitrary prime $p$.
        \item \label{introthm:mainpart2} $\Syn^A$ categorifies the equivariant Adams-Novikov spectral sequence, in the sense that it fits into a diagram analogous to Diagram \cref{eq:categorif}.
    \end{enumerate}
\end{introthm}

In the remainder of the introduction, we will give an overview of some of our methods and results, and discuss some consequences.

\subsection{Global methods and a motivic refinement}
Nonequivariantly, the synthetic description of $\bbC$-motivic homotopy theory hinges largely on understanding the \emph{algebraic cobordism} spectrum $\MGL$; in many ways, this spectrum behaves similarly to $\MU$, and this close connection powers the connection between $\bbC$-motivic homotopy theory and topology. In the equivariant setting, we will exploit a similar close relationship between the spectrum of equivariant algebraic cobordism $\MGL_A$ introduced in \cite{khan_generalized_2022} and tom Dieck's spectrum of equivariant homotopical bordism $\MU_A$ from \cite{dieck1970bordism}. Contrary to the nonequivariant setting, the coefficients of $\MU_A$, and by extension $\MGL_A$, are extremely complicated and we do not have a general presentation for these rings. At abelian compact Lie groups, work of Comezaña in \cite[\S XXVIII]{comezana1996calculations} at least tells us that the coefficients of $\MU_A$ are concentrated in even degrees. In \cite{hausmann2022global}, the coefficients of abelian-equivariant homotopical bordism shown to possess a universal property related to the notion of equivariant formal groups laws introduced in \cite{cole2000equivariant}, thus extending the celebrated result of Quillen in \cite{quillen2007formal}.
\begin{theorem*}[{\cite[Theorem A]{hausmann2022global}}]
    For all compact abelian Lie groups $A$, $$\pi_*^A \MU_A \simeq L_A,$$ where $L_A$ is the \emph{$A$-equivariant Lazard ring}, the ring carrying the universal \emph{$A$-equivariant formal group law}.  
\end{theorem*}
Hausmann's work takes advantage of the fact that the $\MU_G$ for different compact Lie groups $G$ assemble into a \emph{global spectrum}, as in \cite{schwede2018global}. Global spectra are given an $\infty$-categorical description in \cite{linskens2022global}; as discussed in this work, a global spectrum $X$ in particular consists of the following data.
\begin{itemize}
    \item For every compact Lie group $G$, a $G$-spectrum $X_G$. 
    \item For every group homomorphism $f: G \to H$, a comparison map $$a_f: f^{*}X_H \to X_G,$$ where $f^*$ is the inflation-restriction functor corresponding to $f$, such that $a_f$ is an equivalence whenever $f$ is injective. 
\end{itemize}

Being one of the spectra $X_G$ in a global spectrum is quite restrictive; for example, the underlying spectrum with $G$-action of $X_G$ must have trivial action. On the other hand, this additional coherence can be very helpful and is essential in proving the universal property enjoyed by the rings $\pi^A_* \MU_A$. Hausmann (in \cite{hausmann2022global}) introduces the notion of \emph{global group laws}, objects which in particular gives rise to an equivariant formal group law at every abelian compact Lie group, but which are \emph{decompleted} in a certain sense, making them more amenable to algebraic operations and universal properties. The homotopy groups $\pi^A_* \MU_A$ for each $A$ assemble into a global group law, and this is initial in the category of global group laws.

We use a similar approach in order to understand the equivariant algebraic cobordism spectra\footnote{We use the notational convention that for a nice abelian group $A$ over $\C$, $\MGL_A$ refers to $\MGL_{\B A}$} $\MGL_A$ of \cite{khan_generalized_2022}. Slightly expanding upon work of Khan and Ravi in \cite{khan_generalized_2022}, we exhibit the collection of motivic spectra $\MGL_\cX$ over regular base stacks $\cX$ with a refinement to an absolute motivic spectrum over $\C$, which may be roughly described as follows.
\begin{construction*}[\cref{def: absolute motivic spectra}]
    An absolute motivic spectrum $E$ over $\C$ roughly consists of the following.
    \begin{itemize}
        \item For every regular stack $\cX$ over $\C$, an object $E_\cX\in \SH(\cX)$
        \item For every map $f\colon \cX\to \cY$ of regular stacks over $\C$, a comparison map
        \[\alpha_f\colon f^\ast E_\cY\to E_\cX\]
        such that $\alpha_f$ is an equivalence whenever $f$ is representable.
    \end{itemize}
\end{construction*}
Upon restriction to classifying stacks, an absolute motivic spectrum gives rise to a collection of equivariant motivic spectra which are compatible under restriction. We refer to \cref{warn: global caveats} for a comparison with absolute motivic spectra in the sense of \cite{khan_generalized_2022} and global spectra in the topological setting. 
\begin{introprop}[\ref{prop: MGL is absolute}]
    The algebraic cobordism spectra $\MGL_\cX$ lift to an absolute motivic spectrum. 
\end{introprop}
Using this simple observation, we are able to describe the coefficients of equivariant algebraic cobordism in terms of the equivariant Lazard rings, as summarised in the following omnibus.
\begin{introthm}[\ref{prop: ggl on MGL}, \ref{cor: homotopy groups of gfp of mgl}, \ref{cor : universal map of ggl's}, \ref{prop: vanishing in MGL}, \ref{prop: identification of the Chow line}, \ref{prop: mod p^i equivariant HKO}]\label{introthm:vanishing}
    The assignment sending an abelian compact Lie group $A$ to the bigraded ring $\pi^A_{\ast,\ast}\MGL$ defines a bigraded global group law satisfying the following.
    \begin{enumerate}
        \item For all $t-2w<0$ the group $\pi_{t,w}^A\MGL$ vanishes.
        \item It is regular: all Euler classes of surjective characters are regular elements.
        \item Its geometric fixed points can be described explicitly in terms of nonequivariant $\MGL$ and its formal group law.
        \item When $t - 2w = 0$, the coefficients recover the equivariant Lazard ring: there is an isomorphism of graded rings $$\pi_{2*}^A \MU \cong \pi_{2*, *}^A \MGL.$$
        \item After completion at an arbitrary prime, the bigraded coefficients are generated by the equivariant Lazard ring and a variable $\tau$ in bidegree $(0,-1)$: the isomorphism above extends to an isomorphism of bigraded rings $$(\pi_{2*}^A \MU)_{p}^{\wedge}[\tau] \cong (\pi^A_{*, *} \MGL)^\wedge_p.$$
    \end{enumerate}
\end{introthm}
\subsection{An equivariant Chow t-structure}
Having established the vanishing results in Theorem \ref{introthm:vanishing}, we can use it to obtain a number of structural results for $\SH^A(\bbC)^{\cell}$ quite easily. Inspired by \cite{bachmann2022chow} and \cite{haine2023spectral}, let the subcategory of \emph{perfect pure} $A$-equivariant motivic spectra $$\Pure_\C(A) \subset \SH^{A}(\bbC)$$ be the minimal subcategory closed under extensions and retracts and containing all equivariant motivic spectra of the form $\Th(V) \otimes A/K_+$ for all complex $A$-representations $V$ and subgroups $H$. Note that the localising subcategory generated by $\Pure_\C(A)$ is all of $\SH^{A}(\bbC)^{\cell}$. Using the technology of weight structures due to Bondarko (see \cite[\S 2.2]{elmanto2022nilpotent} for an introduction), we can easily prove the following.
\begin{introthm}[\ref{prop: Chow weight structure}]
    Let $\Pure(A;\MGL)$ denote the subcategory of $\MGL_{A}$-modules in $\SH^{A}(\C)^\cell$ consisting of free $\MGL_{A}$-modules on objects of $\Pure_\C(A)$. The inclusion induces an equivalence
    \[\cP_\Sigma(\Pure(A;\MGL);\Sp)\simeq \Mod(\SH^{A}(\bbC)^{\cell}; \MGL_{A}).\]
\end{introthm}

Using a close analysis of the cell structure on equivariant algebraic cobordism, we can bootstrap this $\MGL$-linear result up to the sphere spectrum.
\begin{introthm}[\ref{thm: Chow heart structure}]
    The inclusion of $\Pure_\C(A)$ induces an equivalence
    \[\Shv_\Sigma(\Pure(A);\Sp)\simeq \SH^{A}(\C)^\cell,\]
    where the left hand side consists of additive presheaves that send fibre sequences of perfect pure motivic spectra to fibre sequences.
\end{introthm}
Note the similarity with the nonequivariant results of \cite[\S 3]{haine2023spectral}. From this perspective, the desired algebraic special fibre arises as the derived category of the heart of the natural t-structure on this category of sheaves. This t-structure ought to be seen as an equivariant cellular version of Chow t-structure in \cite{bachmann2022chow}. 
\begin{introthm}[ \ref{cons: Chow t-structure}, \ref{cor: identification of the heart}, \ref{cor: identification of SF}]
    Let $\tau_{c\geq*}$ denote the covers in the natural t-stucture on $\SH^A(\C)^\cell$. Then we may identify the heart as
    \[\SH^{A}(\C)^\cell_{c=0}\simeq \coMod(\underline{\pi}^{A}_\star\MU^{\otimes 2}),\]
    i.e. as the abelian category of $\RU(A)$-graded comodules in $A$-Mackey functors over the Green Hopf algebroid coming from $\MU_{A}$ and its co-operations. Letting $\tau_{c=0}\mathbb{1}_{A}$ denote the Chow coconnective cover of the unit, we may further identify
    \[\Mod(\tau_{c=0}\mathbb{1}_{A})\simeq \Ind\cD^b_\Pure(\coMod(\underline{\pi}^{A}_\star\MU^{\otimes 2})).\]
\end{introthm}
We remark that it is not known whether the category $\Ind\cD^b_\Pure$ appearing in the statement is equivalent to the stable derived category of a certain moduli stack of $\RU(A)$-graded $A$-equivariant formal groups. We are now ready to compare to a homotopically defined synthetic category, which we define in terms of an appropriate notion of evenness given by complex representations.
\begin{construction*}[\ref{def: topological ppure}, \ref{def: equivariant synthetif}]
    The subcategory of \emph{perfect pure $A$-spectra} $$\Pure(A) \subset \Sp^A$$ is the smallest subcategory containing $S^V\otimes(A/K)_+$ for all virtual complex $A$-representations $V$ and subgroups $K$ and which is closed under extensions and retracts. 
    The category of \emph{synthetic $A$-spectra} $$\Syn^A = \Shv_{\Sigma}(\Pure(A); \Sp)$$ is the category of additive presheaves on $\Pure(A)$ which take fibre sequences in $\Pure(A)$ to fibre sequences of spectra.  
\end{construction*}
Since both categories of interest in \cref{introthm:main} appear as sheaf categories on more tractable subcategories, the problem is reduced to proving the desired equivalence for perfect pure objects on either side.
\begin{introthm}[\ref{cor: clp}, \ref{lem: common envelope}, \ref{thm: pure reconstruction}]
    The restricted equivariant Betti realisation functor 
    \[\Be^{A}\colon \Pure_\C(A)\to \Pure(A)\] satisfies the following.
    \begin{enumerate}
        \item It has the covering lifting property and admits a common envelope.
        \item It induces an equivalence on mod $p$ mapping spectra.
    \end{enumerate}
\end{introthm}
This result can be thought of as an equivariant lift of \cite[Theorem 7.30]{pstragowski_synthetic_2022}, itself a generalisation of \cite[Theorem 4.5]{gheorghe2017structure}. In fact, we use isotropy separation to reduce to the nonequivariant result. The motivic-synthetic comparison result, Theorem \ref{introthm:main} \ref{introthm:mainpart1}, now follows immediately.

\subsection{Relation to the perfect even filtration}
There are two categories, introduced at a similar time, which serve similar roles as categorifications of the Adams-Novikov spectral sequence providing topological reconstructions of $\bbC$-motivic stable homotopy theory: 
\begin{itemize}
    \item The category of even $\MU$-synthetic spectra, constructed in \cite{pstragowski_synthetic_2022}:
    $$\Syn^{\ev}_{\MU} = \Shv_{\Sigma}\left(\Sp_{\MU}^{\mathrm{fpe}}; \Sp\right),$$ where $\Sp_{\MU}^{\mathrm{fpe}}$ is the site of finite spectra with even, projective $\MU$-homology of \cite[Definition 5.8]{pstragowski_synthetic_2022}.
    \item The category of $\Gamma_* \mathbb{1}$-modules, constructed in \cite{gheorghe2022c}:
    \begin{align*}
        &\Mod(\Fil(\Sp);\Gamma_* \mathbb{1}), & \Gamma_* \mathbb{1}\simeq \Tot(\tau_{\geq 2*}\MU^{\otimes\bullet+1})\in \CAlg(\Fil(\Sp)),
    \end{align*}
    where $\Fil(\Sp) := \Fun(\bbZ^{\op}, \Sp)$ is the category of filtered spectra. 
\end{itemize}
Knowing that these two categories both recover the $p$-complete cellular motivic category tells us that they are equivalent after $p$-completion; in fact, one can show that they are equivalent integrally. There are different ways to approach this, but our work takes inspiration from the perspective of Pstr{\k a}gowski in \cite{pstrkagowski2023perfect}, where it is explained that both $\Syn^{\ev}_{\MU}$ and $\Gamma_* \mathbb{1}$ can be described without ever mentioning $\MU$ but only the notion of \emph{evenness}. 
\begin{itemize}
    \item Let $\mathrm{Perf}(S^0)_\even$ denote the site of \emph{perfect even $S^0$-modules} from \cite[Definition 2.2]{pstrkagowski2023perfect} (which coincides with $\Pure(\{e\})$ in our notation discussed above). Then \cite[Proposition 3.17]{pstrkagowski2023perfect} supplies an equivalence of sites
    \[\mathrm{Perf}(S^0)_\even\simeq\Sp^{\mathrm{fpe}}_\MU.\]
    \item Following \cite[Definition 2.21]{pstrkagowski2023perfect}, the site $\mathrm{Perf}(S^0)_\even$ equips any spectrum with a descending filtration, giving rise to a lax symmetric monoidal functor
    \[\fil_*^\even\colon\Sp\to \Fil(\Sp)\]
    which we call the \emph{perfect even filtration}.
    \item There is a symmetric monoidal equivalence
    \[\Syn\simeq\Shv_\Sigma(\mathrm{Perf}(S^0)_\even;\Sp)\simeq\Mod(\Fil(\Sp);\fil^\even_*\mathbb{1})\]
    \item By \cite[Theorem 7.5]{pstrkagowski2023perfect} and \cite[Corollary 2.2.21]{hahn2022motivic} there is an equivalence of commutative algebras in filtered spectra
    \[\fil^\even_*\mathbb{1}\simeq \Gamma_*\mathbb{1}\simeq\Tot(\tau_{\geq 2*}\MU^{\otimes\bullet+1}).\]
\end{itemize}
We are able to show that much of this story generalizes quite nicely to our equivariant setting. The connection between $\Syn^A$, the equivariant perfect even filtration, and equivariant Adams--Novikov descent may be summarised as follows.

\begin{introthm}[\ref{prop: filtered model}, \ref{thm: identification of even filtration on unit}]\label{introthm:filteredmodel}
    Let $\RU(A)$ denote the poset of isomorphism classes of virtual complex representations of $A$, ordered under inclusion of subrepresentations. Then there is a lax symmetric monoidal \emph{equivariant perfect even filtration} $$\fil_{\star}^{\ev}: \Sp^{A} \to \Fil_{\RU(A)} \Sp^A := \Fun(\RU(A)^\op,\Sp^A)$$ such that
    \begin{itemize}
        \item there is a monadic equivalence $$\Syn^{A} \simeq \Mod(\Fil_{\RU(A)} \Sp^A; \fil_{\star}^{\ev}(\mathbb{1}_{A}) ),$$ 
        \item and an explicit description $$\fil_\star^\ev(\mathbb{1}_{A}) \simeq \Tot \tau_{\geq 2\star} \MU_{A}^{\otimes \bullet + 1},$$ where $\tau_{\geq V} X := \Sigma^{V} \tau_{\geq 0}\Sigma^{-V} X.$
    \end{itemize}
\end{introthm}

We note that in our approach, we bypass any equivariant analogues of the site of finite spectra with even projective $\MU$-homology as well as any $\E_\infty$ even filtration\footnote{We therefore warn the reader that our notation $\fil^\even_\star$ refers to the \emph{perfect} even filtration.} as in \cite{hahn2022motivic}. 

The equivalence of $\Syn^A$ with modules over the $\RU(A)$-filtered even filtration follows from quite formal considerations, but has interesting consequences when connected to motivic homotopy theory. Informally, one can think of the map $\tau$ in the nonequivariant setting as corresponding to the one filtration direction in the filtered model for synthetic spectra. Analogously, the $\RU(A)$-filtered model for $\Syn^A$ encodes equivariant lifts of $\tau$ $$\tau_{V}: S^V \to \Th(V)$$ for any complex representation $V$ of $A$ (see \cref{ssec: eq lifts of tau} for notation). These maps are constructed formally in $\Syn^A$, but under the equivalence of \cref{introthm:main} \ref{introthm:mainpart1}, they give maps between spheres in $\SH^A(\bbC)^\cell_p$, many of which we know no clear geometric construction of. In particular, in contrast with the nonequivariant case, one did not have to know their existence prior to proving the equivalence. 

\subsection*{Overview}
\label{subsec:organization}
The body of this paper is split roughly into four blocks, which we will describe in more detail below:
\begin{enumerate}
    \item \label{block:1}In \cref{sec: equivariant SH,sec: absolute MGL}, we recall necessary aspects of equivariant motivic and absolute motivic homotopy theory, towards giving a description of equivariant algebraic cobordism $\MGL_A$ and structure on it. 
    \item \label{block:2}In \cref{sec: MGL and ggl} and \cref{sec: vanishing in MGL}, we move to a closer analysis of the homotopy groups of $\MGL_A$, taking advantage of the structure discussed previously. 
    \item \label{block:3}In \cref{sec: perfect pure}, \cref{sec: special fibre} and \cref{sec: synthetic reconstruction}, we levy our understanding of $\MGL_A$ to prove structural results about the cellular $A$-equivariant $\bbC$-motivic stable homotopy category $\SH^A(\bbC)^{\cell}$, culminating in the $p$-complete equivalence with our category of synthetic $A$-spectra $\Syn^A$.   
    \item \label{block:4}Finally, in \cref{sec:featuresofsynthetic} and \cref{sec: filtered reconstruction}, we move our attention from $\SH^A(\bbC)^{\cell}$ to our category $\Syn^A$, proving results internal to this topological category that connect it to the perfect even filtration and the Adams-Novikov spectral sequence.  
\end{enumerate}
We now give a linear overview the contents. In \cref{sec: equivariant SH}, we recall the setup of equivariant motivic homotopy theory and define absolute motivic spectra. \cref{sec: absolute MGL} is where we set up some properties of the equivariant algebraic cobordism spectra.
In \cref{sec: MGL and ggl}, we equip the former with a global group law and prove it is regular. This is the key input to \cref{sec: vanishing in MGL}, where we obtain our deescription of its homotopy groups.
In \cref{sec: perfect pure}, we introduce perfect pure objects and obtain structural descriptions of the cellular equivariant motivic categories, and \cref{sec: special fibre} is dedicated to describing the algebraic fibre .
In \cref{sec: synthetic reconstruction}, we briefly introduce the category of synthetic $A$-spectra and prove the main reconstruction theorem. \cref{sec:featuresofsynthetic} is dedicated to further study of the category of synthetic $A$-spectra.
Finally in \cref{sec: filtered reconstruction}, we exhibit the relation between synthetic $A$-spectra and the equivariant Adams--Novikov spectral sequence.
There are three technical appendices. In \cref{appendix: Betti realisation}, we construct an equivariant lift of the Betti realisation functor and an equivariant unit functor. In \cref{sec: ANSS convergence}, we show convergence of the equivariant motivic Adams-Novikov spectral sequence. In \cref{appendix: gfp2}, we provide some definitions and details on geometric fixed points at general nice abelian groups over $\C$ which form the technical input into the regularity result of \cref{sec: MGL and ggl}. 

\subsection*{Acknowledgements}
The authors wish to thank Robert Burklund, Sanath Devalapurkar, Jeremy Hahn, Jeremiah Heller, and Marc Hoyois for encouragement, suggestions and helpful discussions that have shaped the contents of this article. We are especially grateful to Markus Hausmann and William Balderrama for explaining the topological versions of the regularity resp. convergence arguments that play a key role in our proof as well as several enlightening conversations on the topic of equivariant homotopical bordism. We have further benefited from helpful conversations with Tom Bachmann, Emma Brink, Bastiaan Cnossen, Tobias Lenz, Klaus Mattis, Arjun Nigam, Levi Poon, Piotr Pstr\k{a}gowski, Natalie Stewart, and Noah Wisdom. We thank Florian Riedel and Markus Hausmann for proofreading and advice. The authors would like to thank the Isaac Newton Institute for Mathematical Sciences, Cambridge, for support and hospitality during the programme Equivariant homotopy theory in context, where work on this paper was undertaken. This work was supported by EPSRC grant EP/Z000580/1. The first author was supported by the NDSEG Fellowship. The second author was supported by the Hausdorff Center for Mathematics at the University of Bonn (DFG GZ 2047/1, project ID 390685813).
\newpage
\subsection*{Conventions}
\label{subsec:conventions}
\begin{itemize}
    \item Categories, unless stated otherwise, are $(\infty,1)$-categories and we follow the foundations set up in \cite{HTT, HA}.
    \item We use the term \emph{anima} (plural \emph{animæ}) for what is otherwise called a space, $\infty$-groupoid, homotopy type, etc. These form a category $\Ani$.
    \item Given an $\E_n$-algebra $R$ in a symmetric monoidal category $\cC$, the notation $\Mod(\cC;R)$ refers to the category of left $R$-modules in $\cC$. The unit of a symmetric monoidal category is denoted by some variation on the symbol $\mathbb{1}$. In particular, in the case of spectra this recovers $\mathbb{1}=\mathbb{S}=S^0$.
    \item The notations $\cP(\cC;\cD)=\Fun(\cC^\op,\cD)$ and $\Shv(\cC;\cD)$ denote the category of presheaves resp. sheaves on a category resp. site $\cC$ with values in $\cD$. Categories of additive (pre-)sheaves are decorated with a subscript $\Sigma$. If the coefficient category is not indicated, it is understood to be $\cD=\Ani$.
    \item Our generic notation for a cosimplicial object is $X^\bullet$. A $\bbZ$-graded/filtered object is generically denoted $X_\ast$, while a $\RU(A)$-graded/filtered object is denoted $X_\star$.
    \item Our main stacks of interest arise as classifying stacks of \emph{nice abelian groups}. Following \cite[\S 2.1]{khan_generalized_2022}, a \emph{nice} group is an fppf affine group scheme which arises as an extension of a finite étale group scheme of invertible order by a group of multiplicative type. In particular, we may view every nice abelian group over $\C$ as a produt of (split) tori and finite cyclic group schemes, the latter being constant. 
    \item Unless specified otherwise, the word \emph{stack} in the motivic context will be shorthand for \emph{nicely scalloped algebraic stack} cf. \cite[Definition 2.9]{khan_generalized_2022} over an implicit base.
    \item Given an abelian compact Lie group $A$, we denote the corresponding affine group scheme over $\C$ by $A_\C=\underline{\hom}(\underline{A}^\vee,\G_{m,\C})$ where $A^\vee$ denotes the Pontryagin dual, and when there is no potential for confusion we simply denote it by $A$ as well. In the case of a finite cyclic group, we see that $C_{n,\C}=\mu_n$. The notation $\T$ stands for the abelian compact Lie group which is a torus of rank \emph{one}, i.e. $S^1$ or $\mathrm{U}(1)$ while $\T_\C=\G_{m,\C}$ is the corresponding (split) torus of rank one over $\C$.
    \item When dealing with t-structures we use homological conventions.
    \item Given a compact Lie group $G$, the terms $G$-spectra and $G$-animæ always refer to the \emph{genuine} variants.
\end{itemize}
\newpage
\section{Equivariant motivic homotopy theory}\label{sec: equivariant SH}
In this section, we recall some key points from the theory of equivariant motivic homotopy theory as in \cite{hoyois_six_2017, khan_generalized_2022, gepner2023tom}. Throughout, we only have to explicitly deal with stacks of the form $[X/G]$ with $X$ separated.
\subsection{Recollections}
\begin{construction}
    Let $\cX$ be a stack and consider the category $\Smooth_\cX$ of stacks with a smooth map to $\cX$. This can be equipped with the Nisnevich topology as in \cite[\S 2.2]{khan_generalized_2022}. We then define the category of motivic spaces over $\cX$, denoted $\spcs({\cX})$, as the category of presheaves that satisfy Nisnevich descent and $\A^1$-homotopy invariance in the sense of \cite[Definition 3.2, Lemma 3.3]{khan_generalized_2022}.
\end{construction}
\begin{remark}\label{rem: motivic localisation functor}
    Following \cite[Lemma 3.4]{khan_generalized_2022}, the category $\spcs(\cX)$ of motivic spaces over $\cX$ is in fact an accessible left Bousfield localisation of $\cP(\Smooth_\cX)$ with localisation functor denoted
\[
    L_\mot\colon\cP(\Smooth_\cX)\to \spcs(\cX)
\]
and referred to as motivic localisation.
\end{remark}
We will frequently identify objects of $\Smooth_\cX$ with their motivically localised Yoneda image in $\spcs(\cX)$ using the same notation.
\begin{remark}
    When the base stack $\cX$ is of the form $\cX=[X/G]$ for a scheme $X$ with $G$-action, we will frequently denote $\spcs(\cX)$ by $\spcs^G(X)$ in accordance with the notation of \cite{hoyois_six_2017}. In this case, we will furthermore use the equivalence in \cite[\S A.3.4]{khan_generalized_2022} and work with the site of $X$-schemes with $G$-action appearing in \cite{hoyois_six_2017}.
\end{remark}
\begin{construction}\label{cons: Thom space}
    Given a finite locally free sheaf $\cE$ on $\cX$, we define its Thom space $\Th_\cX(\cE)\in \spcs(\cX)_\ast$ as sitting in the cofibre sequence
    \[(\mathbb{V}_\cX(\cE)\setminus \cX)_+\to \mathbb{V}_\cX(\cE)_+\to \Th_\cX(\cE)\]
    in $\spcs(\cX)_\ast$, where $\mathbb{V}_\cX(\cE)$ is the total space of $\cE$ viewed as an object of $\Smooth_\cX$ and we view $\cX$ as sitting inside of it by the zero section.
\end{construction}
We will frequently omit the base stack $\cX$ from the notation $\Th_\cX(\cE)$ when there is no risk for confusion. Further, note that this construction takes extensions of finite locally free sheaves to tensor products following \cite[Remark 4.4]{khan_generalized_2022} and is in particular symmetric monoidal.
\begin{defn}\label{def: Euler class}
    Since $\mathbb{V}_\cX(\cE)$ is $\A^1$-homotopy equivalent to $\cX$, we see that the inclusion of the zero section gives us a map
    \[\mathbb{1}_\cX\to \Th_\cX(\cE)\]
    from the unit in $\spcs(\cX)_\ast$. We will refer to this map as the pre-Euler class of $\cE$ and denote it by $a_\cE$.
\end{defn}
\begin{remark}\label{rem: purity cof seq for tori}
    Consider a nice abelian group $A$ with a surjective character $\alpha\colon A\to \T_\C$ giving rise to a similarly named finite locally free sheaf on $\B A$. The cofibre sequence defining $\Th(\alpha)$ takes the form
    \[A/K_+\to \mathbb{1}_{\B A}\xrightarrow{a_\alpha}\Th(\alpha)\]
    where $K\subset A$ is the kernel of $\alpha$. Indeed, since $\alpha$ is one-dimensional we see that $\mathbb{V}_{\B A}(\alpha)\setminus \B A$ can be identified with $\T_\C=\G_m$ where the action of $A$ is provided by the map $\alpha$, since $\alpha$ was surjective we may identify this with $A/K$.
\end{remark}
\begin{construction}
    Let $\cX$ be a stack, then we define the category of motivic spectra $\SH(\cX)$ over $\cX$ by tensor-inverting all Thom spaces $\Th_\cX(\cE)$ in $\spcs(\cX)_\ast$. 
\end{construction}
\begin{remark}
    Suppose $\cX$ lives over $\B G$ for a nice group $G$, then by \cite[Corollary 6.7]{hoyois_six_2017} it suffices to invert the pullbacks of Thom objects of $G$-representations: there is a symmetric monoidal equivalence $\SH(\cX)\simeq \spcs(\cX)_\ast\otimes_{\spcs(\B G)}\SH(\B G)$. Furthermore, by \cite[Lemma 6.3]{hoyois_six_2017} such objects are $3$-symmetric so that this inversion process can be modeled as a filtered colimit in $\PrL$.
\end{remark}
The assignment $\cX\mapsto \SH(\cX)$ comes with excellent functoriality properties, of which we now recall the salient points. Throughout, we will not explicitly make use of the shriek functors, so they will not be mentioned.
\begin{thm}[{\cite[Theorems 4.5, 4.10]{khan_generalized_2022}}]\label{thm: 6FF omnibus}
    The motivic six functor formalism $\SH$ on qcqs algebraic spaces extends to stacks, with the following functoriality.
    \begin{enumerate}
        \item\label{omni: presentable} Every $\SH(\cX)$ is a presentable stable category admitting a colimit-preserving symmetric monoidal structure.
        \item\label{omni: pullbacks} For every map $f\colon \cX\to \cY$ there exists an adjoint pair
        \[f^\ast\colon \SH(\cY)\rightleftarrows \SH(\cX)\colon f_\ast\]
        where the left adjoint is symmetric monoidal.
        \item The pullback functoriality makes $\SH$ into a Nisnevich sheaf with values in $\CAlg(\PrL_\mathrm{st})$
        \item For every smooth map $f\colon \cX\to \cY$, the pullback $f^\ast$ admits a further left adjoint $f_\sharp$ which satisfies the following properties\footnote{The abbreviations SBC and SPF stand for \emph{Smooth base change} and \emph{Smooth projection formula} respectively}:
        \begin{itemize}
    \item[(SBC)]\label{omni: sbc} it commutes with arbitrary $\ast$-pullbacks, and
    \item[(SPF)]\label{omni: spf} it is $f^\ast$-linear.
\end{itemize}
    \item For every finite locally free sheaf $\cE$ on $\cX$, the suspension spectrum of the Thom space $\Th(\cE)$ in $\SH(\cX)$ is $\otimes$-invertible\footnote{We will therefore denote the endofunctor $\Th(\cE)\otimes -$ by $\Sigma^\cE$ and adopt the notation $\Th(-\cE)$ for the tensor-inverse of $\Th(\cE)$.}. The construction of Thom spectra is furthermore compatible with arbitrary pullbacks.
    \item For every smooth map $f\colon \cX\to \cY$, let $\Omega_f$ denote its cotangent complex viewed as a finite locally free sheaf on $\cX$, then there is a natural transformation
    \[f_\sharp\Sigma^{-\Omega_f}\to f_\ast\]
    which is an equivalence if $f$ was additionally proper.
    \end{enumerate}    
\end{thm}
\begin{remark}
    Since Thom spaces are now invertible, we will frequently abuse notation by identifying the pre-Euler class of a finite locally free sheaf $\cE$ on a stack $\cX$ with its desuspension, i.e. we view it as a map $a_\cE\colon \Th_\cX(-\cE)\to \mathbb{1}_\cX$.
\end{remark}
\begin{remark}
    Following standard notation in motivic homotopy theory, every category $\SH(\cX)$ obtains a family of bigraded spheres $S^{t,w}=\Sigma^{t-2w}(\P^1_{\cX})^{\otimes w}$, where $\P^1=\Th_{\cX}(\cO_{\cX})$ can be viewed as the Thom space of the trivial vector bundle of rank one. In general, these bigraded spheres are therefore in the image of the pullback functor along the structure map $\cX\to B$ to an implicit base scheme.
\end{remark}
The functoriality described above allows us to perform many familiar operations from equivariant homotopy theory in the motivic setting. We will fix a base $\Spec(\C)$ below and let $A$ vary over nice abelian groups. 
\begin{construction}
   Let $i\colon K\subset A$ be the inclusion of a subgroup, so that the corresponding map $i\colon \B K\to \B A$ is smooth. We adopt the following nomenclature:
    \begin{itemize}
        \item $i^\ast$ is the restriction functor,
        \item $i_\sharp$ is the induction functor,  
        \item $i_\ast$ is the coinduction functor,
        \item $\Omega_i$ is the adjoint representation,
        \item $i_\sharp\Sigma^{-\Omega_i}\to i_\ast$ is the Wirthmüller transformation.
    \end{itemize}
If $i\colon K\to A$ is such that $A/K$ is furthermore proper (which in our case is only true when it is an inclusion of codimension zero), then the Wirthmüller transformation becomes the Wirthmüller isomorphism.
In the non-representable setting, consider the case where $p\colon \B A\to \Spec(\C)$ is the terminal morphism, then we adopt the following nomenclature.
\begin{itemize}
    \item $p^\ast$ is the inflation functor,
    \item $p_\ast$ is the fixed point functor, also denoted $(-)^A$,
    \item given $E\in \SH^A(\C)$, its $A$-equivariant homotopy groups are defined by
    \[\pi^A_{t,w}E=\pi_0\map(\Sigma^{t,w}\mathbb{1}_{\B A},E)\simeq \pi_0\map(\Sigma^{t,w}\mathbb{1}_\C,p_\ast E).\]
\end{itemize} 
\end{construction}
\begin{notation}
    Given a nice abelian group $A$, the unit in $\SH(\B A)$ will be denoted $\mathbb{1}_A$ when there is no potential for confusion.
\end{notation}
\subsection{Isotropy separation}
An essential tool in equivariant motivic homotopy theory is that of isotropy separation. This is discussed in the stacky language in \cite[\S 4]{bachmann2022motivic} and in the case of finite group actions in \cite[\S 3]{gepner2023tom}. We follow the latter and discuss some of the key technical features that we will need; we will be mainly concerned with the case of a finite abelian group $A$ acting trivially on the base scheme $\C$. If $\cF$ is a family of subgroups of $A$, we adopt the notation $\Smooth_\C^A[\cF]$ for the subcategory of $\Smooth_\C^A$ with isotropy contained in $\cF$. If $\cF$ is the maximal family of all subgroups this recovers $\Smooth_\C^A$.
\begin{defn}\label{def : universal motivic spaces}
    Let $\cF$ be a family of subgroups of $A$. Then the universal motivic space associated to $\cF$ is denoted $\E\cF$ and defined as a presheaf on $\Smooth_\C^A$ by
    \[\E\cF(X)=\begin{cases}
        \ast & X\in \Smooth_\C^A[\cF],\\
        \varnothing & \text{else}.
    \end{cases}\]
\end{defn}
Following \cite[Proposition 3.3]{gepner2023tom} we see that this defines an object of $\spcs(A)$. When $\cF$ is the trivial family we denote the corresponding motivic space by $\E A$, so that $\E A$ is the colimit of all free $A$-schemes. One can also define a relative version: given any inclusion of families $\cF\to \cF'$ define a motivic $A$-space $\E(\cF',\cF)$ as the cofibre
\[\E\cF_+\to \E\cF'_+\to \E(\cF',\cF)\]
of the natural comparison map. When $\cF'$ is the family of all subgroups we will also denote the cofibre by $\widetilde{\E}\cF$ in keeping with equivariant conventions. 
An essential observation, going back to \cite{morel19991}, is that equivariant classifying spaces in motivic homotopy theory admit geometric descriptions in nice cases. 
\begin{example}\label{ex: model for total space}
    Given a family $\cF$ of subgroups of $A$, we let $\E^n\cF$ denote the inductive system in smooth $\A$-schemes given by $\mathbb{V}(n\rho)\setminus \bigcup_{K\in \mathrm{co}(\cF)}\mathbb{V}(n\rho)^K$ where $\rho$ is the regular representation of $A$ and $K$ ranges over the complement of $\cF$ in the maximal family. Then there is an equivalence
    \[\varinjlim_n\E^n\cF\simeq \E\cF.\]
\end{example}
\begin{example}\label{ex: model for classifying space}
    In the case of the trivial family, since $\E^nA$ is an $A$-scheme with free action, the quotient $\E^nA/A$ exists as a smooth scheme and we denote it by $\B^n_\mot A$ such that we obtain an equivalence
    \[\varinjlim_n \B^n_\mot A\simeq \E A/A=\B_\mot A\]
    where the right hand side the geometric classifying space of \cite{morel19991} which is equivalent as a motivic space to the étale classifying space of $A$.
\end{example}
\begin{construction}\label{cons: horbits}
    Given any $X\in \SH^A(\C)$, the tensor product $\E A_+\otimes X$ is now a free $A$-equivariant motivic spectrum, so that one can apply the quotient functor of \cite[\S 5]{gepner2023tom} and obtain the homotopy orbits
    \[X_{\mathrm{h} A}\coloneq (\E A_+\otimes X)/A\]
    of \cite[Definition 7.3]{gepner2023tom}.
\end{construction}
\begin{construction}\label{cons: gfp}
Given a subgroup $A$ of $K$, consider the family $\cF[K]$ of subgroups of $A$ that do not contain $K$. Following \cite[\S 4.3]{gepner2023tom} we define a $K$-geometric fixed points functor
\[\Phi^K\colon \SH^A(\C)\to \SH^{A/K}(\C)\]
given by tensoring with $\widetilde{\E}\cF[K]$ and using the equivalence in \cite[Proposition 4.8]{gepner2023tom}. In the special case where $K=A$, $\cF[A]$ is the family of proper subgroups and we recover the usual notion of geometric fixed points. In \cref{appendix: gfp2} this construction is extended to allow for general nice abelian groups and indexing universes.
\end{construction}
\begin{defn}\label{def: adjacent families}
    Let $\cF_i\subset \cF_{i+1}$ be an inclusion of families of subgroups of $A$, we say that the families are adjacent at some subgroup $K$ if the complement consists of the singleton $\{K\}$. We adopt the notation $\E[K]=\E(\cF_{i+1},\cF_i)$.
\end{defn}
\begin{remark}
    Since $A$ was assumed to be a finite group, there is always a finite filtration of the form
    \[\varnothing=\cF_{-1}\subset \cF_0\subset\cdots\subset \cF_{n-1}\subset \cF_n=\mathrm{Sub}(A)\]
    terminating at the family of all subgroups such that every pair of the form $\cF_i\subset \cF_{i+1}$ is adjacent. Indeed, one can just take a maximal chain of families of subgroups. 
\end{remark}
\begin{remark}\label{rem: adjacent isotropy filtration}
    As a consequence, any object $X$ of $\SH^A(\C)$ admits a finite filtration of the form $\{(\E\cF_i)_+\otimes X\}_i$ with associated graded $\{\E[K]\otimes X\}_K$. By \cite[Proposition 4.12, Theorem 6.33]{gepner2023tom} we may identify the resulting filtration on the fixed points $X^{A}$. Its associated graded is given by $$\{(\E[K]\otimes X)^A\}_K=\{(\Phi^K X)_{\mathrm{h}A/K}\}_{K}$$ as $K$ ranges over all subgroups of $A$.
 \end{remark}   
\begin{ex}\label{ex: horbits of Thom spectrum}
    Let $\cE$ be a vector bundle over $\B A$, i.e. a finite dimensional complex $A$-representation. In order to idenfity the associated graded of the filtration on $\Th(\cE)$, we make the following observations.
    \begin{enumerate}
        \item Let $\{\E^i A\}_i$ be an Ind-scheme model for $\E A$ as in \cref{ex: model for total space}, with structure maps $p_i\colon \E^i A\to \B A$. Then the smooth projection formula allows us to identify
        \[\Th(\cE)\otimes \E A_+\simeq \varinjlim_i(p_i)_\sharp\Th_{\E^i A}(p_i^\ast \cE).\]
        We now wish to compute its quotient (in the sense of \cite[\S 5]{gepner2023tom}). Per construction, note that every $\E^i A$ is a free $A$-scheme so that it has a quotient scheme $q_i\colon\B^i_\mot A\to \Spec(\C)$ which presents an Ind-scheme model for the étale classifying space of $A$ by \cref{ex: model for classifying space}. Furthermore, the pullbacks $p_i^\ast \cE$ over the $\E^iA$ descend to quotient bundles $p^i_\ast \cE/A$ on every $\B^i_\mot A$. The homotopy orbits of $\Th(\cE)$ can then be identified more explicitly as
        \[\Th(\cE)_{\mathrm{h}A}=(\Th(\cE)\otimes \E A_+)/A\simeq \varinjlim_i (q_i)_\sharp\Th_{\B^i_\mot A}(p_i^\ast\cE/A).\]
        In more convenient notation, we may write this as
        \[\Th(\cE)_{\mathrm{h}A}\simeq \Th_{\B_\mot A}(\widetilde{\cE})\]
        where $\cE$ is the \emph{associated bundle} over $\B_\mot \mu_n$.
        \item $\Phi^K\Th(\cE)$ can be identified with the Thom spectrum of $\cE^K$ viewed as an $A/K$-representation, cf. \cite[Proposition 4.10]{gepner2023tom}.
    \end{enumerate}
    We conclude that the filtration on $\Th(\cE)^A$ has associated graded given by the collection $\{\Th_{\B_\mot A/K}(\widetilde{\cE^K})\}_K$ of Thom spectra over étale classifying spaces as $K$ ranges over subgroups of $A$.
\end{ex}
\subsection{Absolute motivic spectra}
Let us now construct an analogue of global spectra in the equivariant motivic setting by slightly expanding on \cite[Definition 9.3]{khan_generalized_2022}. We warn the reader that these objects are not truly \emph{global} and have more structure than \emph{absolute}, since they are arguably closed to the latter in definition, we have opted to use the terminology \emph{absolute}. See \cref{warn: global caveats} below for details.
\begin{defn}\label{def: absolute motivic spectra}
    Let $\Stk_\cB$ denote the category of quasi-fundamental stacks over some fixed base $\cB$, and let $\Stk^\dagger_{\cB}$ denote its marking at representable morphisms, then the category of absolute motivic spectra over $\cX$ is the partially lax limit
    \[\SH^{\mathrm{abs}}(\cB)=\plaxlim{\Stk_\cB^{\op,\dagger}}{\SH(\bullet)}\]
\end{defn}
\begin{remark}
    It is clear that the functor $\SH\colon \Stk_B\to \Cat$ considered above lifts to $\CAlg(\Prlst)$ and that $\SH^{\mathrm{abs}}(\cB)$ therefore lifts to a stable presentably symmetric monoidal category as well, cf. the general discussion in \cite[\S 5]{linskens2022global}.
\end{remark}
\begin{warning}\label{warn: regularity hypothesis nomenclature}
    One can modify the test category $\Stk_\cB$ as well, e.g. by considering only \emph{regular} stacks over $\cB$, marked at representable morphisms. Since our main example --algebraic cobordism-- only lifts to the partially lax limit over this regular test category, we will also refer to these objects as \emph{absolute} with the regular test category being implicit. 
\end{warning}
More explicitly, we want to think of an object $E$ of $\SH^{\abs}(\cB)$ as a compatible collection of objects $E_\cX\in \SH(\cX)$ as $\cX$ ranges over quasi-fundamental stacks over $\cB$, together with a coherent collection of comparison maps
\[\alpha_f\colon f^\ast E_\cY\to E_\cX\]
for all maps $f\colon \cX\to \cY$ in $\Stk_\cB$ such that $\alpha_f$ is an equivalence whenever $f$ is representable. We are particularly interested in the case where $\cB$ is a fixed base scheme such as $\Spec(\C)$ and the stacks in $\Stk_\C$ are of the form $\B G=[\Spec(\C)/G]$ for nice groups $G$. Note that any map of group schemes $H\to G$ gives rise to a map in $\Stk_{\C}$ of the form $\B H\to \B G$ which is representable if the map $H\to G$ is a monomorphism.
\begin{remark}\label{rem: absolute homotopy groups functorial}
    Let $E\in \SH^{\abs}(\C)$ be an absolute motivic spectrum over $\C$, then the equivariant homotopy groups $\pi^G_{\ast,\ast}E$ of $E$ are fully contravariantly functorial in arbitrary group homomorphisms. Indeed, given an arbitrary map of nice groups $H\to G$ over $\C$ we obtain a commutative diagram
    \[\begin{tikzcd}
    \B H\arrow[r, "f"]\arrow[d, "q"]&\B G\arrow[d, "p"]\\
    \Spec(\C)\arrow[r, "\id"]&\Spec(\C)
    \end{tikzcd}\]
    giving rise to a composite map
    \[p_\ast E_{G}\to q_\ast f^\ast E_G\xrightarrow{q_\ast\alpha_f}q_\ast E_H\]
    where the first map is the Beck--Chevalley transformation. Note that if $H\to G$ was a monomorphism so that $\alpha_f$ is an equivalence, then this is nothing but the usual restriction functoriality in injective group homomorphisms.
\end{remark}
In global homotopy theory, one can show that for any global spectrum $E$ and any group $G$ its underlying spectrum with $G$-action (i.e. the Borel completion of $E_G$) must have a trivial action. This essentially follows from the observation that the $G$-action comes from the global functoriality and that this can already be trivialised in the indexing category (see \cref{warn: global caveats}). We will now prove an analogue of this in the setting of absolute motivic spectra.
\begin{lemma}\label{lem: global action is trivial}
    Let $A$ be a finite group acting trivially on a base scheme $S$ with structure map $p\colon [S/A]\to S$, and let $E$ be an absolute motivic spectrum over $S$. Then the comparison map
    \[\alpha_p\colon p^\ast E\to E_A\] is an $\E A_+$-equivalence.
\end{lemma}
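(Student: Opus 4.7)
The plan is to reduce the claim to the behaviour of the comparison maps on representable morphisms, exploiting the geometric model for $\E A$ given in \cref{ex: model for total space}. Concretely, I will show that pulling $\alpha_p$ back along the representable maps indexing this model produces equivalences, and that the projection formula lets me upgrade this to an $\E A_+$-equivalence.

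First, I would use the Ind-scheme model $\E A \simeq \varinjlim_n \E^n A$, where each $\E^n A$ is a smooth free $A$-scheme. Because the action is free, the quotient stack is representable by a scheme, so the quotient map $q_n\colon [\E^n A/A] \to \B A$ is a representable smooth map, and (after motivic localisation and stabilisation) we have $(\E^n A)_+ \simeq (q_n)_\sharp \mathbb{1}_{[\E^n A/A]}$ in $\SH(\B A)$. Invoking (SPF) from \cref{thm: 6FF omnibus} gives, for any $F \in \SH(\B A)$, the identification
\[\E A_+ \otimes F \simeq \varinjlim_n (q_n)_\sharp\, q_n^\ast F.\]
Consequently, it suffices to show that $q_n^\ast \alpha_p\colon q_n^\ast p^\ast E_S \to q_n^\ast E_A$ is an equivalence for each $n$.

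Next, I would exploit the coherence built into the partially lax limit defining $\SH^{\abs}(\C)$. The composition $p \circ q_n\colon [\E^n A/A] \to S$ is, by freeness of the action, the structure map of the scheme $\E^n A/A$, hence in particular representable. By the coherence of the comparison maps $\alpha_{(-)}$, we obtain a factorisation
\[\alpha_{p q_n} \simeq \alpha_{q_n} \circ q_n^\ast \alpha_p \colon (pq_n)^\ast E_S \longrightarrow E_{[\E^n A/A]}.\]
Since $p q_n$ is representable, $\alpha_{p q_n}$ is an equivalence by the definition of absolute motivic spectra; since $q_n$ is representable, so is $\alpha_{q_n}$. Therefore $q_n^\ast \alpha_p$ is an equivalence, completing the argument after taking the filtered colimit.

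The main obstacle is the very first step: making precise that the Ind-scheme model from \cref{ex: model for total space} indeed produces the claimed presentation of $\E A_+$ as a filtered colimit of pushforwards $(q_n)_\sharp \mathbb{1}_{[\E^n A/A]}$ in $\SH(\B A)$, rather than merely in $\spcs^A(\C)_\ast$. Once this and the attendant projection formula are in hand, everything else is formal manipulation of the comparison maps in the partially lax limit, with the crucial input being that \emph{any} map $[U/A] \to S$ factoring through $\B A$ becomes representable as soon as the $A$-action on $U$ is free.
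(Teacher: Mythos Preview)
Your proposal is correct and follows essentially the same approach as the paper: write $\E A_+$ as a filtered colimit of free smooth $A$-schemes, apply the smooth projection formula to reduce to checking that $q_n^\ast\alpha_p$ is an equivalence, and then use the coherence factorisation $\alpha_{pq_n}\simeq\alpha_{q_n}\circ q_n^\ast\alpha_p$ together with representability of both $q_n$ and $pq_n$. The paper's proof compresses the last step into the line $q_\sharp q^\ast\alpha_p\simeq q_\sharp\alpha_{pq}$ and quotes \cite[Tag 07S7]{stacks} for representability of $pq$, but the logical content is the same.
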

\begin{proof}
    Recall from \cref{def : universal motivic spaces} and \cite[Proposition 3.7]{gepner2023tom} that $\E A_+$ can be obtained as the colimit of a filtered diagram of smooth free $A$-schemes. It therefore suffices to prove that for any smooth free $A$-scheme $X$ with structure map $q\colon [X/A] \to [S/A]$ the map
    \[\alpha_p\otimes [X/A]_+\colon p^\ast E\otimes [X/A]_+\to E_A\otimes[X/A]_+\]
    is an equivalence. By the smooth projection formula and coherence of the absolute structure maps this can be identified with $q_\sharp q^\ast\alpha_p\simeq q_\sharp \alpha_{pq}$ but by the freeness assumption $pq$ is representable (see \cite[Tag 07S7]{stacks}) so that $\alpha_{pq}$ is an equivalence.
\end{proof}
\begin{warning}\label{warn: global caveats}
    While the definition of absolute motivic spectra above is enough for our purposes, we point out to the reader that it contains some deficits and subtleties.
    \begin{enumerate}
        \item The pre-existing notion of \emph{absolute} motivic spectrum in \cite[Definition 9.3]{khan_generalized_2022} is slightly weaker than ours: Khan--Ravi only ask for equivalences $\alpha_f$ along representable maps $f$. In practice, constructing the maps $\alpha_f$ in full generality is not hard and the true content is in verifying that these are equivalences when $f$ is representable. We have therefore opted to re-use their terminology.
        \item In the homotopical setting, there is a notion of global spectra, which by \cite{linskens2022global} can be realised as a partially lax limit of the functor that sends a compact Lie group $G$ to the category of $G$-spectra. However, this limit is not taken over the $1$-category of compact Lie groups but a higher categorical enhancement denoted $\Glo$ in \cite{linskens2022global} whose homotopy category recovers the $1$-category of compact Lie groups. 
        \end{enumerate}
In the case of finite groups, the corresponding global indexing category $\Glo^\mathrm{fin}$ can be realised as a full subcategory of topological stacks (\cite[\S 2]{gepner2023equivariant}) or differentiable stacks (\cite[Corollary 2.17]{clough2024global}) and this will be exploited in \cref{cons: glo to stk comparison}. On the other hand, $\Glo$ itself is not truncated as a category, and by \cite[Propositions 2.29, 2.30]{clough2024global} it instead arises as a full subcategory of \emph{homotopy invariant sheaves} on differentiable stacks. The main difference between global spectra in the homotopical setting and absolute spectra in the motivic setting is therefore that the indexing category for the partially lax limit is highly non-truncated in the homotopical setting while it is a mere $(2,1)$-category in our motivic setting. Furthermore, in the homotopical setting one is only interested in functoriality as the group changes, i.e. over classifying stacks of compact Lie groups. On the other hand, an absolute motivic spectrum is functorial over a large supply of schematic morphisms as well. The latter allows us to prove \cref{lem: global action is trivial} while in the homotopical setting the higher homotopies trivialising this action are already found in the indexing category, see \cite{schwede_localizations} for a proof of this folklore result. 
\end{warning}
The remarks above lead us to the following natural question inspired by \cite{clough2024global} and \cite[Proposition 6.13]{linskens2022global}.
\begin{question}
Can one define a stacky global indexing category over $\C$ satisfying the following?
\begin{itemize}
    \item It admits a description as a full subcategory of $\A^1$-invariant Nisnevich sheaves on quasi-fundamental stacks.
    \item It contains all (quasi--fundamental) Deligne--Mumford stacks as a full subcategory.
    \item The functor $\SH$ on quasi-fundamental stacks admits a unique extension to this stacky global indexing category.
    \item It admits a factorisation system such that functorality of $\SH$ in the right class is automatic.
    \item It contains sufficiently many higher homotopies to prove an analogue of \cref{lem: global action is trivial} in full generality for any object in the partially lax limit of $\SH$ over this indexing category.
\end{itemize}
\end{question}

\newpage
\section{Equivariant algebraic cobordism}\label{sec: absolute MGL}
We now recall the construction of the motivic spectrum $\MGL_\cX\in \SH(\cX)$ and its absolute functoriality, as well as its cell structure in terms of Gra{\ss}mannians.
\subsection{Motivic Thom spectra}
Since $\MGL_\cX\in \SH(\cX)$ will be constructed as the Thom spectrum of an equivariant motivic analogue of the $J$-homomorphism, we will first recall its construction following \cite[\S 16]{bachmann2021norms}.
\begin{defn}\label{def: motivic Thom spectrum}
    Let $\cX$ be a quasi-fundamental stack and consider the functor
    \[\SH\colon\Smooth_\cX^\op\to \Cat\]
    given by $\SH$ with its pullback functoriality, whose unstraightening will be denoted $(\Smooth_\cX)_{\sslash \SH}$. Let
    \[\mathrm{M}_\cX\colon \cP((\Smooth_\cX)_{\sslash\SH})\to \SH(\cX)\]
    be motivic Thom spectrum functor over $\cX$, defined as the left Kan extension of the functor that sends a pair $(f\colon \cY\to \cX, E_\cY)$ in $(\Smooth_\cX)_{\sslash\SH}$ to $f_\sharp E_\cY$.
\end{defn}
\begin{remark}
    A more coherent and precise definition of this functor is given in \cite[\S 16.3]{bachmann2021norms} and we simply note that the construction goes through in our setting as well.
\end{remark}
Note that the source $\cP((\Smooth_\cX)_{\sslash \SH})$ contains a full subcategory equivalent to $\cP(\Smooth_\cX)_{\sslash \SH}$ and this further contains a wide subcategory $\cP(\Smooth_\cX)_{/\SH}$ given by the usual slice category. In the remainder of this section, we will only make use of the latter subcategory and denote the restriction of $\mathrm{M}_\cX$ using the same notation.
\begin{remark}[{\cite[Remark 16.5]{bachmann2021norms}}]\label{rem: colimit formula}
Given an object $\phi\colon \cF\to \SH$ in $\cP(\Smooth_\cX)_{/\SH}$ we can explicitly describe $\mathrm{M}_\cX(\phi)$ by the formula
\[\mathrm{M}_{\cX}(\phi)\simeq \varinjlim_{(f,\alpha)}f_\sharp\phi_\cY(\alpha),\]
where the colimit is indexed over object $(f\colon \cY\to \cX,\alpha)$ of $(\Smooth_{\cX})_{\sslash \cF}.$
\end{remark}
In the same vein, one  can shows that the motivic Thom spectrum functor is compatible with pullbacks (cf. \cite[Lemma 16.7]{bachmann2021norms}) as a consequence of the smooth projection formula recalled in \cref{thm: 6FF omnibus}.
\begin{defn}[{\cite[Remark 4.12]{khan_generalized_2022}}]\label{def: stacky j homomorphism}
Let $\cK_\cX\in \cP(\Smooth_\cX)$ denote the presheaf sending a smooth stack over $\cX$ to its $\mathrm{K}$-theory anima, and let 
\[j_\cX\colon \cK_\cX\to \SH\]
denote the natural transformation sending a virtual vector bundle to its Thom spectrum. Letting $\cK_\cX^\circ\subset \cK_\cX$ denote the subanima on virtual bundles of rank zero, we will denote the corresponding restriction by $j_\cX^\circ$.
\end{defn}
\begin{remark}
    As in loc. cit. the natural transformation is constructed from the usual Thom spectrum functor $\Th_\cX\colon \mathrm{Vect}(\cX)\to \Pic(\SH(\cX))$ which is a map of $\E_\infty$-monoids landing in the Picard anima per construction. We then observe that the target is a grouplike Zariski sheaf to extend it to $\cK(\cX)$ and note that this construction is compatible with pullbacks to extend it to a natural transformation of presheaves over $\Smooth_{\cX}$.
\end{remark}
\begin{defn}[{\cite[Construction 10.9]{khan_generalized_2022}}]\label{def: MGL}
    Given a quasi-fundamental stack $\cX$, we define the motivic spectrum of algebraic cobordism over $\cX$ by
    \[\MGL_\cX= \mathrm{M}_\cX(j_\cX^\circ)\in \SH(\cX),\]
    while its periodic variant is defined by
    \[\MGLP_\cX= \mathrm{M}_\cX(j_\cX)\in \SH(\cX).\]
\end{defn}
\begin{remark}\label{rem: colimit formula for MGL}
    Applying the formula in \cref{rem: colimit formula}, we immediately obtain an equivalence
\[\MGL_\cX\simeq \varinjlim_{(f,\xi)}f_\sharp \Th_\cY(\xi)\]
where the colimit ranges over pairs $(f\colon \cY\to \cX,\xi\in \cK^\circ(\cY))$.
\end{remark}
\begin{remark}\label{rem: motivic Thom iso}
Again following the general arguments provided in \cite[Proposition 16.28, Example 16.30]{bachmann2021norms} and the notation from \cref{rem: colimit formula for MGL} we see that $\MGL_\cX$ is oriented in the sense that given any $g\colon \cZ\to \cX$ smooth over $\cX$ and $\zeta\in \cK^\circ(\cZ)$, there is an equivalence
\[\MGL_\cX\otimes \Th_\cZ(\zeta)\simeq \varinjlim_{(f,\xi)}(f\times g)_\sharp\Th_{\cY\times \cZ}(\xi\boxtimes\zeta)\simeq \MGL_{\cX}\otimes \cZ_+\]
just by rearranging the colimit.
\end{remark}
\subsection{An absolute refinement}
We now establish a slight refinement of \cite[Proposition 10.10]{khan_generalized_2022}, which essentially states that the spectra $\MGL_\cX$ lift to an absolute motivic spectrum when working over regular global quotient stacks (see \cref{warn: regularity hypothesis nomenclature}, \cref{rem: regularity assumption}). As in loc. cit., the proof passes through a similar result for presheaves $\cK_\cX^\circ$ and pullback functoriality of the motivic Thom spectrum functors. We begin with a simple observation.
\begin{lemma}
    Let $\Stk_\cB$ denote the category from \cref{def: absolute motivic spectra} and $\cK_\cX^\circ$ the presheaf from \cref{def: stacky j homomorphism} for $\cX\in \Stk_\cB$. Then this lifts to an object
    \[\cK^\circ\in \laxlim{\cX\in\Stk_\cB}\cP(\Smooth_{\cX})\]
\end{lemma}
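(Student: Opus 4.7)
The plan is to observe that $K$-theory of rank zero is already a functor $K^{\circ}\colon \Stk_\cB^{\op}\to \Ani$ on all quasi-fundamental stacks, and that each $\cK_\cX^{\circ}$ is the restriction of $K^{\circ}$ along the forgetful functor $\Smooth_\cX\to \Stk_\cB$, $(h\colon\cW\to\cX)\mapsto \cW$. To coherently assemble these fibrewise restrictions into a lax limit object, I would package the slice categories $\Smooth_\cX$ into a cartesian fibration $\pi\colon \cT\to \Stk_\cB$ obtained as the Grothendieck construction of the functor $\cX\mapsto \Smooth_\cX$, where the contravariant functoriality in $\cX$ is given by base change of smooth morphisms along arbitrary maps of stacks.

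Concretely, the objects of $\cT$ are pairs $(\cX, h\colon \cW\to \cX)$ with $h$ smooth, and a cartesian morphism over $f\colon\cX\to\cY$ with target $(h\colon \cW\to \cY)$ is a pullback square identifying the source with $(\cW\times_\cY\cX\to \cX)$. The ``total source'' functor $u\colon \cT\to \Stk_\cB$, $(h\colon \cW\to \cX)\mapsto \cW$, composed with $K^{\circ}$, defines an $\Ani$-valued presheaf
\[\cK^{\circ}\colon \cT^{\op}\xrightarrow{u^{\op}} \Stk_\cB^{\op}\xrightarrow{K^{\circ}} \Ani.\]
Via the standard identification $\cP(\cT) \simeq \laxlim{\cX\in\Stk_\cB}\cP(\Smooth_\cX)$ arising from $\infty$-categorical straightening--unstraightening applied to the $\Cat$-valued functor $\cX\mapsto \cP(\Smooth_\cX)$, this $\cK^{\circ}$ furnishes the required object of the claimed lax limit.

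Unwinding this identification confirms that the output matches the expected data: the fibrewise restriction of $\cK^{\circ}$ to $\Smooth_\cX$ sends $h\colon \cW\to \cX$ to $K^{\circ}(\cW) = \cK_\cX^{\circ}(h)$, while the cartesian transport along $f\colon \cX\to\cY$ at $(h\colon \cW\to \cY)$ acts by the $K$-theoretic pullback $K^{\circ}(\cW)\to K^{\circ}(\cW\times_\cY \cX)$ along the canonical projection, providing the expected lax compatibility natural transformation $\cK_\cY^{\circ}\to b_f^{\ast}\cK_\cX^{\circ}$ in $\cP(\Smooth_\cY)$, where $b_f\colon \Smooth_\cY \to \Smooth_\cX$ denotes the base change functor. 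The main non-tautological input is the identification of the lax limit with presheaves on the Grothendieck construction, but this is a standard consequence of $\infty$-categorical straightening; once it is granted, the rest of the construction is essentially formal.
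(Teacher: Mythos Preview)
Your proof is correct and takes a genuinely different route from the paper's. The paper argues directly on the level of the building blocks $\B_{\mathrm{fppf}}\GL_n$: since $\GL_n$ in $\cP(\Smooth_\cX)$ is pulled back from $\cP(\Smooth_\cB)$, one obtains tautological comparison maps $f^\ast \B_{\mathrm{fppf}}\GL_{n,\cY}\to \B_{\mathrm{fppf}}\GL_{n,\cX}$, and then assembles over $n$ and group-completes. Your approach instead uses that $K^\circ$ is already a functor on all of $\Stk_\cB$ and packages the coherence via the Grothendieck construction together with the identification $\cP(\cT)\simeq \laxlim_{\cX}\cP(\Smooth_\cX)$ (a standard instance of the Gepner--Haugseng--Nikolaus description of functors out of a fibration). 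The underlying content is the same---both exploit that $K$-theory is globally defined compatibly with pullback---but your packaging has the advantage that all higher coherences are handled at once by the single functor $K^\circ\circ u^{\op}$, whereas the paper's phrasing (``assembling \ldots and group completing levelwise'') leaves the coherence somewhat implicit. One small point worth making explicit in your write-up: the total-source functor $u$ is most cleanly constructed by realising $\cT$ as the full subcategory of $\Fun(\Delta^1,\Stk_\cB)$ on smooth arrows and taking $u=\mathrm{ev}_0$; this avoids having to specify $u$ by hand.
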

\begin{proof}
    For any $n\geq 0$ the group $\GL_n$ in $\cP(\Smooth_\cX)$ is pulled back from $\cP(\Smooth_\cB)$. Therefore, there are tautological comparison maps
    \[\alpha_f\colon f^\ast\B_{\mathrm{fppf}}\GL_{n,\cY}\to \B_{\mathrm{fppf}}\GL_{n,\cX}\]
    for any morphisms $f\colon \cX\to \cY$ in $\Stk_\cB$. Assembling the moduli of rank $n$ bundles for all $n$ and group completing levelwise, we obtain the desired result for $\cK^\circ$ as the maps above are precisely enough to define an object in the lax limit.
\end{proof}
After motivic localisation we can say something stronger, namely that this lands in the partially lax limit. Note that the partially lax limit is a full subcategory of the lax limit so this can be tested as a \emph{property}. 
\begin{lemma}\label{lem: K-theory is absolute}
    Consider the motivic localisation functor $$L_{\mot}\colon \laxlim{\cX\in\Stk_\cB}\cP(\Smooth_\cX)_\ast\to \laxlim{\cX\in\Stk_\cB}\spcs(\cX)_\ast.$$ obtained by applying 
    $L_\mot$ levelwise. Then $L_\mot \cK^\circ$ lands in the full subcategory $$\plaxlim{\cX\in \Stk_\cB^\dagger}\spcs(\cX)_\ast\subset \laxlim{\cX\in\Stk_\cB}\spcs(\cX)_\ast$$
\end{lemma}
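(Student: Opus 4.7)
The strategy is to reduce to the case of a single classifying stack $\B_\mathrm{fppf}\GL_n$ and then exploit a geometric model for its motivic localisation. Concretely, the assertion amounts to showing that for every representable morphism $f\colon\cX\to\cY$ in $\Stk_\cB$, the lax structure map
\[L_\mot\alpha_f\colon f^\ast L_\mot\cK^\circ_\cY\to L_\mot\cK^\circ_\cX\]
is an equivalence in $\spcs(\cX)_\ast$. I would begin by unwinding the construction of $\cK^\circ$ from the preceding lemma: it is assembled, level by level, from the stacks $\B_\mathrm{fppf}\GL_n$ via stabilisation in $n$ and grouplike Zariski-local group completion. Since $f^\ast$ and $L_\mot$ both preserve colimits and are symmetric monoidal, both commute with disjoint unions, stabilisation over $n$, and group completion. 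Thus it suffices to prove that $L_\mot\alpha_f$ is an equivalence on each summand $\B_\mathrm{fppf}\GL_n$.

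The main geometric input is the Morel--Voevodsky-style presentation of $L_\mot\B_\mathrm{fppf}\GL_n$ as a filtered colimit of finite-dimensional Grassmannians (compare the abelian-group case in \cref{ex: model for classifying space}): on any quasi-fundamental stack $\cZ$ we have
\[L_\mot\B_\mathrm{fppf}\GL_{n,\cZ}\simeq\varinjlim_N\mathrm{Gr}(n,N)_\cZ,\]
and crucially the schemes $\mathrm{Gr}(n,N)$ are defined over the base $\cB$, so both sides are obtained by pullback from a universal $\cB$-scheme along the structure map $\cZ\to\cB$. By construction the structure maps of the lax-limit object $\cK^\circ$ built in the previous lemma are precisely the tautological identifications coming from this uniformity in the base, so under the geometric model $\alpha_f$ literally presents the left and right hand sides as the common base change of the $\cB$-models of $\mathrm{Gr}(n,N)$.

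It remains to verify that $f^\ast$ interacts well with this geometric model. Here is where the representability hypothesis on $f$ enters. For representable $f$ the pullback $f^\ast$ preserves smooth representable objects, commutes with filtered colimits of such, and is compatible with $L_\mot$ via the smooth base change enjoyed by the six-functor formalism (\cref{thm: 6FF omnibus}). Combining these ingredients yields $f^\ast\varinjlim_N\mathrm{Gr}(n,N)_\cY\simeq\varinjlim_N\mathrm{Gr}(n,N)_\cX$, which is exactly the desired equivalence at level $n$. Reassembling over $n$ and group-completing gives the lemma.

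The main obstacle I expect is not conceptual but bookkeeping: one must track coherence of the comparison maps $\alpha_f$ simultaneously in $n$, in the Grassmannian index $N$, through group completion, and through the motivic localisation functor, checking that all of these operations interact compatibly with pullback along representable morphisms. Once the geometric model is in place this reduces to repeated appeals to the preservation properties of $f^\ast$ and $L_\mot$, but setting up the diagram with the right level of coherence is where the work lies.
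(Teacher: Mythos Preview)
Your overall strategy—reduce to the building block $\B_{\mathrm{fppf}}\GL_n$ and invoke a Grassmannian model for its motivic localisation—is the right idea, and is essentially what the cited result of Hoyois provides. However, the specific model you write down is incorrect over stacks, and this is not a bookkeeping issue but a genuine gap.

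You claim that over any quasi-fundamental stack $\cZ$ one has $L_\mot\B_{\mathrm{fppf}}\GL_{n,\cZ}\simeq\varinjlim_N\Gr(n,N)_\cZ$ with the $\Gr(n,N)$ pulled back from $\cB$. This is false when $\cZ$ is not a scheme. For instance, when $\cZ=\B G$, the motivic localisation of $\B_{\mathrm{fppf}}\GL_n$ classifies $G$-equivariant bundles, and the correct Grassmannian model (compare \cref{lem: Grass model for K}) is $\varinjlim_{d,i}\Gr_d(V_i)$ where $\{V_i\}$ is a complete saturated system of vector bundles \emph{on $\B G$}, i.e.\ genuine $G$-representations. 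The pullbacks of the trivial Grassmannians from $\cB$ only see bundles that are equivariantly trivial. A symptom of this error is that your argument never genuinely uses representability: every property you invoke for $f^\ast$ (colimit preservation, sending representables to representables, compatibility with $L_\mot$) holds for arbitrary $f$, so as written your proof would show the comparison map is an equivalence for all morphisms, not just representable ones.

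The actual role of representability, which the paper imports from \cite{hoyois_cdh_2020}, is roughly this: a complete saturated system of vector bundles on $\cY$ pulls back along a representable morphism $f\colon\cX\to\cY$ to a complete saturated system on $\cX$ (this is where the resolution-type properties of nicely scalloped stacks enter). Once you know that, the Grassmannian models on source and target are visibly identified under $f^\ast$, and the rest of your outline goes through.
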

\begin{proof}
    This follows from \cite[Corollary 2.9, \S 5]{hoyois_cdh_2020}.
\end{proof}
We would now like to apply the motivic Thom spectrum functor from \cref{def: motivic Thom spectrum} and obtain an absolute refinement of algebraic cobordism. However, the absolute refinement of the rank zero K-theory anima only holds after motivic localisation, which the motivic Thom spectrum functor need not respect in general. 
\begin{lemma}\label{lem: motivic invariance of Thom}
    Consider the motivic Thom spectrum functor
    \[\mathrm{M}_\cX\colon \cP((\Smooth_\cX)_{\sslash \SH})\to \SH(\cX).\] If $A\in \cP((\Smooth_\cX)_{\sslash \SH})$ lies in the subcategory given by $\spcs(\cX)_{\slash \SH}$, then the restriction of $\mathrm{M}_\cX$ to $\cP(\Smooth_{\cX})_{/A}$ inverts motivic equivalences.
\end{lemma}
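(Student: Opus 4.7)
The plan is to exploit that $\mathrm{M}_\cX$ is a left Kan extension and hence colimit-preserving, so its restriction to $\cP(\Smooth_\cX)_{/A}$ inherits the same property. Since motivic equivalences form a strongly saturated class, a cocontinuous functor out of $\cP(\Smooth_\cX)_{/A}$ inverts all motivic equivalences if and only if it inverts a generating set. Using that $A$ is motivically local so that $\spcs(\cX)_{/A}$ is the corresponding Bousfield localisation of $\cP(\Smooth_\cX)_{/A}$, I would reduce to checking two classes of generators between representable objects of the slice --- $\A^1$-projections and Nisnevich distinguished squares --- and verify these directly using the explicit colimit formula of \cref{rem: colimit formula}.

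For the $\A^1$-step, given a morphism $\alpha\colon \cY\to A$ with $f\colon \cY\to \cX$ smooth and the projection $p\colon \cY\times\A^1\to \cY$, the map $(\cY\times\A^1,\alpha\circ p)\to(\cY,\alpha)$ in the slice is sent by $\mathrm{M}_\cX$ to $f_\sharp$ applied to the counit $p_\sharp p^\ast\phi(\alpha)\to \phi(\alpha)$, where $\phi\colon A\to \SH$ is the composite. Here I crucially use that $\phi$ is a natural transformation of $\SH$-valued presheaves on $\Smooth_\cX$, so $\phi(\alpha\circ p)\simeq p^\ast\phi(\alpha)$. By the smooth projection formula (SPF from \cref{thm: 6FF omnibus}), the counit identifies with the map $p_\sharp\mathbb{1}_{\cY\times\A^1}\otimes\phi(\alpha)\to\phi(\alpha)$, and since $p_\sharp\mathbb{1}_{\cY\times\A^1}\simeq \mathbb{1}_\cY$ by $\A^1$-contractibility in $\SH(\cY)$, the map is an equivalence, which is then preserved by $f_\sharp$.

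For the Nisnevich step, given a Nisnevich distinguished square $Q$ over $\cY$ with $\alpha\colon \cY\to A$, the same naturality of $\phi$ together with smooth base change (SBC) shows that $\mathrm{M}_\cX$ sends the induced square in $\cP(\Smooth_\cX)_{/A}$ to $f_\sharp$ applied to the square in $\SH(\cY)$ obtained by tensoring the image of $Q$ with $\phi(\alpha)$. This is cocartesian because Nisnevich descent holds in $\SH(\cY)$ and both tensoring with $\phi(\alpha)$ and $f_\sharp$ preserve cocartesian squares.

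The main technical obstacle is the coherent identification $\phi(\alpha\circ p)\simeq p^\ast\phi(\alpha)$, which must hold at the level of $\infty$-categorical naturality rather than merely on homotopy categories. This follows from the structure of $\phi$ as a morphism in $\cP((\Smooth_\cX)_{\sslash \SH})$, i.e. a map of presheaves into the functor $\SH$ with its pullback functoriality. Once this is in hand the remainder is a direct unraveling of the left Kan extension formula combined with standard facts about the six functor formalism on $\SH$.
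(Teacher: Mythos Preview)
Your proposal is correct and is essentially the same approach as the paper's, which simply cites \cite[Proposition 16.9]{bachmann2021norms} and notes that the required inputs---Nisnevich descent and $\A^1$-homotopy invariance of $\SH$---are available in the stacky setting via \cite{khan_generalized_2022}. You have unraveled precisely what that cited argument does: reduce to generating motivic equivalences via cocontinuity, then verify the $\A^1$-projection and Nisnevich cases using the naturality $\phi(\alpha\circ p)\simeq p^\ast\phi(\alpha)$ together with the smooth projection formula and the internal motivic-locality of $\SH(\cY)$.
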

\begin{proof}
    The argument in \cite[Proposition 16.9]{bachmann2021norms} goes through once we know that $\SH$ itself satisfies Nisnevich descent and homotopy invariance in the form of \cite[Proposition 4.5 (iii), Example 5.12 (iii)]{khan_generalized_2022}.
\end{proof}
\begin{propn}\label{prop: MGL is absolute}
    Let $\Stk_{\cB}^{\mathrm{reg},\dagger}$ denote the category of regular quasi-fundamental stacks over some base $\cB$, marked at representable morphisms. Then the collection of algebraic cobordism spectra over these stacks assembles to an absolute object.
\end{propn}
\begin{proof}
    Since the motivic Thom spectrum functor is compatible with pullbacks, we may apply it levelwise to \cref{lem: K-theory is absolute}, where the passage to the motivic localisation is justified by \cref{lem: motivic invariance of Thom} and the observation that $\cK^\circ_\cX$ lives in $\spcs(\cX)$ whenever $\cX$ is regular by combining \cite[Theorem 4.9]{krishna2018algebraic} and \cite[Theorems 2.7, 4.1, 5.7]{thomason87groupscheme}.
\end{proof}
\begin{remark}
    In fact, since the pullback functors are strong symmetric monoidal, we similarly obtain a commutative algebra object in absolute motivic spectra.
\end{remark}
\begin{remark}\label{rem: regularity assumption}
    Note that $\MGL$ only lifts to an object of the partially lax limit over regular stacks since we need \cref{lem: motivic invariance of Thom}. Following \cref{warn: regularity hypothesis nomenclature} we will still refer to this as an \emph{absolute} lift. The reader may rest assured that we never need to consider the functoriality of algebraic cobordism over non-regular bases. 
    In the non-stacky context, following \cite[Remark 16.11]{bachmann2021norms} this regularity assumption may be circumvented by the observation that the functor $\SH$ on schemes satisfies cdh descent so that the map $\cK^\circ\to \Pic(\SH)$ factors through the cdh sheafification of the source, which is homotopy invariant (combining \cite[Theorem 6.3]{kerz2018algebraic} and a Noetherian approximation result) so that \cref{lem: motivic invariance of Thom} may be applied. Over stacks, it is known that $\SH$ and homotopy K-theory satisfy cdh descent by \cite{hoyois_cdh_2020}, but the converse --whether the cdh sheafification of algebraic K-theory of stacks is homotopy invariant-- is not known to the authors.
\end{remark}
\subsection{The equivariant cell structure}
A key feature of the spectrum $\MGL$ is that it admits a nice presentation in terms of Thom spectra on Gra{\ss}mannians, i.e. a cell structure. We show that this can be done in the equivariant context as well. For simplicitly, we will work over the base $\B A=[\Spec(\C)/A]$ throughout, where $A$ is a nice abelian group.
\begin{construction}
    Let $V_i$ denote a finite dimensional $A$-representation and let $\Gr_d(V_i)$ denote the Gra{\ss}mannian of $d$-planes in $V_i$, viewed a smooth projective $\C$-scheme with $A$ acting by its action on $V_i$. This admits a tautological rank $d$ bundle denoted $Q_i^d$. Let 
    \[\overline{Q}_i^d= Q_i^d-\cO^{\oplus d}\]
    denote its reduced variant.
\end{construction}
Next, we need an appropriate motivic notion of a complete $A$-universe, which is provided by a complete and saturated system of vector bundles in the sense of \cite[Definition 2.2]{hoyois_cdh_2020}.
\begin{lemma}\label{lem: Grass model for K}
    Let $\{V_i\}_{i\in I}$ be a complete and saturated system of vector bundles on $\B A$, then the map
    \[\varinjlim_{d, i}\Gr_d(V_i)\to \cK^\circ_A\]
    induced by the classifying maps of all $\overline{Q}^d_i$ is a motivic equivalence. 
\end{lemma}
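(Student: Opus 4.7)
The plan is to check that both sides, viewed as pointed presheaves on $\Smooth_{\B A}$, classify the same data after motivic sheafification: virtual $A$-equivariant vector bundles of rank zero. Since by the combination of \cite[Theorem 4.9]{krishna2018algebraic} and the Thomason results already invoked in the proof of \cref{prop: MGL is absolute}, $\cK^\circ_A$ satisfies Nisnevich descent and $\A^1$-invariance on smooth $A$-schemes over $\C$ and so already lives in $\spcs(\B A)_\ast$, it suffices to show that the comparison map induces an equivalence in $\spcs(\B A)_\ast$ after motivic localization of the source.

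To analyse the map on sections at a smooth $A$-scheme $f\colon X\to \B A$, observe that a map $X\to \varinjlim_{d,i}\Gr_d(V_i)$ in motivic spaces corresponds, after passing to a suitable Nisnevich cover of $X$, to a choice of $(d,i)$ together with an $A$-equivariant $d$-dimensional subbundle $Q\hookrightarrow f^\ast V_i$; the comparison map sends this datum to the virtual class $[Q]-[\cO^{\oplus d}]\in \cK^\circ_A(X)$.

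The key input is then the completeness and saturation properties of the system $\{V_i\}$ as spelled out in \cite[Definition 2.2]{hoyois_cdh_2020}. Completeness guarantees that any $A$-equivariant vector bundle $\cF$ on $X$ is, Nisnevich-locally, a direct summand of $f^\ast V_i$ for $i$ sufficiently large, so that any virtual rank-zero class $[\cF]-[\cO^{\oplus d}]$ on $X$ is hit by a point of $\Gr_d(V_i)$; this yields essential surjectivity on $\pi_0$. Saturation (together with the stabilisation induced by the natural inclusions $\Gr_d(V_i)\hookrightarrow \Gr_{d+k}(V_i\oplus \cO^{\oplus k})$) implements the standard Morel--Voevodsky argument identifying two different choices of such presentations with compatible elements of the filtered colimit, giving bijectivity on $\pi_0$ and higher homotopies.

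The main obstacle is the Nisnevich-local existence of equivariant embeddings $\cF\hookrightarrow f^\ast V_i$: this is precisely the content of the completeness hypothesis, and once it is unpacked from \cite{hoyois_cdh_2020} the remainder of the argument is a formal adaptation of the nonequivariant representability theorem for K-theory by Grassmannians, along the lines of \cite[Section 16]{bachmann2021norms}.
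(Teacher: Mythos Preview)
Your proposal is correct and ultimately rests on the same input as the paper's own proof: the paper simply cites \cite[Corollary 2.10]{hoyois_cdh_2020} directly, whereas you unpack what that corollary says and sketch its proof in the equivariant setting. One minor point: the final reference to \cite[Section 16]{bachmann2021norms} is slightly off, as that section treats motivic Thom spectra rather than the Gra{\ss}mannian model for K-theory; the argument you are alluding to is precisely the one carried out in \cite[\S 2]{hoyois_cdh_2020}.
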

\begin{proof}
    This follows immediately from \cite[Corollary 2.10]{hoyois_cdh_2020}.
\end{proof}
\begin{cor}\label{cor: Grass model for MGL}
    Letting $\{V_i\}_{i\in I}$ denote a complete and saturated system of vector bundles on $\B A$ as above, there is an equivalence
    \[\MGL_A\simeq \varinjlim_{d, i}\Sigma^{-2d,-d}\Th_{\Gr_d(V_i)}(Q_i^d).\]
\end{cor}
\begin{proof}
    Since we are working over a regular base, we may use \cref{lem: motivic invariance of Thom} and \cref{rem: colimit formula} to conclude. 
\end{proof}
\begin{remark}
    A similar statement holds for $\MGLP_A$, where we now obtain the formula
    \[\MGLP_A\simeq \varinjlim_{d, i}\Th_{\Gr_d(V_i)}(Q_i^d).\]
\end{remark}
This geometric presentation in particular allows us to determine the Betti realisation (cf. Appendix \ref{appendix: Betti realisation}) of $\MGL_A$.
\begin{cor}\label{cor: Betti image of MGL}
    There is an equivalence
    \[\Be^A_\C\MGL_A\simeq \MU_{A^\an}\]
\end{cor}
\begin{proof}
    By \cite[\S 5.2]{juliusthesis} we have an analogous colimit formula for $\MU_{A^\an}$. 
    It is clear that Betti realisation preserves Gra{\ss}mannians and Thom spectra by \cref{lem: examples of Betti realisations} and preserves filtered colimits per construction.
\end{proof}
An essential property of the Gra{\ss}mannian model for $\MGL_A$ is that it tells us that $\MGL_A$ is built essentially only out of Thom spectra; i.e. it admits an equivariant cell structure.
\begin{lemma}\label{lem: cell structure on Grass at T}
    Let $W$ be a finite-dimensional $\T_\C$-representation, then for any $d$ the motivic spectrum $\Gr_d(W)$ can be written as an iterated extension of motivic spectra of the form $\Th(\cE)$ for $\cE$ a finite-dimensional $\T_\C$-representation.
\end{lemma}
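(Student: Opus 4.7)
The plan is to apply the Białynicki--Birula decomposition to the $\T_\C$-action on $\Gr_d(W)$ and then further refine via classical Schubert decompositions of the fixed-point components. Writing $W=\bigoplus_{n\in\Z}W_n$ for the weight decomposition of $W$ under $\T_\C=\G_{m,\C}$, the fixed locus is
\[\Gr_d(W)^{\T_\C}=\coprod_{(d_n):\,\sum d_n=d}\prod_n \Gr_{d_n}(W_n),\]
a finite disjoint union of products of ordinary Gra{\ss}mannians on which $\T_\C$ acts trivially.

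The $\T_\C$-action endows $\Gr_d(W)$ with a Białynicki--Birula stratification: each $\T_\C$-orbit flows into some fixed-point component $F_k$ as $t\to 0$, giving a $\T_\C$-invariant locally closed stratum $C_k^+\subset \Gr_d(W)$ realised as a $\T_\C$-equivariant vector bundle (the positive normal bundle $N^+_{F_k}$) over $F_k$. Taking closures in a compatible order yields a filtration of $\Gr_d(W)$ by closed $\T_\C$-invariant subvarieties whose successive cofibers in $\SH^{\T_\C}(\C)$ are identified, by $\T_\C$-equivariant motivic purity, with the Thom spectra $\Th_{F_k}(N^+_{F_k})$.

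To further break each $\Th_{F_k}(N^+_{F_k})$ into Thom spectra of $\T_\C$-representations, I would refine the filtration using the classical Schubert cell decomposition of the product of Gra{\ss}mannians $F_k$ into affine spaces $\A^m$ (with trivial $\T_\C$-action). The cofibers of the refined filtration are the Thom spectra of the restrictions of $N^+_{F_k}$ to each such Schubert cell; since algebraic vector bundles on affine spaces over a field are free, these restrictions are $\T_\C$-equivariantly trivial of the form $\A^m\times V$ for a $\T_\C$-representation $V$, and their Thom spectra are equivalent to $\Th(V)$ in $\SH^{\T_\C}(\C)$ by $\A^1$-invariance of $\Th(-)$ applied to the projection $\A^m\times V\to V$. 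Assembling these two filtrations gives the desired iterated extension of $\Gr_d(W)$ into $\T_\C$-equivariant Thom spectra of representations.

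The main obstacle will be the careful motivic and $\T_\C$-equivariant formulation of the Białynicki--Birula decomposition, in particular the purity identification of the successive cofibers as Thom spectra of positive normal bundles over the (possibly singular) closed strata. A direct alternative would be to bypass BB altogether and instead work with a $\T_\C$-invariant complete flag in $W$ (which exists by semisimplicity of $\T_\C$-representations) and the associated Schubert cell decomposition of $\Gr_d(W)$, whose cells are explicitly $\T_\C$-equivariantly isomorphic to representations via the standard pivot-matrix parametrization, paying the dual price of identifying cofibers for possibly singular Schubert varieties.
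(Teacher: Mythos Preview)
Your proposal is correct and follows essentially the same approach as the paper: apply the Bia\l{}ynicki--Birula decomposition to get Thom spectra over the fixed-point components (products of nonequivariant Gra{\ss}mannians), then refine each such component by its Schubert cell decomposition. The paper handles the technical point you flag about purity over possibly singular strata by deferring to Wendt \cite[Proposition~3.2]{wendt2010more} and \cite[Theorem~A.3]{bachmann2022chow}, which package exactly the open-complement purity argument; it also records (in a footnote) the equivariance of the BB filtration, which you correctly anticipate as the other subtlety.
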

\begin{proof}
    We use the Bia{\l}ynicki--Birula decomposition of a smooth projective variety with $\T_\C$-action (see \cite[Theorem 3.1]{wendt2010more}). Note that the fixed point locus can be described as the coproduct
    \[\Gr_d(W)^{\T_\C}\simeq \bigsqcup_{\alpha\subset W}\Gr_{d_\beta}(\beta)\]
    of lower-dimensional nonequivariant Gra{\ss}mannians indexed by the characters of $\T_\C$ appearing in $W$. The Bia{\l}ynicki--Birula decomposition then gives us a finite filtration of $\Gr_d(W)$ by $\T_\C$-invariant subspaces\footnote{The usual statement of the Bia{\l}ynicki--Birula stratification does not make it clear that this is a ${\T_\C}$-equivariant stratification, but a glance at the construction (see \cite{jelisiejew2019bialynicki} for a general and modern account) shows that this is true.} such that the complements are affine bundles over some component of the fixed point locus. A slight modification of the argument in \cite[Proposition 3.2]{wendt2010more} then tells us that $\Gr_d(W)$ can be built as an iterated extension of Thom spectra over nonequivariant Gra{\ss}mannians. In order to further break up the nonequivariant Gra{\ss}mannians we use Wendt's argument again, in the form of \cite[Theorem A.3]{bachmann2022chow}.
\end{proof}
\begin{cor}\label{cor: cell structure on Thom over Grass at T}
    Letting $Q_W^d$ denote the tautological bundle over $\Gr_d(W)$ as above, then $\Th_{\Gr_d(W)}(Q_W^d)$ can also be built as an iterated extension of Thom spectra.
\end{cor}
\begin{proof}
    It suffices to note that one can \emph{Thomify} the purity cofibre sequences appearing in the proof of \cref{lem: cell structure on Grass at T} as in \cite[Lemma A.2]{bachmann2022chow} cf. the proof of \cite[Corollary 3.25]{hoyois_six_2017}.
\end{proof}
\begin{remark}
    The arguments of the Lemmata above can in fact be applied inductively to prove the same result at a torus of arbitrary rank, by applying the Bia{\l}ynicki--Birula stratification for the restricted action of a single one-dimensional torus to reduce to a torus of lower rank.
\end{remark}
\begin{cor}\label{cor: cellularity for MGL}
    Let $A$ be a nice abelian group, then $\MGL_A$ can be written as a filtered colimit where each term in the diagram arises as an iterated extension of Thom spectra of $A$-representaions.
\end{cor}
\begin{proof}
    With the Gra{\ss}mannian formula from \cref{cor: Grass model for MGL} in hand, this now follows immediately from \cref{cor: cell structure on Thom over Grass at T} in the case where our group is a torus of arbitrary rank. In order to obtain the result for all nice abelian groups, note that any such $A$ can be embedded into a sufficiently large torus $\T_\C^n$ by assumption. Since $\MGL$ is absolute we see that $\MGL_A$ is the restriction of $\MGL_{\T^n}$ along the inclusion. It then suffices to note that restriction preserves all colimits and is compatible with the formation of Thom spectra.
\end{proof}
\begin{remark}
    Upon applying equivariant Betti realisation, we see that this is a motivic lift of the observation that $\MU$ as well as its equivariant versions at abelian compact Lie groups are built out of complex representation cells. In particular, $\MU$ is only built out of cells of even dimension which is a key feature that powers the (equivariant) even filtration.
\end{remark}
Not only can $\MGL_A$ be written as a filtered colimit of strongly cellular objects, the filtered colimit itself has excellent cellularity properties which we will make critical use of.
\begin{lemma}\label{lem: inclusion of Grassmannians 1}
    Let $V\subset V\oplus\alpha=W$ be an inclusion of subrepresentations of $A$ and let $d$ be a positive integer no greater than the rank of $W$, then the closed immmersions
    \[\Gr_{d}(V)\xhookrightarrow{i} \Gr_d(W)\xhookleftarrow{j} \Gr_{d-1}(V)\]
     corresponding respectively to the inclusion $V\subset W$ and the map sending $V'\subset V$ to $V'\oplus\alpha\subset W$ have disjoint images and are complementary up to $\A^1$-homotopy. 
\end{lemma}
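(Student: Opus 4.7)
The plan is to check disjointness of the images directly, and then to identify the open complement of $\Gr_d(V)\hookrightarrow \Gr_d(W)$ as the total space of an $A$-equivariant vector bundle over $\Gr_{d-1}(V)$ whose zero section is precisely $j$. To unpack the images on points: that of $i$ consists of $d$-planes $P\subset W$ contained in $V$, i.e. those for which the projection $P\hookrightarrow W\twoheadrightarrow \alpha$ vanishes, while that of $j$ consists of $d$-planes containing $\alpha$, and these in particular project surjectively onto $\alpha$. As $V\cap\alpha=0$ as subrepresentations of $W$, these conditions are mutually exclusive and the two images are disjoint.

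For the complementarity, let $U\subseteq \Gr_d(W)$ denote the open complement of $\Gr_d(V)$ and $\cU\subseteq W$ the tautological rank $d$ subbundle on $\Gr_d(W)$, where we view $W=V\oplus\alpha$ as the trivial bundle with that fibre. Over $U$, the composite $\cU\hookrightarrow W\twoheadrightarrow \alpha$ is a pointwise surjection and therefore a surjection of vector bundles, whose kernel is a rank $d-1$ subbundle of the trivial bundle with fibre $V$, classified by a morphism
\[\pi\colon U\to \Gr_{d-1}(V)\]
which on points sends a $d$-plane $P$ to $P\cap V$.

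It remains to identify $\pi$ as an equivariant vector bundle with zero section $j$. Let $\cV'\subset V$ denote the tautological rank $d-1$ subbundle on $\Gr_{d-1}(V)$ and $\cQ = V/\cV'$ its quotient. Over $\Gr_{d-1}(V)$, giving a rank $d$ subbundle of $W$ containing $\cV'$ and surjecting onto $\alpha$ is equivalent to splitting the surjection $W/\cV'=\cQ\oplus\alpha\twoheadrightarrow \alpha$; the inclusion $\alpha\hookrightarrow W$ provides a canonical such splitting, so the space of all splittings is identified with the total space $\mathbb{V}_{\Gr_{d-1}(V)}(\cQ\otimes\alpha^\vee)$ in such a way that the canonical splitting corresponds to the zero section. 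Unwinding the definitions, this identifies $\pi$ with a rank $\dim V - d + 1$ vector bundle over $\Gr_{d-1}(V)$ with zero section $j$, so $\pi$ is an $\A^1$-equivalence, as required.

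Everything in the argument is built $A$-equivariantly from the decomposition $W=V\oplus\alpha$, so it takes place in $\Smooth_\C^A$ without modification. The only delicate step is the identification of $\pi$ as a vector bundle, which is a standard manipulation with short exact sequences of locally free sheaves; this amounts to an equivariant instance of the Bia{\l}ynicki--Birula cell decomposition of a Grassmannian with respect to a one-dimensional torus action, and presents no genuine obstacle.
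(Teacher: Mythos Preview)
Your proof is correct and follows the same approach as the paper: you show that $j$ factors through the open complement $U=\Gr_d(W)\setminus\Gr_d(V)$ and then identify $U$ as the total space of an $A$-equivariant vector bundle over $\Gr_{d-1}(V)$ with zero section $j$. The paper's proof is terser and describes the fibre over $V''\subset V$ as the set of lines in $W/V''$ not contained in $V/V''$, whereas you go further and explicitly identify the bundle as $\cQ\otimes\alpha^\vee$; this is the same affine space (the complement of the hyperplane $\mathbb{P}(V/V'')$ in $\mathbb{P}(W/V'')$), so the arguments agree.
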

\begin{proof}
    Let $\Gr_d(W)\setminus \Gr_d(V)$ denote the complement of the image of $i$, then it is clear that $j$ factors through this complement as $$j'\colon \Gr_{d-1}(V)\to\Gr_d(W)\setminus \Gr_d(V).$$ Note that $j'$ is the zero section of a vector bundle whose fibre over some $V''\subset V$ is the collection of lines in a complement of $V''\subset W$ that are not entirely contained in $V$. As such, it is an $\A^1$-homotopy equivalence. 
\end{proof}

As a final application of the Gra{\ss}mannian model for $\MGL_A$, we see that its $A$-geometric fixed points can be described explicitly.
\begin{propn}\label{prop: identification of gfp}
    There is an equivalence of commutative ring spectra in $\SH(\C)$ of the form
    \[\Phi^A\MGLP\simeq\MGLP\otimes\bigotimes_{\alpha\in A^\vee\setminus\epsilon}\BGLP_+\]
\end{propn}
\begin{proof}
    Using the results of \cref{appendix: gfp2}, more specifically \cref{cor: gfp of Thom}, we see that one can reduce the computation to determining the nonequivariant Thom spectra
    \[\Th_{\Gr_d(V_i)^A}((Q^d_i)^A).\]
    Since $A$ is acting on $\Gr_d(V_i)$ through its action on $V_i$, we see that a fixed point of $\Gr_d(V_i)$ is precisely an invariant subspace of $V_i$. We may there decompose $V_i$ into its isotypical components $V_i^\alpha$ and obtain an equivalence
    \[\Gr_d(V_i)^A\simeq \coprod_{|W|=d}\prod_{\alpha\in A^\vee}\Gr_{|W^\alpha|}(V_i^\alpha)\]
    where the coproduct ranges over $d$-dimensional $A$-representations $W$ with isotopyical components $\alpha$. Taking the colimit over $d$ gives an equivalence
    \[(\BGLP_A)^A\simeq {\prod_{\alpha\in A^\vee}}'\BGLP.\]
    The notation above refers to a weak product; i.e. the union of all products over finite subsets of $A^\vee$. It then suffices to Thomify the expression above. In the case of a finite Gra{\ss}mannian, note that the tautological bundle $Q_i^d$ is such that its fibre over a $d$-dimensional subspace $W$ in $V_i$ is precisely $W$. If $W$ corresponded to an invariant subspace with only nontrivial characters in its isotpyical decompositions, its fixed points are of rank zero. If it instead only contains trivial isotypical summands then the fixed points are the whole subspace. We conclude that the resulting fixed points bundle on $(\BGLP_A)^A$ is trivial on every factor except the one corresponding to the trivial character $\epsilon\in A^\vee$. As such, taking Thom spectra recovers either $\MGLP$ in the case where $\alpha=\epsilon$ and $\BGLP_+$ otherwise, and we may conclude.
\end{proof}
\begin{remark}
    In the case of $\MGL_A$, inspection of the argument above tells us that there is a similar equivalence
    \[\Phi^A\MGL_A\simeq \MGL\otimes\bigotimes_{\alpha\in A^\vee\setminus \epsilon}\BGLP.\]
\end{remark}
\begin{cor}\label{cor: homotopy groups of gfp of mgl}
    The geometric fixed points homotopy groups of $\MGLP_A$ are given by
    \[\Phi^A_{\ast,\ast}\MGLP\cong \MGLP_{\ast,\ast}[(b_0^\alpha)^\pm,b_i^\alpha\mid \alpha\in A^\vee\setminus\epsilon,i\geq 1]\]
    where the classes $b_i^\alpha$ are in bidegree $(2i,i)$ for $i\geq 1$ and the classes $b_0^\alpha$ are in bidegree $(2,1)$.
\end{cor}
\begin{proof}
    Using \cref{prop: identification of gfp} we see that this is now an entirely nonequivariant question. We apply the results of \cite[Proposition 6.2, (ii,b)]{naumann2009motivic} where the class $b_i^\alpha$ comes from the factor of $\BGLP$ corresponding to the character $\alpha$ and the invertible classes $b_0^\alpha$ encode the fact we are working with the periodic version of $\BGL$.
\end{proof}
\begin{remark}
    As usual, we also obtain a non-periodic identification
    \[\Phi^A_{\ast,\ast}\MGL\cong \MGL_{\ast,\ast}[(b_0^\alpha)^\pm,b_i^\alpha\mid \alpha\in A^\vee\setminus\epsilon, i\geq 1].\]
\end{remark}
\begin{remark}
    Since we computed that geometric fixed points are a smashing localisation that precisely inverts the pre-Euler classes of all nontrivial characters in \cref{cor: euler model for gfp}, we see that this is also a computation of the localisation
    \[\Phi^A_{\ast,\ast}\MGL_A\cong \pi_{\ast,\ast}^A\MGL_A[e_\alpha^{-1}\mid \alpha\in A^\vee\setminus\epsilon].\]
\end{remark}
\begin{remark}\label{rem: gfp chern classes are also in image of unit map}
Not only does this computation supply us with an explicit presentation for the geometric fixed points, it also tells us that the generators for the geometric fixed point homotopy groups as an $\MGL_{\ast,\ast}$-algebra are entirely determined by the formal group law on $\MGL$: following \cite[Lemma 6.4 (i)]{naumann2009motivic} they arise as the images of the usual generators in the cooperation algebra for $\MU$. 
\end{remark}
\subsection{Incomplete algebraic cobordism spectra}
In this brief section, we construct and analyse variants of the (periodic) equivariant algebraic cobordism spectra $\MGLP_A$ indexed over general $A$-universes $\cU$ in the sense of \cref{appendix: gfp2}. These only arise in the proof of the regularity result below so our construction is rather ad-hoc. In the following, $\cU$ denotes an $A$-universe, and for any finite-dimensional summand $V$ of $\cU$ and integer $d$, $\Gr_d(V)$ denotes the corresponding Gra{\ss}mannian with its tautological bundle $Q^d_V$. We further adopt notation from \cref{appendix: gfp2} so that for an $A$-scheme $X$, $\SH^A(X)_\cU$ denotes the category of equivariant motivic spectra over $X$ indexed over $\cU$.
\begin{lemma}
    The suspension spectrum functor
    \[\SH^A(\Gr_d(V))_{\epsilon^\infty}\to \SH^A(\Gr_d(V))_\cU\]
    sends $\Th_{\Gr_d(V)}(Q^d_V)$ to a tensor-invertible object.
\end{lemma}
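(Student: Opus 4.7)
The plan is to exploit the tautological short exact sequence of $A$-equivariant vector bundles on $\Gr_d(V)$,
\[0 \to Q^d_V \to \underline{V} \to R \to 0,\]
where $\underline{V}$ denotes the trivial bundle of rank $\dim V$ with the $A$-action inherited from $V$ (equivalently, the pullback of $V$ along the structure map $q\colon \Gr_d(V) \to \B A$) and $R$ is the tautological quotient bundle. Since the Thom construction turns short exact sequences of finite locally free sheaves into tensor products of Thom objects (cf. the remark after \cref{cons: Thom space} and \cite[Remark 4.4]{khan_generalized_2022}), this sequence gives an equivalence
\[\Th_{\Gr_d(V)}(Q^d_V) \otimes \Th_{\Gr_d(V)}(R) \simeq \Th_{\Gr_d(V)}(\underline{V}) \simeq q^\ast \Th_{\B A}(V),\]
already valid in $\SH^A(\Gr_d(V))_{\epsilon^\infty}$ and hence in $\SH^A(\Gr_d(V))_\cU$ after passing through the symmetric monoidal suspension spectrum functor.

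Next, because $V$ is a summand of the universe $\cU$, the Thom spectrum $\Th_{\B A}(V)$ is by construction tensor-invertible in $\SH^A(\B A)_\cU$ (cf. \cref{appendix: gfp2}). Since the pullback functor $q^\ast$ is symmetric monoidal, it preserves invertible objects, so $q^\ast \Th_{\B A}(V)$ is invertible in $\SH^A(\Gr_d(V))_\cU$. Combining these observations, the tensor product $\Th_{\Gr_d(V)}(Q^d_V) \otimes \Th_{\Gr_d(V)}(R)$ is invertible, which exhibits a concrete tensor inverse for $\Th_{\Gr_d(V)}(Q^d_V)$, namely $\Th_{\Gr_d(V)}(R) \otimes q^\ast \Th_{\B A}(-V)$.

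I do not anticipate a genuine obstacle here: the argument is formal once one has both the tautological extension and the fact that $\underline{V}$ descends from the universe on $\B A$. The only mild subtlety is ensuring that a genuine \emph{short exact sequence} of vector bundles, not merely the underlying additive relation in $\mathrm{K}$-theory, induces the desired equivalence of Thom spectra in our stacky setting, which is a standard feature of the Thom construction recalled in \cref{cons: Thom space}.
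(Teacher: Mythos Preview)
Your argument is correct, and it takes a slightly different route from the paper. The paper observes that, since $V$ is a summand of $\cU$, every fibre of $Q^d_V$ (being a $d$-plane in $V$) is itself a summand of $\cU$, and then invokes the argument of \cite[Proposition 6.5]{hoyois_six_2017} to deduce invertibility; this is essentially the content of \cref{rem: Pic in incomplete over general base}. Your approach instead uses the tautological short exact sequence $0 \to Q^d_V \to \underline{V} \to R \to 0$ to identify $\Th(Q^d_V)\otimes\Th(R)$ with the pullback of the manifestly invertible object $\Th(V)$, thereby producing an explicit tensor inverse. Your route is more self-contained in that it avoids unpacking the local-triviality argument from Hoyois, while the paper's route is more general in that it applies to any bundle whose fibres are summands of $\cU$, not just the tautological one. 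One small notational quibble: in the conventions of this subsection the structure map is to $\Spec(\C)$ as an $A$-scheme rather than to $\B A$, so the relevant category is $\SH^A(\C)_\cU$; the content of your argument is unaffected.
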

\begin{proof}
    The assumption that $V$ is a summand of $\cU$ guarantees that the fibres of $Q^d_V$ appear as summands of $\cU$. One can then repeat the argument in the proof of \cite[Proposition 6.5]{hoyois_six_2017} to conclude.
\end{proof}
\begin{defn}\label{def: incomplete MGLP}
    Let $\{V_i\}_{i\in I}$ run through a complete and saturated flag of $\cU$, then we define the incomplete periodic algebraic cobordism spectrum indexed on $\cU$ as the colimit
    \[\MGLP_{A,\cU}=\varinjlim_{d,i}\Th_{\Gr_d(V_i)}(Q^d_{V_i})\]
    in $\SH^A(\C)_\cU$.
\end{defn}
\begin{remark}
    The colimit $\varinjlim_{d,i}\Gr_d(V_i)$ appearing above will be denoted $\BGLP_{A,\cU}$.
\end{remark}
\begin{remark}\label{rem: comparison map}
    It is clear that the class of $A$-equivariant vector bundles arising as a summand of $\cU$ is closed under direct sum, we may therefore equip $\MGLP_{A,\cU}$ with a commutative algebra structure in $\SH^A(\C)_\cU$. Furthermore, we obtain a comparison map of commutative algebra objects in $\SH^A(\C)$ of the form
    \[\Sigma^{\cU_A-\cU}\MGLP_{A,\cU}\to \MGLP_{A}\]
    which is an equivalence if $\cU$ is complete.
\end{remark}
\begin{remark}
    Let $\alpha$ be a character which appears as a summand in $\cU$, then $\Th(\alpha)$ is an invertible object in $\SH^A(\C)_\cU$ and the colimit formula in \cref{def: incomplete MGLP} produces an equivalence
    \[\MGLP_{A,\cU}\otimes \Th(\alpha)\simeq \MGLP_{A,\cU}\]
    in analogy with \cref{rem: motivic Thom iso}. In particular, we obtain an Euler class $e_\alpha$ in bidegree $(-2,-1)$.
\end{remark}

\newpage
\section{Algebraic cobordism and global group laws}\label{sec: MGL and ggl}
Global group laws were introduced in \cite{hausmann2022global} as a way to decomplete the theory of equivariant formal group laws. In this section, we recall elements from this theory. We show that the bigraded homotopy groups of $\MGL_A$ assemble into a global group law, and thus that they admit a map from the universal object $\pi_{2*} \MU_A$. We prove an analogous result for higher tensor powers of $\MGL_A$. In \cref{ssec:regularity}, we show that this global group law has regular Euler classes. This will allow us to reduce problems to geometric fixed points, and is the main technical input into our results of \cref{sec: vanishing in MGL}.
\subsection{Global group laws}
First, we recall some basic definitions..
\begin{defn}
    An $\Ab$-algebra is a functor
    \[X\colon\Ab^\op\to \mathrm{CRing}\]
    from the (opposite of the) $1$-category of abelian compact Lie groups to commutative rings. A coordinate on an $\Ab$-algebra $X$ is a class $e\in X(\T)$ such that for any split surjective character $\alpha$ of a torus $A$ with kernel $i\colon K\to A$ there is a short exact sequence
    \[0\to X(A)\xrightarrow{(\alpha^\ast e)\cdot-}X(A)\xrightarrow{i^\ast}X(K)\to 0.\]
\end{defn}
\begin{remark}
    One can also define graded and bigraded $\Ab$-algebras in the obvious way. We will then require the coordinate to live in degree $-2$ resp. bidegree $(-2,-1)$.
\end{remark}
Related to this is the definition of a global group law. We note that the definition of a coordinate on an $\Ab$-algebra only involves its values at tori, and a global group law is therefore a priori only determined at tori.
\begin{defn}
    A global group law is a functor
    \[X\colon \mathrm{Lat}\cong\{\mathrm{tori}\}^\op \to \mathrm{CRing}\]
    together with a class $e\in X(\T)$, called the coordinate, such that for any split surjective character $\alpha$ of a torus $A$ with kernel $i\colon K\hookrightarrow A$, there is a short exact sequence 
    \[0\to X(A)\xrightarrow{(\alpha^\ast e)\cdot-}X(A)\xrightarrow{i^\ast}X(K)\to 0.\]
    A morphism of global group laws is a natural transformation which preserves the coordinate.
\end{defn}
\begin{remark}
    Clearly, any $\Ab$-algebra with coordinate gives rise to a global group law by restricting it to tori. The converse question; whether an $\Ab$-algebra with coordinate can be entirely recovered from its underlying global group law, is closely related to a phenomenon called regularity which will play an important role later.
\end{remark}
The primary examples of $\Ab$-algebras with coordinates come from commutative rings in global spectra which are equivariantly complex oriented, cf. \cite[Proposition 5.19]{hausmann2022global}. In fact, the graded $\Ab$-algebra with coordinate associated to the global spectrum $\MU$ recovers the universal example (\cite[Theorem C]{hausmann2022global}).
\subsection{The global group law on algebraic cobordism}
As established in \cref{prop: MGL is absolute} and \cref{rem: motivic Thom iso}, $\MGL\in \SH^\abs(\C)$ behaves like an equivariantly complex oriented global ring spectrum. We will use this to construct a bigraded $\Ab$-algebra with a coordinate coming from $\MGL$.
\begin{construction}\label{cons: ggl on MGL}
    Let $\underline{\pi}_{\ast,\ast}\MGL$ be the bigraded $\Ab$-algebra that sends a compact Lie group $A$ to the $A_\C$-equivariant homotopy groups of $\MGL$, i.e. $\pi_{\ast,\ast}^{A_\C}\MGL$. The coordinate is given by the Euler class $e_{\tau}\in \pi_{-2,-1}^{\T_\C}\MGL$ arising as the composite
    \[\MGL_\T\xrightarrow{a_\tau}\MGL_\T\otimes\Th(\tau)\simeq \MGL_\T\otimes\P^1.\]
\end{construction}
\begin{remark}
    Note that the $\Ab$-algebra above is well defined, i.e. fully functorial in all morphisms of compact Lie groups by \cref{rem: absolute homotopy groups functorial}.
\end{remark}
\begin{propn}\label{prop: ggl on MGL}
    The pair $(\underline{\pi}_{\ast,\ast}\MGL,e_\tau)$ forms a bigraded $\Ab$-algebra with coordinate.
\end{propn}
\begin{proof}
    Let $\alpha\colon A\to \T$ be a split surjective character of a torus, and let $i\colon K\hookrightarrow A$ denote the (split) inclusion of the kernel. Recall the cofibre sequence from \cref{rem: purity cof seq for tori} of the form
    \[A/K_+\to \mathbb{1}_A\xrightarrow{a_\alpha}\Th(\alpha).\]
    Let us now contemplate the corresponding long exact sequence in $\MGL_A$-cohomology. Every third term is of the form
    \begin{align*}
        [A/K_+,\Sigma^{t,w}\MGL_A]&\cong [i_\sharp\mathbb{1}_K,\Sigma^{t,w}\MGL_A]\\&\cong [\mathbb{1}_K,\Sigma^{t,w}i^\ast\MGL_A]\\&\cong[\mathbb{1}_K,\Sigma^{t,w}\MGL_K]
    \end{align*}
by the fact that $i$ is smooth and $\MGL$ is absolute. We can further identify $[\Th(\alpha),\Sigma^{t,w}\MGL_A]$ with $[\P^1,\Sigma^{t,w}\MGL_A]$ by the orientation isomorphism which furthermore takes $a_\alpha$ to $e_\alpha$. It then suffices to prove that this short exact sequence splits into the desired short exact sequences; since $\MGL$ is absolute and $i$ was assumed to be a split inclusion, this follows from \cref{rem: absolute homotopy groups functorial}. 
\end{proof}
\begin{remark}
    While $\underline{\pi}_{\ast,\ast}\MGL$ forms a bigraded $\Ab$-algebra with coordinate, it is also clear that one can restrict to the subring $\underline{\pi}_{2\ast,\ast}\MGL$ on classes in bidegrees of the form $(2w,w)$ and obtain singly graded $\Ab$-algebra with coordinate.  One can similarly extract an $\Ab$-algebra with coordinate from $\underline{\pi}_{\ast,\ast}\MGLP$ using the same arguments.
\end{remark}
\begin{cor}\label{cor : universal map of ggl's}
    There is a map of $\Ab$-algebras with coordinate
    \[\underline{\pi}_{2\ast}\MU\to \underline{\pi}_{2\ast,\ast}\MGL.\]
\end{cor}
\begin{proof}
    This follows from the universality of the former, which is \cite[Theorem C]{hausmann2022global}, where we have used that equivariant $\MU$ is concentrated in even degrees at abelian compact Lie groups to restrict to the even subring (\cite[Theorem 5.3]{comezana1996calculations}).
\end{proof}
\begin{remark}
    Upon evaluation at the trivial group, we see that this map is none other than the map from the Lazard ring classifying the formal group law on $\MGL$ that arises from the orientation corresponding to $e_\tau$; see \cite[\S 5.5]{hausmann2022global}. In particular, note that the map above sends the coordinate on $\underline{\pi}_{\ast}\MU$ to the coordinate on $\underline{\pi}_{\ast,\ast}\MGL$, i.e. the Euler class $e_{\tau^\an}$ on the topological side is sent to the motivic Euler class $e_\tau$. In fact, the Euler class of any character of any nice abelian group is pulled back from the universal Euler class at the torus along said character, so for any abelian compact Lie group $A$ with character $\alpha^\an$ we see that $e_{\alpha^\an}$ gets sent to the corresponding motivic Euler class.
\end{remark}
\begin{remark}\label{rem: Chern classes are hit too}
    We may also contemplate the effect of the universal map above after inverting all Euler classes of nontrivial characters; this gives rise to a ring map
    \[\Phi^A_{\ast}\MU\to \Phi^A_{2\ast,\ast}\MGL.\]
    The source has an essentially identical formula as an $\MU_\ast$-algebra in terms of Chern classes (see \cite[Proposition 2.25]{hausmann2023invariant}) arising from the universal formal group law on $\MU$. Using \cref{cor: homotopy groups of gfp of mgl} and \cref{rem: gfp chern classes are also in image of unit map} we see that these are compatible: the classes $b_i^\alpha$ in the target are hit by the classes $b_i^{\alpha^\an}$ in the source, cf. \cite[Lemma 6.4 (ii)]{naumann2009motivic}, \cite[Proposition 7.3, Lemma 7.9]{pstragowski_synthetic_2022}.
\end{remark}
\subsection{The co-operation algebra}
Let us remark that the discussion of $\Ab$-algebras with coordinates extends further to $\Ab$-algebras with several coordinates, and once again $\MU$ provides a universal example.
\begin{defn}[{\cite[\S 5.8]{hausmann2022global}}]
    Let $X$ be an $\Ab$-algebra with two coordinates $e^{(1)}, e^{(2)}$. Then the pair $(e^{(1)}, e^{(2)})$ is said to be strict if there exists a unit $\lambda\in X(\T)$ restricting to $1$ at $X(\{e\})$ such that $e^{(1)}=\lambda e^{(2)}$. More generally, an $n$-tuple of coordinates $(e^{(1)},\ldots,e^{(n)})$ is said to be strict if every pair $(e^{(i)},e^{(j)})$ is strict.
\end{defn}
\begin{remark}
    Note that this really is a condition on the unit: since $e^{(1)}$ and $e^{(2)}$ cut out the same ideal in $X(\T)$ by the axioms of a coordinate they must be related by a unit in $X(\T)$ which need not restrict to $1$ a priori.
\end{remark}
\begin{thm}[{\cite[Theorem E]{hausmann2022global}}]
    The $\Ab$-algebra $\underline{\pi}_\ast\MU^{\otimes n}$ admits a strict $n$-tuple of coordinates coming from the $n$ possible unit maps $\MU_A\to \MU_A^{\otimes n}$ and it is universal among $\Ab$-algebras with a strict $n$-tuple of coordinates.
\end{thm}
We see that it is rather straightforward to generalise \cref{prop: ggl on MGL} to this setting. 
\begin{construction}
    Consider the absolute commutative algebra $\MGL^{\otimes n}$. At the group $\T$, let $e_\tau^{(i)}$ denote the image of $e_\tau$ under the $i$-th unit map $\MGL_\T\to \MGL_\T^{\otimes n}$ for $i=1,\ldots,n$.
\end{construction}
\begin{lemma}
    The classes $e_\tau^{(i)}$ are coordinates on the bigraded $\Ab$-algebra $\underline{\pi}_{\ast,\ast}\MGL^{\otimes n}$.
\end{lemma}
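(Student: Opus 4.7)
The plan is to reproduce the argument of \cref{prop: ggl on MGL} essentially verbatim with $\MGL$ replaced by $\MGL^{\otimes n}$, keeping careful track of which orientation is used for which coordinate. First, observe that $\MGL^{\otimes n}$ is itself an absolute commutative algebra: the pullback functors in the six-functor formalism are strong symmetric monoidal, so the tensor product of absolute commutative algebras is again absolute. Consequently \cref{rem: absolute homotopy groups functorial} applies to $\MGL^{\otimes n}$, and in particular a split inclusion of a kernel induces a split surjection on its equivariant homotopy groups. Moreover $\MGL^{\otimes n}$ carries $n$ natural orientations, one obtained by pushing forward the orientation of $\MGL$ along each of the unit maps $\MGL \to \MGL^{\otimes n}$ that insert into the $i$-th tensor factor.

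Now fix an index $1 \le i \le n$, a split surjective character $\alpha \colon A \to \T$ of a torus with kernel inclusion $\iota \colon K \hookrightarrow A$, and a bidegree $(t,w)$. Applying $[-, \Sigma^{t,w}\MGL_A^{\otimes n}]$ to the purity cofibre sequence
\[ A/K_+ \to \mathbb{1}_A \xrightarrow{a_\alpha} \Th(\alpha) \]
and arguing exactly as in \cref{prop: ggl on MGL}, the smoothness of $\iota$ combined with the absolute structure on $\MGL^{\otimes n}$ identifies $[A/K_+, \Sigma^{t,w}\MGL_A^{\otimes n}]$ with $\pi_{t,w}^{K}\MGL^{\otimes n}$. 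The $i$-th orientation supplies a Thom isomorphism identifying $[\Th(\alpha), \Sigma^{t,w}\MGL_A^{\otimes n}]$ with $\pi_{t-2,w-1}^{A}\MGL^{\otimes n}$ under which the pre-Euler class $a_\alpha$ corresponds to multiplication by $\alpha^\ast e_\tau^{(i)}$, by the very definition of $e_\tau^{(i)}$ as the image of $e_\tau$ along the $i$-th unit map. This yields a long exact sequence, which the splitting of $\iota$ breaks into the required short exact sequences.

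The only genuinely new bookkeeping compared with \cref{prop: ggl on MGL} is that different indices $i$ give different Thom isomorphisms; each is natural with respect to the corresponding unit map and so sends $a_\alpha$ to the intended $e_\alpha^{(i)}$ rather than to some other class. This naturality is built into the definition of an orientation on a ring spectrum and so no substantive obstacle arises: the argument is really just a parametrised version of the proof of \cref{prop: ggl on MGL}.
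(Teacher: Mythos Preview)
Your proposal is correct and takes essentially the same approach as the paper: the paper's own proof simply says that the argument of \cref{prop: ggl on MGL} goes through unchanged once one uses the orientation isomorphism coming from the $i$-th tensor factor to carry $a_\alpha$ to $e_\alpha^{(i)}$. Your write-up is more detailed (spelling out why $\MGL^{\otimes n}$ is absolute and how the splitting breaks the long exact sequence), but the underlying argument is identical.
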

\begin{proof}
    The argument of \cref{prop: ggl on MGL} goes through unchanged, except that one has two choices for the orientation isomorphism taking $a_\alpha$ to $e_\alpha$. Using the orientation isomorphism from the $i$-th tensor factor exhibits $e_{\tau}^{(i)}$ as a coordiante for all $i=1,\ldots,n$
\end{proof}
\begin{lemma}
    The $n$-tuple $(e^{(1)},\ldots,e^{(n)})$ of coordinates on $\underline{\pi}_{\ast,\ast}\MGL^{\otimes n}$ is strict.
\end{lemma}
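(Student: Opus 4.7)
The plan is to construct $\lambda$ explicitly as the ratio of two Thom classes for the tautological character $\tau$ on $\B\T$ coming from different $\E_\infty$-unit maps $\MGL_\T\to\MGL^{\otimes n}_\T$, after which the strictness condition becomes transparent. By definition, the tuple is strict provided every pair is, so it suffices to fix indices $i\neq j$ and verify that $(e^{(i)},e^{(j)})$ is strict.

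Unwinding the definitions, $e^{(k)}$ arises as the composite
\[\MGL^{\otimes n}_\T\xrightarrow{a_\tau}\MGL^{\otimes n}_\T\otimes\Th(\tau)\xrightarrow{\varphi_k}\MGL^{\otimes n}_\T\otimes\P^1,\]
where $\varphi_k$ denotes the orientation isomorphism of \cref{rem: motivic Thom iso} obtained by transporting the Thom isomorphism on a single copy of $\MGL$ along the $k$-th unit map. Since $\P^1$ is $\otimes$-invertible, the composite $\varphi_i\circ\varphi_j^{-1}$ is an automorphism of $\MGL^{\otimes n}_\T\otimes\P^1$ and therefore corresponds to multiplication by a unique unit $\lambda\in\pi^\T_{0,0}\MGL^{\otimes n}$. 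By construction we then have $e^{(i)}=\lambda\cdot e^{(j)}$.

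The remaining step, and the crux of the argument, is to verify that $\lambda|_{\{e\}}=1$. Upon restriction along the inclusion of the trivial group, the character $\tau$ becomes the trivial line bundle and $\Th(\tau)$ becomes $\P^1$ tautologically. The key observation is that any orientation of a ring spectrum is normalized so that the Thom class of a trivial line bundle is the multiplicative unit; in our setting this translates into the orientation isomorphism $\varphi_k|_{\{e\}}$ being the identity map on $\MGL^{\otimes n}\otimes\P^1$ independently of $k$. This normalisation can be read off the Gra\ss mannian model of \cref{cor: Grass model for MGL} at the stratum $d=0$, where the universal Thom class factors through the ring unit. It follows that $\varphi_i|_{\{e\}}=\varphi_j|_{\{e\}}$, hence $\lambda|_{\{e\}}=1$, completing the verification of strictness. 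I expect no serious obstacles beyond being careful about which orientation sits in which slot, since the heart of the argument is the standard fact that Thom classes of trivial line bundles are units.
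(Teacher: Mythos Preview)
Your proof is correct and follows essentially the same approach as the paper: both reduce to a fixed pair $(e^{(i)},e^{(j)})$, take $\lambda$ to be the quotient of the $i$-th and $j$-th Thom classes for $\tau$, and verify $\lambda|_{\{e\}}=1$ by observing that $\tau$ restricts to the trivial line bundle at the trivial group so its Thom classes restrict to $1$. Your version spells out the orientation isomorphisms $\varphi_k$ more explicitly, but the content is the same.
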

\begin{proof}
    Fix $1\leq i\neq j\leq n$, we will prove that the pair $(e^{(i)},e^{(j)})$ is strict.
    Since $e^{(i)}$ and $e^{(j)}$ cut out the same ideal in $\pi^\T_{\ast,\ast}\MGL^{\otimes n}$ we know that there must be some unit $\lambda$ such that $\lambda e_\tau^{(i)}=e^{(j)}_\tau$. In fact, we may choose $\lambda$ to be the quotient of the $i$-th resp. $j$-th Thom class for $\tau$. Upon restriction to the trivial group we see that $\tau$ restricts to the trivial rank one bundle over a point, whence the equivariant Thom classes for $\tau$ must restrict to $1$ and the same is true for $\lambda$.
\end{proof}
\begin{remark}
    As usual, the arguments above go through for the singly graded $\Ab$-algebra $\underline{\pi}_{2\ast,\ast}\MGL^{\otimes n}$ as well as for $\MGLP^{\otimes n}$.
\end{remark}
\begin{cor}\label{cor: unit map for n coords}
    There is a map of $\Ab$-algebras with a strict $n$-tuple of coordinates
    \[\underline{\pi}_{2\ast}\MU^{\otimes n}\to \underline{\pi}_{2\ast,\ast}\MGL^{\otimes 
    n}.\]
\end{cor}
\subsection{Regularity}\label{ssec:regularity}
An important notion in the study of global group laws is that of \emph{regularity} as in \cite[Definition 5.9]{hausmann2022global}: A (potentially (bi)graded) global group law $X$ is said to be regular if for every torus $A$ and linearly independent $n$-tuple of characters $\alpha_1,\ldots,\alpha_n$ of $A$ the sequence $(e_{\alpha_1},\ldots,e_{\alpha_n})$ forms is regular in the ring $X(A)$. Per definition, the Euler class of a split surjective character is always a regular element since it participates in the short exact sequences defining a global group law. Asserting regularity of these further Euler classes has two distinct advantages that will be critical in the rest of the work. First, it says that inverting Euler classes is a harmless process since the value at a torus will inject into the inversion. This allows us to reduce statements about a global group law to its geometric fixed points. Second, when a global group law arises from a global spectrum or similar object, restriction to finite abelian groups from a torus is encoded by killing a (pre-)Euler class on the spectral level. If this class is regular then it simply induces a quotient on the ring level. An essential observation in \cite[Corollary 5.25]{hausmann2022global} is that the universal global group law is regular. We will prove that the global group law on $\MGLP$ (and hence also $\MGL$) is regular as well. The argument presented below is originally due to Markus Hausmann in the topological setting (i.e. for $\MUP$) and set to appear in \cite{regularityarg}. We thank him for explaining his proof.
\begin{construction}
    In the following, we let $A$ be a nice abelian group. We let $\chi_{A,n}$ be the subgroup of $(A\times \T)^\vee$ on characters of the form $\alpha\otimes \tau^i$ with $0<|i|<n$ a power of the tautological character of $\tau$ and $\alpha$ an arbitrary character of $A$. We construct a corresponding $A\times \T$-universe $\cU_n$ which contains countably infinitely many copies of each character in $\chi_{A,n}$ as well as a copy of the inflation (along the quotient map) of a complete $A$-universe. Further, $\cF_n$ will denote the family of subgroups of $A\times \T$ which is the union of the kernels of all characters in $\chi_{A,n}$. We will consistently view $\T$ as a subgroup of $A\times \T$ using the inclusion $0\times\id$
\end{construction}
\begin{remark}
   For any $n$, the fixed-point universe $\cU_n^\T$ forms a complete $A$-universe. Note that the family $\cF_n$ consists of all subgroups of the form $K\times \mu_d$ for $1\leq d<n$. We therefore remark the following.
   \begin{itemize}
       \item As $n$ grows, the families $\cF_n$ grow larger with colimit $\cF_\infty=\cF[\T]$ the family of subgroups not containing $T$.
       \item $\cU_\infty$ is a complete $A\times \T$-universe.
       \item For any $n$, $\cU_n$ restricts isomorphically to the nontrivial isotypical part of a complete $A\times\mu_n$-universe.
   \end{itemize} 
\end{remark}
Let $\E\cF_n$ denote the construction of \cref{cons: univ space of family} applied to the family $\cF_n$, for which we now construct a geometric model. We refer to \cref{appendix: gfp2} for details on the notions used below.
\begin{lemma}\label{lem : cell model for Fn}
    Let $\{V^n_i\}_{i\in I}$ denote a saturated flag of the nontrivial isotypical part of $\cU_n-\epsilon^\infty$, then the map
    \[\varinjlim_{i\in I}\mathbb{V}(V_i)\setminus 0\to \E\cF_n\]
    is a motivic equivalence.
\end{lemma}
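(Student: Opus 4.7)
The plan is to verify the claimed motivic equivalence by directly identifying the colimit with a standard geometric model for $\E\cF_n$, in the spirit of \cref{ex: model for total space}. Since motivic localization preserves colimits and $\E\cF_n$ is characterized on orbits $(A\times\T)/H$ (being contractible if $H \in \cF_n$ and empty otherwise), it suffices to compute the $H$-fixed subschemes of each $\mathbb{V}(V_i^n) \setminus 0$ and show the resulting colimit matches the universal value.

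The crux is the following dichotomy, which follows from the design of $\chi_{A,n}$ and $\cF_n$. For $H \notin \cF_n$, no character $\chi \in \chi_{A,n}$ satisfies $H \subseteq \ker(\chi)$ by definition; since every irreducible summand of $V_i^n$ lies in $\chi_{A,n}$, we get $(V_i^n)^H = 0$ for all $i$, so $(\mathbb{V}(V_i^n) \setminus 0)^H = \varnothing$ and the colimit is empty as required. For $H \in \cF_n$, there exists by definition some $\chi \in \chi_{A,n}$ with $H \subseteq \ker(\chi)$; saturation of the flag ensures such $\chi$ appears in $V_i^n$ with unbounded multiplicity, so $\dim (V_i^n)^H \to \infty$. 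The colimit $\varinjlim_i \mathbb{V}((V_i^n)^H) \setminus 0$ is then motivically contractible by the standard argument (it is an open subscheme of an infinite-dimensional affine space whose complement has codimension tending to infinity), matching the required value $\ast$.

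The cleanest packaging is to invoke \cref{ex: model for total space} verbatim. The observation above shows that $\bigcup_{K \notin \cF_n} \mathbb{V}(V_i^n)^K = \{0\}$, hence
\[\mathbb{V}(V_i^n) \setminus 0 \;=\; \mathbb{V}(V_i^n) \setminus \bigcup_{K \notin \cF_n} \mathbb{V}(V_i^n)^K.\]
The flag $\{V_i^n\}$ therefore produces literally the same shape of construction as in that example, with multiples of the regular representation replaced by representations drawn from $\chi_{A,n}$, and the same argument yields the motivic equivalence. The main obstacle is checking that the comparison argument behind \cref{ex: model for total space} goes through with this more restrictive choice of representations; but saturation of the flag together with the kernel-structure of $\chi_{A,n}$ is precisely what the cited argument requires.
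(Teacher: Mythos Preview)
Your first paragraph contains a genuine gap: in equivariant motivic homotopy theory, equivalences cannot be detected by evaluating at orbits or by comparing $H$-fixed subschemes. This would amount to an equivariant Whitehead theorem, which holds in genuine equivariant topology (where orbits generate) but fails motivically --- the category $\spcs^{A\times\T}(\C)$ is generated under colimits by all smooth $(A\times\T)$-schemes, and the presheaf $\E\cF_n$ is characterised by its value on all such schemes, not just on orbits. Your fixed-point dichotomy is correct as a computation, but it does not by itself establish the motivic equivalence.

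Your final paragraph is essentially the paper's proof, with one adjustment: the paper defers to \cref{lem: cellular model for gfp} rather than \cref{ex: model for total space}. The latter is stated without proof, while the former (in the appendix) contains the actual argument: $\E\cF_n$ is the motivic colimit over all smooth affine schemes with isotropy in $\cF_n$, and one shows the diagram $\{\mathbb{V}(V_i^n)\setminus 0\}_i$ is cofinal by embedding any such scheme equivariantly into a sufficiently large $\mathbb{V}(V_i^n)\setminus 0$, with saturation of the flag guaranteeing contractibility of the relevant slice categories. Your identification $\bigcup_{K\notin\cF_n}\mathbb{V}(V_i^n)^K = \{0\}$ is correct and confirms that the schemes $\mathbb{V}(V_i^n)\setminus 0$ have isotropy in $\cF_n$ (so the comparison map exists), but the equivalence itself still requires the embedding argument rather than an orbit-by-orbit fixed-point check.
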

\begin{proof}
    This follows immediately from a similar argument as in \cref{lem: cellular model for gfp}.
\end{proof}
In particular, the corresponding localisation $\widetilde{\E}\cF_n$ is obtained by inverting all Euler classes of characters in $\chi_{A,n}$. We now proceed with a key technical lemma that allows us to compute geometric fixed points internal to an incomplete universe by inverting a small set of Euler classes.
\begin{lemma}\label{lem: gfp from less euler classes}
    Let $\widetilde{\E}\cF_n$ be as above, then the functor
    \[\SH^{A\times \T}(\C)_{\cU_n}\to \Mod(\SH^{A\times \T}(\C)_{\cU_n};\widetilde{\E}\cF_n)\xrightarrow{(-)^\T_{\cU_n}}\SH^A(\C)\]
    agrees with $\Phi^\T_{\cU_{\infty}}$.
\end{lemma}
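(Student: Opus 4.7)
The plan is to compare both functors directly by constructing a natural comparison map and verifying agreement on a family of compact generators of $\SH^{A\times\T}(\C)_{\cU_n}$. Let $\iota\colon \SH^{A\times\T}(\C)_{\cU_n}\to \SH^{A\times\T}(\C)$ denote the canonical symmetric monoidal, colimit-preserving universe-change functor. The inclusion of families $\cF_n\subseteq \cF[\T]$ gives rise to a natural map $\iota(\widetilde{\E}\cF_n)\to \widetilde{\E}\cF[\T]$ in the complete universe, which induces a comparison natural transformation between the two functors in the statement, after using that both $\cU_n^\T$ and $\cU_\infty^\T$ are complete $A$-universes (by the remark following the construction of $\cU_n$) in order to identify $(-)^\T_{\cU_n}$ with $(-)^\T_{\cU_\infty}\circ \iota$ on the appropriate subcategory.

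Since both functors are symmetric monoidal and preserve colimits, it then suffices to check that the comparison map is an equivalence on the family of compact generators $(A\times \T/H)_+$ as $H$ ranges over closed subgroups of $A\times \T$. The analysis splits into three cases. If $H\supseteq \T$, so that $H=K\times \T$ for some $K\leq A$, then $(A\times\T/H)_+$ is pulled back from $A$ along the quotient and both functors recover $(A/K)_+$ using that $\widetilde{\E}\cF_n$ and $\widetilde{\E}\cF[\T]$ both have $\T$-fixed points equivalent to $\mathbb{1}_A$ (this is forced by the defining cofibre sequences since $(\E\cF_n)^\T$ and $(\E\cF[\T])^\T$ are empty). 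If $H\in \cF_n$, both functors vanish directly, for the LHS by the cofibre sequence defining $\widetilde{\E}\cF_n$ and for the RHS because $\cF_n\subseteq \cF[\T]$.

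The crux is the remaining case $H\in \cF[\T]\setminus \cF_n$, where the RHS vanishes tautologically but the LHS is a priori nontrivial since the relevant Euler classes are not inverted by $\widetilde{\E}\cF_n$. To handle this, I would use the geometric model from \cref{lem : cell model for Fn}, writing $\widetilde{\E}\cF_n$ as a filtered colimit of Thom spectra $\Th(V_i)$ built from representations in the nontrivial isotypical part of $\cU_n-\epsilon^\infty$, and exploit that the orbit scheme $A\times\T/H$ has empty $\T$-fixed locus (since $H\not\supseteq\T$). The key point is that although such $H$ is not covered by the characters in $\chi_{A,n}$, the incomplete universe $\cU_n$ is designed so that the relevant Thom spaces for $(A\times \T/H)_+ \otimes \Th(V_i)$ project trivially to the $\T$-fixed universe $\cU_n^\T$, forcing the fixed points to vanish in the colimit. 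This compatibility between the universe choice and the family $\cF_n$ is the technical heart of the argument and the main obstacle: it requires a careful verification that the partial Euler class inversion in the incomplete universe already produces the full geometric fixed points after descending to $\cU_n^\T$.
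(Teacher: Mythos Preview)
Your proposal has two genuine gaps that prevent it from going through as written.

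First, the reduction to compact generators requires that the left-hand composite preserve colimits and be compatible with the $\Th(V)$-twists that appear among the generators of $\SH^{A\times\T}(\C)_{\cU_n}$. You assert that both functors are symmetric monoidal and colimit-preserving, but for $(-)^\T_{\cU_n}$ this is not justified: it is a right adjoint and only \emph{lax} symmetric monoidal in general. Without strong monoidality you cannot pass from the orbits $(A\times\T/H)_+$ to their twists $\Sigma^V(A\times\T/H)_+$, and without that you do not touch a generating family. Relatedly, your identification of $(-)^\T_{\cU_n}$ with $(-)^\T_{\cU_\infty}\circ\iota$ ``on the appropriate subcategory'' implicitly assumes that the unit $\id\to\iota^R\iota$ is an equivalence there, i.e.\ that $\iota$ is fully faithful after $\widetilde{\E}\cF_n$-localisation --- which is essentially the content of the lemma.

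Second, your treatment of the crux case $H\in\cF[\T]\setminus\cF_n$ does not contain an actual argument. The fact that $(A\times\T/H)^\T=\varnothing$ and that the representations $V_i$ in the flag have trivial $\T$-fixed part does not by itself force the \emph{categorical} fixed points $((A\times\T/H)_+\otimes\widetilde{\E}\cF_n)^\T_{\cU_n}$ to vanish: the Euler classes of characters $\alpha\otimes\tau^m$ with $|m|\geq n$ are not inverted by $\widetilde{\E}\cF_n$, and such an $H$ carries isotropy not killed by this partial localisation.

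The paper avoids both problems by arguing structurally rather than generator-by-generator. The key step is to show that the stabilisation functor $\Sigma^{\cU_\infty-\cU_n}$ becomes an \emph{equivalence} after $\widetilde{\E}\cF_n$-localisation: for any finite summand $W$ of $\cU_\infty$, the inclusion of fixed points induces $\Th(W^\T)\otimes\widetilde{\E}\cF_n\simeq\Th(W)\otimes\widetilde{\E}\cF_n$, so every such $\Th(W)$ is already $\otimes$-invertible in the $\widetilde{\E}\cF_n$-local category indexed on $\cU_n$. Once this is established, one uses the \emph{definition} of $\Phi^\T_{\cU_\infty}$ as the unique extension of fixed points along stabilisation (and its insensitivity to $\widetilde{\E}\cF_n$) to identify the two functors directly. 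This line of argument never needs to know that the left-hand side is strong monoidal, and it absorbs your problematic third case into the single invertibility statement for Thom objects.
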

\begin{proof}
    Let $W$ be a finite-dimensional summand of the nontrivial part of $\cU_\infty$, then the inclusion of fixed points induces an equivalence of the form
    \[\Th(W^\T)\otimes \widetilde{\E}\cF_K\simeq \Th(W)\otimes\widetilde{\E}\cF_K.\]
    Indeed, the complementary representation $W-W^\T$ has isotropy contained in $\cF_n$. Therefore, any $\Th(W)$ is equivalent to a tensor-invertible object in $\Mod(\SH^{A\times \T}(\C)_{\cU_n};\widetilde{\E}\cF_n)$ as $W^\T$ is a summand of $\cU_n$. Recall that $(-)^\T_{\cU_n}$ is defined as the right adjoint to the inflation functor from $\SH^A(\C)$ while $\Phi^\T_{\cU_\infty}$ is defined as the unique extension along $\Sigma^{\cU_\infty-\cU_n}$ of the fixed points functor and is insensitive to first tensoring with $\widetilde{\E}\cF_n$ by dint of the identification $\widetilde{\E}\cF_\infty\simeq \E\cF[\T]$. Since the stabilisation functor $\Sigma^{\cU_\infty-\cU_n}$ is an equivalence after tensoring with $\widetilde{\E}\cF_n$, we see that these functors must be equivalent.
\end{proof}
\begin{remark}
    We warn the reader that this equivalence of fixed point functors does not necessarily arise from an identification of the $\widetilde{\E}\cF_n$-local category as was the case with the usual geometric fixed points functor.
\end{remark}
\begin{lemma}\label{lem: incomplete provides section}
    Let $\MGLP_{A\times \T,\cU_n}$ denote the incomplete algebraic cobordism spectrum from \cref{def: incomplete MGLP} and let $K$ denote the kernel of a surjective $A\times\T$-character of the form $\alpha\otimes\tau^n$ so that $\mu_n=K\cap\T$ is a subgroup. Then the composite
    \[(\MGLP_{A\times \T,\cU_n}\otimes \widetilde{\E}\cF_n)^\T_{\cU_n}\to (\MGLP_{A\times \T}\otimes \widetilde{\E}\cF_n)^\T\to (\MGLP_{K}\otimes \widetilde{\E}\cF_n)^{\mu_n}\]
    of the comparison map from \cref{rem: comparison map} and the restriction map is an equivalence of $A$-spectra.
\end{lemma}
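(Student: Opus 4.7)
The plan is to identify both endpoints of the composite with a common geometric fixed points construction, so that each arrow in the composite becomes transparent. The key family-theoretic input, which I will establish first, is the identification $(\widetilde{\E}\cF_n)|_K \simeq \widetilde{\E}\cF[\mu_n]$ in $\SH^K$, where $\cF[\mu_n]$ denotes the family of subgroups of $K$ not containing $\mu_n$. This reduces to the claim that a subgroup $H' \subset K$ lies in $\cF_n$ if and only if $H' \not\supset \mu_n$, which I will verify by comparing geometric fixed points $\Phi^{H'}$ at every subgroup $H' \subset K$. The forward direction uses that any character $\alpha \otimes \tau^i \in \chi_{A,n}$ vanishing on $H'$ must in particular vanish on $H' \cap \mu_n$, which together with the prescribed range for $i$ forces $\mu_n \not\subset H'$; the converse produces an explicit vanishing character via Pontryagin duality applied to the quotient of $H'$ by $H' \cap \mu_n$.

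For the first arrow, I will apply \cref{lem: gfp from less euler classes} to rewrite the domain as $\Phi^\T_{\cU_\infty}(\MGLP_{A \times \T, \cU_n})$. Since $\Phi^\T_{\cU_\infty}$ factors as stabilisation $\Sigma^{\cU_\infty - \cU_n}$ followed by complete-universe $\Phi^\T$, and since the comparison map of \cref{rem: comparison map} becomes an equivalence after tensoring with $\widetilde{\E}\cF_n$, this formally identifies the first arrow with the canonical equivalence to $\Phi^\T(\MGLP_{A \times \T} \otimes \widetilde{\E}\cF_n)$.

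For the second arrow, I will invoke absoluteness of $\MGLP$ (\cref{prop: MGL is absolute}) to identify $(\MGLP_{A\times\T})|_K$ with $\MGLP_K$, and combine this with the family identification above to rewrite the codomain as $\Phi^{\mu_n}\MGLP_K$. The second arrow thereby becomes a canonical comparison $\Phi^\T(\MGLP_{A\times\T} \otimes \widetilde{\E}\cF_n) \to \Phi^{\mu_n}\MGLP_K$ of $A$-spectra, which is the comparison we must show is an equivalence.

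The main obstacle will be verifying that this final comparison is an equivalence. I plan to use the Grassmannian model of \cref{cor: Grass model for MGL}: both sides decompose, after analysing fixed-point loci of Grassmannians by isotypical components, as tensor products of $\MGLP_A$ with $\BGLP$-type factors indexed by non-trivial isotypical characters, of $\T$ in $\cU_n$ on the left and of $\mu_n$ on the right. The restriction of characters from $A \times \T$ to $K$ induces a bijection between these two indexing sets, and the naturality of the absolute structure on $\MGLP$ ensures the corresponding Thom spectra are identified across this bijection. The delicate step is checking that this bijection at the Grassmannian level recovers precisely the second arrow of the stated composite, and that the identifications so obtained assemble coherently with the stabilisation map from the first step.
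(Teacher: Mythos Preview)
Your overall strategy matches the paper's: the family identification $(\widetilde{\E}\cF_n)|_K \simeq \widetilde{\E}\cF[\mu_n]$ is the right input, and the endgame—comparing Grassmannian fixed loci via isotypical decomposition and matching the indexing sets—is exactly how the paper finishes. Your verification of the family identification is in fact more detailed than the paper's.

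The gap is in your treatment of the middle term. You assert that ``the comparison map of \cref{rem: comparison map} becomes an equivalence after tensoring with $\widetilde{\E}\cF_n$'', and hence that the first arrow is an equivalence identifying the middle term with $\Phi^\T(\MGLP_{A\times\T}\otimes\widetilde{\E}\cF_n)=\Phi^\T\MGLP_{A\times\T}$. This is false. The $\widetilde{\E}\cF_n$-localisation only inverts Euler classes of characters $\alpha\otimes\tau^i$ with $0<|i|<n$; it is strictly weaker than the $\widetilde{\E}\cF[\T]$-localisation, so $(\MGLP_{A\times\T}\otimes\widetilde{\E}\cF_n)^\T$ is \emph{not} $\Phi^\T\MGLP_{A\times\T}$. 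More concretely, the $\T$-fixed Grassmannians on the incomplete side are indexed by characters appearing in $\cU_n$, while those for the complete $\MGLP_{A\times\T}$ are indexed by all of $(A\times\T)^\vee$; the comparison map does not collapse this discrepancy after $\widetilde{\E}\cF_n$-localisation. Neither individual arrow in the composite is an equivalence.

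The paper avoids the middle term entirely. Using \cref{lem: gfp from less euler classes} for the domain and your family identification for the codomain, it recognises the \emph{composite} as a map $\Phi^\T_{\cU_\infty}\MGLP_{A\times\T,\cU_n}\to\Phi^{\mu_n}\MGLP_K$ (after reducing to the split case $K=A\times\mu_n$), and then un-Thomifies via \cref{cor: gfp of Thom} to the map $\BGLP_{A\times\T,\cU_n}^\T\to\BGLP_{A\times\mu_n}^{\mu_n}$. Your final paragraph carries out precisely this comparison: the left side is indexed by characters in $\cU_n$, the right by $(A\times\mu_n)^\vee$, and these coincide by construction of $\cU_n$. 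But note that this is a comparison of the domain and codomain of the original composite, not of your (misidentified) middle term with the codomain—your own indexing ``of $\T$ in $\cU_n$ on the left'' is inconsistent with having identified the left side as $\Phi^\T\MGLP_{A\times\T}$. Drop the claim about the first arrow, apply your Grassmannian analysis to the composite directly, and the argument goes through.
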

\begin{proof}
    In the case where $\alpha=\epsilon$ is a trivial character of $A$, $K$ is of the split form $A\times\mu_n$. In general, it may be a non-split extension of $A$ by $\mu_n$, but we may restrict our proof to the maximal case $K=A\times\mu_n$ since it only uses that $\widetilde{\E}\cF_n$ restricts to $\widetilde{\E}\cF[\mu_n]$ at $K$, or in fact at any subgroup of $A\times\mu_n$, and that $K/\mu_n$ identifies with $A$. By \cref{lem: gfp from less euler classes} and the aforementioned observation that $\widetilde{\E}\cF_n$ restricts to $\widetilde{\E}\cF[\mu_n]$ at $A\times\mu_n$, we see that composite map is of the form
    \[\Phi^\T_{\cU_n}\MGLP_{A\times \T,\cU_n}\to \Phi^{\mu_n}\MGLP_{A\times\mu_n}.\]
    In fact, this map arises as a Thomification, by \cref{cor: gfp of Thom} it suffices to prove that the restriction map
    \[\BGLP_{A\times \T,\cU_n}^\T\to \BGLP_{A\times\mu_n}^{\mu_n}\]
    is an $A$-equivariant equivalence. By a similar argument as in the proof \cref{prop: identification of gfp} the fixed points of these Gra{\ss}mannians may be computed as
    \begin{align*}
        \BGLP_{A\times \T,\cU_n}^\T&\simeq\prod_{\beta\subset \cU_n}\BGLP_A,\\
        \BGLP_{A\times \T}^{\mu_n}&\simeq\prod_{(A\times\mu_n)^\vee}\BGLP_A,
    \end{align*}
    where the index $\beta$ ranges over characters appearing in $\cU_n$. Per construction, such characters correspond precisely to $(A\times\mu_n)^\vee$ so we are done.
\end{proof}
\begin{thm}\label{thm: regularity}
    The bigraded global group law $\underline{\pi}_{\ast,\ast}\MGLP$ is regular.
\end{thm} 
\begin{proof}
    Following the second characterisation provided in \cite[Lemma 2.9]{hausmann2022global}, we must show that for all nice abelian groups $B$ and surjective characters $\beta$ of $B$, the class $e_\beta$ in $\pi_{\ast,\ast}^B\MGLP$ is regular. We proceed by induction on the rank of the kernel of $\beta$. The base case of rank zero is covered by the axioms of a global group law since we are then dealing with a split surjective character. Now for the induction step, note that $\beta$ is surjective so that there must be a decomposition $B=A\times \T$ so that $\beta$ is of the form $\alpha\otimes\tau^n$. Now for all $1\leq |i|<n$ the Euler classes of the characters in $\chi_{A,n}$ are regular by the induction hypothesis so we may invert these. Associated to $\beta$ and its kernel $K\subset B$ is the purity cofibre sequence
    \[B/K\to \mathbb{1}_B\xrightarrow{a_\beta}\Th(\beta)\]
    which induces a long exact sequence in $\MGLP_B$-cohomology. It therefore suffices to show that this splits into short exact sequence after inverting the Euler classes of characters in $\chi_{A,n}$, e.g. by producing a section of the localised restriction maps
    \[\pi^B_{\ast,\ast}\MGLP[e_{\chi_{A,n}}^{-1}]\to \pi^K_{\ast,\ast}\MGLP[e_{\chi_{A,n}}^{-1}].\]
    Now by \cref{lem : cell model for Fn} this is precisely the effect on $A$-equivariant homotopy groups of the restriction map
    \[(\MGLP_B\otimes\widetilde{\E}\cF_n)^\T\to (\MGLP_K\otimes\widetilde{\E}\cF_n)^{\mu_n}\]
    and by \cref{lem: incomplete provides section} this admits a section induced by the comparison map from the incomplete algebraic cobordism spectrum indexed on $\cU_n$.
\end{proof}

\begin{remark}\label{rem: regularity for variants}
    The argument for regularity provided above is sufficiently sturdy that it can be copied verbatim for sufficiently nice variants of $\MGLP$. Indeed, all operations involved in the proof of regularity are exact and commute with filtered colimits so that one can replace $\MGLP$ by various localisations or quotients since these will still give rise to absolute objects which are oriented at every group whence their homotopy groups give rise to global group laws (as long as one can equip their bigraded equivariant homotopy groups with a commutative ring structure). For example, one can take the absolute motivic spectrum $\MGL/p^i$ for some $i$ large enough such that its homotopy groups admits a commutative ring structure, e.g. $i\geq 5$ by \cite[Theorem 1.2]{burklund2022multiplicative}.
\end{remark}
We conclude our discussion of regularity with two immediate consequences. These are stated for $\MGL$ but are equally valid (and in fact derived from) the analogous statements for $\MGLP$.
\begin{cor}\label{cor: regularity consequence 1}
    Let $A$ be an arbitrary nice abelian group and pick a minimal presentation of $A$ as the kernel of a character $\alpha$ of a torus $\T^{r}$. Then there is an equivalence of $\MGL_{\ast,\ast}$-algebras
    \[\pi^{A}_{\ast,\ast}\MGL\cong \pi^{\T^r}_{\ast,\ast}\MGL/e_\alpha.\]
\end{cor}
\begin{proof}
    This now follows immediately from the splitting of the long exact sequences associated to the character $\alpha$.
\end{proof}
\begin{cor}\label{cor: regularity consequence 2}
    Let $\T^r$ be a torus of arbitrary rank $r\geq 1$, then the map
    \[\pi^{\T^r}_{\ast,\ast}\MGL\to \Phi^{\T^r}_{\ast,\ast}\MGL\]
    is injective
\end{cor}
\begin{proof}
    By \cref{cor: euler model for gfp} we see that the geometric fixed points are obtained by inverting a set of regular elements.
\end{proof}
\begin{remark}\label{rem: regularity of co-ops}
    The regularity statement obtained here is equally valid for the $\Ab$-algebras $\underline{\pi}_{\ast,\ast}\MGLP^{\otimes d}$ for $d\geq 2$. Indeed, the choices of Euler classes differ by a unit so it suffices to pick the ones coming from the first tensor factor. Then inductively apply \cref{cor: cellularity for MGL} to see that at every nice abelian group this bigraded ring is a localisation of a free module over the coefficients of $\MGL$ so that this is an instance of \cref{rem: regularity for variants}.
\end{remark}
\newpage
\section{Coefficients of algebraic cobordism}\label{sec: vanishing in MGL}
In this section, we use the regularity of the global group law on $\MGL$ established in the previous section to obtain comparison and vanishing results for the bigraded homotopy groups $\pi_{t,w}^A \MGL$, showing that it behaves very similarly to the nonequivariant case. The computation is well-organized by the \emph{Chow degree} $t - 2w$; we show that when the Chow degree is negative, these groups vanish, and that when the Chow degree is zero, the map from $\pi_{2*} \MU_A$ constructed in the previous section is an isomorphism. We then show that after $p$-completion, this extends to a complete description of the bigraded homotopy of $\MGL_A$; we have an isomorphism $\pi^A_{*, *}\MGL^\wedge_p \simeq \pi_{2*}^A\MU^\wedge_p[\tau],$ where $\tau$ is in bidegree $(0, -1)$. 
\subsection{Below the Chow line}
\begin{thm}[{see \cite[Theorem B.1]{bachmann2022chow}}]
    For all $k>0,w\in \Z$ the group $\MGL_{2w-k,w}$ vanishes. 
\end{thm}
\begin{remark}
    The result above is true in much larger generality than stated; in fact the cohomological version is true for the algebraic cobordism of any essentially smooth scheme over a semi-local PID after inverting the residue characteristics by reduction to similar vanishing results in motivic cohomology using the slice filtration. We will only work over $\C$ and prove results for the algebraic cobordism of a point.
\end{remark}
Given a bidegree $(t,w)$ we will refer to $t-2w$ as the associated Chow degree; this result can then be interepreted as saying that the homotopy of $\MGL$ vanishes in negative Chow degree. In fact, the equivariant version is immediate from the results established in the previous section.
\begin{propn}\label{prop: vanishing in MGL}
    Let $A$ be a nice abelian group, then for all $k>0,w\in \Z$, the group $\pi^A_{2w-k,w}\MGL$ vanishes.
\end{propn}
\begin{proof}
We begin with the case of a torus $\T^r$, \cref{cor: regularity consequence 2} tells us this follows from the same vanishing range for $\Phi^{\T^r}_{\ast,\ast}\MGLP$, which by \cref{cor: homotopy groups of gfp of mgl} can be expressed as 
\[\Phi^{\T^r}_{\ast,\ast}\MGLP\cong \MGLP_{\ast,\ast}[(b_0^\alpha)^\pm,b_i^\alpha\mid i\geq 1,\alpha\in (\T^r)^\vee\setminus\epsilon]\]
where all the generators sit in Chow degree zero. The case of a general nice abelian group now follows from \cref{cor: regularity consequence 1}.
\end{proof}
\begin{remark}\label{rem : Chow vanishing for perfect pure}
    For general nice abelian groups, we may use the equivariant Thom isomorphism to conclude that the same vanishing range holds for the equivariant algebraic cobordism of Thom spectra of virtual representations. In fact, the cell structure on algebraic cobordism provided by \cref{cor: cellularity for MGL} shows that this vanishing range also holds for the cooperation rings $\pi^A_{\ast,\ast}\MGL^{\otimes d}$.
    When the group $A$ is \emph{finite}, we deduce the same vanishing range for the $\MGL_A$-homology of orbits $A/K_+$ since these are self-dual so this reduces to the $K$-equivariant algebraic cobordism of a point.
\end{remark}
We warn the reader that orbits of positive-dimensional nice abelian groups need not satisfy this vanishing in homology; their duals involve shifts which are \emph{not} in Chow degree zero.
\subsection{The Chow line}
Another critical result in nonequivariant motivic homotopy theory over $\C$ is the identification of the Chow line $\MGL_{2\ast,\ast}$ with the Lazard ring. Once again, the structure of a global group law allows us to prove this by reduction to the nonequivariant setting.
\begin{thm}[{\cite[Proposition 8.2]{hoyois2015algebraic}}]\label{thm: HMH}
    The map $L_\ast\to \MGL_{2\ast,\ast}$ classifying the formal group on $\MGL$ is an equivalence, where $L_\ast$ is the (graded) Lazard ring.
\end{thm}
The role of the Lazard ring is now replaced by the graded global group law associated to $\MU$.
\begin{propn}\label{prop: identification of the Chow line}
    The map from \cref{cor : universal map of ggl's} induces an isomorphism of graded rings
    \[\pi_{2\ast}^{A}\MU\cong \pi_{2\ast,\ast}^{A_\C}\MGL\]
    at all abelian compact Lie groups.
\end{propn}
\begin{proof}
    Once again, we may pass to periodic variants and restrict to tori since the values at all other abelian compact Lie groups are determined by the values at tori by \cref{cor: regularity consequence 1}. Note that the aforementioned map
    \[\pi_{2\ast}^{\T^r}\MUP\to \pi_{2\ast,\ast}^{\T^r}\MGLP\]
    sends Euler classes to their motivic variants since it is a map of global group laws.
    In the other direction, we see that Betti realisation provides us with a map of global group laws which at $\T^r$ is of the form
    \[\pi_{2\ast,\ast}^{\T^r}\MGLP\to \pi_{2\ast}^{\T^r}\MUP.\]
    We therefore obtain a commutative diagram of graded rings
    \[\begin{tikzcd}
    \pi_{2\ast}^{\T^r}\MUP\arrow[d]\arrow[r]&\pi_{2\ast,\ast}^{\T^r}\MGLP\arrow[d]\arrow[r]&\pi_{2\ast}^{\T^r}\MUP\arrow[d]\\
    \Phi^{\T^r}_{2\ast}\MUP\arrow[r]&\Phi^{\T^r}_{2\ast,\ast}\MGLP\arrow[r]&\Phi^{\T^r}_{2\ast}\MUP,
    \end{tikzcd}\]
    where the vertical arrows are injective by \cref{cor: regularity consequence 2}.
    The bottom right horizontal arrow is an isomorphism since both source and target are polynomial rings over the Lazard ring by \cref{cor: homotopy groups of gfp of mgl} and \cref{thm: HMH} for the source and \cite[Proposition 2.25]{hausmann2023invariant} for the target, and Betti realisation carries the motivic generators to the homotopical ones. We conclude that the top right horizontal arrow is injective. Now the top horizontal composite comes from a map of graded global group laws hence must be the identity since the source is initial. We conclude that the top horizontal arrows are inverses. 
\end{proof}
\begin{remark}\label{rem: cooperations on the Chow line}
    In a similar fashion, we see that the map in \cref{cor: unit map for n coords} is also an isomorphism at the circle and its subgroups. Indeed, since geometric fixed points are symmetric monoidal the same argument goes through and the necessary regularity of Euler classes is documented in \cref{rem: regularity of co-ops}.
\end{remark}
\subsection{Above the Chow line}
Another key input in nonequivariant motivic homotopy theory over $\C$ is a complete computation due to \cite{hu2011remarks} ($p=2$) and \cite{stahn2016motivic} (all primes) of the homotopy groups of $p$-complete $\MGL$. We let $p$ be an arbitrary fixed prime in this section.
\begin{thm}[{\cite[Theorem 7]{hu2011remarks}, \cite[Proposition 3.8]{stahn2016motivic}}]\label{thm: HKO computation}
    There is an isomorphism
    \[\pi_{\ast,\ast}\MGL^\wedge_p\cong  (\pi_{2\ast}\MU^\wedge_p)[\tau]\]
    where $\tau$ is in bidegree $(0,-1)$. In particular, we may rewrite the right hand side as $(L_{2\ast})^\wedge_p[\tau]$.
\end{thm}
\begin{remark}\label{rem: tau classes}
    The class $\tau$ appearing here is the eponymous class in the $C\tau$-philosophy, and in fact already appears in the $p$-complete homotopy groups of the motivic sphere spectrum over $\C$. This spherical lift is described at $p=2$ in \cite[Lemma 23]{hu2011remarks} (and see the remark below it for an elementary proof), but in \cite[\S 2.1]{burklund_galois_2022} it is observed that one can already construct (compatible) classes $\tau_i\colon S^1\to \G_m/p^i$ for every $i$ and assemble them to the limit. We will implicitly view these classes as living in the mod $p^i$ equivariant motivic stable stems by the inflation map.
\end{remark}
Our equivariant analogue proceeds by first working modulo a power of $p$ and then assembling the result.
\begin{propn}\label{prop: mod p^i equivariant HKO}
    Let $i\geq 5$ and let $A$ be a nice abelian group, then there is an equivalence
    \[\pi_{\ast,\ast}^A\MGL/p^i\cong \pi_{2\ast}^A\MU/p^i[\tau]\]
    for a class $\tau$ in bidegree $(0,-1)$.
\end{propn}
This may be interpreted as saying that the equivariant homotopy groups of $\MGL/p^i$ are completely determined by the Chow line; every class can be written as the product of a class on the Chow line and a power of $\tau$. 
\begin{proof}
    Note that the equivariant motivic spectra $\MGL_A/p^i$ assemble to form an absolute motivic ring spectrum such that their values at any nice abelian group are $\MGL_A$-modules hence inherit Thom isomorphisms. Following \cref{rem: regularity for variants} we see that this global group law is regular by the same argument as in the integral case. Further, note that by \cref{prop: vanishing in MGL} and \cref{prop: identification of the Chow line}, $p$ acts regularly on the Chow line of $\pi_{\ast,\ast}^A\MGL$ and there are no nonzero classes in homotopical degrees below it, so that one may identify $\pi_{2\ast,\ast}^A\MGL/p^i$ with $\pi_{2\ast}^A\MU/p^i\cong (\pi_{2\ast}^A\MU)/p^i$ since the equivariant Lazard rings at tori are integral domains by \cite[Corollary 5.44]{hausmann2022global}. At any torus $\T^r$ one then defines a map of \emph{bigraded} rings
    \[f\colon\pi_{2\ast}^{\T^r}\MU/p^i[\tau_i]\to \pi_{\ast,\ast}^{\T^r}\MGLP/p^i\]
    extending the inclusion of the Chow degree zero line by sending the free variable $\tau_i$ in bidegree $(0,-1)$ to the correspondingly named class in the source. We want to show that $f$ is an equivalence. To prove this, we may use regularity to obtain a commutative diagram
    \[\begin{tikzcd}
\pi_{2\ast}^{\T^r}\MUP/p^i[\tau_i]\arrow[d]\arrow[r, "f"]&\pi_{2\ast,\ast}^{\T^r}\MGLP/p^i\arrow[d]\\
        \Phi^{\T^r}_{2\ast}\MUP/p^i[\tau_i]\arrow[r]&\Phi^{\T^r}_{2\ast,\ast}\MGLP/p^i,
    \end{tikzcd}\]
    where the vertical arrows are injective and \cref{thm: HKO computation} tells us that the bottom horizontal arrow is an equivalence whence $f$ is injective. In fact, in the bottom left (hence also bottom right) vertex of the diagram, every class arises as an appropriate $\tau_i$-multiple of a class on the Chow line. In particular, the right vertical arrow being injective tells us that there are no classes in odd topological degree in the top right vertex. We may therefore prove $f$ is surjective by providing a section
    \[g\colon \pi_{\ast,\ast}^{\T^r}\MGL/p^i\to \pi_{2\ast}^{\T^r}\MU/p^i[\tau_i]\]
    as follows: since every nonzero class $x$ in the source of is in a bidegree of the form $(2w,w)+(0,-d)$, we set $g(x)=\Be^{\T^r}(x)\cdot\tau_i^d$, and this is a well defined map of bigraded rings. To show that $fg$ is the identity, we note that the equality $f(g(x))=x$ may be tested after injecting into geometric fixed points, where it is true by \cref{thm: HKO computation}. The passage from tori to general nice abelian groups follows from regularity as in \cref{cor: regularity consequence 1}.
\end{proof}
\begin{cor}\label{cor: p-complete HKO}
    For any torus $\T^r$, taking the limit along $i$ of the result in \cref{prop: mod p^i equivariant HKO} gives rise to an identification
    \[\pi_{\ast,\ast}^{\T^r}\MGL^\wedge_p\cong (\pi_{2\ast}^{\T^r}\MU^\wedge_p)[\tau].\]
\end{cor}
\begin{proof}
     Since the equivariant Lazard rings at tori are an integral domains (which is a consequence of regularity, see \cite[Corollary 5.44]{hausmann2022global}), the system of bigraded rings $\{\pi_{\ast,\ast}^{\T^r}\MGL/p^i\}$ has surjective transition maps.
\end{proof}
\begin{remark}
    By our computation of the $p$-complete homotopy of equivariant $\MGL$, we see that $i=1$ suffices to equip $\MGL/p^i$ with a regular global group law since its coefficients already form commutative rings.
\end{remark}

\newpage
\section{Pure reconstruction}\label{sec: perfect pure}
From this section onward, $A$ will be a \emph{finite} abelian group. Some analogues of the results below are valid for more general nice abelian groups, and these will be indicated. Equipped with \cref{prop: vanishing in MGL}, we give now a sheafy model for an appropriately cellular version of $A$-equivariant motivic spectra over $\C$. The key notion is that of a \emph{perfect pure} $A$-equivariant motivic spectrum; our arguments (and result) should be compared to those of \cite[\S 3]{haine2023spectral}.
\subsection{Equivariant pure motives}
\begin{defn}\label{def: perfect pure motives}
    Let $\Pure_\C(A)$ denote the minimal full subcategory of $\SH^{A}(\C)$ which is closed under extensions and retracts and contains all objects of the form $\Sigma^\cE_+A/K$ for subgroups $K$ of $A$ and virtual $A$-representations $\cE$. The cellular subcategory $\SH^{A}(\C)^\cell$ is the minimal full subcategory of $\SH^{A}(\C)$ which contains $\Pure_\C(A)$ and is closed under colimits and desuspensions.
\end{defn}
    It is clear that $\Pure_\C(A)$ consists of compact objects, and is closed under taking tensor products and duals. We conclude that $\SH^A(\C)^\cell$ admits a presentably symmetric monoidal structure as well. Furthermore, it follows from the definition that the various categories of perfect pure motivic spectra at $A$ and its subgroups are preserved by restriction, inflation, and geometric fixed points. In the following, all fixed point and (co)induction functors will be replaced by their cellular variants, namely the respective adjoints to inflation and/or restriction between cellular subcategories. Furthermore, the cellular subcategory is not closed under arbitrary limits. In particular, we warn the reader that a completion of a cellular object need not be cellular anymore. Henceforth, any limit operations in the cellular subcategory must be understood internally.
    An essential property of the perfect pure objects in the cellular category is that their $\MGL_A$-homology is well behaved.
\begin{lemma}\label{lem : Chow vanishing for perfect pure 2}
    Every $X\in \Pure_\C(A)$ satisfies Chow vanishing: for all $k>0,w\in \Z$ the group $\MGL^{A}_{2w-k,w}X$ vanishes.
\end{lemma}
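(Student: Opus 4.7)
The plan is to show that the class of objects in $\SH^A(\C)^\cell$ satisfying Chow vanishing forms a full subcategory closed under the operations generating $\Pure_\C(A)$, and then verify that the generators $\Sigma^\cE_+ A/K$ lie in it. Concretely, let $\cV\subset\SH^A(\C)$ denote the full subcategory on objects $X$ such that $\MGL^A_{2w-k,w}X=0$ for all $k>0$ and $w\in\Z$. Because $\MGL^A_{*,*}(-)$ is a cohomology theory, a cofibre sequence $X'\to X\to X''$ gives rise to a long exact sequence; thus $\cV$ is closed under extensions. It is also closed under retracts, since the vanishing property passes to direct summands. It therefore suffices to prove that every generator $\Sigma^\cE_+ A/K\in\Pure_\C(A)$ lies in $\cV$.

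The first reduction is to eliminate the Thom suspension. Since $\MGL_A$ is oriented (\cref{rem: motivic Thom iso}), for a virtual complex $A$-representation $\cE$ of virtual complex dimension $d$ there is a Thom isomorphism
\[\MGL^A_{t,w}(\Sigma^\cE X)\cong \MGL^A_{t-2d,w-d}(X).\]
The Chow degree $t-2w$ is preserved under this reindexing, so Chow vanishing for $\Sigma^\cE_+ A/K$ is equivalent to Chow vanishing for $A/K_+$.

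The next reduction is along the inclusion $i\colon\B K\hookrightarrow \B A$. Because $A$ is finite and $K\subset A$ is of finite index, the map $i$ is finite étale, so its cotangent complex vanishes and the Wirthmüller isomorphism of \cref{thm: 6FF omnibus} reads $i_\sharp\simeq i_\ast$. Together with the absolute structure on $\MGL$ (\cref{prop: MGL is absolute}), this yields
\[\MGL^A_{t,w}(A/K_+)\simeq [i_\sharp\mathbb{1}_K,\Sigma^{t,w}\MGL_A]\simeq [\mathbb{1}_K,\Sigma^{t,w}i^\ast\MGL_A]\simeq \pi^K_{t,w}\MGL.\]
Since $K$ is itself a finite (hence nice) abelian group, we may apply \cref{prop: vanishing in MGL} to conclude $\pi^K_{2w-k,w}\MGL=0$ for all $k>0$ and $w\in\Z$. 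This places every generator in $\cV$ and completes the argument.

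No step presents a real obstacle: all the heavy lifting has already been done in \cref{prop: vanishing in MGL}, and the present lemma is essentially the observation (implicit in \cref{rem : Chow vanishing for perfect pure}) that the vanishing property is inherited by extensions and retracts of the basic building blocks of $\Pure_\C(A)$. The only point requiring care is the identification $\MGL^A_{*,*}(A/K_+)\cong \pi^K_{*,*}\MGL$, which crucially uses that $A/K$ is finite étale so that $i_\sharp$ and $i_\ast$ agree without a Thom twist.
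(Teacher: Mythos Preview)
Your proof is correct and follows essentially the same approach as the paper: verify the generators $\Sigma^\cE_+A/K$ using the Thom isomorphism and the reduction to $\pi^K_{\ast,\ast}\MGL$ via self-duality of orbits (this is exactly the content of \cref{rem : Chow vanishing for perfect pure}), then close up under extensions and retracts. The paper compresses this into two sentences while you spell out the adjunction argument explicitly, but the substance is identical.
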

\begin{proof}
Using \cref{rem : Chow vanishing for perfect pure} in conjunction with the motivic Thom isomorphism of \cref{rem: motivic Thom iso}, we obtain the base case for the generators $\Sigma^\cE_+A/K$. By the vanishing result, any extension gives rise to a short exact sequence in $\MGL_A$-homology so we may deduce the same vanishing result holds for any perfect pure equivariant motivic spectrum.
\end{proof}
\begin{defn}\label{def: pure epimorphisms}
    A map $f\colon X\to Y$ in $\Pure_\C(A)$ is said to be a pure epimorphism if its fibre, computed in $\SH^A(\C)$, is again perfect pure. Similarly, we say that $f$ is a pure monomorphism if its cofibre is perfect pure or, equivalently, if its dual is a pure epimorphism.
\end{defn}
\begin{ex}\label{ex: transition maps}
    Using \cref{cor: cell structure on Thom over Grass at T} we see that all Gra{\ss}mannians are perfect pure and the Thom spectra of their (reduced) canonical bundles are perfect pure. In fact, by Thomifying the purity cofibre sequences produced by \cref{lem: inclusion of Grassmannians 1}, we see that \cref{cor: cellularity for MGL} can be strengthened to the observation that $\MGL_A$ admits a presentation as a filtered colimit of perfect pure objects along pure monomorphisms.
\end{ex}
\begin{remark}
    When $A$ is a finite abelian group, the motives of its orbits are self-dual and satisfy Chow vanishing, whence we include them as perfect pure objects. This will have some advantages later on, as it means the subcategory of cellular motivic spectra has some improved equivariant properties. We note here that in the case of a general nice abelian group, the more correct definition of the perfect pure objects the closure under extensions and retracts of the full subcategory on Thom spectra of virtual representations, without orbits. In fact, the attentive reader will note that every result stated above goes through in this setting: the cell structure on Gra{\ss}mannians, and hence $\MGL_A$, does not make use of orbits.
\end{remark}
\subsection{Chow weight structure}
Consider the module category $\Mod(\SH^A(\C)^\cell;\MGL_{A})$, which we will abbreviate to $\Mod(\MGL_{A})$. Essentially per definition this is generated under limits and colimits by the full subcategory denoted $\Pure(A;\MGL)$ on objects the form $X\otimes \MGL_{A}$ with $X\in \Pure_\C(A)$. We will see that this module category is in fact entirely determined by perfect pure motives in a controllable way.
\begin{propn}\label{prop: Chow weight structure}
    The inclusion of $\Pure(A;\MGL)$ induces an equivalence
    \[\cP_\Sigma(\Pure(A;\MGL);\Sp)\simeq \Mod(\MGL_A).\]
\end{propn}
We being with the following elementary observation.
\begin{lemma}\label{lem: Chow weight structure}
    The full subcategory $\Pure(A;\MGL)$ is such that it generates $\Mod(\MGL_A)$ under finite (co)limits and filtered colimits, is closed under extensions and retracts, and is such that the mapping spectrum between any two elements is connective.
\end{lemma}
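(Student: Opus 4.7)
The proof naturally splits into three parts, with the connectivity claim carrying the main technical content.

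For connectivity, my plan is to combine the free--forgetful adjunction between $\SH^A(\C)^\cell$ and $\Mod(\MGL_A)$ with Spanier--Whitehead duality. Concretely, for $X, Y \in \Pure_\C(A)$ and $n \in \Z$,
\[
\pi_n\map_{\MGL_A}(X \otimes \MGL_A,\, Y \otimes \MGL_A) \simeq [\Sigma^{n,0}X,\, Y \otimes \MGL_A]_{\SH^A(\C)}.
\]
The class $\Pure_\C(A)$ is closed under both tensor products and duals: Thom spectra of virtual representations are already invertible, orbits $A/K_+$ are self-dual for finite $A$ by the Wirthm\"uller isomorphism, and these two features pass through extensions and retracts. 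Hence $DX \otimes Y$ again lies in $\Pure_\C(A)$, and duality rewrites the displayed group as $\MGL^A_{n,0}(DX \otimes Y)$. For $n<0$ this sits in negative Chow degree, and the required vanishing is then precisely \cref{lem : Chow vanishing for perfect pure 2}.

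Closure under extensions follows quickly from connectivity. Given a cofibre sequence $M \to N \to P$ in $\Mod(\MGL_A)$ with $M, P \in \Pure(A;\MGL)$, the classifying map $P \to \Sigma M$ gives an element of $\pi_{-1}\map_{\MGL_A}(P, M)$, which vanishes by the connectivity just established. Hence $N \simeq M \oplus P$; writing $M = X_1 \otimes \MGL_A$ and $P = X_2 \otimes \MGL_A$ realises $N \simeq (X_1 \oplus X_2) \otimes \MGL_A$, and this lies in $\Pure(A;\MGL)$ because $\Pure_\C(A)$ is closed under direct sums. For retracts, the main point is that the splitting of any idempotent on $X \otimes \MGL_A$ in $\Mod(\MGL_A)$ must again be of the form $Y \otimes \MGL_A$; this is somewhat delicate since the free-module functor is not fully faithful, and I expect the cleanest resolution is to pass to the idempotent completion of $\Pure(A;\MGL)$, which does not affect the subsequent weight-structure arguments and is harmless because $\Pure_\C(A)$ is already itself idempotent complete.

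Finally, generation is formal from the construction: by \cref{def: perfect pure motives}, $\SH^A(\C)^\cell$ is generated under colimits by $\Pure_\C(A)$, which contains all representation-sphere shifts of its objects, so $\Mod(\MGL_A)$ is generated under colimits by $\Pure(A;\MGL)$, with finite (co)limits and filtered colimits sufficing. The principal obstacle is the connectivity step, which ultimately rests on the regularity of the global group law on $\MGL$ proved in \cref{thm: regularity} and propagated to perfect pure objects through \cref{lem : Chow vanishing for perfect pure 2}.
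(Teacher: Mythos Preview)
Your connectivity argument is exactly the paper's: reduce via the free--forgetful adjunction and dualisability in $\Pure_\C(A)$ to the vanishing $\MGL^A_{-k,0}Z = 0$ for perfect pure $Z$, then invoke \cref{lem : Chow vanishing for perfect pure 2}.

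The only difference is in how much is said about the first two claims. The paper dismisses generation and closure under extensions and retracts in one line as ``true per construction,'' whereas you supply actual arguments. Your extension argument (the classifying map lies in $\pi_{-1}$ of a connective mapping spectrum, so every extension splits and $N \simeq (X_1 \oplus X_2)\otimes\MGL_A$) is correct and is in fact the honest content behind the paper's ``per construction.'' Your caution about retracts is also well-placed: nothing guarantees that an idempotent on $X\otimes\MGL_A$ in $\Mod(\MGL_A)$ lifts to one on $X$ in $\SH^A(\C)$, and passing to the idempotent completion is the standard and harmless fix (the weight-structure machinery of \cite[\S 2.2]{elmanto2022nilpotent} is insensitive to this). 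So your treatment is slightly more careful than the paper's, but the substance is identical.
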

\begin{proof}
    The first two statements are true per construction, so it suffices to verify that last one i.e. that for all perfect pure $X,Y$ the mapping spectrum
    \[\Map_{\MGL_A}(X\otimes\MGL_A,Y\otimes\MGL_A)\]
    is connective. Since perfect pure motivic spectra are closed under duals and tensor products, it suffices to prove that
    \[\MGL^{A}_{-k,0}X=0\]
    for any perfect pure $X$. This now follows from \cref{lem : Chow vanishing for perfect pure 2}.
\end{proof}
The main result now follows immediately after noting that this equips $\Mod(\MGL_A)$ with a weight structure which we dub the \emph{Chow weight structure}.
\begin{proof}[Proof of \cref{prop: Chow weight structure}]
    Following \cite[Remark 2.2.6]{elmanto2022nilpotent}, the results of \cref{lem: Chow weight structure} equip $\Mod(\MGL_A)^\omega$ with a bounded weight structure. By the classification in \cite[Theorem 2.2.9]{elmanto2022nilpotent} of idempotent complete bounded weighted categories we deduce that $\Mod(\MGL_A)$ must be of the desired form.
\end{proof}
This result on the existence of a bounded weight structure on $\MGL$-modules, and its reduction to a vanishing result, has a nonequivariant precedent due to Bondarko and others. We refer to \cite[\S 4]{bondarko2016constructing} for an overview.
\subsection{Chow heart structure}
Using the cell structure on $\MGL_{A}$, we construct an approximation of the weight heart structure which now lives over $\mathbb{1}_{A}$. Instead of a weight structure, we now more naturally obtain a \emph{Chow heart structure} in the sense of \cite[\S 2]{saunier2023theorem}, but we will avoid this point of view for sake of simplicity. Morally the Chow weight structure exists because of the vanishing of Ext groups between perfect pure objects in $\Mod(\MGL_A)$. When working in $\SH^A(\C)^\cell$ these Ext groups only vanish locally for the topology generated by perfect pure epimorphisms.
\begin{thm}\label{thm: Chow heart structure}
    The inclusion of $\Pure_\C(A)$ induces an equivalence
    \[\Shv_{\Sigma}(\Pure_\C(A);\Sp)\simeq \SH^A(\C)^\cell,\]
    where the left hand side is the category of product-preserving presheaves that send cofibre sequences in $\Pure_\C(A)$ to fibre sequences in $\Sp$.
\end{thm}
\begin{remark}\label{rem: stable hull vs additive sheaves}
    The sheaf category appearing in the statement above may be interepreted in several ways. One may view $\Pure_\C(A)$ as an excellent ($\infty$-)site in the sense of \cite[\S 2]{pstragowski_synthetic_2022} and consider additive sheaves of spectra on it. Alternatively, note that the classes of pure monomorphisms and pure epimorphisms make $\Pure_\C(A)$ into an exact category in the sense of \cite{barwick2015exact} so that the left hand side is its \emph{presentable stable envelope} in the sense of \cite[Definition 2.12]{winges2025presentable}. See the proof of \cite[Theorem 2.7]{winges2025presentable} and  \cite[Proposition 4.4 ]{winges2025presentable} for the equivalence between the category of additive sheaves and and the presentable stable envelope.
\end{remark}
\begin{remark}\label{rem: pure extensions are MGL split} 
    Any extension $X\to Y\to Z$ in $\Pure_\C(A)$ is such that after tensoring with $\MGL_A$ it is classified by a class in $\pi^A_{-1,0}\MGL_A\otimes X\otimes Z^\vee$ which vanishes when one notes that perfect pure objects are closed under tensor products and duals and applies \cref{lem : Chow vanishing for perfect pure 2}. Any such extension therefore becomes split and the exact structure on $\Pure_\C(A)$ therefore becomes split exact after tensoring with $\MGL_A$.
\end{remark}
We now prove the crucial \emph{local} vanishing result that powers the existence of the Chow heart structure.
\begin{lemma}\label{lem: local vanishing}
    Let $m>0$ be a positive integer and let $f\colon \Sigma^{-m}X\to Y$ be a map of perfect pure spectra. Then there exists a pure epimorphism $p\colon X'\to X$ such that the composite $f\circ\Sigma^{-m}p$ vanishes.
\end{lemma}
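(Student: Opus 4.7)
My plan is to combine the Chow weight structure from \cref{prop: Chow weight structure} with the pure-monomorphism cell structure on $\MGL_A$ provided by \cref{ex: transition maps}, using compactness of $X$ to pass from an infinite filtered colimit to a finite-stage factorisation.

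The first step is to observe that for perfect pure $X$ and $Y$ the mapping spectrum $\map(X, Y \otimes \MGL_A)$ is connective. Indeed, by the free--forget adjunction it agrees with $\map_{\MGL_A}(X \otimes \MGL_A, Y \otimes \MGL_A)$, and both $X \otimes \MGL_A$ and $Y \otimes \MGL_A$ lie in the heart of the Chow weight structure on $\Mod(\MGL_A)$. Consequently $\pi_{-m}\map(X, Y \otimes \MGL_A) = 0$ for every $m > 0$, so the composite $\Sigma^{-m}X \xrightarrow{f} Y \to Y \otimes \MGL_A$ with the unit of $\MGL_A$ is nullhomotopic.

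Next, I would invoke \cref{ex: transition maps} to write $\MGL_A \simeq \colim_n M_n$ as a filtered colimit of perfect pure objects along pure monomorphisms, with initial term $M_0 = \mathbb{1}_A$. Pure monomorphisms compose, since the cofibre of a composition sits in an extension of the two cofibres via the octahedral axiom, so each map $\mathbb{1}_A \to M_n$ is itself a pure monomorphism and its fibre $C_n$ (a desuspension of the cofibre) is perfect pure. Since $X$ is compact, the nullhomotopy above is detected at a finite stage: the composite $\Sigma^{-m}X \to Y \to Y \otimes M_n$ is already nullhomotopic for some $n$, and $f$ therefore lifts through the fibre sequence $Y \otimes C_n \to Y \to Y \otimes M_n$ to a map $\tilde f \colon \Sigma^{-m}X \to Y \otimes C_n$.

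To conclude, I would set $P \coloneq \Sigma^m(Y \otimes C_n) \in \Pure_\C(A)$ and let $q \colon X \to P$ correspond to $\tilde f$, so that $\Sigma^m f$ factors as $X \xrightarrow{q} P \to \Sigma^m Y$. Take $p \colon X' \to X$ to be the fibre of $q$. The cofibre sequence $X \to P \to \Sigma X'$ exhibits $\Sigma X'$ as an extension in $\Pure_\C(A)$, hence $X'$ is perfect pure, and the fibre $\Omega P$ of $p$ is perfect pure by closure of $\Pure_\C(A)$ under desuspensions. Thus $p$ is the desired pure epimorphism, and $\Sigma^m f \circ p = \Sigma^{-m}(\Sigma^m\pi \circ q \circ p) = 0$ by construction of $p$ as the fibre of $q$. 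The main obstacle is really Step~2: extracting a finite-stage factorisation from the $\MGL_A$-vanishing, which is where compactness of $X$ and the explicit cell presentation of $\MGL_A$ by pure monos both play essential roles.
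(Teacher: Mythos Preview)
Your first two steps are correct and match the paper's proof exactly: the composite $\Sigma^{-m}X\to Y\to Y\otimes\MGL_A$ is null by the Chow weight structure, and compactness of $X$ lets you replace $\MGL_A$ by a finite stage $M_n$ of the Gra{\ss}mannian filtration, with $\mathbb{1}_A\to M_n$ a pure monomorphism.

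Your final step, however, does not work. The category $\Pure_\C(A)$ is \emph{not} closed under suspensions or desuspensions $\Sigma^{\pm 1}$; it is only closed under twists by $\Th(V)$ for virtual complex representations $V$ (so in particular by $S^{2n,n}$, not by $S^{1,0}$). Consequently:
\begin{itemize}
\item The fibre $C_n$ of $\mathbb{1}_A\to M_n$ is $\Sigma^{-1}$ of the cofibre and is \emph{not} perfect pure.
\item The object $P=\Sigma^m(Y\otimes C_n)=\Sigma^{m-1}(Y\otimes\mathrm{cof}(\mathbb{1}_A\to M_n))$ is perfect pure only when $m=1$.
\item Even when $P$ is perfect pure, the cofibre sequence $X\to P\to\Sigma X'$ does \emph{not} force $\Sigma X'$ (or $X'$) to be perfect pure: closure under extensions goes the other way (if the outer terms are in $\Pure_\C(A)$ then so is the middle, not conversely).
\item Your appeal to ``closure of $\Pure_\C(A)$ under desuspensions'' is simply false.
\end{itemize}

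The paper's fix is to \emph{dualise} rather than lift to the fibre. Since $M_n$ is dualisable, the nullhomotopy of $\Sigma^{-m}X\to Y\to Y\otimes M_n$ is adjoint to a nullhomotopy of
\[
\Sigma^{-m}(M_n^\vee\otimes X)\xrightarrow{\ \Sigma^{-m}(u^\vee\otimes\id_X)\ }\Sigma^{-m}X\xrightarrow{\ f\ }Y,
\]
where $u\colon\mathbb{1}_A\to M_n$ is the unit map. Now set $p=u^\vee\otimes\id_X\colon M_n^\vee\otimes X\to X$. This is a pure epimorphism because $u$ is a pure monomorphism, so $u^\vee$ is a pure epimorphism by definition, and tensoring with the perfect pure object $X$ preserves that property (the fibre is $\mathrm{fib}(u^\vee)\otimes X$, again perfect pure). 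This gives exactly the $p$ you want with no suspension bookkeeping.
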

The proof of \cite[Lemma 3.3.4]{haine2023spectral} goes through verbatim, but we spell it out for convenience.
\begin{proof}
    Consider the composite of $f$ with the unit map
    \[\Sigma^{-m}X\to Y\to Y\otimes \MGL_A\]
    which is null by \cref{rem: pure extensions are MGL split}. Since perfect pure motivic spectra are compact, we may find some $i,d$ in the colimit presentation from \cref{cor: Grass model for MGL} such that the composite
    \[\Sigma^{-m}X\to Y\to Y\otimes \Th_{\Gr^{d}(V_i)}(\overline{Q}^d_i)\]
    is already null. By dualising, this tells us that the composite
    \[\Sigma^{-m}(\Th_{\Gr_d(V_i)}(\overline{Q}_i^d)^\vee\otimes X)\to \Sigma^{-m}X\to Y\]
    is null as well, and we note that the unit map $\mathbb{1}_A\to\Th_{\Gr_d(V_i)}(\overline{Q}_i^d)^\vee$ is a pure monomorphism by \cref{ex: transition maps} so that the dual map $\Th_{\Gr_d(V_i)}(\overline{Q}_i^d)^\vee\otimes X\to X$ is a pure epimorphism and we may conclude.
\end{proof}
This is now enough to prove main result of this section.
\begin{proof}[Proof of \cref{thm: Chow heart structure}]
    The arguments of \cite[Theorem 3.3.5]{haine2023spectral} go through once again in this generality. The gist is that the inclusion of $\Pure_\C(A)$ preserves cofibre sequences hence gives rise to a left adjoint
    \[\Shv_\Sigma(\Pure_\C(A);\Sp)\to \SH^A(\C)^\cell.\]
    The left hand side is generated under colimits by objects of the form $\tau_{c\geq 0}\map(-,X)$ for $X$ perfect pure, where $\tau_{c\geq 0}$ denotes the connective cover in the sheafy t-structure on the left hand side (see \cref{cons: Chow t-structure} below). Checking that this left adjoint is fully faithful then reduces to showing that for all perfect pure $X$, the sheaf of spectra $\map(-,X)$ is connective in this $t$-structure. This follows immediately from \cref{lem: local vanishing} since it tells us that any negative degree homotopy elements can be killed locally. Essential surjectivity is then immediate since the target is generated under colimits by $\Pure_\C(A)$ per construction.
\end{proof}
\begin{construction}\label{cons: Chow heart structure}
    A posteriori, using \cite[Theorem 1.6]{saunier2025exact} we see that $\SH^A(\C)^{\cell,\omega}$ therefore admits a bounded heart structure whose heart is precisely $\Pure_\C(A)$, since it arises as the stable envelope of the latter. We will call this heart structure the Chow heart structure.
\end{construction}
\begin{remark}
    Since the inclusion of $\Pure_\C(A)$ into $\SH^A(\C)^\cell$ is symmetric monoidal, we see that the equivalence of \cref{thm: Chow heart structure} lifts to a strong symmetric monoidal equivalence if we equip the left hand side with the Day convolution symmetric monoidal structure (see \cite[Proposition 2.30]{pstragowski_synthetic_2022}).
\end{remark}
\subsection{Chow t-structure}
We can now equip $\SH^A(\C)^\cell$ with a novel t-structure coming from the equivalence in \cref{thm: Chow heart structure}.
\begin{construction}\label{cons: Chow t-structure}
    Under the equivalence $\SH^A(\C)^\cell\simeq \Shv_\Sigma(\Pure_\C(A);\Sp)$, let the Chow t-structure be the sheafy t-structure inherited from the standard t-structure on $\Sp$ (see \cite[Proposition 2.16]{pstragowski_synthetic_2022}). It is right complete, compatible with filtered colimits, and compatible with the symmetric monoidal structure. 
\end{construction}
\begin{remark}\label{rem:chars of chow t}
    The heart of the Chow t-structure is therefore given by the abelian category
    \[\SH^A(\C)^{\cell}_{c=0}\simeq \Shv_\Sigma(\Pure_\C(A);\mathrm{Ab})\]
    with its associated homotopy group functors
    \[\tau_{c=n}\colon    \SH^A(\C)^{\cell}\to \Shv_\Sigma(\Pure_\C(A);\mathrm{Ab}). \]
    In particular, we see that some $X\in \SH^A(\C)^\cell$ is
    \begin{enumerate}
        \item Chow connective if $\tau_{c=-n}X=0$ for all $n>0$,
        \item Chow coconnective if the sheaf of animæ $\Omega^\infty X$ is levelwise discrete.
    \end{enumerate}
\end{remark}
In fact, we can be more explicit about the connective objects; the connective part is generated under colimits by (suspension spectra of) representables, so that this corresponds to the unique t-structure (see \cite[1.4.4.11(2)]{HA}) on $\SH^A(\C)^\cell$ whose nonnegative part is generated under colimits by perfect pure motivic spectra. In particular, we see that this is compatible with the cellular Chow t-structure defined in \cite{bachmann2022chow}. Let us outline a few essential properties of the Chow t-structure that are immediate from the definition.
\begin{lemma}\label{lem: char of fib seq's}
    Let $p\colon X\to Y$ be a map of perfect pure motivic spectra with fibre $F$. Then $p$ is a pure epimorphism, i.e. $F$ is again perfect pure, if and only if the induced sequence
    \[\tau_{c=0}F\to \tau_{c=0}X\to \tau_{c=0}Y\]
    is a short exact sequence in $\SH^A(\C)^\cell_{c=0}$
\end{lemma}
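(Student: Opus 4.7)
The plan is to extract the Chow homotopy groups of $F$ from the long exact sequence associated to the fibre sequence $F\to X\to Y$ in $\SH^A(\C)^\cell$ and then to promote the resulting homotopical information back to the statement that $F$ is perfect pure.

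For the forward direction, assume $p$ is a pure epimorphism, so that $F\in\Pure_\C(A)$. By \cref{cons: Chow heart structure} and \cref{cons: Chow t-structure} the subcategory $\Pure_\C(A)$ embeds into the heart of the Chow t-structure, so all three of $F,X,Y$ have Chow homotopy concentrated in degree zero. Writing down the long exact sequence associated to the fibre sequence only the degree-zero piece survives, yielding the desired short exact sequence $0\to\tau_{c=0}F\to\tau_{c=0}X\to\tau_{c=0}Y\to 0$ in $\SH^A(\C)^\cell_{c=0}$.

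Conversely, suppose the displayed sequence is short exact. Since $X$ and $Y$ have Chow homotopy concentrated in degree zero, the long exact sequence immediately gives $\tau_{c=n}F=0$ for all $n\geq 1$ and all $n\leq -2$. The only subtle case is $n=-1$, where the relevant portion of the long exact sequence reads
\[\tau_{c=0}X\to\tau_{c=0}Y\xrightarrow{\partial}\tau_{c=-1}F\to 0;\]
the assumed surjectivity of the left arrow forces the connecting map $\partial$ to vanish, giving $\tau_{c=-1}F=0$. Thus $F$ lies in the heart of the Chow t-structure.

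It remains to promote this to the statement $F\in\Pure_\C(A)$. Here I would use that $F$ is a fibre of compact objects in the stable category $\SH^A(\C)^\cell$ and is therefore itself compact. Combined with \cref{cons: Chow heart structure}, which identifies the heart of the bounded Chow heart structure on $\SH^A(\C)^{\cell,\omega}$ with $\Pure_\C(A)$, this forces $F$ to be perfect pure: the two degree-zero truncation functors coming from the Chow t-structure and the Chow heart structure agree on compact objects, so a compact object is in the heart of the t-structure precisely when it lies in the heart of the heart structure, which is $\Pure_\C(A)$. The main obstacle is exactly this last identification; it is a formal property of the stable envelope of the exact category $\Pure_\C(A)$, but requires carefully comparing the abelian stable envelope $\Shv_\Sigma(\Pure_\C(A);\mathrm{Ab})$ with $\Pure_\C(A)$ on compact objects and tracing through the equivalence $\SH^A(\C)^\cell\simeq\Shv_\Sigma(\Pure_\C(A);\Sp)$ from \cref{thm: Chow heart structure}.
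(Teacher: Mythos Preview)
Your argument rests on the claim that perfect pure objects lie in the heart of the Chow t-structure, i.e.\ have $\tau_{c=n}=0$ for all $n\neq 0$. This is false. Perfect pure objects are Chow \emph{connective}---this is exactly what the proof of \cref{thm: Chow heart structure} establishes via \cref{lem: local vanishing}---but they are not Chow coconnective: by \cref{rem:chars of chow t}, coconnectivity requires $\Omega^\infty\map(-,X)$ to be levelwise discrete, i.e.\ $[\Sigma^n P,X]=0$ for all $n>0$ and all perfect pure $P$, which fails already for $P=X=\mathbb{1}_A$. You are conflating the heart of the Chow \emph{t-structure} (the abelian category $\Shv_\Sigma(\Pure_\C(A);\mathrm{Ab})$, cf.\ \cref{rem:chars of chow t}) with the heart of the Chow \emph{heart structure} (the additive, non-abelian category $\Pure_\C(A)$, cf.\ \cref{cons: Chow heart structure}). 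Your final paragraph acknowledges the distinction but then asserts that a compact object lies in one heart iff it lies in the other; this is not true, since perfect pure objects themselves are compact yet lie in the latter heart and not the former.

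This error breaks both directions. In the forward direction, the long exact sequence does not collapse for degree reasons: the connecting map $\tau_{c=1}Y\to\tau_{c=0}F$ has nonzero source in general, so injectivity on the left requires a separate argument. In the converse direction, you cannot conclude that $\tau_{c=n}F=0$ for $n\geq 1$, so $F$ is not placed in any heart. The paper instead cites \cite[Lemma 2.19]{pstrkagowski2023perfect}, a general site-theoretic fact: a map $p$ in an additive site is a covering (here: pure epimorphism) if and only if the induced map of representable abelian sheaves is an epimorphism, and in that case the kernel is represented by the fibre. The argument is about the Grothendieck topology and the Gabriel--Quillen-type embedding of the exact category into its abelian sheaf category, not about vanishing of higher Chow homotopy.
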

\begin{proof}
    This is \cite[Lemma 2.19]{pstrkagowski2023perfect}.
\end{proof}
\begin{lemma}\label{lem: restriction is t-exact}
    The equivariant restriction and inflation functors between cellular $A$-equivariant motivic categories are right $t$-exact, furthermore restriction is t-exact.
\end{lemma}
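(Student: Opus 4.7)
The plan is to exploit the characterization of the Chow t-structure from \cref{cons: Chow t-structure,rem:chars of chow t}: its connective part is generated under colimits by $\Pure_\C(A)$, while its coconnective part consists of $X$ with $\Map(Y, X)\in \Sp_{\leq 0}$ for every perfect pure $Y$. Consequently a colimit-preserving functor is right $t$-exact exactly when it preserves $\Pure_\C$, and left $t$-exact exactly when its left adjoint preserves $\Pure_\C$.

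For right $t$-exactness, I would verify on generators that both the restriction $i^\ast\colon \SH^A(\C)^\cell\to \SH^K(\C)^\cell$ along $K\subset A$ and the inflation $p^\ast\colon \SH^{A/K}(\C)^\cell\to \SH^A(\C)^\cell$ along $p\colon A\twoheadrightarrow A/K$ preserve $\Pure_\C$. Since $A$ is abelian, $A/H$ splits as a $K$-set into a finite coproduct of orbits $K/(K\cap H)$, so $i^\ast\Sigma^\cE(A/H)_+$ is a finite coproduct of $\Sigma^{i^\ast\cE}(K/(K\cap H))_+$; for inflation, a generator $\Sigma^\cE((A/K)/H)_+$ pulls back to $\Sigma^{p^\ast\cE}(A/p^{-1}(H))_+$, still perfect pure. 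Both functors are colimit-preserving, being left adjoints to the cellular (co)induction and fixed-point functors respectively, so right $t$-exactness follows.

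For full $t$-exactness of restriction, I would additionally check that the left adjoint $i_\sharp$ preserves $\Pure_\C$. Writing $(K/H)_+\simeq q_\sharp\mathbb{1}_H$ for $q\colon \B H\to \B K$ and composing $\sharp$-pushforwards reduces this to understanding $(iq)_\sharp \Th(V|_H)$ for a virtual $K$-representation $V$. The key observation in the abelian setting is that every $H$-representation admits an extension to an $A$-representation: since $\C^\times$ is divisible, the restriction $A^\vee\twoheadrightarrow H^\vee$ on character groups is surjective. Picking such an extension $W$ of $V|_H$ and applying the smooth projection formula of \cref{thm: 6FF omnibus}, one obtains $i_\sharp\Sigma^V(K/H)_+\simeq \Sigma^W(A/H)_+$, which is manifestly perfect pure.

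The main obstacle is this identification of $i_\sharp$ on Thom spectra; the extension trick above is specific to the finite abelian setting and is precisely what makes the restriction fully $t$-exact here, while leaving the question of left $t$-exactness of inflation outside the scope of the statement.
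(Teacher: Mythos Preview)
Your proposal is correct and follows the same approach as the paper: both argue right $t$-exactness by checking that restriction and inflation preserve colimits and $\Pure_\C$, and left $t$-exactness of restriction by showing its left adjoint $i_\sharp$ is right $t$-exact. The paper's proof is a two-line sketch that declares these preservation properties ``clear''; your version supplies the concrete verifications (orbit decompositions under restriction/inflation, and the character-extension trick showing $i_\sharp\Sigma^V(K/H)_+\simeq\Sigma^W(A/H)_+$) that the paper leaves implicit.
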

\begin{proof}
    Both inflation and restriction clearly preserve colimits and the subcategory of perfect pure motivic spectra, so we conclude that both are right t-exact. Restriction is furthermore left t-exact since it admits a right t-exact left adjoint given by induction.
\end{proof}
\begin{lemma}\label{lem: tensor with pure is t-exact}
    Let $X$ be a perfect pure motivic spectrum, then the endofunctor $X\otimes-\colon $ of $\SH^A(\C)^{\cell}$ is t-exact.
\end{lemma}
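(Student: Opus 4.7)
The plan is to leverage the facts that perfect pure motivic spectra are closed under tensor products and duals, together with the characterization of the Chow t-structure as the unique t-structure whose connective part is generated under colimits by $\Pure_\C(A)$.

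First, I would establish right t-exactness. Since $X \in \Pure_\C(A)$ and $\Pure_\C(A)$ is closed under tensor products, the endofunctor $X \otimes -$ sends each generator $Y \in \Pure_\C(A)$ of the connective part of the Chow t-structure to the object $X \otimes Y \in \Pure_\C(A)$, which is again Chow connective. Because $X$ is compact and dualizable in $\SH^A(\C)^{\cell}$, the functor $X \otimes -$ preserves colimits, so it carries the whole connective part (generated under colimits by $\Pure_\C(A)$) into itself. Hence $X \otimes -$ is right t-exact.

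For left t-exactness, I would exploit duality. Since $X$ is dualizable with dual $X^\vee$, and $\Pure_\C(A)$ is closed under duals, $X^\vee$ is again perfect pure. The argument of the previous paragraph applied to $X^\vee$ shows that $X^\vee \otimes -$ is also right t-exact. But in any symmetric monoidal category, tensoring with a dualizable object and tensoring with its dual are mutually adjoint on both sides, so $X \otimes -$ is the right adjoint of $X^\vee \otimes -$. A right adjoint of a right t-exact functor is automatically left t-exact, so $X \otimes -$ is left t-exact as well.

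Combining the two halves gives t-exactness of $X \otimes -$. No step looks genuinely difficult here; the only thing to double-check is that $\Pure_\C(A)$ is indeed closed under taking duals, which is noted immediately after \cref{def: perfect pure motives}, and that the connective part is generated under colimits by $\Pure_\C(A)$, which is the characterization of the Chow t-structure from \cref{cons: Chow t-structure} (see also \cref{rem:chars of chow t}).
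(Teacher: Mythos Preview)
Your proof is correct and essentially the same as the paper's. The only cosmetic difference is in the left t-exactness step: the paper unwinds the adjunction directly by computing $\map(\Sigma^k Y, X\otimes E)\simeq \map(\Sigma^k(Y\otimes X^\vee),E)=0$ for perfect pure $Y$, whereas you invoke the general principle that the right adjoint of a right t-exact functor is left t-exact; these are the same argument, since $X^\vee\otimes-$ being right t-exact is exactly what makes that mapping space vanish.
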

\begin{proof}
    By compatibility with the symmetric monoidal structure and the fact that perfect pure motivic spectra are connective per construction, we see that $X\otimes-$ is right t-exact. On the other hand, suppose that $E$ is a Chow coconnective object of $\SH^A(\C)^\cell$ and let $Y$ be another perfect pure motivic spectrum. Then we may compute for all $k\geq 0$
    \[\map(\Sigma^kY,X\otimes E)\simeq \map(\Sigma^k(Y\otimes X^\vee),E)=0\]
    since $Y\otimes X^\vee$ is again perfect pure. Therefore $X\otimes E$ must also be Chow coconnective.
\end{proof}
\begin{cor}\label{cor: MGL t-exact}
    The endofunctor $\MGL_A\otimes-$  of $\SH^A(\C)^\cell$ is also t-exact.
\end{cor}
\begin{proof}
    Since the Chow t-structure is compatible with filtered colimits and $\MGL_A$ can be written as a filtered colimit of perfect pure motivic spectra by \cref{cor: cellularity for MGL} this follows immediately from \cref{lem: tensor with pure is t-exact}.
\end{proof}
As with any other t-structure on a sheaf category, recall that coconnectivity may be detected on the level of presheaves, while connectivity is more subtle. In particular, this tells us that bigraded homotopy groups need not interact nicely with connectivity.
\begin{remark}\label{rem: Chow ccon has ccon htpy}
    Let $E$ be Chow coconnective, then $\pi^{A}_{\ast,\ast}E$ is concentrated in nonpositive Chow degrees. Indeed, if $t-2w> 0$ then a class in $\pi^{A}_{t,w}E$ is represented by a map
    \[\Sigma^{t,w}\mathbb{1}_{A}\to E\]
    but the assumption $t>2w$ allows us to write this as a positive shift $\Sigma^{t-2w}\Th(\epsilon^w)$ of a perfect pure object so this must be null. 
\end{remark}
After we tensor with $\MGL_A$, the situation simplifies. Indeed, the induced Chow t-structure on $\Mod(\MGL_A)$ is now a t-structure on a category of additive presheaves so that both connectivity and coconnectivity are detected levelwise.
\begin{lemma}\label{lem: connecivity of MGL modules}
    Let $E$ be Chow connective, then $\MGL_A\otimes E$ has $A$-equivariant homotopy groups concentrated in nonnegative Chow degrees. If $E$ is Chow coconnective, then $\MGL_A\otimes E$ has $A$-equivariant homotopy groups concentrated in nonpositive Chow degrees.
\end{lemma}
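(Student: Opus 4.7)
My plan is to reduce both halves to the $t$-exactness of $\MGL_A \otimes -$ from \cref{cor: MGL t-exact}, exploiting the two different models of the Chow $t$-structure available on each side. For the coconnective half, if $E$ is Chow coconnective then so is $\MGL_A \otimes E$ by \cref{cor: MGL t-exact}, and \cref{rem: Chow ccon has ccon htpy} immediately yields $\pi^A_{t,w}(\MGL_A \otimes E) = 0$ whenever $t - 2w > 0$, finishing that half at once.

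The connective half is the real content, because the naive analogue of \cref{rem: Chow ccon has ccon htpy} is false: in the sheaf model $\Shv_\Sigma(\Pure_\C(A);\Sp)$, connectivity is not detected levelwise. I would therefore work in the module category, which by \cref{prop: Chow weight structure} admits the \emph{presheaf} description $\Mod(\MGL_A) \simeq \cP_\Sigma(\Pure(A;\MGL);\Sp)$, in which both connectivity and coconnectivity are detected levelwise on the representables $X \otimes \MGL_A$ with $X \in \Pure_\C(A)$. My plan is to show that $\MGL_A \otimes E$ is levelwise connective in this presheaf model. Since $E$ is Chow connective, it lies in the smallest subcategory of $\SH^A(\C)^\cell$ containing $\Pure_\C(A)$ and closed under colimits and extensions, so $\MGL_A \otimes E$ lies in the analogous subcategory of $\Mod(\MGL_A)$ generated by the representables; the representables are themselves levelwise connective by the mapping-spectrum vanishing in \cref{lem: Chow weight structure}, and levelwise connective presheaves of spectra are closed under colimits and extensions, whence $\MGL_A \otimes E$ is levelwise connective.

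To conclude, specialising to the representable indexed by the perfect pure motivic spectrum $\Th(\epsilon^w) = \Sigma^{2w,w}\mathbb{1}_A$ and using the base-change adjunction gives
\[\pi^A_{2w-k,w}(\MGL_A \otimes E) \;\cong\; \pi_{-k}\,\map_{\Mod(\MGL_A)}\!\bigl(\Th(\epsilon^w) \otimes \MGL_A,\ \MGL_A \otimes E\bigr),\]
which vanishes for every $k > 0$ and every $w \in \Z$, exhibiting $\pi^A_{t,w}(\MGL_A \otimes E)$ as supported in Chow degrees $t - 2w \geq 0$. The only point requiring care is the bookkeeping between the sheafy and presheafy models of the Chow $t$-structure; everything delicate has been absorbed into \cref{prop: Chow weight structure} and \cref{lem: Chow weight structure}, so I anticipate no serious obstacle here.
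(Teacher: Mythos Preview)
Your proof is correct and follows essentially the same approach as the paper: the coconnective half is identical (t-exactness of $\MGL_A\otimes-$ plus \cref{rem: Chow ccon has ccon htpy}), and the connective half reduces to the Chow vanishing for perfect pure objects (\cref{lem : Chow vanishing for perfect pure 2}) via closure under colimits. The paper phrases the connective half more tersely as ``any Chow connective object is a colimit of perfect pure objects and the vanishing is compatible with colimits,'' whereas you package the same closure-under-colimits argument through the presheaf model of \cref{prop: Chow weight structure}; this is a cosmetic difference, not a different route.
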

\begin{proof}
    Note that the second statement is immediate from \cref{cor: MGL t-exact} and \cref{rem: Chow ccon has ccon htpy}. For the second statement, note that any Chow connective object is a (filtered) colimit of perfect pure objects and the result of \cref{lem : Chow vanishing for perfect pure 2} is compatible with (filtered) colimits.
\end{proof}
\begin{cor}\label{cor: Chow heart conc on Chow line}
    If $E$ is contained in the Chow heart, then $\MGL_A\otimes E$ has homotopy groups concentrated on the Chow line.
\end{cor}
\begin{cor}\label{cor: chow heart weight structure}
    Let $\Mod(\tau_{c=0}\MGL_A)^\omega$ denote the category of compact $\tau_{c=0}\MGL_A$-modules in $\SH^A(\C)^\cell$. This admits a bounded weight structure whose heart is an additive $1$-category.
\end{cor}
\begin{proof}
    The arguments of \cref{prop: Chow weight structure} go through again; consider the subcategory $\Pure(A;\tau_{c=0}\MGL)$, i.e. the subcategory on free $\tau_{c=0}\MGL_A$-modules on perfect pure motivic spectra. This once again satisfies the assumptions of \cite[Remark 2.2.6]{elmanto2022nilpotent}. However, we see that for $X,Y$ perfect pure one can easily compute the mapping spectrum between them as
    \begin{align*}
        \pi_\ast\Map(\mathbb{1}_A,\tau_{c=0}\MGL_A\otimes Y\otimes X^\vee))
        &\cong \pi_{\ast,0}^{A}(\tau_{c=0}\MGL_A\otimes Y\otimes X^\vee).
    \end{align*}
    Now $Y\otimes X^\vee$ is perfect pure so that the right hand side must have homotopy concentrated on the Chow line; in particular this vanishes unless $\ast=0$ in which case we recover the group $\MGL^{A}_{0,0}Y\otimes X^\vee$.
\end{proof}
In the next section, we will use this observation to give an explicit description of the heart of the Chow t-structure on $\SH^A(\C)^\cell$ itself.

\newpage
\section{The special fibre}\label{sec: special fibre}
We now identify what is to be thought of as the special fibre of the deformation encoded by cellular $A$-equivariant motivic spectra over $\C$: the heart of the Chow $t$-structure. As in \cite{bachmann2022chow}, the results in this section go through integrally.
\subsection{The heart of the Chow t-structure}
Consider the commutative algebra object $\tau_{c=0}\mathbb{1}_{A}$ in $\SH^A(\C)^{\cell}$ given by the truncation of the unit. We see that $\tau_{c=0}\MGL_A$ forms a commutative algebra over it and therefore obtain a chain of adjunctions
\[\Mod(\tau_{c=0}\mathbb{1}_{A})\leftrightarrows \coMod(C)\rightleftarrows \Mod(\tau_{c=0}\MGL_A)\]
where $C$ is short for the comonad on $\Mod(\tau_{c=0}\MGL_A)$ given by tensoring with $\tau_{c=0}\MGL_A$ over ${\tau_{c=0}\mathbb{1}_{A}}$.
\begin{lemma}
    All three categories participating in the chain of adjunctions above inherit t-structures from the Chow t-structure on $\SH^A(\C)^\cell$ such that all left adjoints above are t-exact.
\end{lemma}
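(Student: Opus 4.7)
The strategy is to transfer the Chow t-structure on $\SH^A(\C)^\cell$ through each adjunction, and then verify t-exactness of the two left adjoints.

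Since $\tau_{c=0}\mathbb{1}_A$ and $\tau_{c=0}\MGL_A$ both lie in the Chow heart of $\SH^A(\C)^\cell$, they are in particular connective commutative algebras; applying the general construction of \cite[Proposition 7.1.1.13]{HA} then equips $\Mod(\tau_{c=0}\mathbb{1}_A)$ and $\Mod(\tau_{c=0}\MGL_A)$ with canonical t-structures for which an object is (co)connective iff its image under the forgetful functor to $\SH^A(\C)^\cell$ is. The two forgetful functors, and hence also the restriction of scalars $\Mod(\tau_{c=0}\MGL_A) \to \Mod(\tau_{c=0}\mathbb{1}_A)$, are then t-exact by construction.

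Next I would transfer a t-structure to $\coMod(C)$ along the forgetful $U_{\coMod}\colon \coMod(C) \to \Mod(\tau_{c=0}\MGL_A)$, which is the left adjoint to the cofree $C(-)$-functor: declare a comodule $M$ to be (co)connective iff $U_{\coMod}(M)$ is. For this to define a $t$-structure, it suffices to check that the comonad $C = \tau_{c=0}\MGL_A \otimes_{\tau_{c=0}\mathbb{1}_A} (-)$ is t-exact on $\Mod(\tau_{c=0}\MGL_A)$, since then the truncations of the underlying module admit canonical coactions via the natural maps $\tau_\ast M \to \tau_\ast C(M) \simeq C(\tau_\ast M)$. As $C$ factors through restriction of scalars (t-exact) followed by extension of scalars $f^\ast\colon \Mod(\tau_{c=0}\mathbb{1}_A) \to \Mod(\tau_{c=0}\MGL_A)$, this reduces to t-exactness of $f^\ast$.

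Right t-exactness of $f^\ast$ is automatic, being the left adjoint of a t-exact right adjoint; the key input is left t-exactness, i.e.\ a flatness-type statement for $\tau_{c=0}\MGL_A$ over $\tau_{c=0}\mathbb{1}_A$. I would prove this by exploiting the cell presentation of $\MGL_A$ from \cref{cor: cellularity for MGL}, which after applying $\tau_{c=0}$ exhibits $\tau_{c=0}\MGL_A$ as a filtered colimit in the Chow heart of truncations of perfect pure motivic spectra attached along pure monomorphisms. Tensoring over $\tau_{c=0}\mathbb{1}_A$ with a coconnective module, the Chow-vanishing of \cref{lem : Chow vanishing for perfect pure 2} together with the t-exactness of $\MGL_A \otimes (-)$ from \cref{cor: MGL t-exact} control the relevant Tor terms and keep the output coconnective.

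With $C$ shown to be t-exact, the t-structure on $\coMod(C)$ is defined, and t-exactness of both left adjoints follows: $U_{\coMod}$ is t-exact by construction, while the comparison $\Psi\colon \Mod(\tau_{c=0}\mathbb{1}_A) \to \coMod(C)$ is t-exact because $U_{\coMod}\circ\Psi = f^\ast$ is t-exact and $U_{\coMod}$ is itself t-exact and conservative by Barr--Beck comonadicity. The principal obstacle in this plan is the flatness statement above.
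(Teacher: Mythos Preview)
Your plan is correct and reaches the same conclusion as the paper. The two module categories are handled identically. For $\coMod(C)$, the paper takes a slightly different but equivalent route: rather than working with the comonad directly, it presents $\coMod(C)\simeq\lim_{\Delta}\Mod(\tau_{c=0}\MGL_A^{\otimes\bullet+1})$ and equips the limit with the t-structure inherited from the terms once the semicosimplicial transition maps are shown to be t-exact. Since the forgetful $U_{\coMod}\colon\coMod(C)\to\Mod(\tau_{c=0}\MGL_A)$ is just projection to the zeroth term of this limit, the resulting t-structure agrees with the one you describe, and in either approach the substantive input is the same: t-exactness of $f^\ast=\tau_{c=0}\MGL_A\otimes_{\tau_{c=0}\mathbb{1}_A}(-)$.

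Where your argument is more involved than necessary is precisely at this flatness step. The paper deduces it in one line from \cref{cor: MGL t-exact}, and so can you: since $\MGL_A\otimes-$ is t-exact and $\mathbb{1}_A$ is Chow connective, one has $\tau_{c=0}\MGL_A\simeq\MGL_A\otimes\tau_{c=0}\mathbb{1}_A$, whence for any $\tau_{c=0}\mathbb{1}_A$-module $M$
\[
\tau_{c=0}\MGL_A\otimes_{\tau_{c=0}\mathbb{1}_A}M\;\simeq\;(\MGL_A\otimes\tau_{c=0}\mathbb{1}_A)\otimes_{\tau_{c=0}\mathbb{1}_A}M\;\simeq\;\MGL_A\otimes M,
\]
and the right-hand endofunctor is t-exact by \cref{cor: MGL t-exact}. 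No separate appeal to the cell presentation of \cref{cor: cellularity for MGL} or a Tor analysis is needed; your sketch of that part (``control the relevant Tor terms'') is vague as written and, in light of the identification above, superfluous. Once $f^\ast$ is t-exact, the remainder of your argument---transferring the t-structure to $\coMod(C)$ via $U_{\coMod}$ and deducing t-exactness of $\Psi$ from $U_{\coMod}\circ\Psi=f^\ast$---goes through as stated.
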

\begin{proof}
    The existence of the inherited t-tructure on the two module categories is immediate since both are module categories over Chow connective algebra objects. For the comodule category, note that this is defined as a limit in $\PrL_\mathrm{st}$ of the form
    \[\coMod(C)=\lim_\Delta\Mod(\tau_{c=0}\MGL_A^{\otimes\bullet+1}).\]
    Now note that the limit only depends on the semicosimplicial structure maps, i.e. the subdiagram indexed by $\Delta^{\mathrm{inj}}$. The structure maps in this semicosimplicial diagram are given by tensoring with $\tau_{c=0}\MGL_A^{\otimes\bullet+1}$ so that these are t-exact by \cref{cor: MGL t-exact} and we may equip the limit with the inherited t-structure. In fact, from this definition it becomes clear that the left adjoints into and out of $\coMod(C)$ are then t-exact.
\end{proof}
\begin{remark}\label{rem: restricted comonad}
    By the result above, we obtain a restricted adjunction on hearts
    \[\Mod(\tau_{c=0}\mathbb{1})_{c=0}\rightleftarrows \Mod(\tau_{c=0}\MGL_A)_{c=0}\]
    with comonad $C_{c=0}$. The category of comodules for this restricted comonad is simply the heart of $\coMod(C)$
    \[\coMod(C_{c=0})\simeq \coMod(C)_{c=0}.\]
\end{remark}
\begin{lemma}\label{lem: conservativity}
    The left adjoint
    \[-\otimes\tau_{c=0}\MGL_A\colon \Mod(\tau_{c=0}\mathbb{1}_{A})_{c=0}\to \Mod(\tau_{c=0}\MGL_A)_{c=0}\]
    is conservative.
\end{lemma}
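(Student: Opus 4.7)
The plan is to exploit the cell structure on $\MGL_A$ established in \cref{ex: transition maps}: $\MGL_A \simeq \varinjlim_i P_i$ as a filtered colimit of perfect pure motivic spectra along pure monomorphisms, with initial stage $P_0 = \mathbb{1}_A$. Applying $\tau_{c=0}$ and combining the compatibility of the Chow t-structure with filtered colimits (\cref{cons: Chow t-structure}) with \cref{lem: char of fib seq's}, this yields a presentation $\tau_{c=0}\MGL_A \simeq \varinjlim_i \tau_{c=0}P_i$ in the heart as a filtered colimit of monomorphisms, beginning with $\tau_{c=0}\mathbb{1}_A = \tau_{c=0}P_0$.

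The strategy is to show that the unit $\tau_{c=0}\mathbb{1}_A \hookrightarrow \tau_{c=0}\MGL_A$ is a split monomorphism in $\Mod(\tau_{c=0}\mathbb{1}_A)_{c=0}$; given such a splitting $\tau_{c=0}\MGL_A \simeq \tau_{c=0}\mathbb{1}_A \oplus Q$, tensoring any $M$ in the heart over $\tau_{c=0}\mathbb{1}_A$ preserves the direct summand decomposition, so the vanishing $M\otimes_{\tau_{c=0}\mathbb{1}_A}\tau_{c=0}\MGL_A = 0$ forces $M = 0$ and thus establishes conservativity. The splitting itself should come from \cref{rem: pure extensions are MGL split}: every pure extension in $\Pure_\C(A)$ splits after tensoring with $\MGL_A$. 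Combined with the t-exactness of $-\otimes \MGL_A$ from \cref{cor: MGL t-exact} and inductively applied along the stages $P_i \hookrightarrow P_{i+1}$ of the cell filtration, this will produce a compatible family of splittings assembling through the filtered colimit into the required retraction of the unit.

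The main obstacle is verifying that the splittings from \cref{rem: pure extensions are MGL split}, which live a priori only in the $\MGL_A$-module category, descend to $\tau_{c=0}\mathbb{1}_A$-linear splittings in the heart. This requires a careful inductive construction, using the Chow heart weight structure of \cref{cor: chow heart weight structure} and an analysis of how the successive cell quotients $\tau_{c=0}C_{i+1}$ behave as $\tau_{c=0}\mathbb{1}_A$-modules, in order to ensure the resulting retraction is linear over the base ring rather than merely over $\MGL_A$.
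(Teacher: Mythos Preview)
Your approach has a genuine gap: the claimed splitting of $\tau_{c=0}\mathbb{1}_A \hookrightarrow \tau_{c=0}\MGL_A$ in $\Mod(\tau_{c=0}\mathbb{1}_A)_{c=0}$ is false, already in the nonequivariant case $A=\{e\}$. There the Chow heart is (by \cite{pstragowski_synthetic_2022} or \cite{bachmann2022chow}) the category of even $\MU_\ast\MU$-comodules, and under this identification the unit map becomes the comodule inclusion $\MU_\ast\hookrightarrow\MU_\ast\MU$. If this were split, $\MU_\ast$ would be a retract of the cofree comodule $\MU_\ast\MU$ and hence injective, forcing $\Ext^{>0}_{\MU_\ast\MU}(\MU_\ast,\MU_\ast)=0$; but this is the Adams--Novikov $E_2$-page for the sphere, which is highly nonzero.

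The obstacle you yourself flag is exactly where the argument fails, and it cannot be repaired: \cref{rem: pure extensions are MGL split} only asserts that pure extensions split \emph{after} tensoring with $\MGL_A$, i.e.\ the relevant extension class dies in $\Mod(\tau_{c=0}\MGL_A)_{c=0}$. Lifting such a splitting back to $\Mod(\tau_{c=0}\mathbb{1}_A)_{c=0}$ would require injectivity of base change on $\Ext^1$, which is essentially the conservativity you are trying to prove. The paper's argument goes an entirely different route: it invokes convergence of the equivariant motivic Adams--Novikov spectral sequence (established in \cref{sec: ANSS convergence}) together with an auxiliary family of hybrid t-structures mixing the Chow and homotopy t-structures, to show that if $\MGL_A\otimes X$ vanishes for $X$ in the heart then $X$ is a filtered colimit of bounded-below pieces each of which has a collapsing ANSS with vanishing $E_2$-page, and is therefore zero.
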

The proof of this crucially result uses the existence of an equivariant motivic Adams--Novikov spectral sequence with good convergence properties. We refer the reader to \cref{sec: ANSS convergence}. Having established this, the proof of \cite[Theorem 3.14 (2)]{bachmann2022chow} goes through rather directly and we provide a sketch of the argument here.
\begin{proof}
    Let $X$ in the source be such that $\tau_{c=0}\MGL_A\otimes_{\tau_{c=0}\mathbb{1}_A} X$ vanishes. Note that this basechange is equivalent to $\MGL_A\otimes X$. Let $d$ be any integer and consider the subcategory $\SH^A(\C)^\cell_{\geq d}\cap\SH^A(\C)^\cell_{c\geq 0}$ given by the Chow connective objects which are furthermore $d$-connective in the homotopy t-structure of \cref{def: homotopy t-structure}. For each $d$ these subcategories define (the connective parts of) unique t-structures with corresponding covers denoted $\tau_{c\geq 0}^d$. We note that as $d$ decreases, the truncation functors come with natural transformations $\tau_{c\leq 0}^{d}\to \tau_{c\leq 0}^{d-1}$ with colimit $\tau_{c\leq 0}$ by virtue of right completeness of the homotopy t-structure. Furthermore, for every Chow connective and homotopy $d$-connective $E$ the cofibre sequence $\tau_{c\geq 1}^dE\to E\to \tau_{c\leq 0}^dE$ shows that $\tau_{c\leq 0}^d E$ is again homotopically $d$-connective. Now let $X$ be in the Chow heart as above, and write $X$ as the filtered colimit of the diagram $\tau_{c\leq 0}^d X$. For every $d$, note that this term is bounded below. Furthermore, since $\eta$ is in Chow degree $-1$ this tells us that the map $\tau_{c\leq 0}^dX\to \tau_{c\leq 0}^dX^\wedge_\eta$ is an equivalence on homotopy groups. Indeed for any $n\geq 1$ we may consider the long exact sequence associated to the cofibre sequence $$\Sigma^{n,n}\tau_{c\leq 0}^dX\xrightarrow{\eta^n}\tau_{c\leq 0}^dX\to \tau_{c\leq 0}^dX/\eta^n,$$ now we claim that for all $d$ the pro-system $\{\pi_{\ast,\ast}^{A}\tau_{c\leq 0}^dX/\eta^n\}_n$ is pro-equivalent to $\pi_{\ast,\ast}^{A}\tau_{c\leq 0}^dX$. For this, consider a bidegree $(2w+i,w)$ with $i>0$ and note that if $w+i>d$ then this a positive suspension of a homotopically $d$-connective object hence cannot map nontrivially to $\tau_{c\leq 0}^d X$. For large enough $n$, we therefore see that $\Sigma^{n,n}\tau_{c\leq 0}^dX$ has homotopy groups in Chow degrees $\leq -n$ and the desired pro-equivalence holds. Finally, note that all arguments above go through after we replace $X$ with $\Sigma^{\cE}_+A/K\otimes X$ so that this equivalence on homotopy becomes a cellular equivalence. We may therefore conclude that every $\tau_{c\leq 0}^dX$ has a convergent equivariant Adams--Novikov spectral sequence with $E_2$-page \[\Ext^{\ast,\ast}_{\pi_{\ast,\ast}^{A}\MGL^{\otimes 2}}(\MGL^{A}_{\ast,\ast},\MGL^{A}_{\ast,\ast}\tau_{c\leq 0}^dX).\]
    By the same argument as in \cref{rem: Chow ccon has ccon htpy} we see that there can not be any nontrivial differentials and this spectral sequence must collapse on the $E_2$-page. One may again replace $X$ by $\Sigma^{\cE}_+A/K\otimes X$ to see that the assumption that $\MGL_A\otimes X$ vanishes therefore implies that $X$ is zero as a cellular equivariant motivic spectrum and we are done.
\end{proof}
\begin{cor}\label{cor: heart as comodules}
    The adjunction
    \[\Mod(\tau_{c=0}\mathbb{1}_{A})_{c=0}\leftrightarrows \coMod(C_{c=0})\]
    is an adjoint equivalence.
\end{cor}
\begin{proof}
    It suffices to show that the adjunction in the statement of \cref{lem: conservativity} is comonadic since $C_{c=0}$ is precisely the comonad of this adjunction. As ascertained above, the left adjoint is conservative. In order to prove the necessary commutation with totalisations, see the argument in \cite[Proposition 4.3, (2,b)]{bachmann2022chow} which uses that we are working with truncated categories whence the condition simplifies. We may then apply \cite[Theorem 4.7.3.5]{HA} to conclude.
\end{proof}
\begin{remark}\label{rem: comonad is cooperations}
    The comonad $C_{c=0}$ on $\Mod(\tau_{c=0}\MGL_A)_{c=0}$ can be identified explicitly. Since $C$ it is cocontinuous and in particular additive, it is entirely determined by its action on the weight heart of $\Mod(\tau_{c=0}\MGL_A)$ which was identified with an additive $1$-category in \cref{cor: chow heart weight structure}. On this $1$-category it is given by tensoring with the co-operation algebra $\pi^A_{\ast,\ast}\tau_{c=0}\MGL_A^{\otimes 2}$ since its homotopy groups are flat over the coefficients (see \cref{rem: regularity of co-ops}). In particular, we see that this comonad is associated to a flat Hopf algebroid in $\RU(A)$-graded $A$-Mackey functors.
\end{remark} 
\begin{construction}
    Let $c\colon \Sp^{A}\to \SH^A(\C)$ denote the unit functor from \cref{cor: unit map (additive)}. Per construction, this sends orbits to the corresponding equivariant motivic spectra which are perfect pure. This can further be base changed to a composite left adjoint
    \[\Sp^{A}\to \Mod(\tau_{c=0}\MGL_A)\]
    such that this functor is t-exact for the standard t-structure on the source and the Chow t-structure on the right hand side. On hearts, we obtain a colimit-preserving functor of abelian categories
    \[c^\ast\colon \Mack(A)\simeq \cP_\Sigma(\Span(\Fin_{A});\mathrm{Ab})\to \Mod(\tau_{c=0}\MGL_A)_{c=0}.\]
    In fact, for every virtual complex $A$-representation $V^\an$ the Thom spectrum $\Th(V)$ gives rise to a tensor-invertible object $\tau_{c=0}(\Sigma^V\MGL_A)$ in the target. We may therefore uniquely extend $c^\ast$ to a symmetric monoidal functor
    \[c^\star\colon \Mack(A)^{\RU}\to \Mod(\tau_{c=0}\MGL_A)_{c=0}\]
    from $\RU(A)$-graded $A$-Mackey functors.
\end{construction}
\begin{lemma}\label{lem: heart is Mackey}
    The functor $c^\star$ above is the left adjoint in a monadic adjunction with monad given by the $\RU(A)$-graded Green functor homotopy groups of $\MU$, $\underline{\pi}_\star^{A}\MU.$ Thus, it induces an equivalence
    \[\Mod(\tau_{c=0}\MGL_{A})_{c=0}\simeq \Mod(\underline{\pi}^{A}_\star\MU)\]
\end{lemma}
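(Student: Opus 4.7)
The strategy is to verify the hypotheses of Barr--Beck--Lurie for the adjunction $c^\star\dashv c_\star$ and then identify the resulting monad by a projection-formula computation. The right adjoint $c_\star$ exists because $c^\star$ is cocontinuous by construction: the underlying functor $c^\ast$ is a left adjoint, and the symmetric monoidal extension to $\RU(A)$-graded Mackey functors is the free one along the inversion of the invertible objects $\tau_{c=0}(\Sigma^V\MGL_A)$, hence preserves all colimits by its universal property.

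The two monadicity conditions are that $c_\star$ is conservative and preserves reflexive coequalizers. For conservativity, by \cref{cor: chow heart weight structure} the heart $\Mod(\tau_{c=0}\MGL_A)_{c=0}$ is generated under colimits by the free modules $\tau_{c=0}(\Sigma^V A/K_+ \otimes \MGL_A)$ for subgroups $K\le A$ and $V\in\RU(A)$. Per construction, these objects are precisely $c^\star$ applied to the shifted orbit Mackey functors $\underline{A/K}[V]$, so $c^\star$ hits a family of generators and $c_\star$ is conservative. Preservation of sifted colimits follows because the generators are compact: mapping out of a free module $c^\star(\underline{A/K}[V])$ reduces, by adjunction, to mapping out of a compact perfect pure object in $\SH^A(\C)^\cell$, and compact objects are preserved by $\tau_{c=0}$ in the presence of \cref{cor: MGL t-exact}. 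Thus $c^\star$ is comonadic--monadic with respect to the induced monad $T=c_\star c^\star$.

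It remains to identify $T$. Because $c^\star$ is symmetric monoidal and its right adjoint is lax symmetric monoidal with conservative underlying functor, one has a projection formula
\[c_\star(c^\star(M)\otimes N)\simeq M\otimes_{\underline{A}}c_\star(N)\]
which is proved by reducing to $N$ a free module, i.e.\ $N=c^\star(N')$, in which case both sides agree tautologically, and then extending by colimits. Specialising to $N=\mathbb{1}=\tau_{c=0}\MGL_A$ yields $T(M)\simeq M\otimes_{\underline{A}}c_\star(\tau_{c=0}\MGL_A)$. By adjunction, $c_\star(\tau_{c=0}\MGL_A)$ is the $\RU(A)$-graded Mackey functor whose value at $(A/K,V)$ is the Chow-line homotopy group $\pi^K_{2|V|,|V|}\MGL$, and by \cref{prop: identification of the Chow line} this is canonically isomorphic to $\pi^K_{2|V|}\MU$. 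Its multiplicative structure, refined to a Green functor, is inherited from the commutative algebra structure on $\tau_{c=0}\MGL_A$ and agrees with that on $\MU$ because the isomorphism of \cref{prop: identification of the Chow line} is one of graded commutative rings (coherently in restriction and transfer). Hence $T\cong\underline{\pi}^A_\star\MU\otimes_{\underline{A}}-$, giving the desired equivalence.

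The main technical obstacle is justifying the projection formula at the level of hearts: one must know that the generators of the target abelian category are images under $c^\star$ of a cogenerating class, which is why the weight-structure analysis of \cref{cor: chow heart weight structure} is essential. Once this is in hand, everything else is a matter of bookkeeping using the ring identification of \cref{prop: identification of the Chow line} (and its cooperation refinement via \cref{cor: unit map for n coords} and \cref{rem: cooperations on the Chow line}, which ensures that the Green-functor structure, not just the abelian-group structure, matches).
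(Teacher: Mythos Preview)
Your approach is essentially the same as the paper's: establish monadicity by showing that $c^\star$ hits a family of compact (indeed dualisable) generators of the target abelian category, then identify the monad by computing $c_\star$ on the unit using \cref{prop: identification of the Chow line}. The paper compresses the first step into a single sentence (``the image of $c^\star$ contains a family of compact dualisable generators for the target abelian category so the adjunction must be monadic''), whereas you spell out the Barr--Beck verification and a projection formula.

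A few minor points of friction. First, the phrase ``compact objects are preserved by $\tau_{c=0}$'' is not literally what you need and is not true in general; the actual mechanism is that for a Chow-connective compact object $X$ and $M$ in the heart, $\Hom(\tau_{c=0}X,M)\cong\Hom(X,M)$, so compactness of $X$ in the ambient category suffices to make $\tau_{c=0}X$ compact in the heart (using that the t-structure is compatible with filtered colimits). Second, your notation $\pi^K_{2|V|,|V|}\MGL$ implicitly invokes the Thom isomorphism to collapse the $\RU(A)$-grading to a $\Z$-grading; the paper keeps the representation grading and writes $\pi^K_{V^{\an}}\MU$, which is cleaner since the Green-functor structure you care about is genuinely $\RU(A)$-graded. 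Third, your appeal to \cref{cor: unit map for n coords} and \cref{rem: cooperations on the Chow line} is unnecessary here: those concern the cooperation Hopf algebroid, which enters only in the subsequent identification of the comonad in \cref{cor: identification of the heart}, not in this lemma. The Green-functor structure follows already from the ring isomorphism of \cref{prop: identification of the Chow line} together with the fact that $c_\star$ respects restrictions and transfers by construction.
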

\begin{proof}
    Essentially per construction, the image of $c^\star$ contains a family of compact dualisable generators for the target abelian category so the adjunction must be monadic. In order to identify the monad, let $c_\star$ denote the right adjoint to $c^\star$. We may compute
    \begin{align*}
        c_\star(\tau_{c=0}\MGL_A)(A/K,V)&\cong \MGL^{A}_{0,0}\Sigma^{-V}_+A/K,\\
        &\cong \pi^{K}_{V^\an}\MU
    \end{align*}
    using \cref{prop: identification of the Chow line}. In fact, this is an identification of $\mathrm{RU}(A)$-graded Mackey functors since $c_\star$ is compatible with restrictions and transfers per construction.
\end{proof}
In particular, note that this abelian category is determined entirely in terms of non-motivic equivariant $\MU$. Using the description of the co-operations in \cref{rem: comonad is cooperations} as well as its explicit description in \cref{rem: cooperations on the Chow line}, we obtain a complete description of the heart of the Chow t-structure\footnote{In particular, the right hand side really is an abelian category whose coforgetful functor to modules is exact. This follows from t-exactness of the comonad exhibited in \cref{rem: restricted comonad} and is manifest in the flatness of the cooperation algebra exhibited in \cref{rem: comonad is cooperations}, cf. \cref{rem: regularity of co-ops}.}.
\begin{cor}\label{cor: identification of the heart}
    The functor
    \[\SH^{A}(\C)^\cell_{c=0}\to \coMod(\underline{\pi}_\star^{A}\MU^{\otimes 2})\]
    sending $X$ to $\tau_{c=0}(\MGL_{A}\otimes X)$ is an equivalence. 
\end{cor}
\begin{proof}
    First note that the forgetful functor $\Mod(\tau_{c=0}\mathbb{1}_{A})\to \SH^A(\C)^\cell$ induces an equivalence on hearts, using e.g. \cite[Lemma 29, (2)]{bachmanntambara}. We can therefore use \cref{cor: heart as comodules} and \cref{lem: heart is Mackey} to conclude.
\end{proof}
We therefore come to the remarkable conclusion that the heart of the Chow t-structure on $\SH^A(\C)^\cell$ is defined entirely in terms of non-motivic data. In fact, we see that the truncation of a perfect pure object is already purely topological.
\begin{remark}\label{rem: sf is topological}
    Let $X$ be a perfect pure motivic spectrum, then $\tau_{c=0}X$ can be identified with the comodule corresponding to the Chow degree zero $\MGL_A$-homology of $X$. Now every extension of perfect pure motivic spectra becomes split in $\MGL_A$ by \cref{rem: pure extensions are MGL split}. Therefore, this comodule is just an extension of a direct sum of free comodules on Thom classes and orbits. Since we are in Chow degree zero, this data is entirely captured on the topological side by the equivalence induced by Betti realisation.
    In summary for $X$ perfect pure, there is an identification of comodules
    \[\tau_{c=0}X\simeq \underline{\pi}_\star^{A}(\MU_{A}\otimes \Be^{A}X).\]
\end{remark}
In fact, one can refine this identification of the heart to an identification of the module category $\Mod(\tau_{c=0}\mathbb{1}_{A})$. Indeed, per construction the latter is compactly generated and the compact objects identify with the thick subcategory generated by objects of the form $\tau_{c=0}\mathbb{1}_A\otimes X\simeq \tau_{c=0}X$ for $X$ perfect pure. 
\begin{defn}
    Let $\cD^b_\Pure(\coMod(\underline{\pi}_\star^{A}\MU^{\otimes 2}))$ denote the thick subcategory of the derived category of $\coMod(\underline{\pi}_\star^{A}\MU^{\otimes 2})$ generated by objects of the form $\underline{\pi}_\star^A(\MU_A\otimes\Sigma^V_+A/K)$ for virtual $A$-representations $A$ and orbits $V$. 
\end{defn}
\begin{cor}\label{cor: identification of SF}
    There is a symmetric monoidal equivalence
    \[\Mod(\tau_{c=0}\mathbb{1}_{A})\simeq \Ind\cD^b_\Pure(\coMod(\underline{\pi}_\star^{A}\MU^{\otimes 2}))\]
\end{cor}
The arguments in \cite[Proposition 4.18]{bachmann2022chow} go through, and we replicate them here for convenience.
\begin{proof}
    The left hand side is compactly generated by the thick subcategory generated by Chow truncations of perfect pure objects, so it suffices to exhibit an equivalence
    \[\Mod(\tau_{c=0}\mathbb{1}_A)^\omega\simeq \cD^b_\Pure(\coMod(\underline{\pi}_\star^{A}\MU^{\otimes 2})).\]
     By \cref{cor: identification of the heart} we are therefore showing that the left hand side is the bounded derived category of its heart. Following the dual of \cite[Proposition 1.3.3.7]{HA}, it suffices to show that the heart has enough injective objects and that these have no higher $\Ext$ groups. Since we are working in a category of comodules over a comonad (in fact, a flat Hopf algebroid), the cofree comodule functor provides us with enough injectives coming from the category of modules over the Green functor $\underline{\pi}_\star^A\MU$, which itself is the heart of a category with bounded weight structure (\cref{cor: chow heart weight structure}) hence has enough injectives since it is equivalent to a category of additive presheaves of abelian groups. Since the injectives arise as cofree comodules on injective modules the Ext vanishing condition is immediate.
\end{proof}
\begin{remark}\label{rem: not really stable comodules}
    While the special fibre is entirely determined in terms of comodule-theoretic information, the answer above is not entirely satisfactory. Indeed, in the non-equivariant synthetic story, the special fibre may be identified with Hovey's stable derived category of (even) Adams--Novikov comodules (\cite[Proposition 4.53]{pstragowski_synthetic_2022}). This uses that $\MU$ is Landweber exact, since it makes recourse to the Landweber filtration (see \cite{hovey2004homotopy} and \cite[Lemma 3.5]{barthel2018algebraic}). We expect that the special fibre above arises as the stable derived category of an $\RU(A)$-graded variant of the moduli stack of equivariant formal groups of \cite{hausmann2023invariant} which should further admit a Landweber filtration. In particular, this would allow us to identify $\cD^b_\Pure(\coMod(\underline{\pi}_\star^{A}\MU^{\otimes 2}))$ with the perfect derived category of this stack. This question will be revisited this in future work. Structural identifications aside, we note that the primary comodule of interest for us is the unit comodule $\MU^{A}_\star$ whose endomorphisms recover the $\mathrm{E}_2$-page of the equivariant Adams--Novikov spectral sequence, and it is tautologically clear how to produce this element in the special fibre.
\end{remark}

\newpage
\section{Synthetic reconstruction}\label{sec: synthetic reconstruction}
In this section, we bring together all of the machinery established above to obtain our synthetic comparison result. Let us begin by introducing a synthetic deformation of $\Sp^{A}$. A more careful analysis of its properties, as well as a justification of why it deserves the title of equivariant synthetic spectra, will be provided in \cref{sec: filtered reconstruction,sec:featuresofsynthetic}. In the following, $A$ denotes a finite abelian group.
\subsection{Synthetic equivariant spectra}
\begin{defn}\label{def: topological ppure}
    Define the category $\Pure(A)$ of \emph{perfect pure $A$-spectra} to be the minimal subcategory of $\Sp^{A}$ containing all objects of the form $\Sigma^V_+A/K$ for virtual complex $A$-representations $V$ and subgroups $K$ and which is closed under extensions and retracts. 
\end{defn}
\begin{remark}
    This can again be equipped with an exact structure inherited from the inclusion, in which an epimorphism is a map whose fibre is again perfect pure and monomorphisms are dual to epimorphisms.
\end{remark}
\begin{remark}
At the trivial group, we see that $\Pure(\{e\})$ recovers the category of perfect even modules $\mathrm{Perf}_\even(\mathbb{S})$ considered in \cite{pstrkagowski2023perfect}, with the same exact structure.
\end{remark}
\begin{defn}\label{def: equivariant synthetif}
    Define the category of $A$-equivariant synthetic spectra $\Syn^{A}$ as the presentable stable hull
    \[\Syn^{A}=\Shv_\Sigma(\Pure(A);\Sp),\]
    i.e. the category of those additive presheaves that take cofibre sequences in $\Pure(A)$ to fibre sequences in $\Sp$.
\end{defn}
\begin{remark}\label{rem: synthetic sheaf condition}
    The remarks of \cref{rem: stable hull vs additive sheaves} go through here as well, where we may view this as a category of additive sheaves. In particular, $\Syn^A$ comes equipped with a preferred presentably symmetric monoidal structure extending the one on $\Pure(A)$.
\end{remark}
\begin{definition}\label{rem: synthetic t-structure}
    As a sheaf category, $\Syn^{A}$ also admits a t-structure coming from the standard t-structure on $\Sp$, and we will also denote it by $\tau_{c\geq\ast}$.
\end{definition}
Betti realisation then provides us with an immediate comparison functor between motivic and synthetic spectra.
\begin{lemma}\label{lem: setup of comparisoon functor}
    Equivariant Betti realisation restricts to a symmetric monoidal functor of exact categories
    \[\Be^A\colon \Pure_\C(A)\to\Pure(A),\]
    giving rise to a symmetric monoidal left adjoint
    \[L\colon\SH^A(\C)^\cell\to \Syn^{A}.\]
\end{lemma}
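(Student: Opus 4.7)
The proof breaks naturally into showing (a) that $\Be^A$ restricts to a symmetric monoidal exact functor on the perfect pure subcategories, and (b) deducing the symmetric monoidal left adjoint on sheaf categories from a universal property. My plan is as follows.

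First, I would invoke the equivariant Betti realisation functor $\Be^A\colon \SH^A(\C)\to \Sp^A$ from Appendix \ref{appendix: Betti realisation}, which is a symmetric monoidal exact functor. The key computational input I would use is the explicit description of its effect on the distinguished generators: for every virtual complex $A$-representation $\cE$ on $\B A$, $\Be^A(\Th(\cE))\simeq S^{\cE^{\mathrm{an}}}$, and for every subgroup $K\subset A$, $\Be^A((A/K)_+)\simeq (A/K)_+$ (these are instances of \cref{lem: examples of Betti realisations}, already used to identify $\Be^A \MGL_A\simeq \MU_{A^{\mathrm{an}}}$ in \cref{cor: Betti image of MGL}). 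Hence $\Be^A$ sends the generators $\Sigma^\cE_+ A/K$ of $\Pure_\C(A)$ to the generators $\Sigma^{\cE^{\mathrm{an}}}_+A/K$ of $\Pure(A)$.

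Next, since $\Be^A$ is exact as a functor of stable $\infty$-categories, it preserves fibre sequences and retracts; the category $\Pure(A)$ is defined to be closed under extensions and retracts of these generators, so a straightforward induction on the extension/retract presentation shows $\Be^A$ factors through $\Pure(A)$, giving the restriction $\Be^A\colon \Pure_\C(A)\to \Pure(A)$. The exact structures on both sides are both determined by the property that a map is a pure epimorphism if and only if its stable fibre is again perfect pure (\cref{def: pure epimorphisms}); since $\Be^A$ preserves fibres and sends perfect pure to perfect pure, pure epimorphisms are taken to pure epimorphisms, so the restriction is exact. Symmetric monoidality of the restriction follows from symmetric monoidality of the unrestricted $\Be^A$, together with the observation (from \cref{def: perfect pure motives}) that $\Pure_\C(A)$ is closed under tensor products.

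For the second part, I would appeal to the universal property of the presentable stable envelope discussed in \cref{rem: stable hull vs additive sheaves}: for any exact category $\cE$, the assignment $\cE\mapsto \Shv_\Sigma(\cE;\Sp)$ is left adjoint to the forgetful functor from presentable stable $\infty$-categories to exact categories, and this adjunction is symmetric monoidal when source and target are equipped with Day convolution. By \cref{thm: Chow heart structure} we have $\SH^A(\C)^\cell\simeq \Shv_\Sigma(\Pure_\C(A);\Sp)$, and symmetric monoidally by the remark following \cref{cons: Chow heart structure}. The symmetric monoidal exact functor $\Be^A\colon \Pure_\C(A)\to \Pure(A)$ therefore uniquely extends to a symmetric monoidal colimit-preserving functor $L\colon \SH^A(\C)^\cell\to \Syn^A$; being colimit-preserving between presentable $\infty$-categories, $L$ is automatically a left adjoint by the adjoint functor theorem.

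The only mildly subtle step is matching the symmetric monoidal structure on $\Syn^A$ inherited from $\Pure(A)$ under Day convolution with the one described abstractly on the sheaf envelope; this is handled exactly as in \cite[Proposition 2.30]{pstragowski_synthetic_2022} for the nonequivariant case, and requires no new input in our setting. Everything else is formal once the compatibility of $\Be^A$ with Thom spectra and orbits is in hand.
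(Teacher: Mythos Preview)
Your proposal is correct and follows essentially the same approach as the paper: both argue that $\Be^A$ carries the generators $\Sigma^\cE_+A/K$ to the corresponding generators of $\Pure(A)$ and then use exactness of $\Be^A$ together with closure under extensions and retracts to obtain the restriction, with exactness of the restricted functor deduced from preservation of (co)fibre sequences. The paper's proof is terser and leaves the construction of $L$ implicit, whereas you spell out the universal property of the presentable stable envelope from \cref{rem: stable hull vs additive sheaves}; this is a welcome elaboration but not a genuinely different route.
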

\begin{proof}
    It is clear from the definition of either side that Betti realisation restricts to a functor between the perfect pure categories. To see that it is a functor of exact categories, note that it preserves cofibre sequences hence sends a pure epimorphism in the source to a pure epimorphism in the target. Since it is symmetric monoidal, it does the same for pure monomorphisms.
\end{proof}
\subsection{Covering lifting property}
The restricted Betti realisation functor furthermore has excellent site-theoretic properties; we saw that it preserves covers, we will show that it furthermore detects covers and that covers can be lifted. The key tool is the identification of the special fibre: just as perfect pure motivic spectra have well behaved $\MGL_{A}$-homology, we can say the same in the topological setting, using that $\MU_{A}$ is concentrated in even degrees.
\begin{lemma}\label{lem: topological epi gives ses}
    Let $p\colon X\to Y$ be a map in $\Pure(A)$ with fibre $F$. If $p$ is a pure epimorphism, i.e. $F$ is again perfect pure, then we obtain a short exact sequence
    \[0\to \underline{\pi}^A_\star\MU_{A}\otimes F\to\underline{\pi}^A_\star\MU_{A}\otimes X
    \to \underline{\pi}^A_\star\MU_{A}\otimes Y\to 0\]
    of $\RU(A)$-graded Mackey comodules.
\end{lemma}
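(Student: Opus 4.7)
The plan is to mimic the motivic argument of \cref{rem: pure extensions are MGL split}: I will show that every pure epimorphism in $\Pure(A)$ becomes split after smashing with $\MU_A$, which immediately implies the claimed short exactness and in fact automatically upgrades it to an exact sequence of $\RU(A)$-graded Mackey comodules (since a split exact sequence of $\MU_A$-modules stays exact after applying the coaction). The first step is to observe that perfect pure $A$-spectra are dualisable and closed under tensor products: this holds on the generators $\Sigma^V_+A/K$ and is preserved under extensions and retracts. Hence the extension class $Y \to \Sigma F$ classifying the cofibre sequence $F \to X \to Y$ gets sent, after tensoring with $\MU_A$, to a class in $\pi^A_{-1}(\MU_A \otimes F \otimes Y^\vee)$, and $F \otimes Y^\vee$ is again perfect pure. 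Therefore it suffices to prove
\[\pi^A_n(\MU_A \otimes Z) = 0 \text{ for all odd } n \text{ and all perfect pure } Z \in \Pure(A).\]

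To verify this vanishing statement, I would reduce to the generators $\Sigma^V_+A/K$ and then close under extensions and retracts. For a generator, the restriction-coinduction adjunction gives
\[\pi^A_n(\MU_A \otimes \Sigma^V_+A/K) \cong \pi^K_n(\Sigma^{V|_K} \MU_K),\]
where I use that $\MU$ restricts correctly (being absolute/global in the topological sense of \cite{hausmann2022global}). Since complex representations are $\MU_A$-oriented, the Thom isomorphism further identifies this with a shift of $\pi^K_\ast \MU_K$ by the real dimension $2|V|_K|$, which is even. Comezaña's evenness theorem (cited in the introduction, \cite[\S XXVIII]{comezana1996calculations}) then yields vanishing in all odd total degrees. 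Closure under extensions follows at once from the long exact sequence, and closure under retracts is formal.

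Having established this, the original cofibre sequence $F \to X \to Y$ becomes $\MU_A$-split, so applying $\underline{\pi}^A_\star$ to $\MU_A \otimes (F \to X \to Y)$ produces a split short exact sequence of $\RU(A)$-graded Mackey functors. The $\MU_A$-comodule structure on each term is induced by the unit map $\MU_A \to \MU_A \otimes \MU_A$ and respects this splitting by naturality, so the sequence is short exact in the category of $\RU(A)$-graded Mackey comodules.

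The main conceptual input is the topological incarnation of Chow vanishing, namely Comezaña's evenness of $\pi^A_\ast\MU_A$ combined with the complex Thom isomorphism; no real technical obstacle arises beyond bookkeeping. It is worth noting the parallel with the motivic argument: there the analogous splitting rested on \cref{lem : Chow vanishing for perfect pure 2}, while here the role of the Chow line is played by the even part of $\pi^A_\star\MU_A$.
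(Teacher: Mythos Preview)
Your proof is correct and rests on the same key input as the paper's: Comeza\~na's evenness of $\pi^A_\ast\MU_A$ combined with the Thom isomorphism and restriction to subgroups shows that the $\MU_A$-homology of any perfect pure $A$-spectrum is concentrated in even degrees, whence the connecting map in the long exact sequence vanishes. The paper argues this boundary-vanishing directly rather than passing through the extension class and splitting, but the difference is purely cosmetic; your route via \cref{rem: pure extensions are MGL split} yields the slightly stronger conclusion that the sequence is $\MU_A$-split, at the cost of a small detour through dualisability.
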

\begin{proof}
    Since the homotopy groups of $\MU_{A}$ are concentrated in even degrees by \cite[Theorem 5.3]{comezana1996calculations}, we see that the same is true for the $\MU_{A}$-homology of a $A$-spectrum of the form $\Sigma^V_+A/K$ by applying the Thom isomorphism and passing to a subgroup. This property is closed under extensions and retracts, so if $F$ is perfect pure then then boundary homomorphism in the long exact sequence associated to $F\to X\to Y$ must vanish.
\end{proof}
\begin{cor}\label{cor: Betti reflects coverings}
    The restricted Betti realisation functor
    \[\Be^A\colon \Pure_\C(A)\to \Pure(A)\]
    reflects epimorphisms.
\end{cor}
\begin{proof}
    There is a commutative diagram coming from \cref{rem: sf is topological}
    \[\begin{tikzcd}
        \Pure_\C(A)\arrow[d, "\Be^A"]\arrow[r]&\SH^A(\C)^{\cell}_{c=0}\arrow[d, "\simeq"]\\
        \Pure(A)\arrow[r]&\coMod(\underline{\pi}^A_\star\MU^{\otimes 2})
    \end{tikzcd}\]
    where the top horizontal arrow is induced by taking the Chow coconnective cover -- or equivalently, taking $\tau_{c=0}\MGL_{A}$-homology -- and the bottom horizontal arrow is induced by taking $\MU_{A}$-homology. The right vertical arrow is the equivalence of \cref{cor: identification of the heart}.
    Let $p\colon X\to Y$ be a map of perfect pure motivic spectra with fibre $F$, then if $\Be^Ap$ is a pure epimorphism it must induce a short exact sequence in the bottom right vertex by \cref{lem: topological epi gives ses}. Equivalently, this means that $p$ gave rise to a short exact sequence on Chow connective covers so that by \cref{lem: char of fib seq's} it must have been a pure epimorphism. 
\end{proof}
We now establish a second technical tool, namely that of a common envelope for a map of excellent sites in the sense of \cite[Definition 2.39]{pstragowski_synthetic_2022}. The idea of this construction is to monomorphically embed objects in the source and target of the restricted Betti realisation functor into a sufficiently large and well-behaved Ind-object so that one can lift covers from $\Pure(A)$ to $\Pure_\C(A)$.
\begin{lemma}\label{lem: common envelope}
    The restricted Betti realisation functor
    \[\Be^A\colon \Pure_\C(A)\to \Pure(A)\]
    admits a common envelope.
\end{lemma}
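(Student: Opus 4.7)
The plan is to construct the common envelope explicitly from the cell filtrations of $\MGL_A$ and $\MU_A$. For each $X \in \Pure_\C(A)$, set
\[\bar X = \varinjlim_i \bigl(X \otimes F_i\bigr) \in \Ind(\Pure_\C(A)),\]
where $\{F_i\}$ is the filtration of $\MGL_A$ by perfect pure motivic spectra along pure monomorphisms furnished by \cref{cor: cellularity for MGL} and \cref{ex: transition maps}; topologically, set $\bar Y = \varinjlim_i \bigl(Y \otimes \Be^A F_i\bigr)$ for $Y \in \Pure(A)$. Compatibility of the two envelopes under Betti realisation is immediate from \cref{cor: Betti image of MGL}, which identifies $\Be^A \MGL_A \simeq \MU_A$, while the inclusions $X \hookrightarrow \bar X$ and $Y \hookrightarrow \bar Y$ are pure monomorphisms because they are colimits of the pure monomorphisms induced by the unit map $\mathbb{1} \to F_0$.

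The substantive content is the lifting of covers. Given a pure epimorphism $p\colon Z \to \overline{\Be^A X}$ in $\Ind(\Pure(A))$, compactness of perfect pure spectra reduces us to the case where $p$ factors through $\Be^A(X \otimes F_i)$ for some $i$. By \cref{lem: topological epi gives ses}, $p$ induces a short exact sequence of $\RU(A)$-graded Mackey comodules on $\underline{\pi}_\star^A \MU$-homology. Via the special fibre identification in \cref{cor: identification of the heart} together with the bridge of \cref{rem: sf is topological}, this comodule-theoretic data lifts uniquely to a short exact sequence in the Chow heart of $\SH^A(\C)^\cell$, which by \cref{lem: char of fib seq's} is realised by a pure epimorphism of motivic perfect pure spectra refining (the motivic model of) $p$. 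Including this refinement into $\bar X$ produces the required common refinement.

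The main obstacle will be the precise verification of the coherence axioms of \cite[Definition 2.39]{pstragowski_synthetic_2022}, in particular naturality of $X \mapsto \bar X$ and the compatibility of the two Ind-systems as $X$ and the cover $p$ vary. This ultimately reduces to the flatness of the co-operation algebra $\underline{\pi}_\star^A \MU^{\otimes 2}$ over $\underline{\pi}_\star^A \MU$ recorded in \cref{rem: regularity of co-ops} and \cref{rem: comonad is cooperations}: flatness ensures that the passage from topological to motivic covers through the comodule realisation of the special fibre is functorial and exact, so that lifts of covers can be chosen coherently in $X$ rather than merely pointwise.
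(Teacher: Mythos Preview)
Your proposal conflates two distinct notions. A \emph{common envelope} in the sense of \cite[Definition 2.39]{pstragowski_synthetic_2022} is a \emph{single} Ind-object $M = \{M(\alpha)\}_\alpha$ in $\Pure_\C(A)$, not a functorial assignment $X \mapsto \bar X$. The conditions to verify are that (i) the presheaf $X \mapsto \varinjlim_\alpha \pi_0 \map(X, M(\alpha))$ on $\Pure_\C(A)$ is a sheaf, (ii) the analogous presheaf on $\Pure(A)$ formed with $\Be^A M(\alpha)$ is a sheaf, (iii) Betti realisation induces an isomorphism between them, and (iv) every $Y \in \Pure(A)$ admits a pure monomorphism into $\Be^A M(\alpha)$ for cofinally many $\alpha$. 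There is nothing about naturality in $X$ and nothing about lifting covers: the covering lifting property is a \emph{consequence} of having a common envelope together with reflection of epimorphisms (via \cite[Proposition 2.43]{pstragowski_synthetic_2022}, as in \cref{cor: clp}), not the content of the lemma itself.

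Your instinct to use the cell filtration of $\MGL_A$ is correct, but the right $M$ is the single Ind-object with colimit $\bigoplus_{V,K} \MGL_A \otimes \Sigma^V_+ A/K$. The sum over all representation-sphere twists and orbits is what makes condition (iv) work: it ensures $Y \otimes \Be^A M \simeq \Be^A M$, so the unit map furnishes the required pure monomorphism for every $Y$. Conditions (i)--(iii) then reduce to the Chow-degree vanishing of \cref{lem : Chow vanishing for perfect pure 2} and the identification in \cref{rem: sf is topological}. Separately, your middle paragraph, besides addressing the wrong statement, contains a gap: \cref{lem: char of fib seq's} characterises when a \emph{given} map in $\Pure_\C(A)$ is a pure epimorphism; it does not manufacture a motivic lift out of a short exact sequence in the heart, and there is no reason the source $Z$ of an arbitrary topological cover should lie in the essential image of $\Be^A$.
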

\begin{construction}
    Let $\{M(\alpha)\}_\alpha$ be an Ind-object in $\Pure_\C(A)$ such that its colimit $M$ in $\SH^A(\C)^{\cell}$ can be identified as
    \[M\simeq \bigoplus_{V,K}\MGL_{A}\otimes \Sigma^{V}_+A/K\]
    where the sum is such that every virtual $A$-representation $V$ appears countably infinitely many times and every subgroup $K$ of $A$ appears countably infinitely many times as well. Note that such an Ind-object certainly exists: for every $K$ one can let the index $V$ run through a saturated flag of a virtual $A$-universe and tensor this with the Gra{\ss}mannian formula for $\MGL_{A}$. Then taking the filtered colimit iterated extensions over an indexing set in which every subgroup 
    $K$ of $A$ appears infinitely many times gives us a model for such an $M$.
\end{construction}
\begin{proof}[Proof of \cref{lem: common envelope}]
    We proceed to verify the conditions in \cite[Definition 2.39]{pstragowski_synthetic_2022}, cf. \cite[Remark 2.40]{pstragowski_synthetic_2022} for Ind-object $\{M(\alpha)\}_\alpha$ defined above, which in our case take the following form.
    \begin{enumerate}
        \item The presheaf (of sets) on $\Pure_\C(A)$ defined by $$X\mapsto\varinjlim_\alpha\pi_0\map(X,M(\alpha))$$ is a sheaf.
        \item The presheaf (of sets) in $\Pure(A)$ defined by  $$Y\mapsto\varinjlim_\alpha\pi_0\map(Y,\Be^AM(\alpha))$$ is a sheaf.
        \item For all $X$ in $\Pure_\C(A)$, the natural map
        \[\varinjlim_\alpha\pi_0\map(X,M(\alpha))\to\varinjlim_\alpha\pi_0\map(\Be^AX,\Be^AM(\alpha))\]
        is an equivalence.
        \item For any $Y\in \Pure(A)$ there exists a map $Y\to \Be^AM$ such that the sub diagram on indices $\alpha$ for which the map $Y\to \Be^AM(\alpha)$ is a pure monomorphism is cofinal.
    \end{enumerate}
    In order to verify the first condition, note that this presheaf is additive and sends $X$ to $\pi_{0,0}^AM\otimes X^\vee$. Since $M$ is a sum of of tensors of $\MGL_A$ by perfect pure objects, we see that any $M\otimes X^\vee$ has homotopy concentrated in nonnegative Chow degree, so that any fibre sequence in $\Pure_\C(A)$ gets sent to a short exact sequence which suffices to prove that it is a sheaf following \cite[Theorem 2.8]{pstragowski_synthetic_2022}. In fact, the exact same argument for the second condition by means of \cref{lem: topological epi gives ses}. The third point then follows from this description and \cref{rem: sf is topological}. Finally, note that any $Y\in \Pure(A)$ admits a pure monomorphism into $\Be^AM$. This can be constructed by means of the unit map
    \[Y\to Y\otimes \Be^AM\simeq \Be^AM.\]
    Indeed, from the construction and \cref{ex: transition maps} we see that the structure maps in the filtered diagram $\Be^AM(\alpha)$ are all pure monomorphisms since this is true for $M(\alpha)$ and Betti realisation preserves pure monomorphisms. The final identification $Y\otimes \Be^AM\simeq \Be^AM$ then follows from the definition of $M$: every cofibre sequence in $\Pure(A)$ becomes split after tensoring with $\MU_{A}$ so that $Y\otimes \Be^AM$ can be identified with a sum of twists of $\Be^AM$ by complex representation spheres and orbits, which is itself again equivalent to $\Be^AM$.
\end{proof}
\begin{cor}\label{cor: clp}
    The restricted Betti realisation functor
    \[\Be^A\colon \Pure_\C(A)\to \Pure(A)\]
    has the covering lifting property.
\end{cor}
\begin{proof}
    Using \cite[Proposition 2.43]{pstragowski_synthetic_2022} this follows from \cref{lem: common envelope} and \cref{cor: Betti reflects coverings}.
\end{proof}
\subsection{Perfect pure reconstruction}
In this section we come to the key computation that allows us to identify the two perfect pure subcategories mod $p$ for an arbitrary prime. 
\begin{thm}\label{thm: pure reconstruction}
    Let $p$ be an arbitrary prime, then the restricted Betti realisation functor
    \[\Be^A\colon \Pure_\C(A)\to \Pure(A)\]
    is fully faithful mod $p$.
\end{thm}
\begin{remark}
    Note that both sides are viewed as enriched in spectra in order to make sense of the mod $p$ mapping spectra.
\end{remark}
Note that both categories are closed under taking duals and tensor products, which $\Be^A$ preserves, so \cref{thm: pure reconstruction} can be reduced to the following by setting $w=0,t\geq 0$.
\begin{propn}\label{prop: equivariant GI}
    Let $X$ be a perfect pure motivic spectrum, then the Betti realisation map
    \[\pi^A_{t,w}X/p\to \pi^{A}_t\Be^AX/p\]
    is an isomorphism whenever $t-2w\geq 0$
\end{propn}
This passes through two intermediate results; first we show that a weaker statement is true for all perfect pure $X$, and then we refine this to the desired statement using the Adams--Novikov spectral sequence.
\begin{construction}
    Let $\cC_{A}\subset \SH^A(\C)^\cell$ be the full subcategory on those $E$ admitting an integer $k_E$ such that the map
    \[\pi_{t,w}^{A}E/p\to \pi_t^{A}\Be^AE/p\]
    is an equivalence for $t-2w\geq k_E$. 
\end{construction}
\begin{remark}\label{rem: closure properties}
    This category $\cC_{A}$ is clearly thick and closed under bigraded (de)suspensions, but it is not closed under all filtered colimits: given some sequential diagram $E_\alpha$ in $\cC_{A}$ it might be that the integers $k_{E_\alpha}$ tend to infinity. If this sequence can be uniformly bounded, then this colimit clearly remains in $\cC_{A}$. The reason for working with this slightly awkward category is that any version where the possible indices $k_E$ are uniformly bounded below would no longer be closed under fibres.
\end{remark}
\begin{propn}\label{prop: pure is in A}
    There is an inclusion $\Pure_\C(A)\subset \cC_{A}$.
\end{propn}
    Using the closure properties in \cref{rem: closure properties} we note the following: it suffices to prove that objects of the form $\Sigma^\cE_+A/K$ are contained in $\cC_{A}$. In fact, since this is a condition on fixed points we can make a further reduction; by induction on the cardinality of $A$. The base case of the trivial group is covered by the nonequivariant result \cite[Theorem 7.30]{pstragowski_synthetic_2022}. if $A/K$ is an orbit corresponding to a proper subgroup, then the perfect pure motivic spectrum $\Sigma^\cE_+A/K$ is in the image of the (co-)induction functor from $K$ so we may reduce to lower ranks. In order to handle the $A$-fixed points of a perfect pure motivic spectrum of the form $\Th(\cE)$ we appeal to induction again; using \cref{ex: horbits of Thom spectrum} we see that it suffices for the associated graded pieces $\Th(\cE)\otimes\E[K]$ of the filtration by adjacent isotropy to be contained in $\cC_{A}$. Taking geometric fixed points for a proper subgroup $K$ lands us in the case of a group $W=A/K$ of strictly lower rank so we again only need to work at the top group which is covered by the lemma below.
\begin{lemma}\label{lem: clspace is in A}
    For any virtual $A$-representation $\cE$, the homotopy orbits $\Th(\cE)_{\mathrm{h}A}$ are contained in $\cC_{\{e\}}$.
\end{lemma}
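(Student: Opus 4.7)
The strategy is to reduce to the nonequivariant base case \cite[Theorem 7.30]{pstragowski_synthetic_2022} by approximating $\Th(\cE)_{\mathrm{h}A}$ via the Ind-scheme model for $\B_\mot A$.

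First, by \cref{ex: horbits of Thom spectrum} and \cref{ex: model for classifying space}, we have
\[\Th(\cE)_{\mathrm{h}A} \simeq \Th_{\B_\mot A}(\widetilde{\cE}) \simeq \varinjlim_n \Th_{\B^n_\mot A}(\widetilde{\cE}_n),\]
a filtered colimit of motivic Thom spectra over the smooth quasi-projective approximations of the motivic classifying space. For each fixed $n$, $\B^n_\mot A$ admits a motivic cell decomposition (for instance via a Bia\l ynicki--Birula style argument applied to the residual torus action on the total space $\mathbb{V}(n\rho)\setminus\bigcup_K\mathbb{V}(n\rho)^K$, in the spirit of \cref{lem: cell structure on Grass at T}). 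Thomifying this as in \cref{cor: cell structure on Thom over Grass at T} exhibits each $\Th_{\B^n_\mot A}(\widetilde{\cE}_n)$ as a finite iterated extension of perfect pure nonequivariant motivic spectra. Since $\cC_{\{e\}}$ is thick by \cref{rem: closure properties} and contains $\Pure_\C(\{e\})$ by the base case \cite[Theorem 7.30]{pstragowski_synthetic_2022}, each $\Th_{\B^n_\mot A}(\widetilde{\cE}_n)$ belongs to $\cC_{\{e\}}$ with some bound $k_n$.

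The main obstacle is passing to the filtered colimit, since \cref{rem: closure properties} only permits this if the bounds $k_n$ remain uniformly bounded. The key observation is that the cofibre of each transition map $\Th_{\B^n_\mot A}(\widetilde{\cE}_n)\to \Th_{\B^{n+1}_\mot A}(\widetilde{\cE}_{n+1})$ is supported on the newly added stratum, and its motivic Thom spectrum lives in bigradings whose Chow degree coordinate grows linearly in $n$ (reflecting that the new cells of $\B_\mot A$ come from complex representations of increasing dimension). Consequently, in any fixed bidegree $(t,w)$ both the motivic mod $p$ homotopy groups $\pi^{\{e\}}_{t,w}\Th_{\B^n_\mot A}(\widetilde{\cE}_n)/p$ and their topological counterparts after Betti realisation stabilise at some finite stage, and the stage-$n$ isomorphism produced by the previous paragraph assembles into the desired comparison for the colimit in a range $t-2w \geq k_\cE$ depending only on $\cE$. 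This is the technical heart of the argument and is where the bookkeeping of bigraded connectivity of the motivic cells in $\B_\mot A$ must be done carefully.
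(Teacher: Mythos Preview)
Your proposal has a genuine gap at its key step: the claimed cell decomposition of $\B^n_\mot A$ via a Bia\l ynicki--Birula argument is not justified. \cref{lem: cell structure on Grass at T} applies to smooth \emph{projective} varieties with $\T_\C$-action, but $\B^n_\mot A = \E^n A / A$ is not projective (already for $A=\mu_m$ and the standard model $(\A^n\setminus 0)/\mu_m$ it is quasi-affine), and there is no ``residual torus action'' to invoke: $\E^n A$ is built from the regular representation of the finite group $A$, not of an ambient torus, and after quotienting by $A$ nothing of the sort survives. Without this cell structure you cannot conclude that $\Th_{\B^n_\mot A}(\widetilde{\cE}_n)$ is perfect pure, and the rest of the argument has no foundation. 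Your final paragraph also concedes that the connectivity bookkeeping ``must be done carefully'' without actually carrying it out.

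The paper's proof takes a more direct route. It first reduces to cyclic groups $A = \mu_n$, using that classifying spaces and Thomification are compatible with finite products. For $\mu_n$ it then invokes the explicit purity cofibre sequence
\[\B_\mot\mu_{n+}\to \P^\infty_+\to \Th_{\P^\infty}((Q^1_\infty)^{\otimes n}),\]
coming from the short exact sequence $0 \to \mu_n \to \T_\C \xrightarrow{[n]} \T_\C \to 0$, and Thomifies it by lifting the $\mu_n$-representation $\cE$ to a $\T_\C$-representation $\cG$. This reduces the claim to showing that $\Th_{\P^\infty}(\widetilde{\cG}) \in \cC_{\{e\}}$. But $\P^\infty$ is the colimit of the $\P^n$, which are finite $\MGL$-projective in the sense of \cite[Definition~7.6]{pstragowski_synthetic_2022}, so \cite[Theorem~7.30]{pstragowski_synthetic_2022} applies to each stage with the \emph{uniform} bound $k=0$, and the passage to the colimit is immediate via \cref{rem: closure properties}. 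Note that had you actually established perfect purity of your finite approximations, the same uniform bound would have rendered your entire connectivity discussion unnecessary; the real content lies in identifying a tractable model, which the fibre sequence over $\P^\infty$ supplies cleanly.
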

\begin{proof}
    Recall from \cref{ex: horbits of Thom spectrum} that the homotopy orbits are computed by
    \[\Th(\cE)_{\mathrm{h}A}\simeq \Th_{\B_{\mot}A}(\widetilde{\cE}),\]
    i.e. the Thom spectrum of the associated bundle. Since the formation of classifying spaces as well as the Thomification process preserves finite products and $\cC_{\{e\}}$ is closed under these, we may take $A$ to be a cyclic group $\mu_n$.
    Next, we see that the motive of $\B_\mot\mu_n$ participates in a purity cofibre sequence of the form
    \[\B_\mot\mu_{n+}\to \P^\infty_+\to \Th_{\P^\infty}((Q^1_\infty)^{\otimes n}).\]
    This purity cofibre sequence can be deduced from the unstable fibre sequence \[\B_\mot \mu_n\to \B_\mot T\xrightarrow{[n]} \B_\mot T\] which itself arises from the short exact sequence of fppf group schemes
    \[0\to \mu_n\to T\xrightarrow{[n]}T\to 0.\]
    Indeed, it suffices to identify $\mathbb{V}((Q^1_\infty)^{\otimes n})\setminus 0$ with $\B_\mot\mu_n$, but since these punctured total spaces of vector bundles are pulled back from the universal case, this can be deduced from the universal identification $\mathbb{V}(Q^1_\infty)\setminus 0\simeq \A^\infty_{\P^\infty}\setminus \P^\infty\simeq\P^\infty$ and the aforementioned unstable fibre sequence. Furthermore, the purity cofibre sequence can be appropriately Thomified for any virtual bundle on $\B_\mot \mu_n$ arising as the associated bundle of a $\mu_n$-representation $\cE$: any such $\cE$ can be lifted to a $T$-representation $\cG$ such that the restriction of the associated bundle $\widetilde{\cG}$ on $\P^\infty$ to $\B_\mot \mu_n$ recovers $\widetilde{\cE}$. In summary, we have reduced to proving that object of the form $\Th_{\P^\infty}(\widetilde{\cG})$ are contained in $\cC_{\{e\}}$.
    This now follows from the non-equivariant version of \cref{prop: equivariant GI}, where we note that such nonequivariant motivic spectra $\Th_{\P^\infty}(\widetilde{\cG})$ arise as filtered colimits of (Thom twists of) finite $\MGL$-projective spectra $\P^n$ in the sense of \cite[Definition 7.6]{pstragowski_synthetic_2022}.
\end{proof}
\subsection{Independence of weight}
We now refine the statement of \cref{prop: pure is in A} by proving that for any perfect pure $X$ one may take $k_X=0$ without loss of generality. This follows from the following observation
\begin{propn}\label{prop: indep of weight}
Let $\tau\in \pi_{0,-1}^{A}\mathbb{1}_A/p$ be the class formerly labeled $\tau_1$ in \cref{rem: tau classes}. Then for all perfect pure $X$, multiplication by $\tau$ induces an isomorphism
\[\pi_{t,w}^{A}X/p\cong \pi_{t,w-1}^{A}X/p\]
whenever $t-2w\geq 0$.
\end{propn}
 Using the long exact sequence in homotopy associated to the cofibre sequence
\[\Sigma^{0,-1}X/p\xrightarrow{\tau}X/p\to X/(p,\tau)\]
it suffices to show that the homotopy groups of $X/(p,\tau)$ are concentrated in nonpositive Chow degrees. In fact, by \cref{rem: Chow ccon has ccon htpy} it suffices to prove that this object is Chow coconnective. We can now reduce this to a question about the Adams--Novikov spectral sequence.
\begin{construction}\label{cons: comparison map}
    Let $X$ be a perfect pure motivic spectrum. The composite
    \[\Sigma^{0,-1}X/p\simeq \Sigma^{2}\Sigma^{-\epsilon}X/p\xrightarrow{\tau}X\to\tau_{c=0}X/p\]
    goes from a Chow $2$-connective object to a Chow coconnective object hence admits a unique factorisation through a map
    \[X/(p,\tau)\to \tau_{c=0}X/p.\]
\end{construction}
\begin{lemma}\label{lem: Chow ccon of X/p}
    The map $X/(p,\tau)\to \tau_{c=0}X/p$ constructed above is an equivalence in $\SH^A(\C)^\cell$.
\end{lemma}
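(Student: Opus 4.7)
The plan is to reduce the lemma to the claim that $X/(p,\tau)$ lies in the Chow heart $\SH^{A}(\C)^{\cell}_{c=0}$, after which the comparison map of \cref{cons: comparison map} becomes an equivalence by a universal-property argument. Indeed, the cofibre of the quotient $X/p \to X/(p,\tau)$ is $\Sigma^{1,-1}X/p$ which is Chow $\geq 3$, so applying $\tau_{c=0}$ to the composite $X/p \to X/(p,\tau) \to \tau_{c=0}X/p$ (which by construction agrees with the Chow truncation of $X/p$) yields a diagram $\tau_{c=0}X/p \xrightarrow{\simeq} \tau_{c=0}X/(p,\tau) \to \tau_{c=0}X/p$ with composite the identity. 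Granting that $X/(p,\tau)$ is in the Chow heart so that $\tau_{c=0}X/(p,\tau)=X/(p,\tau)$, the second map is then forced to be the inverse of the first, and we are done.

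To verify the heart claim, let $F$ denote the fibre of $X/(p,\tau) \to \tau_{c=0}X/p$; from the long exact sequence in Chow homotopy the object $F$ is Chow $\geq 1$, and we must prove $F=0$. The first step is to compute $\MGL_A \otimes F$. By the Thom isomorphism and \cref{prop: mod p^i equivariant HKO}, the bigraded homotopy $\pi^{A}_{\ast,\ast}(\MGL_A \otimes X)/p$ for a generator $X = \Sigma^V_+A/K$ is a Chow-shifted copy of $\pi^K_{2\ast}\MU/p[\tau]$, hence is concentrated in nonnegative Chow degrees with every class in positive Chow degree uniquely $\tau$-divisible. Consequently $\MGL_A\otimes X/(p,\tau)$ has homotopy concentrated on the Chow line. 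On the other side, by $t$-exactness of $\MGL_A\otimes-$ (\cref{cor: MGL t-exact}) together with \cref{rem: sf is topological}, the object $\MGL_A\otimes \tau_{c=0}X/p$ lies in the Chow heart of $\Mod(\MGL_A)$ with the same Chow-line homotopy, and the comparison map is an isomorphism on this Chow line by naturality. Hence $\MGL_A\otimes F = 0$.

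It remains to propagate the vanishing of $\MGL_A\otimes F$ to $F$ itself, and this is the main obstacle. Since $F$ is Chow $\geq 1$, each smash $\Sigma^{\mathcal{E}}_+A/K \otimes F$ for a virtual complex $A$-representation $\mathcal{E}$ and subgroup $K$ is again suitably bounded below in the Chow $t$-structure. We then run the mod $p$ equivariant motivic Adams--Novikov spectral sequence of \cref{sec: ANSS convergence} on each such smash, exactly as in the proof of \cref{lem: conservativity}: one truncates using $\tau_{c\leq 0}^{d}$, exploits that $\eta$ sits in Chow degree $-1$ to obtain the crucial pro-equivalence between $\tau_{c\leq 0}^{d}$-truncations and their $\eta$-completions, and concludes convergence in each bidegree. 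The $E_2$-page \[\Ext^{\ast,\ast}_{\pi^{A}_{\ast,\ast}\MGL^{\otimes 2}}\!\bigl(\MGL^{A}_{\ast,\ast},\,\MGL^{A}_{\ast,\ast}(\Sigma^{\mathcal{E}}_+A/K \otimes F)\bigr)\] vanishes by the previous paragraph, so convergence forces $\pi^{A}_{\ast,\ast}(\Sigma^{\mathcal{E}}_+A/K \otimes F)=0$ for every $\mathcal{E}$ and $K$. Since these Thom--orbit suspensions jointly detect vanishing in $\SH^{A}(\C)^{\cell}$, we conclude $F=0$, completing the proof.
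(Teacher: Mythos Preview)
Your overall architecture is sound and close in spirit to the paper's argument: reduce to showing $X/(p,\tau)$ lies in the Chow heart, check this after tensoring with $\MGL_A$ using \cref{prop: mod p^i equivariant HKO}, and then descend. The first paragraph is correct, and the second paragraph is essentially right (though to conclude $\MGL_A\otimes F=0$ as an \emph{object}, not just on $\pi^A$, you need the computation for all perfect pure $X$, not only generators---this follows since any perfect pure object is an iterated extension of generators and ``homotopy concentrated on the Chow line'' is closed under extensions).

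The genuine gap is in the third paragraph. The convergence argument you borrow from \cref{lem: conservativity} does not apply to $F$. That argument writes an object $Z$ in the Chow heart as $\varinjlim_d \tau^d_{c\leq 0}Z$; this works because $\varinjlim_d \tau^d_{c\leq 0}=\tau_{c\leq 0}$ and $\tau_{c\leq 0}Z=Z$ for $Z$ in the heart. But your $F$ is Chow $\geq 1$, so $\varinjlim_d \tau^d_{c\leq 0}F=\tau_{c\leq 0}F=0$, not $F$. Relatedly, the hypothesis needed for Adams--Novikov convergence in \cref{sec: ANSS convergence} is boundedness in the \emph{homotopy} $t$-structure, not the Chow $t$-structure; there is no reason for $F=\fib(X/(p,\tau)\to \tau_{c=0}X/p)$ to be homotopy-bounded-below, since $\tau_{c=0}X/p$ need not be.

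There is an easy fix that stays within your framework: since $\MGL_A\otimes-$ is $t$-exact (\cref{cor: MGL t-exact}), $\MGL_A\otimes\tau_{c=n}F\simeq \tau_{c=n}(\MGL_A\otimes F)=0$ for every $n$. Now each $\tau_{c=n}F$ \emph{is} in the Chow heart, so \cref{lem: conservativity} applies directly and gives $\tau_{c=n}F=0$ for all $n$. Since $F$ is Chow connective and the Chow $t$-structure is right complete (\cref{cons: Chow t-structure}), this forces $F=0$. The paper instead avoids the fibre altogether and compares the Adams--Novikov spectral sequences of $X/(p,\tau)$ and $\tau_{c=0}X/p$ directly: the former converges because $X/(p,\tau)$ is strongly dualisable hence $\eta$-complete, the latter because the heart is identified with comodules (\cref{cor: heart as comodules}); both $E_2$-pages agree and collapse for degree reasons.
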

\begin{proof}
    Using \cref{cor: heart as comodules} we see that the right hand side is entirely determined by its $\underline{\pi}^A_\star\MU^{\otimes 2}$-comodule and hence by the motivic Adams--Novikov spectral sequence for this comodule, which must collapse for degree reasons. In fact, the left hand side also has a converging Adams--Novikov spectral sequence up to $\eta$-completion by \cref{thm: convergence} whose $E_2$-page can be computed explicitly. Indeed, the computation of \cref{prop: mod p^i equivariant HKO} tells us that $\pi_{\ast,\ast}^{A}\MGL_{A}\otimes X/(p,\tau)$ is concentrated on the Chow line, where it is isomorphic to the Adams--Novikov comodule of the target, and idem for its twists by orbits and representation spheres. Since $X/(p,\tau)$ and its twists are is strongly dualisable, they are alredy $\eta$-complete by \cref{cor: unit is eta complete}. Further, since its comodule is concentrated on the Chow line, its motivic Adams--Novikov spectral sequence must collapse on the $E_2$-page so that map $X/(p,\tau)\to \tau_{c=0}X/p$ induces an equivalence on $A$-equivariant homotopy groups. By replacing $X$ with a perfect pure motivic spectrum of the form $X\otimes \Sigma^{\cE}_+A/K$ we see that this map is therefore a cellular equivalence.
\end{proof}
\begin{cor}
    For any perfect pure motivic spectrum $X$, the cofibre sequence
    \[\tau_{c\geq 1}X/p\to X/p\to \tau_{c=0}X/p\]
    coming from the Chow t-structure may be identified with the cofibre sequence
    \[\Sigma^{0,-1}X/p\xrightarrow{\tau}X/p\to X/(p,\tau)\]
    coming from $\tau$-multiplication.
\end{cor}
We may now bring together these two results to finish the proof the main computational result of this section.
\begin{proof}[Proof of \cref{prop: equivariant GI}]
    Combine \cref{prop: pure is in A} and \cref{prop: indep of weight}.
\end{proof}
This may now be combined with our previously established structural results to obtain the main result.
\begin{thm}\label{thm: synthetic reconstruction}
    The symmetric monoidal left adjoint
    \[L\colon \SH^A(\C)^\cell\to \Syn^{A}\]
    constructed in \cref{lem: setup of comparisoon functor} induces an equivalence on $p$-complete subcategories for any prime $p$.
\end{thm}
\begin{proof}
    Using \cref{thm: Chow heart structure}, write the source and target as sheaf categories
    \[L\colon \Shv_\Sigma(\Pure_\C(A);\Sp)\to \Shv_\Sigma(\Pure(A);\Sp)\]
    and let $R$ denote the right adjoint of $L$. Upon restriction to subcategories of $p$-complete objects, $R$ restricts to such a right adjoint while $L$ is to be replaced with the composite $L_p=(-)^\wedge_p\circ L$. We begin by showing that $L_p$ is fully faithful, or equivalently that the unit $\id\to RL_p$ is a $p$-complete equivalence on $\SH^A(\C)^\cell$. First note that $R$ preserves colimits by \cref{cor: clp} so that we may restrict our attention to the colimit generators of the source, i.e. for any perfect pure motivic spectrum $X$ we want to prove that the unit map
    \[X\to L_p(R(X/p))\]
    is an equivalence. In terms of sheaves on $\Pure_\C(A)$, this is asking whether the map 
    \[\tau_{c\geq 0}\map(-,X/p)\to \tau_{c\geq 0}\map(\Be^A-,\Be^AX/p)\]
    is an equivalence for all perfect pure $X$. Since perfect pure motivic spectra are closed under tensor products and duals, we see that this is equivalent to asking whether for all $t\geq 0$ the natural map
    \[\pi_{t,0}^{A}X/p\to \pi_t^{A}\Be^AX/p\]
    is an equivalence. This follows from \cref{prop: equivariant GI}. In order to see that $L_p$ is essentially surjective, note that it preserves colimits and is fully faithful so it suffices to note that it hits a family of compact generators for the target which follows from \cref{thm: pure reconstruction}.
\end{proof}

\newpage
\section{Features of equivariant synthetic spectra}\label{sec:featuresofsynthetic}
In this section and the next, we shift our attention to the category of synthetic $A$-spectra. We begin by defining a number of important functors, collect some results that describe $\Syn^A$ more explicitly, and identify one part of the deformation span it sits in.
\subsection{The synthetic analogue}
Recall that $\Syn^{A}$ comes with a sheafy t-structure denoted $\tau_{c\geq \ast}$ in analogy with the Chow t-structure. We will refer to this as the \emph{synthetic t-structure}. 
\begin{remark}\label{rem:basic facts about syn t structure}
    As usual, this synthetic t-structure is right complete and compatible with the symmetric monoidal structure and filtered colimits by \cite[Proposition 2.16]{pstragowski_synthetic_2022}.
\end{remark}

The synthetic t-structure allows us to define an equivariant version of the synthetic analogue.
\begin{defn}
    The \emph{equivariant synthetic analogue} is the functor
    \begin{equation*}
        \begin{split}
            \nu\colon \Sp^{A} &\to \Syn^{A} \\ 
        X &\mapsto \tau_{c\geq 0}\Map(-,X),
        \end{split}
    \end{equation*}
    while the \emph{equivariant spectral Yoneda embedding} is the functor
    \begin{equation*}
        \begin{split}
            Y\colon \Sp^{A} &\to \Syn^{A} \\
            X &\mapsto \Map(-,X).
        \end{split}
    \end{equation*}
\end{defn}
Let us collect some key properties of the functors $\nu$ and $Y$ that follow from general considerations.
\begin{lemma}\label{lem: properties of nu}
    The functors $\nu, Y\colon \Sp^{A}\to \Syn^{A}$ satisfy the following:
    \begin{enumerate}
        \item \label{item:nucont}The functor $\nu$ commutes with filtered colimits.
        \item \label{item:Ycont}The functor $Y$ commutes with all limits and colimits.
        \item \label{item:symmon}Both $\nu$ and $Y$ obtain a lax symmetric monoidal structure. 
        \item \label{item:strongsymmon}The restriction of $\nu$ to $\Pure(A)$ is strong symmetric monoidal. In fact, if $X\in \Pure(A)$ and $Z\in \Sp^{A}$ is arbitrary, then the map
        \[\nu X\otimes \nu Z\to \nu(X\otimes Z)\]
        is an equivalence.
    \end{enumerate}
\end{lemma}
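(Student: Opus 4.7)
Parts (\ref{item:nucont}) and (\ref{item:Ycont}) are largely formal. For (\ref{item:nucont}), every object of $\Pure(A)$ is compact in $\Sp^A$: the generators $\Sigma^V_+A/K$ are compact, and compactness is preserved by extensions and retracts. Hence $\Map(-,X)$ commutes with filtered colimits in $X$, and \cref{rem:basic facts about syn t structure} tells us that $\tau_{c\geq 0}$ commutes with filtered colimits in $\Syn^A$, which gives the claim for $\nu$. For (\ref{item:Ycont}), the functor $\Map(W,-)$ always preserves limits, so $Y$ preserves limits pointwise. For colimits, $W\in\Pure(A)$ being compact and $\Map(W,-)$ being exact together yield preservation of all colimits; the pointwise colimit of $Y(X_i)$ is the presheaf $W\mapsto \Map(W,\varinjlim_i X_i)$, which sends cofibre sequences in $\Pure(A)$ to fibre sequences and is therefore already a sheaf, so no sheafification intervenes.

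For (\ref{item:symmon}), one uses the external product maps $\Map(W_1,X_1)\otimes\Map(W_2,X_2)\to\Map(W_1\otimes W_2,X_1\otimes X_2)$ to lift $Y$ to a lax symmetric monoidal functor, realising it as the canonical lift of the ordinary Yoneda embedding to the Day convolution tensor product on $\Syn^A$. The truncation $\tau_{c\geq 0}$ is lax symmetric monoidal because the connective half of the synthetic t-structure is closed under tensor products (compatibility of the t-structure with the symmetric monoidal structure, \cref{rem:basic facts about syn t structure}), so $\nu = \tau_{c\geq 0}\circ Y$ inherits a lax symmetric monoidal structure.

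The content is part (\ref{item:strongsymmon}). For $X_1,X_2\in\Pure(A)$, the equivalence $\nu X_1\otimes \nu X_2\simeq \nu(X_1\otimes X_2)$ reflects the fact that the Yoneda embedding $\Pure(A)\hookrightarrow \Syn^A$ is strong symmetric monoidal for Day convolution, which is a general feature of the presentable stable envelope of a symmetric monoidal exact $\infty$-category. For the more general statement, I would fix $X\in \Pure(A)$ and exploit closure of $\Pure(A)$ under Spanier--Whitehead duals: the generators $\Sigma^V_+A/K$ are dualisable in $\Sp^A$, and duality is an exact involution, so $X$ admits a dual $DX\in\Pure(A)$. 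The standard formula for Day convolution against a representable then gives
\[(\nu X\otimes F)(W)\simeq F(W\otimes DX)\]
for any sheaf $F\in\Syn^A$ and any $W\in\Pure(A)$; this is a double application of the co-Yoneda lemma, and it makes sense in the sheafy setting because $\nu X$ is already a representable sheaf and the operation $-\otimes DX$ preserves covers in $\Pure(A)$, being tensoring with a dualisable object. Specialising $F=\nu Z$ yields
\[(\nu X\otimes \nu Z)(W)\simeq \tau_{c\geq 0}\Map(W\otimes DX,Z)\simeq \tau_{c\geq 0}\Map(W,X\otimes Z)\simeq \nu(X\otimes Z)(W),\]
as desired. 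The main subtlety will be to verify that this pointwise equivalence is indeed induced by the lax monoidal structure map from (\ref{item:symmon}), which should follow by tracing through the construction of that lax structure together with the naturality of the co-Yoneda identifications.
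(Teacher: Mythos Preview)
Your proof is correct and follows essentially the same approach as the paper, which defers items (\ref{item:nucont}), (\ref{item:symmon}), and (\ref{item:strongsymmon}) to \cite[Lemmas 4.4, 4.5, 4.24]{pstragowski_synthetic_2022} and argues (\ref{item:Ycont}) directly just as you do; you have simply unpacked the content of those cited lemmas, and your duality argument for (\ref{item:strongsymmon}) via the Day convolution formula $(\nu X\otimes F)(W)\simeq F(W\otimes DX)$ is precisely the mechanism behind \cite[Lemma 4.24]{pstragowski_synthetic_2022}. One small point worth making explicit in your write-up of (\ref{item:strongsymmon}): the middle equivalence in your final display relies on the fact that precomposition with the autoequivalence $-\otimes DX$ of $\Pure(A)$ commutes with the sheafy truncation $\tau_{c\geq 0}$, which is immediate once you note it is an autoequivalence of the site and hence of $\Syn^A$ preserving the t-structure.
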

\begin{proof}
    \cref{item:nucont,item:symmon} follow from \cite[Lemma 4.4]{pstragowski_synthetic_2022}, and \cref{item:strongsymmon} follows from \cite[Lemma 4.5]{pstragowski_synthetic_2022} and \cite[Lemma 4.24]{pstragowski_synthetic_2022}. For statement \ref{item:Ycont}, note that limits in $\Syn^A$ are computed levelwise, and since $\Map(-, X)$ also preserves limits one can check that $Y$ commutes with all limits. Thus $Y$ also commutes with finite colimits; to conclude that it commutes with all colimits it suffices to show that it preserves filtered colimits. But \cite[Corollary 2.9]{pstragowski_synthetic_2022} tells us that filtered colimits in $\Syn^A$ are also computed levelwise, and since all $A$-spectra in $\Pure(A)$ are compact, the same argument as for limits tells us that $Y$ commutes with filtered colimits. 
\end{proof}
We will consistently borrow motivic notation for some objects in the synthetic category.
\begin{notation}
    The tensor unit $\nu \mathbb{1}_A$ of $\Syn^{A}$ will be denoted $\mathbb{1}_A^\Syn$. More generally, given a virtual complex representation sphere $S^V$ in $\Sp^{A}$, we write $\Th(V)$ for the Picard element $\nu S^V$.
    \footnote{Note that $\Th(V)$ really is tensor-invertible by \cref{lem: properties of nu}.}
\end{notation}
One can check that under the motivic-synthetic equivalence of \cref{thm: synthetic reconstruction}, these objects correspond to each other (after $p$-completion).
\subsection{Equivariant lifts of tau}\label{ssec: eq lifts of tau}
In this section, we construct equivariant analogues of the $\tau$ map from synthetic homotopy theory.
\begin{definition}
    For some perfect pure $P$ we denote the sections over $P$ by 
    \begin{equation*}
        \begin{split}
            \Gamma(P;-)\colon \Syn^{A} &\to \Sp \\
            F &\mapsto F(P).
        \end{split}
    \end{equation*}
\end{definition}
This definition of the sections functor $\Gamma(P;-)$ is unsatisfactory: one would like to be able to recover the perfect even filtration on a $A$-spectrum in terms of the sections of its synthetic analogue, but the latter simply lands in $\Sp$ as constructed. We therefore define an enhanced analogue that remembers sections over all orbits along with restriction and transfer maps that allow us to lift to an $A$-spectrum.
\begin{construction}
    Note that $\Sp^{A}$ admits a Mackey weight structure coming from the tom Dieck splitting. Let $\cB(A)\subset \Sp^{A}$ denote the minimal full subcategory containing all orbits $(A/K)_+$ and closed under extensions and retracts. Then the inclusion $\cB(A)\subset (\Sp^{A})^\omega$ satisfies the hypotheses of \cite[Remark 2.2.6]{elmanto2022nilpotent}. In particular, \cite[Theorem 2.2.9]{elmanto2022nilpotent} gives an equivalence\footnote{Of course, closer analysis of $\cB(A)$ reveals that it is in fact merely a $(2,1)$-category and furthermore equivalent to $\Span(\Fin_{A})$, cf. \cite{guillou2024models, barwick2017spectral}.}
    \[\cP_\Sigma(\cB(A);\Sp)\simeq \Sp^{A}.\]    
\end{construction}
\begin{defn}\label{def: enhanced sections}
    Let $V$ denote a virtual complex $A$-representation. Then we obtain a functor of the form
    \[S^V\otimes-\colon \cB(A)\to \Pure(A).\]
    The \emph{enhanced sections functor} $\widetilde{\Gamma}(S^V;-)$ at $V$ is obtained as the restriction functor
    \[ \widetilde{\Gamma}(S^V;-):\Syn^{A}\simeq \Shv_\Sigma(\Pure(A);\Sp)\xrightarrow{(S^V\otimes-)^*}\Shv_\Sigma(\cB(A);\Sp)\simeq \Sp^{A}.\]
\end{defn}
\begin{remark}
    In order to verify that this is well defined, we need to make sure that $S^V\otimes-$ as defined above is a functor of exact categories. It is clearly additive, and in fact the exact structure on the source it inherits from the target degenerates to a split exact, i.e. additive structure so this is clear.
\end{remark}

The lemma below illustrates that this enhanced sections functor is a genuine lift of the sections functor defined previously.

\begin{lemma}
    The enhanced sections functor refines the sections functors in the sense that for all virtual complex $A$-representations $V$ there is a commutative diagram
    \[\begin{tikzcd}
        {\Syn^{A}} && {\Sp^{A}} \\
        & \Sp
        \arrow["{\widetilde{\Gamma}(S^V; -)}", from=1-1, to=1-3]
        \arrow["{\Gamma(\Sigma^{V}_+ A / K; -)}"', from=1-1, to=2-2]
        \arrow["{(-)^{K}}", from=1-3, to=2-2]
    \end{tikzcd}\]
\end{lemma}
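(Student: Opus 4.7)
The plan is to unwind the definition of $\widetilde{\Gamma}(S^V; -)$ and identify $K$-fixed points with evaluation at the orbit $(A/K)_+$ under the equivalence $\cP_\Sigma(\cB(A);\Sp)\simeq \Sp^A$.

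Concretely, I would proceed in three steps. First, spell out $\widetilde{\Gamma}(S^V; F)$ as a presheaf on $\cB(A)$: by \cref{def: enhanced sections}, for $F\in\Syn^A$ and any $Y\in \cB(A)$ one has $\widetilde{\Gamma}(S^V; F)(Y) = F(S^V\otimes Y)$. In particular, evaluating at $Y=(A/K)_+$ gives exactly $F(\Sigma^V_+ A/K) = \Gamma(\Sigma^V_+ A/K; F)$. So the commutativity of the diagram reduces to showing that, under the weight-structure equivalence identifying $\cP_\Sigma(\cB(A);\Sp)$ with $\Sp^A$, the composite $(-)^K$ on $\Sp^A$ corresponds to evaluation at $(A/K)_+$ on the presheaf side.

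Second, I would verify this identification. The equivalence $\Sp^A\simeq \cP_\Sigma(\cB(A);\Sp)$ furnished by \cite[Theorem 2.2.9]{elmanto2022nilpotent} sends an $A$-spectrum $X$ to the additive presheaf $Y\mapsto \Map_{\Sp^A}(Y, X)$. Under this equivalence, evaluation at $(A/K)_+$ corresponds to $\Map_{\Sp^A}((A/K)_+, -)$, which is precisely the categorical $K$-fixed points functor $(-)^K$.

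Third, I would assemble these two observations: for $F\in\Syn^A$, the $A$-spectrum $\widetilde{\Gamma}(S^V;F)$ satisfies
\[ \widetilde{\Gamma}(S^V;F)^K \simeq \widetilde{\Gamma}(S^V;F)\bigl((A/K)_+\bigr) = F\bigl(S^V\otimes (A/K)_+\bigr) = \Gamma(\Sigma^V_+ A/K; F), \]
which is the required commutativity, natural in $F$.

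The only real subtlety is the second step, i.e.\ checking that the equivalence used to define $\widetilde{\Gamma}(S^V;-)$ really does send evaluation at $(A/K)_+$ to genuine $K$-fixed points. This is essentially the content of the spectral Mackey functor description of $\Sp^A$ recalled just before \cref{def: enhanced sections}; one has to observe that the Yoneda-type functor implementing \cite[Theorem 2.2.9]{elmanto2022nilpotent} is the restriction of the mapping spectrum $\Map_{\Sp^A}(-,-)$ to $\cB(A)\times \Sp^A$, so that its right adjoint applied to a presheaf $P$ produces an $A$-spectrum whose $K$-fixed points recover $P((A/K)_+)$.
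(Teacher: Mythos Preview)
Your proposal is correct and follows essentially the same approach as the paper: the paper's proof consists of the single observation that $K$-fixed points on $\Sp^A$ is identified with evaluation at $(A/K)_+$ under the equivalence $\cP_\Sigma(\cB(A);\Sp)\simeq\Sp^A$, which is exactly your second step. Your first and third steps just make explicit the unwinding of $\widetilde{\Gamma}(S^V;-)$ as a restriction of presheaves, which the paper leaves implicit.
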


\begin{proof}
    Note that the $K$-fixed points functor on $\Sp^{A}$ can be identified with evaluation at $(A/K)_+$ in $\cB(A)$, i.e. precomposition along the inclusion of the subcategory generated under extensions and retracts by just this orbit.
\end{proof}
\begin{construction}\label{constr: synthetic unit}
    From general facts about sheaf categories (for example \cite[Proposition 2.10]{pstragowski_synthetic_2022}), the enhanced sections functors $\widetilde{\Gamma}(S^V,-)$ admit left adjoints. When $V = 0$, the fact that the inclusion $\mcB(A) \to \Pure(A)$ is symmetric monoidal tells us that the left adjoint to $\widetilde{\Gamma}(S^0, -)$ is symmetric monoidal. We denote this left adjoint by 
    $$c: \Sp^{A} \to \Syn^A,$$
    cf. the constructions of \cref{cor: unit map (multiplicative)}. 
\end{construction}

\begin{notation}
    We will view $\Syn^A$ as tensored over $\Sp^A$ via the functor $c$, given an $X \in \Sp^A$ and $E \in \Syn^A$, we write $X \otimes E$ for $c(X) \otimes E$. When $X$ is a sphere $S^V$, we will write $\Sigma^V E$ for $c(S^V) \otimes E$. 
\end{notation}

An adjunction argument shows us that the functor $c$ and the synthetic analogue $\nu$ agree on orbits; we have $c(A / K_+) \simeq \nu(A / K_+)$. However, they do not agree in general; this is due to the failure of $\nu$ to commute with desuspensions or colimits. Instead, we have comparison maps arising as assembly maps.
\begin{construction}
    Let $\alpha$ be a character of $A$, then we denote the assembly map associated to $\Sigma^\alpha$ by
    \[\tau_\alpha\colon S^\alpha=\Sigma^\alpha\nu\mathbb{1}_A\to \nu \Sigma^\alpha\mathbb{1}_A=\Th(\alpha).\]
    When $\alpha=\epsilon$ is the trivial character, this is simply denoted
    \[\tau\colon S^{2,0}\to S^{2,1}\]
    and we will frequently identify these maps with their (de)suspensions.
\end{construction}

\subsection{The generic fibre}
As in the nonequivariant case and as suggested by our work in the motivic category, the map $\tau$ defined above is of central importance. To start, we will illustrate how this map provides a further relationship between the synthetic analogue and the spectral Yoneda embedding.
\begin{proposition}\label{prop:tauinversion}
    The connective cover $\nu X \to Y(X)$ is also a $\tau$-inversion; that is, it induces an isomorphism $\tau^{-1} \nu X \xrightarrow{\sim} Y(X)$. 
\end{proposition}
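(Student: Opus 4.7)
The plan is to adapt the strategy of \cite[Proposition 4.25]{pstragowski_synthetic_2022} from the nonequivariant setting, decomposing the statement into two claims. First, $Y(X)$ is $\tau$-periodic for every $X\in\Sp^A$. Second, the cofiber of the connective cover $\nu X\to Y(X)$, which equals $\tau_{c\le -1}Y(X)$ because $\nu=\tau_{c\ge 0}Y$ by construction, becomes zero after $\tau$-inversion. Granted both claims, the map $\nu X\to Y(X)$ factors through $\tau^{-1}\nu X$, and the resulting induced map $\tau^{-1}\nu X\to Y(X)$ is then an equivalence.

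For the first claim, I leverage that $Y$ preserves all limits and all colimits by \cref{lem: properties of nu}\ref{item:Ycont}, together with the closure of the $\tau$-periodic subcategory $\Syn^A[\tau^{-1}]\subseteq\Syn^A$ under both operations. This reduces $\tau$-periodicity of $Y(X)$ to verification on a generating family of $\Sp^A$, say orbits $(A/K)_+$ smashed with virtual complex representation spheres. Structurally this is equivalent to the generic-fiber identification $\Syn^A[\tau^{-1}]\simeq\Sp^A$ with $Y$ playing the role of the canonical inclusion, which is the content of the forthcoming \cref{thm: generic fiber}.

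For the second claim, we exploit that $\tau$ has bidegree $(0,-1)$, so multiplication by $\tau$ shifts the Chow grading by $+2$. Since the synthetic heart is concentrated in Chow degree $0$ (in the synthetic analogue of \cref{cor: identification of the heart}), multiplication by $\tau$ on any homotopy class of a heart object lands outside the heart and therefore vanishes. Thus $\tau$ acts as zero on the synthetic heart, and by a d\'evissage $\tau$-inversion annihilates every bounded-above object. Right completeness and filtered-colimit compatibility of the synthetic t-structure (\cref{rem:basic facts about syn t structure}) then extend this vanishing from the bounded truncations $\tau_{c\in[-n,-1]}Y(X)$ to the a priori unbounded cofiber $\tau_{c\le -1}Y(X)$.

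The main obstacle is the first claim. A direct, self-contained verification of $\tau$-periodicity of $Y$ — rather than invoking the generic-fiber identification abstractly — requires explicit control over how $Y$ interacts with the synthetic bigrading and the assembly maps of \cref{constr: synthetic unit}, ultimately drawing on the equivariant $\MU$-based input of \cref{sec: vanishing in MGL}.
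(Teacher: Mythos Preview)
Your proposal has a genuine gap in the first claim and significantly overestimates its difficulty.

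\textbf{Circularity.} You defer the $\tau$-periodicity of $Y(X)$ to \cref{thm: generic fiber}, but that theorem's proof explicitly invokes \cref{prop:tauinversion} (to identify $\map(Y(A/K_+),Y(X))$ with $\map(\nu(A/K_+),Y(X))$). So as written your argument is circular.

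\textbf{The first claim is elementary.} You call it ``the main obstacle'' and suggest it ultimately requires the equivariant $\MU$-based input of \cref{sec: vanishing in MGL}. It does not. The paper dispatches it in one line: $Y(X)$ is $\tau$-invertible because $Y$ is exact. Concretely, for $P\in\Pure(A)$ invertible, $\nu P$ is the (invertible) representable sheaf, so Day convolution gives $(\nu P\otimes F)(Q)\simeq F(P^{-1}\otimes Q)$; applying this to $F=Y(X)$ yields $\Th(\epsilon)\otimes Y(X)\simeq Y(\Sigma^2 X)$. Exactness of $Y$ (\cref{lem: properties of nu}\ref{item:Ycont}) gives $Y(\Sigma^2 X)\simeq\Sigma^2 Y(X)$, and under these identifications $\tau\otimes Y(X)$ is an equivalence. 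No $\MU$, no generators, no forward reference.

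\textbf{The second claim is fine but roundabout.} Your d\'evissage through ``$\tau$ acts as zero on the heart'' works, but the citation of \cref{cor: identification of the heart} is both misplaced (that is a motivic statement; the synthetic analogue is \cref{prop: heart of synthetic t-str}, proven later) and unnecessary: the vanishing follows purely from t-structure bookkeeping. The paper's argument is more direct: for $E$ which is $k$-coconnective, each $\Sigma^{0,n}E\simeq\Sigma^{-2n}\Th(\epsilon)^{\otimes n}\otimes E$ is $(k-2n)$-coconnective (tensoring with a perfect pure object is t-exact), so $\tau^{-1}E=\varinjlim_n\Sigma^{0,n}E$ vanishes by right completeness and compatibility with filtered colimits. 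This handles the $(-1)$-coconnective cofibre $\tau_{c\le -1}Y(X)$ in one stroke, without separating bounded and unbounded cases.
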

The proof proceeds identically to the nonequivariant case in \cite[Proposition 4.36]{pstragowski_synthetic_2022} since it may be reduced to a connectivity argument.
\begin{proof}
    Note that the target is obviously $\tau$-invertible since $Y$ is an exact functor. The cofibre of the connective cover map $\nu X\to Y(X)$ is $(-1)$-coconnective. We claim more generally that for any $k$-coconnective equivariant synthetic spectrum $E$ with $k<\infty$, the localisation $\tau^{-1}E$ vanishes. This now follows from the formula
    \[\tau^{-1}E\simeq\varinjlim_n\Sigma^{0,n}E\]
    and the observation that for all $n\geq 0$, the shift $\Sigma^{0,n}E\simeq \Sigma^{-2n}\Th(\epsilon)^{\otimes n}\otimes E$ is $(k-2n)$-coconnective as tensoring with a perfect pure object is t-exact. Now note that the t-structure on $\Syn^A$ is right complete and compatible with filtered colimits to conclude.
\end{proof}

With the above result, we are ready to prove half of Theorem \ref{introthm:main} \ref{introthm:mainpart2}, showing that inverting $\tau$ in $\Syn^A$ recovers the category of $A$-spectra. 
\begin{theorem}\label{thm: generic fiber}
    The equivariant spectral Yoneda embedding $Y: \Sp^A\to \Syn^A$ is fully faithful, with essential image given by the $\tau$-invertible synthetic $A$-spectra. 
\end{theorem}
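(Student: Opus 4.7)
The plan is to verify the three parts of the statement in turn: $\tau$-invertibility of the image of $Y$, full faithfulness of $Y$, and essential surjectivity onto the $\tau$-invertible subcategory of $\Syn^A$. The first part is immediate from \cref{prop:tauinversion}, which exhibits $Y(X)$ as $\tau^{-1}\nu X$ and hence as a $\tau$-local object.

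For full faithfulness, my strategy is to reduce to the Yoneda lemma on orbits. Since $Y$ preserves all colimits by \cref{lem: properties of nu} \ref{item:Ycont} and $\Map_{\Syn^A}(-, Y(Z))$ sends colimits to limits, the natural comparison
\[
\Map_{\Sp^A}(X, Z) \longrightarrow \Map_{\Syn^A}(Y(X), Y(Z))
\]
is compatible with colimits and desuspensions in $X$. Because $\Sp^A$ is generated under these operations by the orbits $(A/K)_+$, it suffices to treat $X = (A/K)_+$. For any $P \in \Pure(A)$ the presheaf $Y(P) = \Map(-, P)$ is by construction the Yoneda image of $P$ in the sheaf category $\Syn^A = \Shv_\Sigma(\Pure(A); \Sp)$, so the Yoneda lemma gives
\[
\Map_{\Syn^A}(Y((A/K)_+), Y(Z)) \simeq Y(Z)((A/K)_+) \simeq \Map_{\Sp^A}((A/K)_+, Z),
\]
as required.

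For essential surjectivity, let $E \in \Syn^A$ be $\tau$-invertible. Since $\Syn^A$ is generated under colimits by representables, I would present $E \simeq \mathop{\mathrm{colim}}_\alpha\, y(P_\alpha)$ for some diagram in $\Pure(A)$. Applying $\tau$-inversion, which is a left adjoint, and invoking \cref{prop:tauinversion} to identify $\tau^{-1} y(P_\alpha) = \tau^{-1}\nu(P_\alpha) \simeq Y(P_\alpha)$, we obtain $E \simeq \mathop{\mathrm{colim}}_\alpha Y(P_\alpha)$. The continuity of $Y$ then rewrites this as $Y(\mathop{\mathrm{colim}}_\alpha P_\alpha)$, exhibiting $E$ as the image under $Y$ of $X = \mathop{\mathrm{colim}}_\alpha P_\alpha \in \Sp^A$.

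The entire argument is essentially formal once $Y \simeq \tau^{-1}\nu$ is available, and I do not anticipate a substantive obstacle. The subtlest point is that the analogous argument would fail for $\nu$ in place of $Y$: the connective cover defining $\nu$ destroys preservation of arbitrary colimits, which is precisely why \cref{lem: properties of nu} has to treat the two functors separately, and why it is $Y$ rather than $\nu$ that gives the clean generic fibre.
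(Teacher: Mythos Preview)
Your proposal is correct and close in spirit to the paper's proof, but the two halves are argued somewhat differently and it is worth noting where.

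For full faithfulness, you invoke the Yoneda lemma for $Y((A/K)_+)$ directly, saying it is ``by construction'' the Yoneda image. The paper instead passes through \cref{prop:tauinversion} to identify $\map(Y(A/K_+), Y(X)) \simeq \map(\nu(A/K_+), Y(X))$ (since $Y(X)$ is $\tau$-local) and then applies the Yoneda lemma for $\nu$ as in \cite[Lemma 4.11]{pstragowski_synthetic_2022}. Your route is valid because for perfect pure $P$ one has $Y(P) = \nu(P)$ (equivalently, $\Map(-,P)$ is already connective in the synthetic t-structure, by the topological analogue of \cref{lem: local vanishing}), so the spectral Yoneda lemma applies; but this identification is not quite ``by construction'' and deserves a sentence of justification. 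The paper's detour through $\tau$-localisation avoids appealing to this fact.

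For essential surjectivity, the approaches genuinely differ. You write a $\tau$-local $E$ as a colimit of representables, apply $\tau^{-1}$ to turn each $\nu(P_\alpha)$ into $Y(P_\alpha)$, and then use cocontinuity of $Y$ to pull the colimit inside. The paper instead produces a right adjoint $\pi$ to $Y: \Sp^A \to \tau^{-1}\Syn^A$ and checks conservativity of $\pi$ by a coconnectivity argument. Your argument is more direct and arguably cleaner; the paper's is perhaps more robust in that it does not require tracking a specific colimit presentation. Both are standard manoeuvres and neither has a gap. (Minor slip: you write ``continuity of $Y$'' where you mean cocontinuity.)
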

\begin{proof}
    Again, our proof proceeds identically to that of \cite[Theorem 4.37]{pstragowski_synthetic_2022}. We first show that $Y$ is fully faithful. By cocontinuity of $Y$, we can reduce to generators; it suffices to show that the induced map $$\map(\Sigma^n_+ A/K, X) \xrightarrow{\sim} \map(Y(\Sigma^n_+ A/K), Y(X))$$ is an isomorphism for all subgroups $K$ and all $A$-spectra $X$. The exactness of $Y$ and $\map$ lets us further reduce to when $n = 0$. In this case, we have $$\map(Y(A/K_{+}), Y(X)) \simeq \map(\nu(A/K_+), Y(X)) \simeq \map(A/K_+, X),$$ where the first equivalence is via \ref{prop:tauinversion} and the second equivalence is via the Yoneda lemma as in \cite[Lemma 4.11]{pstragowski_synthetic_2022}.

    It remains to show that the essential image of $Y$ is all of $\tau$-invertible synthetic $A$-spectra $\tau^{-1}\Syn^A$. Towards this, note that by the cocontinuity of $Y$, the adjoint functor theorem gives us an adjunction 
    \[\begin{tikzcd}
        {} & {\Sp^A} & {\tau^{-1}\Syn^A}
        \arrow[""{name=0, anchor=center, inner sep=0}, "Y", shift left=2, hook, from=1-2, to=1-3]
        \arrow[""{name=1, anchor=center, inner sep=0}, "\pi", shift left=2, from=1-3, to=1-2]
        \arrow["\dashv"{anchor=center, rotate=-90}, draw=none, from=0, to=1]
    \end{tikzcd}\]
    Using general facts about localisations, it suffices to check that $\pi$ is conservative. We can repeat an argument similar to above; if $\pi(M) = 0,$ then $$\Omega^\infty M(P) \simeq \map(\nu P, M) \simeq \map(Y(P), M) \simeq \map(P, \pi(M)) \simeq 0$$ for any perfect pure $P$, and so by the characterisation of coconnectivity in the synthetic $t$-structure (see \cref{rem:chars of chow t}), $M$ is $(-1)$-coconnective. We can then repeat the argument employed in Proposition \ref{prop:tauinversion} to see that $\tau^{-1}M \simeq 0$; since $M$ was assumed to be $\tau$-invertible, we are done. 
\end{proof}

\newpage
\section{Filtered reconstruction}\label{sec: filtered reconstruction}
Nonequivariantly, synthetic spectra admit a description as a category of modules over a certain commutative algebra in $\Z$-filtered spectra. We will use the equivariant lifts of $\tau$ constructed in the previous section to show that a similar statement is true.
\subsection{Equivariant perfect even filtration}
We begin by showing how the category $\Syn^A$ equips $A$-spectra with a filtration which we call the equivariant perfect even filtration.
\begin{defn}
    Given $E\in \Syn^A$ and $V$ a virtual complex $A$-representation, we write
    \[\tau_{c\geq V}E=\Sigma^V\tau_{c\geq 0}\Sigma^{-V}E.\]
\end{defn}

\begin{ex}
    When $E=Y(X)$ for a $A$-spectrum $E$, the exactness of $Y: \Sp^A \to \Syn^A$ lets us compute
    \[\tau_{c\geq V}Y(X) \simeq \Sigma^V\tau_{c \geq 0} Y(\Sigma^{-V} X)\simeq \Sigma^V\nu(\Sigma^{-V} X) \simeq \Sigma^V \Th(-V) \otimes \nu X. \] In particular, when $X = \mathbb{1}_A$, we obtain 
    $$\tau_{c \geq V} Y(\mathbb{1}_A) \simeq \Sigma^V \Th(-V).$$
\end{ex}

\begin{construction}\label{cons: equivariant tau}
    As a consequence, if $V$ and $W$ are two virtual complex $A$-representations such that $W \ominus \alpha = V$ for a character $\alpha$, we can use $\tau_\alpha$ to construct a map
    \begin{align*}
    \tau_{c\geq W}Y(X)\xrightarrow{\sim}&\Sigma^{W} \Th(-W) \otimes \nu X\\
    \xrightarrow{\sim}&\Sigma^{W - \alpha} S^{\alpha} \otimes \Th(-W) \otimes \nu X\\
    \xrightarrow{}&\Sigma^{W - \alpha} \Th(\alpha) \otimes \Th(-W)\otimes \nu X\\
    \xrightarrow{\sim}&\Sigma^{V} \Th(-V) \otimes \nu X\\
    \xrightarrow{\sim}&\tau_{c\geq V} Y(X)      
    \end{align*}
    for any $X \in \Sp^{A}$ using \cref{lem: properties of nu}.
\end{construction}

\begin{defn}
    Let $\RU(A)$ denote the poset whose objects consist of isomorphism classes of virtual complex representations of $A$ (i.e. $\Z[A^\vee]$) with the poset structure where $V\leq W$ if and only if there exists a collection of characters $\alpha_1,\ldots,\alpha_n$ such that
    \[W\cong V\oplus\alpha_1\oplus\cdots\oplus\alpha_n.\] This admits a symmetric monoidal structure coming from the direct sum of representations. The category of $\RU(A)$-filtered $A$-spectra is the functor category
    \[\Fil_{\RU(A)}(\Sp^{A})=\Fun(\RU(A)^\op,\Sp^A)\]
    equipped with the Day convolution symmetric monoidal structure.
\end{defn}
We can use \cref{cons: equivariant tau} to obtain a functor
\begin{align*}
    \tau_{c\geq -\star}Y(\mathbb{1}_A)\colon\RU(A)&\to \Syn^{A}, \\
    V&\mapsto \tau_{c\geq -V}Y(\mathbb{1}_A)
\end{align*}
The observation that the synthetic t-structure is compatible with the symmetric monoidal structure tells us that this is a lax symmetric monoidal functor. In fact, \cref{lem: properties of nu} tells us that this is a strong symmetric monoidal functor.
\begin{defn}\label{def: filtered synthetic unit}
    The filtered unit map for $\Syn^{A}$ is the symmetric monoidal left adjoint
    \[L\colon\Fil_{\RU(A)}(\Sp^A)\to \Syn^{A}\]
    obtained by left Kan extending the functor $\tau_{c\geq -\star}\mathbb{1}_{A}$ and tensoring with the symmetric monoidal left adjoint from \cref{constr: synthetic unit}.
\end{defn}
\begin{remark}
    The upshot of phrasing everything in terms of the synthetic t-structure becomes apparent here: it is clear from the construction that the equivariant synthetic spectra $\tau_{c\geq -V}Y(\mathbb{1}_A)\simeq \Sigma^{-V}\Th(V)$ give rise to an $\RU(A)$-indexed diagram of Picard elements. In order to extend this to a \emph{strong} symmetric monoidal functor one would have to verify that these are \emph{strict} Picard elements. Since these equivalently arise as connective covers in a t-structure which is compatible with the symmetric monoidal structure and further satisfies \cref{lem: properties of nu}, this is now essentially formal.
\end{remark}
\begin{remark}\label{rem:RU generators}
    For formal reasons, the presheaf category $\Fil_{\RU(A)}(\Sp^A)$ has a family of elements $\mathrm{ins}_VX$ for every $V\in \RU(A), X\in \Sp^A$ defined by
    \[\mathrm{ins}_V(X)(W)=\begin{cases}
        X & W\geq V,\\
        0 & \text{else},
    \end{cases}\]
    i.e. the representable presheaves. The functor $L$ can then be uniquely characterised as a map of $\Sp^A$-algebras by the identity
    \[L(\mathrm{ins}_V \mathbb{1}_A)\simeq \tau_{c\geq -V}{Y(S_A^0)}.\] 
    Consistent with the observation that $\tau_{c\geq -V}{Y(\mathbb{1}_A)}$ is a Picard element, we note that the elements $\mathrm{ins}_V\mathbb{1}_A$ are tensor-invertible as well since every element of the symmetric monoidal poset $\RU(A)$ is invertible.
\end{remark}
The $\RU(A)^\op$-indexed diagram $L(\mathrm{ins}_\star \mathbb{1}_A)$ then formally recovers the synthetic Whitehead tower on a synthetic analogue: for any $A$-spectrum $X$ there is an equivalence
\[L(\mathrm{ins}_{V} \mathbb{1}_A)\otimes \nu X\simeq \tau_{c\geq -V}Y(X)\]
following immediately from an application of \cref{lem: properties of nu} and \cref{def: filtered synthetic unit}.
\begin{construction}\label{cons: equivariant perfect even filtration}
     \cref{def: enhanced sections} and \cref{cons: equivariant tau} give rise to a lax symmetric monoidal functor
    \[R=\widetilde{\Gamma}(S^0;\tau_{c\geq\star}\mathbb{1} \otimes - )\colon\Syn^A\to \Fil_{\RU(A)}(\Sp^A)\]
    where the filtration index $\star$ now ranges over $\RU(A)$.
\end{construction}
\begin{definition}
    The \emph{equivariant perfect even filtration} is the lax symmetric monoidal functor
    \[\fil^\even_\star=R\circ \nu \colon\Sp^A\to \Fil_{\RU(A)}(\Sp^A).\]
\end{definition}
\begin{remark}\label{rem: formula for even filtration}
    The functor $R$ is right adjoint to $L$. Indeed, $L$ allows us to view $\Syn^A$ as enriched over $\Fil_{\RU(A)}(\Sp^A)$ so that under this enrichment (denoted $\hom^\fil$) we have
\[R(E)\simeq \hom^\fil({\mathbb{1}_A^{\Syn}},E).\]
Further, our discussion in Remark \ref{rem:RU generators} let us rewrite the even filtration as $$\fil_\star^{\ev}(X) = \widetilde{\Gamma}(S^0; \tau_{c \geq \star} \mathbb{1} \otimes \nu X) \simeq \widetilde{\Gamma}(S^0; \tau_{c \geq \star} Y(X)).$$
\end{remark}

\subsection{Filtered monadicity}
While the category $\Syn^A$ provides a natural construction of the equivariant perfect even filtration, we now prove that it is in fact entirely determined by it.
\begin{construction}\label{cons: filtered synthetic adjunction}
    Since the adjunction $L\dashv R$ in \cref{cons: equivariant perfect even filtration} has a strong symmetric monoidal left adjoint, we obtain an induced symmetric monoidal adjunction
    \[\cL\colon \Mod(\Fil_{\RU(A)}(\Sp^A);\fil^\even_\star\mathbb{1}_{A})\rightleftarrows \Syn^A:\cR\]
    coming from the identification $R({\mathbb{1}_A^{\Syn}})\simeq \fil^\even_\star\mathbb{1}_{A}$.
\end{construction}
\begin{propn}\label{prop: filtered model}
    The adjunction in \cref{cons: filtered synthetic adjunction} is an equivalence.
\end{propn}
\begin{proof}
    The filtered Schwede--Shipley Theorem of \cite[Lemma 3.16]{pstrkagowski2023perfect} easily adapts to the $\RU(A)$-filtered setting, and it suffices to prove that the objects of the form $\tau_{c\geq V}\mathbb{1}^\Syn_{A}=\Sigma^V\Th(-V)$ generate $\Syn^{A}$ as an $\Sp^{A}$-module. Consider the subcategory $\mcU$ of $\Syn^{A}$ generated by the $\Sigma^V \Th(-V)$ as an $\Sp_{A}$-module. This contains the sheaves $\nu(A/K_+)$ and $\nu(S^V) = \Th(V)$. The former is immediate as we may identify $\nu(A/K_+) = c(A/K_+)$. Using the tensoring by $\Sp^A$, we see that $\cU$ further contains all $\Th(W)=\Sigma^{-W}\tau_{c\geq-W}\mathbb{1}^\Syn_A$. We conclude that $\cU=\Syn^A$.
\end{proof}
\subsection{Adams--Novikov filtration}
The result of \cref{prop: filtered model} gives us a conceptually clarifying model for $\Syn^A$, but we would like to have a better understanding of what exactly the algebra $\fil^{\ev}_\star \mathbb{1}$ is. Here, we relate this algebra to the equivariant Adams--Novikov spectral sequence.
\begin{defn}
    Let $\tau_{c\geq \star}, \star\in 2\Z$ denote the double speed Whitehead tower for the Mackey functor t-structure on $\Sp^A$. Extend this to $\star\in \RU(A)$ by setting
    \[\tau_{c\geq V}X=\Sigma^V\tau_{\geq 0}\Sigma^{-V}X\]
    for any $A$-spectrum $X$ and virtual complex $A$-representation $V$.
\end{defn}
The goal of this section is to prove the following result.
\begin{thm}\label{thm: identification of even filtration on unit}
    There is an equivalence of commutative algebras in $\RU(A)$-filtered $A$-spectra
    \[\fil^\even_\star\mathbb{1}_A\simeq \Tot(\tau_{\geq \star}\MU_A^{\otimes\bullet+1})\]
\end{thm}
Combining this with \cref{prop: filtered model}, we obtain an equivariant analogue of the familiar filtered model for synthetic spectra.
\begin{cor}
    There is a symmetric monoidal equivalence
    \[\Syn^A\simeq \Mod(\Fil_{\RU(A)}(\Sp^A);\Tot(\tau_{\geq 2\star}\MU_A^{\otimes\bullet+1})).\]
\end{cor}
The proof of \cref{thm: identification of even filtration on unit} follows the strategy of \cite[Proposition C.22]{burklund_galois_2022} after establishing some analogues of the results in \cite[Appendix A]{burklund2023boundaries}.
\begin{lemma}\label{lem: unit is indmono}
    The cofibre sequence
    \[\mathbb{1}_A\to \MU_A\to C\]
     associated to the unit map of $\MU_A$ is preserved by $\nu$.
\end{lemma}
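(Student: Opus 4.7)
The plan is to write $\MU_A$ as a filtered colimit of perfect pure $A$-spectra, reduce to the statement that $\nu$ preserves cofiber sequences all of whose terms lie in $\Pure(A)$, and conclude via a local vanishing argument on mapping sheaves.

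First, I would use the topological cell structure on $\MU_A$: it may be written as a filtered colimit $\MU_A \simeq \varinjlim_i P_i$ of perfect pure $A$-spectra along pure monomorphisms, with $P_0 = \mathbb{1}_A$. This is the topological counterpart of \cref{cor: cellularity for MGL} and \cref{ex: transition maps}, obtained from the classical Schubert cell decomposition of complex Grassmannians refined equivariantly as in \cref{cor: cell structure on Thom over Grass at T}. Setting $C_i = \mathrm{cof}(\mathbb{1}_A \hookrightarrow P_i) \in \Pure(A)$ gives cofiber sequences $\mathbb{1}_A \to P_i \to C_i$ of perfect pure $A$-spectra with $C \simeq \varinjlim_i C_i$.

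Second, I would show that $\nu$ preserves every cofiber sequence $P \to Q \to R$ whose three terms all lie in $\Pure(A)$. Since the spectral Yoneda embedding is exact by \cref{lem: properties of nu}, we get a fiber sequence in $\Syn^A$; applying the connective cover produces $\nu P \to \nu Q \to \nu R$, which remains a fiber sequence provided the sheaf $\pi_{-1} Y(P)$ in the synthetic heart vanishes. This is the topological analog of \cref{lem: local vanishing}: any map $\Sigma^{-m} P' \to P$ between perfect pure $A$-spectra with $m > 0$ is killed after pulling back along some pure epimorphism $P'' \to P'$. The proof transfers verbatim from the motivic setting: compose with the unit $P \to P \otimes \MU_A$, use compactness of perfect pure $A$-spectra to factor through a finite stage of the cell filtration of $\MU_A$, and invoke Comezana's concentration of $\pi^A_\star \MU_A$ in even degrees to conclude that the resulting composite vanishes.

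Third, $\nu$ commutes with filtered colimits by \cref{lem: properties of nu}, so $\nu \MU_A \simeq \varinjlim \nu P_i$ and $\nu C \simeq \varinjlim \nu C_i$. Filtered colimits of cofiber sequences are cofiber sequences in the stable category $\Syn^A$, yielding the desired preservation $\nu \mathbb{1}_A \to \nu \MU_A \to \nu C$.

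The main obstacle is the second step: negative-degree mapping classes between perfect pure $A$-spectra can be nonzero at individual objects, and the argument has to be sheaf-theoretic in order to pass from this levelwise failure to global vanishing in $\Syn^A$. Once the local vanishing is in hand, the remaining pieces fit together formally via filtered colimits.
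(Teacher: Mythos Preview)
Your proposal is correct and follows the same overall outline as the paper: write $\MU_A$ as a filtered colimit of perfect pure $A$-spectra along pure monomorphisms with initial term $\mathbb{1}_A$, observe that each cofibre $C_i$ is again perfect pure, and pass to the filtered colimit using that $\nu$ commutes with filtered colimits.

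The difference lies in your second step. You propose to show that $\nu$ preserves cofibre sequences of perfect pure objects by proving a topological local vanishing lemma (the analogue of \cref{lem: local vanishing}), arguing that the sheaf $\pi_{-1}Y(P)$ vanishes. This works, but it is a detour: the paper simply notes that \emph{per construction} $\nu$ preserves cofibres of pure monomorphisms. The point is that $\Syn^A=\Shv_\Sigma(\Pure(A);\Sp)$ is by definition (cf.\ \cref{rem: stable hull vs additive sheaves}) the presentable stable envelope of the exact category $\Pure(A)$, so the inclusion $\Pure(A)\hookrightarrow\Syn^A$---which is exactly $\nu$ on perfect pure objects---is an exact functor. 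Equivalently, the sheaf condition is precisely the demand that these sequences become (co)fibre sequences after sheafification. Your local vanishing argument is essentially a re-derivation of why the representable sheaves are connective in the synthetic t-structure, which is already baked into the framework. So while your approach is not wrong, the ``main obstacle'' you identify in your last paragraph dissolves once you remember that exactness of $\nu$ on $\Pure(A)$ is definitional rather than something to be earned.
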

\begin{proof}
    The unit map of $\MU_A$ can be expressed as a filtered colimit of maps $\mathbb{1}_A\to G(\alpha)$, where $\{G(\alpha)\}_\alpha$ is a presentation of $\MU_A$ as a filtered colimit of perfect pure spectra along pure monomorphisms, e.g. in terms of the standard Gra{\ss}mannian cell structure which arises as the Betti realisation of \cref{cor: Grass model for MGL} (see \cref{ex: transition maps}). In particular, we may choose this diagram such that the indexing category has an initial object $\alpha_0$ at which it assumes the value $G(\alpha_0)=\mathbb{1}_A$. As such, the maps $\mathbb{1}_A\to G(\alpha)$ are simply the composites of maps in this filtered diagram hence are all pure monomorphisms, in particular, $C$ is itself a filtered colimit of perfect pure objects along pure monomorphisms since these are closed under cobase change. Per construction, $\nu$ preserves cofibres of pure monomorphisms. Since it furthermore preserves filtered colimits we may conclude.
\end{proof}
\begin{lemma}\label{lem: synthetic nilpotent completion}
    Let $X\in \Sp^A$ be $\MU_A$-nilpotent complete, then $\nu X$ is $\nu \MU_A$-nilpotent complete in $\Syn^A$. 
\end{lemma}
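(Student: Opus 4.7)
The plan is to characterise both $\MU_A$-nilpotent completeness of $X$ and $\nu\MU_A$-nilpotent completeness of $\nu X$ in terms of the vanishing of an associated Adams tower, and then transfer the vanishing from $\Sp^A$ to $\Syn^A$ by checking it pointwise on perfect pure objects.

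To start, I would set $\overline{C}$ to be the fibre of the unit map $\mathbb{1}_A \to \MU_A$ and let $X_n = \overline{C}^{\otimes n} \otimes X$ denote the associated Adams tower in $\Sp^A$. A standard manipulation identifies $\MU_A$-nilpotent completeness of $X$ with the vanishing $\lim_n X_n \simeq 0$, and the same manipulation internal to $\Syn^A$ reduces the desired conclusion to proving $\lim_n (\nu\overline{C})^{\otimes n} \otimes \nu X \simeq 0$.

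The next step is to identify this tower in $\Syn^A$ as the $\nu$-image of the Adams tower in $\Sp^A$. By \cref{lem: unit is indmono}, $\nu$ preserves the cofibre sequence $\mathbb{1}_A \to \MU_A \to C$, and since both $\MU_A$ and $C$ are filtered colimits of perfect pure $A$-spectra, the strong monoidality of $\nu$ on perfect pure objects recorded in \cref{lem: properties of nu} combined with the cocontinuity of $\nu$ yields $\nu(\MU_A \otimes Z) \simeq \nu\MU_A \otimes \nu Z$ and $\nu(C \otimes Z) \simeq \nu C \otimes \nu Z$ for every $Z \in \Sp^A$. Tensoring the preserved cofibre sequence with $\nu Z$ and comparing shows that $\nu$ also preserves the cofibre sequence $\overline{C} \otimes Z \to Z \to \MU_A \otimes Z$, so that iterating gives $\nu X_n \simeq (\nu\overline{C})^{\otimes n} \otimes \nu X$, which is precisely the $n$-th term of the Adams tower for $\nu X$ with respect to $\nu\MU_A$.

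The rest is a short computation. Limits in $\Syn^A$ are computed pointwise on perfect pure objects, so it suffices to show $\lim_n (\nu X_n)(P) \simeq 0$ for each $P \in \Pure(A)$. Unwinding the definition of $\nu$ gives $(\nu X_n)(P) \simeq \tau_{\geq 0}\map(P, X_n)$, the connective cover in the synthetic t-structure being computable pointwise because $\tau_{\geq 0}\colon \Sp \to \Sp$ preserves fibre sequences. The chain of equivalences
\[
\lim_n \tau_{\geq 0}\map(P, X_n) \simeq \tau_{\geq 0}\map(P, \lim_n X_n) \simeq \tau_{\geq 0}\map(P, 0) \simeq 0
\]
then finishes the argument, using that $\tau_{\geq 0}$ and $\map(P, -)$ are both right adjoints and hence commute with the limit, and that $\lim_n X_n \simeq 0$ is exactly the hypothesis that $X$ is $\MU_A$-nilpotent complete. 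All the genuinely new input is concentrated in \cref{lem: unit is indmono} and the strong monoidality of $\nu$; once these are used to identify the two Adams towers, the rest of the proof is a bookkeeping exercise in commuting right adjoints past the limit.
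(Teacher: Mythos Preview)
Your argument has a genuine gap at the identification of the two Adams towers. The fibre of $\nu\mathbb{1}_A \to \nu\MU_A$ in $\Syn^A$ is $\Sigma^{-1}\nu C$, not $\nu\overline{C} = \nu(\Sigma^{-1}C)$, because $\nu$ does not commute with desuspension. Concretely, $\Sigma^{-1}\nu C \simeq \tau_{c\geq -1}Y(\overline{C})$ while $\nu\overline{C} \simeq \tau_{c\geq 0}Y(\overline{C})$, and these differ by a piece in the heart. Iterating, the $n$-th stage of the $\nu\MU_A$-Adams tower for $\nu X$ is $(\nu X)_n \simeq \tau_{c\geq -n}Y(X_n)$, \emph{not} $\nu X_n = \tau_{c\geq 0}Y(X_n)$. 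This is exactly the identification the paper proves, and the shifting truncation level is the crux: the cofibres of the comparison maps $(\nu X)_n \to Y(X_n)$ are $(-n-1)$-coconnective, so right completeness of the synthetic t-structure forces the limit to agree with $\lim_n Y(X_n) \simeq Y(\lim_n X_n)$, which vanishes by hypothesis and continuity of $Y$.

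There is a second, independent problem in your final step. The claim $(\nu X_n)(P) \simeq \tau_{\geq 0}\map(P,X_n)$ is false in general: $\nu$ is the \emph{sheafy} connective cover, not the levelwise one, and the levelwise connective cover of a sheaf need not be a sheaf (the paper has to check this separately for $\MU_A$ using evenness in \cref{lem: synthetic analogue MU}). And even granting that, the connective cover functor $\tau_{\geq 0}\colon \Sp \to \Sp$ does not preserve sequential limits: a tower of connective spectra can have nontrivial $\pi_{-1}$ in the limit coming from $\lim^1$. So the displayed chain of equivalences at the end does not hold. The paper's approach avoids both issues by never trying to compute $\nu$ pointwise and instead using the coconnectivity estimate on the comparison with $Y$.
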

\begin{proof}
    Using the Dold--Kan equivalence (see \cite[\S 2.1]{mathew2017nilpotence}), let $\{(\nu X)_i\}_{i\geq 0}$ denote the preferred $\nu \MU_A$-based Adams tower for $\nu X$ defined inductively in terms of the unit map for $\nu\MU_A$ by
    \begin{align*}
        (\nu X)_0&=\nu X,\\
        (\nu X)_{i+1}&=\fib((\nu X)_i\to (\nu X)_i\otimes \nu\MU_A).
    \end{align*}
    Then $\nu X$ is $\nu \MU_A$-nilpotent complete if and only if the limit of this tower vanishes. Similarly, let $X_i$ denote the $\MU_A$-based Adams tower for $X$ in $\Sp^A$, so that the assumption guarantees that its limit vanishes. Then we claim that there is an equivalence for all $i\geq 0$ of the form
    \[\tau_{c\geq -i}Y(X_i)\simeq (\nu X)_i.\]
    The case $i=0$ is true per construction. For the induction step from $i$ to $i+1$, note that the cofibre sequences 
    \[\Sigma^iX_i\to \Sigma^iX_i\otimes \MU_A\to \Sigma^{i+1} X_{i+1}\]
    are preserved by $\nu$ as a consequence of \cref{lem: unit is indmono} and \cref{lem: properties of nu}. We may then use \cref{lem: properties of nu} again to compute
    \begin{align*}
        (\nu X)_{i+1}&\simeq \fib((\nu X)_i\to (\nu X)_i\otimes \nu\MU_A),\\
        &\simeq \Sigma^{-(i+1)}\cof (\tau_{c\geq 0}Y(\Sigma^iX_i)\to \tau_{c\geq 0}(\Sigma^i X_i\otimes \MU_A)),\\
        &\simeq \Sigma^{-(i+1)}\tau_{c\geq 0}\cof(Y(\Sigma^iX_i\to \Sigma^iX_i\otimes \MU_A)),\\
        &\simeq \tau_{c\geq -(i+1)}Y(X_{i+1}).
    \end{align*}
    As a consequence, we obtain comparison maps
    \[(\nu X)_i\simeq \tau_{c\geq -i}Y(X_i)\to \nu X_i\]
    such that the coconnectivity of their cofibre is linear in $i$. Then right completeness of the synthetic t-structure and continuity of $Y$ combine to give an equivalence
    \[\varprojlim_i(\nu X)_i\simeq Y(\varprojlim_iX_i).\]
\end{proof}
\begin{lemma}\label{lem: synthetic analogue MU}
    There is an equivalence
    \[\widetilde{\Gamma}(S^0;\nu\MU_A)\simeq \tau_{\geq 0}\MU_A.\]
\end{lemma}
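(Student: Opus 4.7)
The plan is to evaluate both sides as presheaves on the Mackey orbit category $\cB(A)$, using the identification $\Sp^{A}\simeq \cP_\Sigma(\cB(A); \Sp)$, and to match them via the evenness of equivariant complex bordism.

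By \cref{def: enhanced sections}, the enhanced sections functor $\widetilde{\Gamma}(S^0; -)$ is literally the restriction of sheaves on $\Pure(A)$ along the inclusion $\cB(A) \hookrightarrow \Pure(A)$, so the value of $\widetilde{\Gamma}(S^0; \nu \MU_A)$ at an orbit $(A/K)_+$ is just $\nu \MU_A((A/K)_+)$. Similarly, the value of any $A$-spectrum $X$ at $(A/K)_+$ under the identification $\Sp^A \simeq \cP_\Sigma(\cB(A);\Sp)$ is $X^K$, and the Mackey restriction and transfer maps are recovered by functoriality in $\cB(A)$.

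The key computational input is Comezaña's theorem \cite[Theorem 5.3]{comezana1996calculations}: combined with the absolute structure on $\MU$ (\cref{rem: absolute homotopy groups functorial}), which identifies the restriction of $\MU_A$ to $\Sp^{K}$ with $\MU_K$, it gives that $\pi_n(\MU_A^K) = 0$ for $n < 0$ and every subgroup $K \subset A$. Hence $\MU_A$ is already Mackey-connective, so $\tau_{\geq 0}\MU_A \simeq \MU_A$, and its value at $(A/K)_+$ is simply $\MU_A^K$.

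On the synthetic side, the equivariant analogue of \cite[Lemma 4.24]{pstragowski_synthetic_2022} yields the formula
\[\nu \MU_A(P) \simeq \tau_{\geq 0}\Map_{\Sp^A}(P, \MU_A)\]
for any perfect pure $P$. Its proof carries over essentially verbatim: the key point is that $\tau_{\geq 0}\map(-, \MU_A)$ is already a sheaf on $\Pure(A)$, as mapping from a perfect pure object into $\MU_A$ preserves the relevant Mackey-connectivity (a topological shadow of \cref{lem : Chow vanishing for perfect pure 2}, via Comezaña's evenness applied after passage to orbits and shifts). Specialising to $P = (A/K)_+$ gives $\tau_{\geq 0}(\MU_A^K) = \MU_A^K$ by the connectivity above, matching the right-hand side pointwise. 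The Mackey structure maps on both sides are inherited from those on $\MU_A$, via the composition $\cB(A) \subset \Pure(A)$, and thus automatically agree.

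The main obstacle is the verification of the pointwise formula for $\nu \MU_A$; once this is in hand, the lemma reduces to Comezaña's evenness together with the absolute structure of $\MU$.
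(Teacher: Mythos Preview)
Your proposal is correct and follows essentially the same strategy as the paper: both reduce to showing that the levelwise connective presheaf $P \mapsto \tau_{\geq 0}\Map(P,\MU_A)$ is already a sheaf on $\Pure(A)$, so that it computes $\nu\MU_A$, with Comeza\~na's evenness of $\MU_A$ as the key input. The only difference is packaging: the paper verifies the sheaf condition by passing to the double-speed Whitehead tower and checking on associated graded (where it becomes exactly the short exact sequence statement of \cref{lem: topological epi gives ses}), whereas you invoke the same evenness more directly via an equivariant analogue of \cite[Lemma~4.24]{pstragowski_synthetic_2022}.
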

\begin{proof}
    Let $\tau^\mathrm{lvl}_{\geq 0}Y(\MU_A)$ denote the levelwise connective cover of $Y(\MU_A)$ in presheaves. Then, per construction, its (enhanced) sections over $S^0$ are the connective cover of $\MU_A$. Recall that $\nu\MU_A$ is the synthetic connective cover of $Y(\MU_A)$ so that there is a map of presheaves
    \[\tau^{\mathrm{lvl}}_{\geq 0}Y(\MU_A)\to \tau_{c\geq 0}Y(\MU_A)=\nu\MU_A\]
    which is precisely the map from the source to its sheafification. In order to prove the identification above it therefore suffices to prove that the presheaf $\tau^{\mathrm{lvl}}_{\geq 0}Y(\MU_A)$ is actually a sheaf. We will prove this more generally for the filtered presheaf $\{\tau^{\mathrm{lvl}}_{\geq 2k}Y(\MU_A)\}_{k\in \Z}$, i.e. the double speed Whitehead tower. Since the levelwise t-structure is left and right complete this is a complete filtration so it suffices to prove that it is a sheaf of graded spectra on associated graded. Now note that $\MU_A$ is even so every associated graded presheaf of this filtered presheaf is a presheaf of abelian groups $\pi^{\mathrm{lvl}}_{2k}Y(\MU_A)$. The sheaf condition of \cref{rem: synthetic sheaf condition} now reduces to a checkable criterion: namely that for any fibre sequence of perfect pure spectra $F\to P\to Q$, the sequence
    \[0\to \MU^{2k}_{A}Q\to \MU^{2k}_{A}P\to \MU^{2k}_{A}F\to 0\]
    is short exact. Since the $\MU_A$-cohomology of a perfect pure object is concentrated in even degrees, we see that the cohomological long exact sequence associated to the fibre sequence must split into the desired short exact sequence as above and we may conclude.  
\end{proof}
\begin{cor}\label{cor: perfect even filtration of MU}
    For all $d\geq 1$ there is an equivalence
    \[\fil^\even_\star\MU_A^{\otimes d}\simeq \tau_{\geq\star}\MU_A^{\otimes d}.\]
\end{cor}
\begin{proof}
    This follows from \cref{lem: synthetic analogue MU} by noting that the enhanced sections functor now commutes with $\Sigma^V$, and we only used evenness of $\MU_A$ whence we are free to replace it by $\Sigma^{-V}\MU_A^{\otimes d}$.
\end{proof}
We can now assemble these auxiliary computations into the desired identification.
\begin{proof}[Proof of \cref{thm: identification of even filtration on unit}]
    Since the functor $\cR$ from \cref{cons: filtered synthetic adjunction} preserves limits and $\mathbb{1}_A$ is itself $\MU_A$-nilpotent complete\footnote{This result and its proof have been communicated to us by Balderrama and Hausmann. We note that the same argument provided motivically in \cref{sec: ANSS convergence} goes through in the topological setting.}, we may use \cref{lem: synthetic nilpotent completion} and \cref{lem: properties of nu} to write
    \[\fil^\even_\star\mathbb{1}_A=\cR({\mathbb{1}_A^{\Syn}})\simeq \Tot(\cR\nu(\MU_A^{\otimes\bullet+1})).\]
    Plugging in \cref{cor: perfect even filtration of MU} then gives the desired equivalence 
    \[\fil^\even_\star\mathbb{1}_A\simeq\Tot(\tau_{\geq \star}\MU_A^{\otimes\bullet+1}).\]
\end{proof}
\subsection{The cofibre of tau}
To close, we will use our understanding of the filtered description of synthetic $A$-spectra to identify what happens when $\tau$ is ``killed'' in $\Syn^A$, completing the proof of \cref{introthm:main}. Many results in this section can be compared to the motivic results in \cref{sec: special fibre}. In fact, the following proposition follows from these results. 

\begin{propn}\label{prop: heart of synthetic t-str}
    The heart of the synthetic t-structure can be identified as
    \[\Syn^{A}_{c=0}\simeq \coMod(\underline{\pi}_\star^{A}\MU^{\otimes 2}).\]
\end{propn}
\begin{proof}
    Per construction, the heart of the synthetic t-structure consists of additive sheaves of $0$-truncated connective spectra, i.e.
    \[\Syn^{A}_{c=0}\simeq \Shv_\Sigma(\Pure(A);\mathrm{Ab}),\]
    where we may replace abelian groups with sets since $\Pure(A)$ is additive. We then apply \cite[Theorem 2.49]{pstragowski_synthetic_2022} to the functor $\Be^{A}: \Pure_\C(A) \to \Pure(A)$. The fact that it reflects coverings is \cref{cor: Betti reflects coverings} and the common envelope is established in \cref{lem: common envelope}. Now use \cref{thm: Chow heart structure} and \cref{cor: identification of the heart} to conclude. 
\end{proof}
In the same fashion as \cref{sec: special fibre}, we can use this to identify the category of modules over the truncation of the unit algebraically.
\begin{corollary}\label{cor: modules over trunc syn unit}
    We have a symmetric monoidal equivalence $$\Mod(\tau_{c = 0} \mathbb{1}_A^\Syn) \simeq \Ind \mcD^b_\Pure(\coMod(\underline{\pi}_{\star}^A \MU^{\otimes 2})).$$ 
\end{corollary}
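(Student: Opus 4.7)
The plan is to mirror, in the synthetic setting, the proof of \cref{cor: identification of SF}. The category $\Mod(\tau_{c=0}\mathbb{1}_A^\Syn)$ is compactly generated by the thick subcategory on objects of the form $\tau_{c=0}\mathbb{1}_A^\Syn \otimes P$ for $P \in \Pure(A)$. Hence it suffices to produce an equivalence
\[\Mod(\tau_{c=0}\mathbb{1}_A^\Syn)^\omega \simeq \cD^b_\Pure(\coMod(\underline{\pi}_\star^{A}\MU^{\otimes 2})),\]
which by passing to Ind-completion will yield the stated result. Via \cref{cor: modules over trunc syn unit}'s predecessor \cref{prop: heart of synthetic t-str}, the heart on the left is already identified with $\coMod(\underline{\pi}_\star^{A}\MU^{\otimes 2})$, so this reduces to showing that $\Mod(\tau_{c=0}\mathbb{1}_A^\Syn)^\omega$ is the bounded derived category of its heart, for which I invoke the dual of \cite[Proposition 1.3.3.7]{HA}: it is enough to verify that this heart has enough injectives and that those injectives have vanishing higher $\Ext$ groups.

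To produce injectives, first I would establish a synthetic analogue of \cref{cor: chow heart weight structure}: the subcategory $\Pure(A;\nu\MU) \subset \Mod(\tau_{c=0}\nu\MU_A)^\omega$ of free $\tau_{c=0}\nu\MU_A$-modules on $\Pure(A)$ provides a bounded weight structure. The connectivity of the relevant mapping spectra here follows from \cref{lem: topological epi gives ses}, which tells us that $\underline{\pi}_\star^A\MU_A$ applied to a perfect pure $A$-spectrum is concentrated in Mackey-degree zero — precisely the topological counterpart of the Chow line vanishing used motivically. The heart of this weight structure identifies with additive presheaves of abelian groups on $\Pure(A)$, hence has enough injectives.

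Next I would check that the cofree comodule functor
\[\Mod(\underline{\pi}_\star^{A}\MU) \longrightarrow \coMod(\underline{\pi}_\star^{A}\MU^{\otimes 2}),\]
which exists because $\underline{\pi}_\star^{A}\MU^{\otimes 2}$ is a flat Hopf algebroid over $\underline{\pi}_\star^{A}\MU$ (by \cref{rem: cooperations on the Chow line} and flatness as discussed in \cref{rem: regularity of co-ops}), sends injective modules to injective comodules and that every comodule embeds into such a cofree object. This produces enough injectives in $\coMod(\underline{\pi}_\star^{A}\MU^{\otimes 2})$. Moreover, higher $\Ext$ groups of such a cofree comodule $\underline{\pi}_\star^{A}\MU^{\otimes 2} \otimes_{\underline{\pi}_\star^{A}\MU} I$ vanish in the synthetic category by a standard change-of-rings/base-change computation, since these $\Ext$ groups reduce to $\Ext_{\underline{\pi}_\star^{A}\MU}^{>0}(-,I)$, which vanishes by injectivity of $I$.

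The main obstacle is establishing the synthetic bounded weight structure on $\Mod(\tau_{c=0}\nu\MU_A)^\omega$ and verifying the flatness of $\underline{\pi}_\star^{A}\MU^{\otimes 2}$ over $\underline{\pi}_\star^{A}\MU$ in the topological context — both steps use the topological evenness of $\MU_A$ (\cite[Theorem 5.3]{comezana1996calculations}) and the Thom isomorphism in place of the motivic regularity arguments. Once these ingredients are in place the comodule-derived-category identification follows formally, and Ind-completion delivers the stated equivalence. Symmetric monoidality is immediate from the symmetric monoidality of the whole construction, exactly as in the motivic case.
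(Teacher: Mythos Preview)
Your proposal is correct and follows essentially the same approach as the paper: the paper's proof consists of the single sentence ``the module category on the left is the derived category of its heart by the same argument in \cref{cor: identification of SF},'' and you have simply unpacked what that argument entails in the synthetic setting (compact generation, identification of the heart via \cref{prop: heart of synthetic t-str}, the dual of \cite[Proposition 1.3.3.7]{HA}, enough injectives via the cofree comodule functor over a flat Hopf algebroid, and $\Ext$-vanishing). Your care in noting that the synthetic weight-structure analogue of \cref{cor: chow heart weight structure} and the flatness of $\underline{\pi}_\star^{A}\MU^{\otimes 2}$ must be re-established topologically (via evenness of $\MU_A$) is appropriate, but these are exactly the ingredients the paper's terse proof implicitly assumes transfer.
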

\begin{proof}
    The module category on the left is the derived category of its heart by the same argument in \cref{cor: identification of SF}
\end{proof}
The following proposition shows us that these truncations admits an alternate description in terms of the map $\tau$.
\begin{proposition}
    The map $\tau\colon \Sigma^{0,-1}\mathbb{1}_A^\Syn\to \mathbb{1}_A^\Syn$
    is a $1$-connective cover.
\end{proposition}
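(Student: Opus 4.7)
The strategy is to reduce the claim to showing that the cofibre $\mathbb{1}_A^\Syn/\tau$ lies in the heart $\Syn^A_{c=0}$, whence the universal property of the connective cover will force $\tau$ to realise the $1$-connective cover morphism $\tau_{c\geq 1}\mathbb{1}_A^\Syn\to\mathbb{1}_A^\Syn$.

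First I would handle connectivity of the source: since $\Sigma^{0,-1}\mathbb{1}_A^\Syn\simeq \Sigma^2\Th(-\epsilon)$ and $\Th(-\epsilon)=\nu S^{-\epsilon}$ is connective by construction (the functor $\nu$ lands in $\Syn^A_{c\geq 0}$ by \cref{lem: properties of nu}), it follows that $\Sigma^{0,-1}\mathbb{1}_A^\Syn \in \Syn^A_{c\geq 2}\subseteq \Syn^A_{c\geq 1}$. Thus the map $\tau$ factors through $\tau_{c\geq 1}\mathbb{1}_A^\Syn$ and the composite $\mathbb{1}_A^\Syn\to \mathbb{1}_A^\Syn/\tau$ will factor canonically through $\tau_{c=0}\mathbb{1}_A^\Syn$ once we know the cofibre is in the heart. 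Granting heartedness and combining with $\mathbb{1}_A^\Syn \in \Syn^A_{c\geq 0}$, both $\mathbb{1}_A^\Syn/\tau$ and $\tau_{c=0}\mathbb{1}_A^\Syn$ receive the same map from $\mathbb{1}_A^\Syn$, and the induced map agrees with the identity on $\pi_0^{\mathrm{syn}}$ (computed via the long exact sequence in synthetic homotopy using that $\Sigma^{0,-1}\mathbb{1}_A^\Syn$ is $2$-connective), so it must be an equivalence.

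For the central claim that $\mathbb{1}_A^\Syn/\tau$ lies in the heart, I would pass to the filtered model. By \cref{prop: filtered model} together with \cref{thm: identification of even filtration on unit}, the unit $\mathbb{1}_A^\Syn$ corresponds to $\fil_\star^\even\mathbb{1}_A\simeq \Tot(\tau_{\geq \star}\MU_A^{\otimes\bullet+1})$ in $\Mod(\Fil_{\RU(A)}(\Sp^A);\fil_\star^\even\mathbb{1}_A)$. Under the Day convolution structure on $\Fil_{\RU(A)}(\Sp^A)$, tensoring by $\Sigma^{0,-1}\simeq \mathrm{ins}_\epsilon\mathbb{1}$ shifts a filtered object by $\epsilon$, so the map $\tau$ corresponds, level by level, to the canonical transition $\fil^\even_{V+\epsilon}\mathbb{1}_A\to \fil^\even_V\mathbb{1}_A$ coming from the filtration structure. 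Hence at filtration level $V$ the cofibre is given by $\Tot\bigl((\tau_{\geq V}/\tau_{\geq V+\epsilon})\MU_A^{\otimes\bullet+1}\bigr)$.

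Now the evenness theorem of Comezaña (used in \cref{cor : universal map of ggl's}) tells us that $\pi^A_\star\MU_A^{\otimes n+1}$ is concentrated in the $\RU(A)$-part of the integer grading for every $n$, so each associated graded piece $(\tau_{\geq V}/\tau_{\geq V+\epsilon})\MU_A^{\otimes n+1}$ is concentrated in a single homotopical degree, namely $|V|$, where it records the Mackey functor $\underline{\pi}^A_V\MU_A^{\otimes n+1}$. Totalising the resulting cosimplicial diagram and invoking the heart identification of \cref{cor: identification of the heart}, this $\epsilon$-associated graded matches the unit comodule $\underline{\pi}^A_\star\MU_A$ in $\coMod(\underline{\pi}^A_\star\MU^{\otimes 2})$; by construction of the heart identification through $\tau_{c=0}(\MU_A\otimes -)$, this is precisely $\tau_{c=0}\mathbb{1}^\Syn_A$. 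The main obstacle I anticipate is tracking the identifications carefully: verifying that under the filtered equivalence the synthetic $\tau$ is exactly the filtration-shift map, and justifying the commutation of $\Tot$ with the short exact sequences $\tau_{\geq V+\epsilon}\MU_A^{\otimes n+1}\to \tau_{\geq V}\MU_A^{\otimes n+1}\to (\tau_{\geq V}/\tau_{\geq V+\epsilon})\MU_A^{\otimes n+1}$ levelwise.
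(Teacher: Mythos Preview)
Your strategy is close in spirit to the paper's, and the ingredients you reach for (the cobar resolution and Comeza\~na's evenness of $\MU_A$) are exactly the right ones. But the execution takes an unnecessary detour and leaves a genuine gap.

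First, a reassurance: your anticipated obstacle about commuting $\Tot$ with cofibres is \emph{not} an obstacle. The functor $\Tot\colon \Fun(\Delta,\cC)\to\cC$ between stable categories preserves all limits, hence finite limits, hence (by stability) finite colimits---so it commutes with cofibres.

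The real gap is the translation step. You pass to the filtered model and compute the underlying $\RU(A)$-filtered $A$-spectrum of $\mathbb{1}_A^\Syn/\tau$, but you never explain how this filtered computation verifies that $\mathbb{1}_A^\Syn/\tau$ is coconnective (or in the heart) for the \emph{synthetic} t-structure on $\Syn^A$. Coconnectivity in $\Syn^A$ is, by definition, the condition that $\Gamma(P;-)$ be coconnective for every perfect pure $P$; the filtered functor $R$ records instead $\widetilde{\Gamma}(S^0;\tau_{c\geq V}\mathbb{1}\otimes-)$, which is not the same data without further argument. Your final sentence asserts that the result ``matches the unit comodule $\underline{\pi}_\star^A\MU_A$'' and hence equals $\tau_{c=0}\mathbb{1}_A^\Syn$, but to make that comparison you would need to already know $R(\tau_{c=0}\mathbb{1}_A^\Syn)$, which is essentially what you are trying to establish.

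The paper avoids this by staying inside $\Syn^A$: it uses \cref{lem: synthetic nilpotent completion} to reduce to showing each $\nu\MU_A^{\otimes d}/\tau$ is coconnective (coconnective objects being closed under limits), and checks this directly on sections via the identification of \cref{lem: synthetic analogue MU}, computing
\[
\Gamma(\Sigma^V_+A/K;\nu\MU_A^{\otimes d}/\tau)\simeq \cof\bigl(\tau_{\geq 2}(\Sigma^{-V}\MU_A^{\otimes d})^K\to\tau_{\geq 0}(\Sigma^{-V}\MU_A^{\otimes d})^K\bigr),
\]
which is visibly coconnective by evenness. This is morally the same computation you sketch, but framed so that it verifies the defining property of coconnectivity rather than an auxiliary filtered invariant.
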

\begin{proof}
    Just as in \cref{cons: comparison map} it suffices to prove that the cofibre is coconnective. Using \cref{lem: synthetic nilpotent completion}, we may shift our attention to its $\nu\MU_A$-based cobar complex and prove coconnectivity for every term since coconnective objects are closed under limits. Since coconnectivity in the sheaf category $\Syn^A$ is detected levelwise on sections, it suffices to prove that for every perfect pure $A$-spectrum of the form $\Sigma^V_+A/K$  and $d\geq 1$ the spectrum
    \[\Gamma(\Sigma^V_+A/K,\nu\MU_A^{\otimes d}/\tau)\simeq \cof(\Gamma(\Sigma^V_+A/K,\tau_{c\geq 2}Y(\MU_A))\to \Gamma(\Sigma^V_+A/K,\nu\MU_A^{\otimes d}))\]
    is coconnective. Following the argument in \cref{lem: synthetic analogue MU} this may be computed explicitly as
    \[\cof(\tau_{\geq 2}(\Sigma^{-V}\MU_A^{\otimes d})^K\to \tau_{\geq 0}(\MU_A^{\otimes d})^K)\]
    and this is coconnective (in fact, concentrated in degree zero) since $\MU_A$ is even (\cite[\S XXVIII]{comezana1996calculations}).
\end{proof}
\begin{corollary}\label{cor: modules over cofiber of tau}
    The cofibre $\mathbb{1}_A^\Syn/\tau$ refines to a commutative algebra and we have a symmetric monoidal equivalence
    $$\Mod(\mathbb{1}_A^\Syn/\tau) \simeq \Ind \cD^b_\Pure\coMod(\underline{\pi}_\star^A \MU^{\otimes 2}).$$
\end{corollary}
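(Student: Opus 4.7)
The plan is to recognize this corollary as essentially a repackaging of the preceding proposition together with \cref{cor: modules over trunc syn unit}. The key identification is that $\mathbb{1}_A^\Syn/\tau \simeq \tau_{c=0}\mathbb{1}_A^\Syn$ as commutative algebras.

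First I would use the immediately preceding proposition, which states that $\tau\colon \Sigma^{0,-1}\mathbb{1}_A^\Syn \to \mathbb{1}_A^\Syn$ is a $1$-connective cover. This means that the fibre sequence $\tau_{c\geq 1}\mathbb{1}_A^\Syn \to \mathbb{1}_A^\Syn \to \tau_{c\leq 0}\mathbb{1}_A^\Syn$ can be identified with the $\tau$-multiplication sequence, yielding an equivalence $\mathbb{1}_A^\Syn/\tau \simeq \tau_{c\leq 0}\mathbb{1}_A^\Syn$. Since $\mathbb{1}_A^\Syn = \nu \mathbb{1}_A$ is connective (per construction of $\nu$ as a connective cover), the right-hand side is in fact $\tau_{c=0}\mathbb{1}_A^\Syn$.

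Next, to upgrade this to an identification of commutative algebras, I would invoke the fact that the synthetic $t$-structure is compatible with the presentably symmetric monoidal structure on $\Syn^A$ (\cref{rem:basic facts about syn t structure}). Hence $\tau_{c\leq 0}\colon \Syn^A \to \Syn^A_{c=0}$ is lax symmetric monoidal and carries $\mathbb{1}_A^\Syn$ to a commutative algebra $\tau_{c=0}\mathbb{1}_A^\Syn$. The equivalence above is then upgraded to an equivalence of commutative algebras using the universal property of the truncation together with the observation that the map $\mathbb{1}_A^\Syn \to \mathbb{1}_A^\Syn/\tau$ exhibits its target as $\tau_{c\leq 0}\mathbb{1}_A^\Syn$.

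Finally, having identified $\mathbb{1}_A^\Syn/\tau \simeq \tau_{c=0}\mathbb{1}_A^\Syn$ as commutative algebras, the symmetric monoidal equivalence on module categories is exactly the content of \cref{cor: modules over trunc syn unit}. No further computation is required; the only mild subtlety is ensuring that the equivalence $\mathbb{1}_A^\Syn/\tau \simeq \tau_{c=0}\mathbb{1}_A^\Syn$ is one of $\mathbb{E}_\infty$-algebras rather than merely underlying objects, which follows formally from the compatibility of the $t$-structure with the tensor product.
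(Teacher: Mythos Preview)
Your proposal is correct and follows exactly the approach implicit in the paper: the preceding proposition identifies $\mathbb{1}_A^{\Syn}/\tau$ with the truncation $\tau_{c=0}\mathbb{1}_A^{\Syn}$, which inherits its commutative algebra structure from the compatibility of the synthetic $t$-structure with the tensor product, and then \cref{cor: modules over trunc syn unit} supplies the module-category identification. The paper states this as an immediate corollary without spelling out the argument, and your unpacking of the $\mathbb{E}_\infty$-refinement step is the natural one.
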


\newpage
\appendix
\section{Equivariant Betti realisation and parametrised stability}\label{appendix: Betti realisation}
In this section, we set up some comparison functors between the categories of equivariant motivic spectra over $\C$ and categories of equivariant spectra. In one direction, we utilise parametrised properties of the former to construct a universal comparison map. The other direction is provided by a stacky Betti realisation. 
\subsection{Equivariant Betti realisation}
If $k$ is a characteristic zero base field with a specified complex embedding $\sigma\colon \Smooth_k\to \C$, one obtains a symmetric monoidal left adjoint
\[\Be_\sigma\colon \SH(k)\to \Sp\]
called Betti realisation\footnote{If the complex embedding is implicit, e.g. $k=\C$ the decoration $\sigma$ will be dropped.} with respect to $\sigma$. This functor is defined by sending some $X\in \Smooth_k$ to the underlying anima of its complex points $X(\C)$ equipped with the analytic topology and extending to $\SH(k)$. This was further generalised to the relative setting in \cite{ayoub2010note}, where one considers $\SH(B)$ for $B$ a finite type $\C$-scheme, so that the functor sending some $X\in \Smooth_B$ to the complex analytic space $X^\an$ induces a symmetric monoidal left adjoint
\[\Be_B\colon\SH(B)\to \SH^\an(B^\an)\simeq\Shv(B^\an;\Sp),\]
where the right hand side denotes the category of $S^2$-spectra in disc-invariant sheaves of pointed animæ on $B^\an$, which by \cite[Théorème 1.8]{ayoub2010note} can be identified with the category of sheaves of spectra on the complex analytic space $B^\an$. In \cite[Théorème 3.19]{ayoub2010note} it is then further shown that this is compatible with the six functor formalisms on either side if one restricts to quasiprojective base schemes. The goal of this section is to extend a version of this to the equivariant setting, i.e. given a nice abelian group $A$ over $\C$ we construct a symmetric monoidal left adjoint 
\[\Be^A\colon \SH^A(\C)\to \Sp^{A^\an_c}\]
landing in genuine equivariant spectra for (the maximal compact subgroup of) the corresponding Lie group. This will further be shown to be compatible with equivariant operations on either side.
\begin{remark}
    Since we have restricted to nice abelian group schemes over $\C$, the Lie group associated to its complex points always admits a maximal compact subgroup. By \cite[Theorem 1.4.31]{degrijse2023proper} the corresponding categories of equivariant spectra are furthermore equivalent. We will therefore drop the subscript $c$ from the notation, and $A^\an$ will refer to the maximal compact subgroup which is now a compact Lie group.
\end{remark}
Our main tool will be the pullback formalism (in the sense of \cite{drew2022universal}) of genuine sheaves on a differentiable stack introduced in \cite[Part II, \S 4.3]{cnossen2024twisted}.
\begin{propn}[{\cite[Part II, Proposition 4.5.27, Corollary 4.3.10, Proposition 4.4.17]{cnossen2024twisted}}]
    There exists a pullback formalism
    \[\SH^\diff\colon \mathrm{DiffStk}^\op\to \CAlg(\Prlst)\]
    such that the following hold.
    \begin{enumerate}
        \item If $M$ is a smooth manifold, there is a symmetric monoidal equivalence
        \[\SH^\diff(M)\simeq \Shv(M;\Sp).\]
        \item if $G$ is a compact Lie group and $\B G$ its classifying stack, there is a symmetric monoidal equivalence
        \[\SH^\diff(\B G)\simeq \Sp^{G}.\]
    \end{enumerate}
\end{propn}
Inspired by this, we make the following definition.
\begin{defn}
    Let $A$ be a nice abelian group over $\C$ and consider the functor
    \[\Be^A\colon \Smooth_\C^A\to \mathrm{Diff}^{A^\an}\]
    that sends a smooth $\C$-scheme $X$ with $A$-action to the smooth\footnote{We must confess that the notation $X^\an$ for the smooth manifold underlying the complex points of a smooth scheme $X$ is rather abusive: in contrast with \cite{ayoub2010note} we are in fact completely forgetting any complex analytic structure on this manifold. We hope that this does not cause confusion} manifold $X^\an$ with its resulting action by the compact Lie group $A^\an$.
\end{defn}
\begin{remark} The following are immediate.
\begin{enumerate}
    \item Any basic Nisnevich cover of $X\in \Smooth_\C^A$ can be refined to a usual open cover of $X^\an$ in $\mathrm{Diff}^{A^\an}$ in the sense of \cite[Part II, Definition 4.1.2]{cnossen2024twisted}.
     \item Products are sent to products.
    \item The affine line $\A^1\in \Smooth_\C^A$ gets sent to a complex disc which contains the real line  $\R$ as a retract in $\mathrm{Diff}^{A^\an}$.
\end{enumerate}
\end{remark}
Using \cite[Proposition 3.8]{hoyois_six_2017} for the descent property and \cite[Remark 3.13]{hoyois_six_2017} for the homotopy invariance condition, the following result may then be deduced.
\begin{lemma}
    The map $\an$ above induces a symmetric monoidal left adjoint
    \[\Be^A\colon \spcs(\B A)\to \spcs^{\mathrm{diff}}(\B A^\an)\simeq \Ani^{A^\an}.\]
\end{lemma}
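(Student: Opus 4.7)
The plan is a standard universal-property argument: extend $\Be^A$ by left Kan extension to presheaves and then verify that it factors through the motivic localisation on the source and the homotopy-invariant hypersheaf localisation on the target. More precisely, I would first invoke the universal property of the presheaf category $\cP(\Smooth_\C^A)$ with its Day-convolution symmetric monoidal structure: since $\Be^A\colon \Smooth_\C^A \to \mathrm{Diff}^{A^\an}$ preserves finite products (the underlying complex points of a fibre product of smooth $A$-schemes carry the product $A^\an$-action), the resulting Yoneda extension
\[\widetilde{\Be^A}\colon \cP(\Smooth_\C^A) \to \spcs^{\mathrm{diff}}(\B A^\an)\]
inherits a symmetric monoidal structure and is a left adjoint for formal reasons.

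Next, I would verify that $\widetilde{\Be^A}$ inverts the two classes of maps generating motivic localisation. For Nisnevich descent, the first bullet of the preceding remark tells us that any basic Nisnevich cover of $X \in \Smooth_\C^A$ analytifies to an honest open cover of $X^\an$ in $\mathrm{Diff}^{A^\an}$; combined with the descent property of $\spcs^{\mathrm{diff}}(\B A^\an)$ (which holds by \cite[Part II, Proposition 4.1.5]{cnossen2024twisted} or the corresponding statement cited above), this shows $\widetilde{\Be^A}$ sends Nisnevich covers to covers. For $\A^1$-invariance, the third bullet of the remark says $\A^1$ analytifies to a complex disc, which is contractible (in fact contains $\R$ as a retract and itself contracts to a point) so that the projection $X \times \A^1 \to X$ is sent to an equivalence in $\spcs^{\mathrm{diff}}(\B A^\an)$ by homotopy invariance of the differentiable category.

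Together these imply that $\widetilde{\Be^A}$ factors through the motivic localisation $L_\mot\colon \cP(\Smooth_\C^A) \to \spcs(\B A)$ to produce the desired functor $\Be^A\colon \spcs(\B A) \to \spcs^{\mathrm{diff}}(\B A^\an)$. Symmetric monoidality descends because $L_\mot$ is a symmetric monoidal localisation and both classes of inverted maps are closed under tensor products with arbitrary motivic spaces. Colimit preservation is automatic from the Yoneda extension construction, and the adjoint functor theorem provides a right adjoint. The final identification $\spcs^{\mathrm{diff}}(\B A^\an) \simeq \Ani^{A^\an}$ is supplied by the cited identification of $\SH^{\mathrm{diff}}$ (or rather its unstable version) on classifying stacks of compact Lie groups.

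I expect essentially no obstacle: each verification is short and is explicitly flagged by the remark preceding the lemma. The only mild subtlety is checking that $\spcs^{\mathrm{diff}}(\B A^\an)$ really has the analogues of Nisnevich descent and $\A^1$-homotopy invariance needed, but this is precisely the content of the cited results from \cite{cnossen2024twisted} (or \cite{hoyois_six_2017} in the relevant form).
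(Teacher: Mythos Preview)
Your proposal is correct and follows essentially the same route as the paper: the paper's proof simply cites \cite[Proposition 3.8]{hoyois_six_2017} for the descent step and \cite[Remark 3.13]{hoyois_six_2017} for the homotopy-invariance step, which package exactly the universal-property-of-motivic-localisation argument you have spelled out. The three bullets in the preceding remark are precisely the inputs needed, and you have used them in the intended way.
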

Let us illustrate some key examples of the images of certain important equivariant motivic animæ under this functor.
\begin{lemma}\label{lem: examples of Betti realisations}
    Consider $X\in \Smooth_\C^A$, with a $A$-equivariant vector bundle $V$ over it. Let $V^\an$ denote the corresponding complex $A^\an$-equivariant bundle over $X^\an$. Then we have an identification $$\Be^A\Th_X(V)\simeq \Th_{X^\an}(V^\an)\simeq \Sigma^{V^\an}_+X^\an.$$ In particular, representation spheres get sent to the corresponding complex representation spheres and one can further verify that Gra{\ss}mannians get sent to the corresponding equivariant Gra{\ss}mannians.
\end{lemma}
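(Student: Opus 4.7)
The plan is to apply $\Be^A$ to the defining cofibre sequence of $\Th_X(V)$ from \cref{cons: Thom space},
\[(\mathbb{V}_X(V)\setminus X)_+\to \mathbb{V}_X(V)_+\to \Th_X(V),\]
and identify the resulting terms. Since $\Be^A$ was constructed as a symmetric monoidal left adjoint, it is pointed and preserves all colimits, in particular cofibre sequences; thus it suffices to identify the realisations of the first two terms together with the map between them.

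Per construction, $\Be^A$ is the left Kan extension of the functor $\Smooth_\C^A\to\mathrm{Diff}^{A^\an}$ sending a smooth $A$-scheme to the underlying smooth $A^\an$-manifold of its complex points. Since analytification preserves finite limits and open immersions, the smooth $A$-scheme $\mathbb{V}_X(V)$ --- the total space of the algebraic $A$-equivariant bundle $V\to X$ --- is sent to the total space of the corresponding smooth complex $A^\an$-equivariant vector bundle $V^\an\to X^\an$, and the open complement $\mathbb{V}_X(V)\setminus X$ is sent to the open complement of the zero section in $V^\an$. Plugging these identifications back into the cofibre sequence above yields a pointed cofibre sequence
\[(V^\an\setminus X^\an)_+\to (V^\an)_+\to \Be^A\Th_X(V)\]
in $\Ani^{A^\an}_\ast$, whose final term is by the analogous cofibre sequence definition on the topological side nothing but the $A^\an$-equivariant Thom space $\Th_{X^\an}(V^\an)$.

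For the final identification, the case of a representation sphere ($X=\Spec(\C)$, $V$ a complex $A$-representation) is immediate: $\Th_{X^\an}(V^\an)$ is by construction the one-point compactification of $V^\an$, i.e. the equivariant representation sphere $S^{V^\an}\simeq \Sigma^{V^\an}_+X^\an$. The Gra{\ss}mannian claim follows by the same strategy, using that the analytification of an algebraic Gra{\ss}mannian with its tautological bundle is the corresponding smooth equivariant Gra{\ss}mannian with its tautological complex bundle, which is a direct consequence of the moduli-theoretic description of Gra{\ss}mannians being preserved by analytification. The argument is essentially a bookkeeping exercise: all of the content is encoded in the preservation of the relevant geometric constructions by analytification, together with the cocontinuity of $\Be^A$, so no substantial obstacle is expected.
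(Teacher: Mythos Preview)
Your proof is correct and follows essentially the same approach as the paper. The paper's proof is a one-liner that cites the purity cofibre sequences on both the motivic side (\cite[Proposition 5.2]{hoyois_six_2017}) and the differentiable-stack side (\cite[Part II, Lemma 4.3.2]{cnossen2024twisted}); you simply unpack this by writing out the defining cofibre sequence of the Thom space explicitly and tracking it through $\Be^A$, which amounts to the same argument.
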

\begin{proof}
This follows immediately by comparing the purity cofibre sequences of \cite[Proposition 5.2]{hoyois_six_2017} and \cite[Part II, Lemma 4.3.2]{cnossen2024twisted}.
\end{proof}
Since we now know what happens to representation spheres, we may extend our equivariant Betti realisation functor to the stable setting.
\begin{cor}\label{cor: equivariant Betti realisation}
    The unstable Betti realisation functor above extends to a symmetric monoidal left adjoint
    \[\Be^A\colon \SH(\B A)\to \Sp^{A^\an}\]
\end{cor}
\begin{proof}
    By \cite[Definition 6.1]{hoyois_six_2017}, it suffices to check that objects of the form $\Th(V)$ for $V$ a vector bundle on $\B A$ get sent to tensor-invertible elements in the target. This is also true per construction using \cref{lem: examples of Betti realisations} and \cite[Definition 4.3.4]{cnossen2024twisted}.
\end{proof}
\subsection{Compatibility with equivariant operations}
We now show that the functor in \cref{cor: equivariant Betti realisation} is compatible with the equivariant operations of restriction, inflation and coinduction. 
\begin{propn}\label{prop: compat}
    Let $f\colon \B K\to \B A$ be a morphism of classifying stacks of nice abelian groups over $\C$ and let $f_\an$ denote the corresponding map of classifying stacks of compact Lie groups. Then there is a commutative diagram of symmetric monoidal left adjoints
    \[\begin{tikzcd}
        \SH(\B A)\arrow[d, "f^\ast"]\arrow[r, "\Be^A"]&\Sp^{A^\an}\arrow[d, "f_\an^\ast"]\\
        \SH(\B K)\arrow[r, "\Be^K"]&\Sp^{K^\an}.
    \end{tikzcd}\]
    If $f$ was representable, then this square is furthermore vertically left adjointable.
\end{propn}
\begin{proof}
    Note that all functors in the diagram above commute with colimits, so it suffices to check commutativity on the level of smooth $\C$-schemes with $A$- resp. $K$-action and their images under the Betti realisation. Commutativity of the diagram then follows immediately from the observation that Betti realisation is compatible with pullbacks. In the case where $f$ is representable and we want to verify left adjointability, the same cocontinuity argument holds along with the observation that $f_\sharp$ resp. $(f_\an)_\sharp$ is given by postcomposition along $f$ resp. $f_\an$ on the level of stacks.
\end{proof}
\subsection{Parametrised stability}
Fixing the base $\C$ and letting $A$ vary over finite abelian groups, we saw in \cref{thm: 6FF omnibus} that the categories $\SH^A(\C)$ satisfy excellent functoriality properties. This may be recast in the language of parametrised homotopy theory and provides us with a comparison functor from genuine $A$-spectra by the universality of the latter. We will freely use the language of \cite{cnossen2023partial} and \cite{cnossen2024normed} below.
\begin{construction}\label{cons: glofab}
      Let $\Glo^{\fab}$ denote the full subcategory of the $(2,1)$-category of connected $1$-groupoids on groupids whose fundamental group is finite and abelian. Let $\Orb^\fab$ denote the wide subcategory on injective group homomorphisms.
\end{construction}
\begin{remark}
    We refer to \cite{cnossen2023partial} for more details on the categories $\Orb^\fab\subset \Glo^\fab$, and in particular note that it defines a \emph{cleft} in the sense of \cite[\S 3]{cnossen2023partial}.
\end{remark}
\begin{construction}\label{cons: glo to stk comparison}
    Consider the functor
    \[\Glo^\fab\to \Stk_\C\]
    that sends a $1$-groupoid to the classifying stack of the corresponding $1$-groupoid in $\C$-schemes.
\end{construction}
\begin{remark}
    Per construction, every $1$-groupoid in $\Glo^\fab$ is (noncanonically) of the form $\B A$ with $A$ a finite abelian group, so that the functor above simply sends it to the classifying stack of the corresponding nice abelian group $A_\C=\underline{\Hom}(A^\vee,\G_{m,\C})$ arising as the constant group scheme on $A$.
\end{remark}
\begin{remark}\label{rem: orb gets sent to smooth proper}
    It is clear from the construction of the functor $\Glo^\fab\to \Stk_\C$ that all morphisms in $\Orb^\fab$ get sent to representable morphisms which are furthermore smooth and proper.
\end{remark}
\begin{defn}
    The $\Glo^\fab$-category of $\C$-motivic spectra is the functor
    \[\SH^\bullet(\C)\colon \Glo^{\fab,\op}\to \Cat\]
    arising as the composite
    \[\Glo^{\fab,\op}\to \Stk_\C^\op\xrightarrow{\SH(\bullet)}\Cat\]
    of the functors in \cref{cons: glo to stk comparison} and \cref{thm: 6FF omnibus}.
\end{defn}
\begin{remark}
    We note that the functor from \cref{cons: glo to stk comparison} can be extended to a functor of presheaf categories $\cP(\Glo^\fab)\to \cP(\Stk_\C)$ and -- as is standard -- we will equivalently view $\SH^\bullet(\C)$ as a limit-preserving functor $\cP(\Glo^\fab)^\op\to \Cat$.
\end{remark}
It is then quite straightforward to convert the six-functor-formalism on $\SH$ into a statement about the parametrised properties of this category.
\begin{propn}\label{prop: parametrised properties}
    The $\Glo^\fab$-category $\SH^\bullet(\C)$ satisfies the following.
    \begin{itemize}
        \item It is $\Orb^\fab$-cocomplete.
        \item It is $\Orb^\fab$-presentable.
        \item It is $\Orb^\fab$-stable.
    \end{itemize}
\end{propn}
\begin{proof}
    For the cocompleteness statement, we use \cite[Lemma 4.9]{cnossen2023partial} to see that this boils down to the following observations.
    \begin{enumerate}
        \item Every $\SH^A(\C)$ is cocomplete by construction.
        \item For every morphism $f$ in $\Orb^\fab$ the resulting map of stacks (also denoted $f$) is smooth so that $f^\ast$ admits a further left adjoint $f_\sharp$.
        \item These further left adjoints satisfy the Beck--Chevalley condition by the smooth base change formula in \cref{thm: 6FF omnibus}.
    \end{enumerate}
    For the presentability statement, it suffices to note that every $\SH^A(\C)$ is presentable per construction. For the stability statement, note that $\SH^\bullet(\C)$ is clearly fibrewise stable so it suffices to prove that it is parametrised semiadditive. By \cref{rem: orb gets sent to smooth proper} this follows from smooth base change.
\end{proof}
We will summarise this by saying that $\SH^\bullet(\C)$ is equivariantly presentable and equivariantly stable as a $\fab$-global category. Having established these properties of $\SH^{\bullet}(\C)$, it is now formal that it obtains a comparison functor from the universal $\fab$-parametrised category satisfying these properties. If we worked instead over all finite groups, then \cite[Theorem 9.4]{cnossen2023partial} tells us that this universal example is given by the parametrised category sending a finite group $G$ to $\Sp^G$.
\begin{cor}\label{cor: unit map (additive)}
    Evaluation at the equivariant sphere spectra induces an equivalence of $\Glo^\fab$-categories
    \[\underline{\Fun}^{\Orb^\fab\text{-cc}}_{\Glo^\fab}(\Sp^\bullet,\SH^\bullet(\C))\simeq \SH^\bullet(\C),\]
    where the left hand side is the full subcategory of the parametrised functor category on $\Orb^\fab$-cocontinuous functors.
\end{cor}
\begin{proof}
    Following \cite[Theorem 8.11]{cnossen2023partial} and the results of \cref{prop: parametrised properties} this is immediate once we identify the source with the free equivariantly presentable equivariantly stable $\fab$-parametrised category on a point. This now follows by observing by restricting the result \cite[Theorem 9.4]{cnossen2023partial} to the finite abelian setting. Indeed, equivalences of parametrised categories may be detected on the wide subcategory of injective maps, and passing to all slices is equally conservative. This therefore follows from the observation that any subgroup of an abelian group is again abelian.
\end{proof}
The result of \cref{cor: unit map (additive)} is enough to provide us with compatible comparison functors from genuine $A$-spectra. This can be further refined to a multiplicative setting.
\begin{construction}
    For a fixed finite abelian group $A$, we may consider the restriction of $\SH^\bullet(\C)$ to the slice $\Orb^\fab_{/A}$ and further extend this to its finite coproduct completion $\Fin_A$, the $1$-category of finite $A$-sets. We therefore obtain functors
    \[\SH^A(\C)\colon \Fin_A^\op\to \CAlg(\Prl)\]
    by noting that every $\SH^K(\C), K\subset A$ admits a presentably symmetric monoidal structure compatible with restriction as in \cref{thm: 6FF omnibus}.
\end{construction}
\begin{lemma}\label{lem: naïve sym mon}
    Let $A$ be a finite abelian group, then the functor
    \[\SH^A(\C)\colon \Fin_A^\op\to\CAlg(\Prl) \]
    exhibits $\SH^A(\C)$ as an $A$-presentably symmetric monoidal $A$-category
\end{lemma}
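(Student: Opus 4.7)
The plan is to unpack the definition of an $A$-presentably symmetric monoidal $A$-category in the sense of \cite{cnossen2024normed}, and verify that the functor $\SH^A(\C)\colon \Fin_A^\op\to \CAlg(\Prl)$ meets each of the required conditions. By definition, this amounts to checking three things: (i) the underlying $A$-category is $A$-presentable, (ii) the functor lands in $\CAlg(\Prl)$ fibrewise, and (iii) it satisfies the Segal condition, so that the fibre at $A/H_1 \sqcup A/H_2$ is the product of the fibres.

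First, I would note that $A$-presentability of the underlying $A$-category is essentially contained in \cref{prop: parametrised properties} upon restriction to the slice $\Orb^\fab_{/A}$; one just needs to observe that passing to subgroups of a finite abelian group stays within the finite abelian setting and that the $\Orb^\fab_{/A}$-cocompleteness follows formally from the finite abelian case of the same proposition. Next, for fibrewise presentable symmetric monoidality, note that for every subgroup $K\subset A$, the category $\SH^K(\C)$ is presentably symmetric monoidal by \cref{thm: 6FF omnibus}\ref{omni: presentable}, and for every orbit map $A/H\to A/K$ the corresponding pullback $\SH^K(\C)\to \SH^H(\C)$ is strong symmetric monoidal and colimit-preserving, the latter because the map $\B H\to \B K$ is smooth representable so that $f^*$ admits the further left adjoint $f_\sharp$ by \cref{thm: 6FF omnibus}\ref{omni: pullbacks}.

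For the Segal condition, I would use that $\SH^A(\C)$ is defined on $\Fin_A$ as the right Kan extension along $\Orb_{/A}^\fab\hookrightarrow \Fin_A$ of its restriction to orbits; equivalently, it is the unique product-preserving extension of the orbit-level data. This immediately gives that disjoint unions in $\Fin_A$ are sent to products in $\CAlg(\Prl)$, which is precisely the Segal condition. Alternatively, one can appeal to Nisnevich descent from \cref{thm: 6FF omnibus}\ref{omni: pullbacks} applied to the disjoint union decomposition of a finite $A$-set into orbits.

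I do not expect any single step to be a substantial obstacle: all the required structure has been established in \cref{thm: 6FF omnibus} and \cref{prop: parametrised properties}. The one subtle bookkeeping point will be ensuring that the coherence data (i.e.\ the functoriality in $\Fin_A^\op$) is constructed coherently rather than just at the level of homotopy categories; this is handled by invoking the finite coproduct completion as a universal property in $\CAlg(\Prl)$, so that the extension from $\Orb^\fab_{/A}$ to $\Fin_A$ is automatic and preserves all the relevant structure.
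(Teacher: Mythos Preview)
Your three-point checklist omits the crucial condition. The notion of an $A$-presentably symmetric monoidal $A$-category from \cite[Definition 5.4.5]{cnossen2024normed} is not exhausted by (i)--(iii): the compatibility between the fibrewise symmetric monoidal structure and the parametrised colimits must also be verified, and this is encoded by the \emph{projection formula}
\[
f_\sharp(f^\ast X\otimes Y)\simeq X\otimes f_\sharp Y
\]
for $f\colon \B H\to \B K$ an orbit map. Indeed, the paper's own proof observes (citing the discussion after \cite[Definition 5.4.5]{cnossen2024normed}) that this is exactly what must be checked, and then invokes the smooth projection formula (SPF) from \cref{thm: 6FF omnibus}. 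Your conditions (i) and (iii) are essentially already built into the Construction immediately preceding the lemma (the extension to $\Fin_A$ is by finite-coproduct completion, which forces the Segal condition; $A$-presentability is handled in \cref{prop: parametrised properties}), and (ii) is also immediate from \cref{thm: 6FF omnibus}; so your proof as written verifies only the parts of the definition that are already free, and misses the one ingredient with actual content.

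To repair the argument, add a fourth item: for each inclusion $H\subset K\subset A$, the induction functor $f_\sharp$ is $f^\ast$-linear. This is precisely (SPF) in \cref{thm: 6FF omnibus}, so the fix is a one-line citation.
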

\begin{proof}
    For the definition of an $A$-presentably symmetric monoidal $A$-category, see \cite[Definition 5.4.5]{cnossen2024normed} and the discussion below it. As remarked in loc. cit. it suffices to exhibit a smooth projection formula but this again follows from \cref{thm: 6FF omnibus}. 
\end{proof}
Using the results of \cref{cor: unit map (additive)} and plugging in the symmetric monoidal units $\mathbb{1}_A$, we see that for every finite abelian group $A$ we obtain a cocontinuous functor $c^A\colon \Sp^A\to \SH^A(\C)$ which we call the unit functor at $A$. 
\begin{cor}\label{cor: unit map (multiplicative)}
    For every finite abelian group $A$, the unit functor
    \[c^A\colon\Sp^A\to \SH^A(\C)\]
     admits a unique symmetric monoidal refinement which is furthermore compatible with pullbacks.
\end{cor}
\begin{proof}
    Following \cite[Theorem 5.4.10 (3)]{cnossen2024normed}, the source admits a unique $A$-presentably symmetric monoidal structure with unit given by the equivariant sphere spectrum. Using \cref{lem: naïve sym mon} we see that the $\E_0$-algebra map given by the units $c^A$ therefore uniquely extends to an $\E_\infty$-algebra map.
\end{proof}
\begin{proof}
    Following \cref{prop: compat} we see that the composite
    \[\Sp^A\xrightarrow{c}\SH^A(\C)\xrightarrow{\Be^A}\Sp^A\]
    is compatible with restriction maps in $A$ hence provides a parametrised functor. One can check that it is a levelwise equivalence since the composite is a left adjoint that sends $A$-orbits to $A$-orbits hence is an equivalence of parametrised categories.
\end{proof}

\newpage
\section{Equivariant motivic convergence}\label{sec: ANSS convergence}
In $\Sp$, convergence of the Adams--Novikov spectral sequence is an essentially formal matter; the unit map $\mathbb{1}\to \MU$ has $1$-connective fibre so one may easily conclude that the augmentation map
\[\mathbb{1}\to \Tot(\MU^{\otimes\bullet+1})\]
in the Amitsur complex is an equivalence. In fact, this is clearly seen to extend to any bounded below spectrum. In the motivic world, a similar result is proven in \cite[\S 5.1]{mantovani2023localizations} for any bounded below\footnote{In the homotopy t-structure which we will recall below} motivic spectrum.
\begin{thm}[{\cite[\S 5.1]{mantovani2023localizations}}]\label{thm: nonequivariant convergence}
    Let $E\in \SH(\C)$ be bounded below in the homotopy t-structure, then the map
    \[E\to E^\wedge_\MGL\coloneq\Tot(E\otimes\MGL^{\otimes\bullet+1})\]
    to its $\MGL$-nilpotent completion can be identified with the $\eta$-completion map $E\to E^\wedge_\eta$.
\end{thm}
Equivariantly, things become trickier: even in the topological setting, $\MU_A$ does not satisfy any kind of nice connectivity properties prima facie. For simplicitly, we will work over the stack $[\Spec(\C)/A]$ throughout as this is the setting of our main application.
\subsection{The homotopy t-structure}
The correct notion of bounded below object is provided by the homotopy t-structure of \cite{bachmann2022motivic}.
\begin{defn}[{\cite[\S 7.4]{bachmann2022motivic}}]\label{def: homotopy t-structure}
    The homotopy t-structure on $\SH^A(\C)$ is such that its nonnegative part is generated under colimits by objects of the form $\Sigma^{n,n}\Sigma^{\infty}_+X$ for $n\in \Z$ and $X\in \spcs^A(\C)$. Its cover and truncation functors will be denoted by $\tau_{\geq n}$ and $\tau_{\leq n}.$
\end{defn}
It is clear from the definition that when $A$ is the trivial group this recovers the usual homotopy t-structure on $\SH(\C)$ as recalled in \cite[\S 2.1]{hoyois2015algebraic}. Furthermore, the homotopy t-structure is compatible with the symmetric monoidal structure and filtered colimits.
In our setting, the stack $[\Spec(\C)/A]$ is a tame DM-stack and satisfies the Adams hypothesis in the sense of \cite{bachmann2022motivic} so that one may conclude the following.
\begin{lemma}[{\cite[Proposition 7.10]{bachmann2022motivic}}]\label{lem: homotopy t-structure bicomplete}
    The homotopy t-structure on $\SH^{A}(\C)$ is both left and right complete.
\end{lemma}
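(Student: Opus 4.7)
The plan is to invoke \cite[Proposition 7.10]{bachmann2022motivic} directly by verifying its two hypotheses for the stack $\cX = [\Spec(\C)/A]$: tameness as a Deligne--Mumford stack, and satisfaction of the Adams hypothesis. The tameness verification is the easier of the two. Since $A$ is a finite abelian group and the base field $\C$ has characteristic zero, the order of any stabiliser in $\cX$ is invertible in the base; combined with the fact that $A$ acts trivially on $\Spec(\C)$, this makes $\cX$ a tame DM-stack in the sense of loc. cit. Bicompleteness of the homotopy t-structure for $\SH$ over a point is a classical result (see e.g. \cite[\S 2.1]{hoyois2015algebraic}), and Bachmann--Hoyois's proposition is precisely the statement that under their tameness and Adams hypotheses this property descends to $\SH(\cX)$.

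For the Adams hypothesis, the main task is to exhibit, for every geometric point $\Spec(k) \to \cX$ and every finite subgroup stabiliser, an equivariantly complex oriented ring spectrum with good finiteness properties. In our setting this is supplied by $\MGL_\cX$ itself: the absolute lift established in \cref{prop: MGL is absolute} gives a commutative algebra structure over every intermediate classifying stack, and the orientation discussed in \cref{rem: motivic Thom iso} provides Thom isomorphisms compatible with pullback. One then checks connectivity: by \cref{prop: vanishing in MGL}, $\MGL_A$ has homotopy concentrated in nonnegative Chow degrees, which translates to the required boundedness of the unit map in the homotopy t-structure after unwinding conventions.

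With both hypotheses in hand, the proposition of Bachmann--Hoyois yields the desired conclusion: the right completeness part follows from the generating set in \cref{def: homotopy t-structure} together with compatibility with filtered colimits, while left completeness uses a connectivity estimate provided by the Adams hypothesis to control the convergence of the Postnikov tower. The main obstacle in making this all rigorous is ensuring that the hypotheses of \cite[Proposition 7.10]{bachmann2022motivic} --- originally formulated for more general tame DM stacks --- apply cleanly in our classifying stack setting, and in particular that our verification of the Adams hypothesis matches the exact formulation there. Since $A$ is finite abelian and the base is $\Spec(\C)$, however, no new geometry is required and the verification reduces to facts already assembled in \cref{sec: absolute MGL} and \cref{sec: vanishing in MGL}.
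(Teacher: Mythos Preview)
The paper gives no proof for this lemma at all: it is stated as a direct citation of \cite[Proposition 7.10]{bachmann2022motivic}, with the sentence immediately preceding it asserting that $[\Spec(\C)/A]$ is a tame DM-stack satisfying the Adams hypothesis in the sense of that reference. Your overall strategy --- verify the two hypotheses and invoke the cited result --- is therefore exactly right in spirit.

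However, your elaboration of the Adams hypothesis is almost certainly a mischaracterisation. The Adams hypothesis in \cite{bachmann2022motivic} is a structural condition on the stack $\cX$ (roughly, a resolution/connectivity condition at the level of the site), not a requirement to produce an equivariantly complex oriented ring spectrum. In particular, its verification has nothing to do with $\MGL$, and the detour through \cref{prop: MGL is absolute}, \cref{rem: motivic Thom iso}, and especially \cref{prop: vanishing in MGL} is both unnecessary and misdirected. The paper treats the Adams hypothesis as something that holds essentially by inspection for the classifying stack of a finite group over $\C$; you should consult the actual definition in \cite{bachmann2022motivic} and check it directly rather than inventing a reformulation. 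Your tameness verification, by contrast, is fine and matches what the paper is implicitly doing.
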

\begin{lemma}\label{lem: homotopy t-structure and stratification}
    The fixed points functor
    \[(-)^{A}\colon\SH^{A}(\C)\to \SH(\C)\]
     is t-exact. Furthermore, each of the functors appearing in the filtration of
     \cref{rem: adjacent isotropy filtration} as well as inflation and coinduction are right t-exact.
\end{lemma}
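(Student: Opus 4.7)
The plan is to dispatch each assertion in turn, with right t-exactness of the fixed points functor being the key input. Inflation $p^\ast\colon \SH(\C) \to \SH^A(\C)$ is a colimit-preserving symmetric monoidal left adjoint sending the generators $\Sigma^{n,n}\Sigma^\infty_+ X$ of $\SH(\C)_{\geq 0}$ to generators of $\SH^A(\C)_{\geq 0}$, hence is right t-exact. For coinduction $i_\ast$ along the inclusion $i\colon \B K \hookrightarrow \B A$, finiteness of $A$ forces the adjoint representation $\Omega_i$ to vanish, so the Wirthm\"uller transformation of \cref{thm: 6FF omnibus} collapses to an equivalence $i_\sharp \simeq i_\ast$; since $i_\sharp$ is a left adjoint sending generators over $\B K$ to generators over $\B A$, it is right t-exact, and so is $i_\ast$.

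Left t-exactness of the fixed points $p_\ast = (-)^A$ is formal: given coconnective $E$ and connective $X$, adjunction gives $\Map(X, p_\ast E) \simeq \Map(p^\ast X, E) = 0$ using the right t-exactness of $p^\ast$ just established. The main work therefore consists of verifying right t-exactness of $p_\ast$. For this, I will exploit the adjacent isotropy filtration of \cref{rem: adjacent isotropy filtration}, which exhibits $X^A$ for any $X \in \SH^A(\C)$ as a finite extension of the pieces $\{(\Phi^K X)_{\mathrm{h}A/K}\}_K$ as $K$ ranges over subgroups of $A$. Since the connective part of $\SH(\C)$ is closed under extensions and $A$ is finite, it suffices to show that each functor appearing in the construction of the filtration, and in its associated graded, preserves connectivity.

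The constituent functors are: tensoring with $(\E\cF_i)_+$ and $\E[K]$, which is right t-exact because these objects are connective (being built from smooth $A$-schemes via colimits, cofibres and extensions) and the homotopy t-structure is compatible with the symmetric monoidal structure; the geometric fixed points $\Phi^K$, which are symmetric monoidal, colimit-preserving, and send a generator $\Sigma^{n,n}\Sigma^\infty_+ X$ to $\Sigma^{n,n}\Sigma^\infty_+ X^K$ via the standard identification of geometric fixed points on suspension spectra (as used in \cref{ex: horbits of Thom spectrum}); and the homotopy orbits $(-)_{\mathrm{h}A/K}$, which factor as tensoring with the connective object $\E(A/K)_+$ followed by the quotient functor, the latter being a colimit-preserving left adjoint to restriction that takes suspension spectra of free smooth $(A/K)$-schemes to suspension spectra of their smooth quotients.

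The main obstacle will be the careful identification of $\Phi^K$ on suspension spectra of smooth $A$-schemes --- which is what guarantees that the geometric fixed points land among the connective generators --- together with the bookkeeping needed to match the intrinsic definition of $\Phi^K$ via tensoring with $\widetilde{\E}\cF[K]$ with this geometric incarnation; once this is in place, right t-exactness of all the relevant functors reduces cleanly to the connectivity of auxiliary motivic spaces built from smooth $A$-schemes.
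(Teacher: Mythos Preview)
Your proposal is correct and takes essentially the same approach as the paper: the paper simply cites \cite[Proposition 7.9 (3)]{bachmann2022motivic} and remarks that the adjacent isotropy filtration is what drives right t-exactness of fixed points, and you have faithfully reconstructed that argument in detail. Your handling of inflation, coinduction via the Wirthm\"uller isomorphism, and the decomposition of the associated graded into geometric fixed points followed by homotopy orbits is exactly the intended strategy.
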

\begin{proof}
    This is \cite[Proposition 7.9 (3)]{bachmann2022motivic}. In fact, note that the filtration by adjacent isotropy is used to prove that fixed points are right t-exact.
\end{proof}
\begin{remark}\label{rem: perfect pures and homotopy t-structure}
    It is clear that all perfect pure motivic spectra are bounded below in the homotopy t-structure.
\end{remark}
\subsection{Adjacent isotropy and convergence}
We can now state and prove the desired convergence result.
\begin{thm}\label{thm: convergence}
    Let $E\in \SH^{A}(\C)^\cell$ be bounded below in the homotopy t-structure and $\eta$-complete, then the map
    \[E\to E^\wedge_{\MGL_{A}}\coloneq\Tot(E\otimes\MGL_{A}^{\otimes\bullet+1})\]
    to its $\MGL_{A}$-nilpotent completion is an equivalence.
\end{thm}
    We pass through an intermediate step using the inflation of nonequivariant $\MGL$ over $\C$. Letting $p\colon [\Spec(\C)/A]\to \Spec(\C)$ denote the structure map, then the absolute functoriality of $\MGL$ established in \cref{prop: MGL is absolute} provides us with a ring map
    \[\alpha_p\colon p^\ast\MGL\to \MGL_{A}.\]
    As remarked in \cref{lem: global action is trivial} this becomes an equivalence after tensoring with $\E A_+$. When nontrivial geometric fixed points are involved, we invoke another geometric construction which shows that they have the same class of nilpotent objects on the associated graded of the stratification by adjacent isotropy.
\begin{lemma}\label{lem: multiplicative deflation}
    For $K\subset A$ a nontrivial subgroup with quotient $W=A/K$, let $q\colon \B A\to \B W$ denote the projection, then there is a $q^\ast\MGLP_{W}$-algebra map
    \[\MGLP_{A}\otimes \E[K]\to q^\ast\MGLP_{W}\otimes \E[K].\]
\end{lemma}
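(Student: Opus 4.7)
The plan is to construct this map via partial geometric fixed points followed by an augmentation. The key observation is that $\E[K]$ is $\widetilde{\E}\cF[K]$-local: from the cofibre sequence $\E\cF[K]_+ \to \E(\cF[K]\cup\{K\})_+ \to \E[K]$ and the fact that the families $\cF[K]$ and $\cF[K]\cup\{K\}$ both meet $\cF[K]$ trivially off of $K$, a direct computation shows $\E[K] \otimes \E\cF[K]_+ = 0$, so that $\E[K] \simeq \E[K] \otimes \widetilde{\E}\cF[K]$. Under the equivalence $\Mod(\SH^A(\C); \widetilde{\E}\cF[K]) \simeq \SH^W(\C)$ used to define $\Phi^K$, the object $\MGLP_A \otimes \widetilde{\E}\cF[K]$ corresponds to $\Phi^K\MGLP_A$, while $q^\ast\MGLP_W \otimes \widetilde{\E}\cF[K]$ corresponds to $\MGLP_W$ itself (as geometric fixed points are trivial on an inflated spectrum). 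It therefore suffices to construct a ring map $\Phi^K\MGLP_A \to \MGLP_W$ in $\SH^W(\C)$ and pull it back.

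To construct the ring map, I would first establish a $K$-partial analogue of \cref{prop: identification of gfp}: using the Gra{\ss}mannian cell structure from \cref{cor: Grass model for MGL}, the $K$-fixed point locus of $\Gr_d(V)$ for an $A$-representation $V$ decomposes $W$-equivariantly as a disjoint union of products of Gra{\ss}mannians of the $K$-isotypical pieces $V^\alpha$ of $V\!\downarrow_K$, each of which inherits a $W$-action. Running over the complete flag and Thomifying as in the proof of \cref{prop: identification of gfp} (invoking the $W$-equivariant variant of \cref{cor: gfp of Thom} from \cref{appendix: gfp2}), we obtain an equivalence of commutative $\MGLP_W$-algebras
\[\Phi^K \MGLP_A \simeq \MGLP_W \otimes \bigotimes_{\alpha \in K^\vee \setminus \epsilon} (\BGLP^{(\alpha)}_W)_+,\]
where the tensor factors are pointed $W$-spaces (associated bundles of the appropriate isotypical pieces).

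Each of the pointed $W$-spaces $(\BGLP^{(\alpha)}_W)_+$ admits a canonical augmentation $(\BGLP^{(\alpha)}_W)_+ \to \mathbb{1}_W$ coming from the basepoint (equivalently, from the rank-zero component); this is a map of $\E_\infty$-coalgebras and hence tensoring with $\MGLP_W$ gives a map of commutative $\MGLP_W$-algebras
\[\Phi^K \MGLP_A \longrightarrow \MGLP_W\]
in $\SH^W(\C)$. Pulling this map back across the equivalence with the $\widetilde{\E}\cF[K]$-local subcategory of $\SH^A(\C)$ gives a $q^\ast\MGLP_W$-algebra map $\MGLP_A \otimes \widetilde{\E}\cF[K] \to q^\ast\MGLP_W \otimes \widetilde{\E}\cF[K]$ in $\SH^A(\C)$, and tensoring both sides with $\E[K]$ (which is already $\widetilde{\E}\cF[K]$-local by the first paragraph) yields the desired morphism.

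The main obstacle will be the first step of the second paragraph: verifying that the $K$-partial geometric fixed points of $\MGLP_A$ admit the claimed $W$-equivariant decomposition. This requires upgrading \cref{prop: identification of gfp} to account for non-maximal $K \subset A$, which means carrying out the isotypical decomposition of $\Gr_d(V)^K$ $W$-equivariantly and checking compatibility under the colimit defining $\MGLP_A$. The augmentation step and the descent across the $\widetilde{\E}\cF[K]$-localization equivalence are formal once this identification is in hand.
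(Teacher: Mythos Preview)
Your approach is correct and produces the same map as the paper, but you take a longer route. Both of you reduce to the $\widetilde{\E}\cF[K]$-local category (the paper implicitly, you explicitly via the observation that $\E[K]$ is $\widetilde{\E}\cF[K]$-local). From there, the paper works directly with the moduli interpretation: $\BGLP_A^K$ classifies $A$-equivariant bundles over smooth $W$-schemes, and the map ``take fibrewise $K$-fixed points'' $q^\ast\BGLP_A^K\to q^\ast\BGLP_W$ is an additive retraction of the inflation map $q^\ast\BGLP_W\to q^\ast\BGLP_A^K$, hence Thomifies to a $q^\ast\MGLP_W$-algebra map. Your approach first establishes the full $K$-partial analogue of Proposition~\ref{prop: identification of gfp}, writing $\Phi^K\MGLP_A\simeq \MGLP_W\otimes\bigotimes_{\alpha\in K^\vee\setminus\epsilon}(\BGLP_W)_+$, and then projects away the extra factors. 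This explicit decomposition is more work than necessary---the paper's moduli-theoretic shortcut avoids it entirely---but yields more information and is a valid proof.

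One small correction: your justification that the augmentation $(\BGLP^{(\alpha)}_W)_+\to\mathbb{1}_W$ is a ring map via ``$\E_\infty$-coalgebras'' does not work as stated; a coalgebra map tensored with an algebra need not be an algebra map. The correct argument is that $\BGLP_W\to\ast$ is a map of commutative monoids in $W$-motivic spaces (direct sum on the source, trivially on the target), so $\Sigma^\infty_+$ of it is a map of commutative ring spectra. With that fix, your argument goes through.
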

\begin{proof}
    Following \cref{cor: gfp of Thom} and \cref{def: MGL} it suffices to work on the unstable level and then Thomify. Note that $\MGLP_{A}\otimes\widetilde{\E}\cF[K]$ then corresponds to the Thom spectrum of the restriction of the universal $A$-bundle to the fixed points $\BGLP_{A}^{K}$ viewed as an $A$-equivariant motivic space by inflating from the residual $W$-action (i.e. $q^\ast\BGLP_{A}^{K}$). Per construction, this then classifies $A$-equivariant vector bundles over smooth $W$-schemes. There is a well defined $A$-equivariant map
    \[q^\ast\BGLP_{A}^{K}\to q^\ast \BGLP_{W}\]
    which takes fibrewise $K$-fixed points of an $A$-equivariant bundle. Indeed, since $K$ acts trivially on the base, it must preserve the fibres so that this is indeed well defined. There is also a $A$-equivariant inflation map \[q^\ast \BGLP_{W}\to q^\ast\BGLP_{A}^{K}\]
    equipping a $W$-equivariant vector bundle over a smooth $W$-scheme with its inflated $A$-action. It is then clear from the construction that the composite map
    \[q^\ast\BGLP_{W}\to q^\ast\BGLP_{A}^{K}\to q^\ast\BGLP_{W}\]
    is the identity. Furthermore, all operations on equivariant vector bundles described above are clearly additive so that this Thomifies to a ring map.
\end{proof}
\begin{propn}\label{prop: convergence on fixed points}
    For $E\in \SH^{A}(\C)^\cell$ to be $\MGL_{A}$-nilpotent complete, it suffices that every nonequivariant motivic spectrum $(\Phi^{K}E)_{\mathrm{h}W}$ appearing in the stratification by adjacent isotropy is $\MGL$-nilpotent complete.
\end{propn}
\begin{proof}
    We proceed by induction on the rank of $A$, where the base case is the nonequivariant result \cref{thm: nonequivariant convergence}. Applying $A$-fixed points to the augmentation map, we obtain a map
    \[E^A\to \Tot(E\otimes\MGL_A^{\otimes\bullet+1})^A.\]
    Using the filtration by adjacent isotropy of \cref{rem: adjacent isotropy filtration} we may reduce to showing that this induces an equivalence on associated graded of the resulting filtations on either side, i.e. replacing $E$ by $E\otimes \E[K]$. If the subgroup $K$ of $A$ is a nontrivial subgroup, then the comparison map on this associated graded term is of the form
    \[(E\otimes \E[K])^A\to \Tot(\E[K]\otimes E\otimes \MGL_A^{\otimes\bullet+1})^A.\]
    We may then apply the induction step using \cref{lem: multiplicative deflation} as $W$ is of strictly smaller rank than $A$. When we are working at the trivial subgroup, the comparison map on associated graded becomes
    \[(E\otimes \E A_+)^A\to \Tot(\E A_+\otimes E\otimes \MGL_A^{\otimes\bullet+1})^A.\]
    However, since $\MGL_A^{\otimes\bullet+1}$ is the value at $A$ of an absolute motivic spectrum, we may apply \cref{lem: global action is trivial} to replace it by $p^\ast\MGL^{\otimes\bullet+1}$ where $p^\ast$ is the inflation functor from the trivial group. In order to apply the induction step, note that there is an equivalence of cosimplicial objects
    \[(\E A_+\otimes E\otimes p^\ast\MGL^{\otimes\bullet+1})\simeq E_{\mathrm{h}A}\otimes\MGL^{\otimes\bullet+1}.\]
    Indeed, viewing both sides as the evaluations at $\MGL^{\otimes\bullet+1}$ of  endofunctors of $\SH(\C)$, we see that the equivalence holds when one instead plugs in $\mathbb{1}_\C$ by the motivic Adams isomorphism of \cite[\S 6]{gepner2023tom}, now $\Smooth_\C$-parametrised cocontinuity of either side forces this natural equivalence to hold in general following \cite[Lemma C.6]{bachmann2021motivic}.
\end{proof}
\begin{lemma}\label{lem: bb+eta complete implies same for stratification}
    Let $E\in \SH^A(\C)^\cell$ be bounded below and $\eta$-complete, then the conditions of \cref{prop: convergence on fixed points} are met.
\end{lemma}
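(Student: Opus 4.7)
The plan is to verify, for each subgroup $K \subseteq A$ with quotient $W = A/K$, that the nonequivariant motivic spectrum $(\Phi^K E)_{\mathrm{h}W} \simeq (E \otimes \E[K])^A$ satisfies the two hypotheses of the nonequivariant convergence result \cref{thm: nonequivariant convergence}, namely boundedness below in the homotopy t-structure on $\SH(\C)$ and $\eta$-completeness.

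For boundedness below, the tensoring functor $\E[K] \otimes -$ is one of the filtration functors appearing in \cref{rem: adjacent isotropy filtration}, and the fixed-points functor $(-)^A$ is itself t-exact, both by \cref{lem: homotopy t-structure and stratification}. Since $E$ is bounded below by hypothesis, it follows immediately that $(E \otimes \E[K])^A$ is bounded below in the homotopy t-structure on $\SH(\C)$.

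For $\eta$-completeness, I will first show that $E \otimes \E[K]$ is $\eta$-complete in $\SH^A(\C)$. Contemplate the fibre sequence $\Sigma^{n,n}(E \otimes \E[K]) \xrightarrow{\eta^n} (E \otimes \E[K]) \to (E \otimes \E[K])/\eta^n$. Right t-exactness of $\E[K] \otimes -$ gives that $E \otimes \E[K]$ is itself bounded below, and since $\Sigma^{n,n}$ shifts connectivity in the homotopy t-structure by $n$, the collection $\{\Sigma^{n,n}(E \otimes \E[K])\}_n$ becomes arbitrarily connective. Left completeness of the homotopy t-structure (\cref{lem: homotopy t-structure bicomplete}) then forces $\varprojlim_n \Sigma^{n,n}(E \otimes \E[K]) = 0$, so passing to the limit of the fibre sequences shows that $E \otimes \E[K] \to \varprojlim_n (E \otimes \E[K])/\eta^n$ is an equivalence. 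Descending to fixed points, $(-)^A$ is a right adjoint and therefore preserves limits, and since $\eta \in \pi_{1,1}\mathbb{1}_\C$ inflates to the equivariant $\eta$, we have $(X/\eta^n)^A \simeq (X)^A/\eta^n$ naturally in $X \in \SH^A(\C)$; chaining these identifications yields $(E\otimes \E[K])^A \simeq \varprojlim_n ((E\otimes \E[K])^A)/\eta^n$, which is the desired $\eta$-completeness.

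The main subtlety will be the passage between the cellular subcategory and the full category $\SH^A(\C)$: the $\eta$-completeness and left completeness arguments require taking limits that a priori leave $\SH^A(\C)^\cell$, so I will want to make the limit assertions in the ambient category $\SH^A(\C)$ and then confirm the resulting fixed-point object still lives in the nonequivariant cellular world where \cref{thm: nonequivariant convergence} can be applied directly. This is essentially a bookkeeping issue rather than a conceptual one, since the associated graded pieces of the adjacent-isotropy filtration are nonequivariant spectra for which the relevant categories coincide.
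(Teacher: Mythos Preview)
Your boundedness argument is fine and matches the paper. The $\eta$-completeness argument, however, has a fatal gap.

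You claim that $\Sigma^{n,n}$ raises connectivity in the homotopy t-structure by $n$. It does not: by \cref{def: homotopy t-structure} the nonnegative part is generated by $\Sigma^{m,m}\Sigma^\infty_+X$ for \emph{all} $m\in\Z$, so $\Sigma^{1,1}$ (and its inverse) permute the generators and are therefore t-exact. The tower $\{\Sigma^{n,n}(E\otimes\E[K])\}_n$ has \emph{constant} connectivity, and left completeness gives you nothing. Indeed, your argument never invokes the hypothesis that $E$ is $\eta$-complete; if it were correct it would show that every bounded-below object is automatically $\eta$-complete, making \cref{thm: BH eta comp} and \cref{cor: unit is eta complete} vacuous.

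The paper's argument works in the opposite direction. It starts from the hypothesis $E\simeq\varprojlim_n E/\eta^n$ and asks whether tensoring with $\E[K]$ commutes with this inverse limit. Since $\E[K]$ is a filtered colimit of dualisable pieces $X_i$ (coming from the explicit geometric models for $\widetilde{\E}\cF[K]$ and $\E W_+$), each $X_i\otimes(-)$ does commute with the limit, and one is reduced to controlling the assembly map $\varinjlim_i X_i\otimes\prod_n E/\eta^n\to\prod_n(\varinjlim_i X_i)\otimes E/\eta^n$. The cofibres $\mathrm{cof}(X_i\to X)\otimes E/\eta^n$ have connectivity going to infinity in $i$ because the cells of $\E[K]$ are attached in increasing dimension and the $E/\eta^n$ are \emph{uniformly} bounded below; left completeness then kills the colimit of the obstruction. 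This is where boundedness below of $E$ and the cell structure on $\E[K]$ genuinely enter.
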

Using \cref{thm: nonequivariant convergence} it suffices to show that every $(\Phi^KE)_{\mathrm{h}W}$ is again bounded below and $\eta$-complete. The first part follows immediately from the observations in \cref{lem: homotopy t-structure and stratification}. The second part is more delicate: it is clear that the fixed points of $E$ and its twists by virtual $A$-representations will be $\eta$-complete as nonequivariant motivic spectra but it is not a priori clear that the same can be said for the associated graded terms in the filtration by adjacent isotropy. The bounded-below assumption as well as an explicit cell structure on $\E[K]$ allows us to conclude by mimicking the argument in \cite[Lemma 3.2]{greenlees1991equivariant}.
\begin{proof}[Proof of \cref{lem: bb+eta complete implies same for stratification}]
    Consider the inverse system $\{E/\eta^n\}_n$ which has limit $E$ and is uniformly bounded below. Since the $A$-fixed points functor commutes with limits and sends the inflated class $\eta$ to the corresponding nonequivariant class $\eta$, it suffices to verify that the map
    \[\E[K]\otimes E\to \varprojlim_n \E[K]\otimes E/\eta^n\]
    induces an equivalence on fixed points. Let $\{X_i\}_{i\in I}$ be a generic filtered diagram of dualisable $A$-equivariant motivic spaces with colimit $X$, then in order to commmute the directed limit over $n$ past tensoring with $X$ we may use the Bousfield--Kan formula to see that it is enough to commute it past an infinite product. Indeed, since every $X_i$ is dualisable, we obtain assembly maps of the form
    \[X_i\otimes\prod_nE/\eta^n\simeq \prod_n X_i\otimes E/\eta^n\to \prod_n X\otimes E/\eta^n.\]
    We claim that if the maps $X_i\to X$ increase in connectivity, then the desired assembly map becomes an equivalence after taking the colimit over $i$ as desired. Indeed, the cofibre of this assembly map is a product over $n$ of terms of the form $\mathrm{cof}(X_i\to X)\otimes E/\eta^n$ and the assumption that the system $E/\eta^n$ was uniformly bounded below and the left completeness of the homotopy t-structure will then guarantee that this object vanishes in the colimit. It therefore suffices to verify the premise of this claim for some suitable geometric model of $\E[K]$. In fact, one can separately consider $\widetilde{\E}\cF[K]$ as well as $\E W_+$ as objects of $A$-equivariant resp. $W$-equivariant motivic spectra. For the former we use the model in \cref{lem: cellular model for gfp} and for the latter we may use \cite[Example 3.5]{gepner2023tom} (and \cite[Lemma 6.29]{gepner2023tom} for the dualisability condition) to obtain our filtered diagrams. The connectivity conditions in either case are clearly met as well since both presentations consist of gluing on cells of increasing dimension. 
\end{proof}
In well behaved cases, the $\eta$-completion that arises in the process of $\MGL$-nilpotent completion of a bounded below object is not relevant since we are working over an algebraically closed field of characteristic zero. Let us recall the following nonequivariant observation, which we thank Klaus Mattis for pointing out. This result makes use of the notion of a very effective motivic spectrum over $\C$, i.e. an object of the minimal subcategory of $\SH(\C)$ containing suspension spectra of smooth $\C$-schemes and closed under colimits and extensions.
\begin{theorem}[{\cite[Theorem 5.1]{bachmann2020eta}}]\label{thm: BH eta comp}
    Let $E\in \SH(\C)$ be very effective, then its $2$-completion $E^\wedge_2$ is $\eta$-complete.
\end{theorem}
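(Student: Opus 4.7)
The plan is a two-step reduction followed by a computation on slices of the very effective slice filtration. First, the subcategory of $\eta$-complete objects in $\SH(\C)$ is closed under limits, and $E^\wedge_2 \simeq \varprojlim_n E/2^n$ with successive fibres $E/2$, so it suffices to show that $E/2$ is $\eta$-complete. Since very effective motivic spectra are closed under cofibres, $E/2$ is itself very effective, and the problem reduces to showing that every $2$-torsion very effective spectrum over $\C$ is $\eta$-complete.

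For this I would invoke the very effective slice filtration, as developed in Bachmann's work on effective motivic homotopy theory: any very effective spectrum $F$ admits a convergent slice tower whose slices $s_q F$ are modules over $s_0 \mathbb{1} \simeq H\Z$. Applied to $F = E/2$, the slices become modules over $H\F_2 \simeq H\Z/2$. The critical computational input is that $\eta \in \pi_{1,1}\mathbb{1}$ vanishes in $\pi_{1,1} H\F_2$: indeed, over $\C$ the mod-$2$ motivic cohomology of a point is $\pi_{*,*} H\F_2 \cong \F_2[\tau]$ with $\tau$ in bidegree $(0,-1)$, so $\pi_{1,1}H\F_2 = 0$. Consequently the action of $\eta$ on any $H\F_2$-module, which factors through $\mathbb{1} \to H\F_2$ and then the module structure, is identically zero, and every $H\F_2$-module is trivially $\eta$-complete.

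To conclude, the subcategory of $\eta$-complete objects is closed under extensions and limits; combined with convergence of the slice tower on $E/2$, whose slices are $\eta$-complete by the previous step, this forces $E/2$ itself to be $\eta$-complete, and assembling yields $\eta$-completeness of $E^\wedge_2$.

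The main obstacle is the convergence of the very effective slice filtration for arbitrary very effective $E$ over $\C$; this is the substantive input, resting on Morel's connectivity theorem and a careful analysis of the effective t-structure. Once convergence is granted, the rest of the argument is essentially a formal assembly of two closure properties with a simple grading obstruction in $\pi_{*,*} H\F_2$.
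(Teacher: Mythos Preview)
This theorem is not proved in the paper; it is quoted from \cite[Theorem~5.1]{bachmann2020eta} and invoked as a black box in the proof of Corollary~\ref{cor: unit is eta complete}. There is therefore nothing in the present paper to compare your sketch against.

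Your outline is nonetheless a plausible route to the cited result, modulo one terminological slip. You name the ``very effective slice filtration'' but then write $s_0\mathbb{1}\simeq H\Z$; that identification is the zeroth \emph{effective} (Voevodsky) slice. The zeroth \emph{very effective} slice $\tilde s_0\mathbb{1}$ is the Milnor--Witt motivic cohomology spectrum $H\tilde\Z$, on which $\eta$ does \emph{not} act by zero---under Morel's identification $\pi_{1,1}\mathbb{1}\cong K^{MW}_{-1}(\C)\cong W(\C)\cong\Z/2$ it is the nonzero element, and this persists in $\tilde s_0\mathbb{1}$. For your vanishing step you want the effective filtration, whose slices genuinely are $H\Z$-modules and hence annihilated by $\eta$ over any base (since $H\Z$ is oriented; your computation of $\pi_{1,1}(H\Z/2)$ over $\C$ is correct but more than needed). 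With that adjustment the skeleton is sound.

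You are right that convergence is the substantive input. One sanity check worth recording: if effective-slice convergence held integrally for every very effective $E$, the same argument would show that every such $E$ is already $\eta$-complete and the $2$-completion in the statement would be idle. So the $2$-torsion hypothesis is doing genuine work in the convergence step, not merely in identifying the slices.
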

\begin{corollary}\label{cor: unit is eta complete}
    The unit $\mathbb{1}_A\in \SH^A(\C)^\cell$ is $\MGL$-nilpotent complete.
\end{corollary}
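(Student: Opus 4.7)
The strategy is to apply Proposition \ref{prop: convergence on fixed points}, which reduces $\MGL_A$-nilpotent completeness of $\mathbb{1}_A$ to $\MGL$-nilpotent completeness of each $(\Phi^K \mathbb{1}_A)_{\mathrm{h}W}$ in $\SH(\C)$, where $W = A/K$ ranges over subquotients of $A$. Since geometric fixed points are symmetric monoidal, $\Phi^K \mathbb{1}_A \simeq \mathbb{1}_W$. Following the Adams isomorphism argument used at the end of the proof of Proposition \ref{prop: convergence on fixed points}, the homotopy orbits $(\mathbb{1}_W)_{\mathrm{h}W}$ identify with $\Sigma^\infty_+ \B_\mot W \in \SH(\C)$, i.e., the suspension spectrum of the geometric classifying space.

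Each such spectrum $\Sigma^\infty_+ \B_\mot W$ is bounded below in the homotopy $t$-structure (being connective), so Theorem \ref{thm: nonequivariant convergence} identifies its $\MGL$-nilpotent completion with its $\eta$-completion. Moreover, by Example \ref{ex: model for classifying space}, $\B_\mot W$ is a filtered colimit of smooth $\C$-schemes $\B^n_\mot W$, hence its suspension spectrum is very effective. Theorem \ref{thm: BH eta comp} therefore gives $\eta$-completeness after $2$-completion.

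To upgrade this to integral $\eta$-completeness, I would invoke Morel's relation $2\eta = 0$, which makes $\eta \in \pi_{1,1}(\mathbb{1}_\C)$ a $2$-primary class. After inverting $2$, the $\eta$-multiplication is null, so $\eta$-completion is the identity there, while after $2$-completion the BH theorem applies. An arithmetic fracture square built from the cofibre sequence $\mathbb{1}[1/2] \to (\mathbb{1})^\wedge_2 \to \cdots$ then assembles these two local statements into integral $\eta$-completeness of $\Sigma^\infty_+ \B_\mot W$.

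The main obstacle is the final gluing step: Theorem \ref{thm: BH eta comp} produces $\eta$-completeness only after $2$-completion, whereas the desired $\MGL_A$-nilpotent completeness of $\mathbb{1}_A$ is an integral statement. The key technical input is that $2\eta = 0$ forces $\eta$-completion to behave as a $2$-local operation on bounded below spectra, so that the fracture reconstruction is effective and no information is lost between the $2$-complete and $\mathbb{Z}[\tfrac12]$-local pieces.
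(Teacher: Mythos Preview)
Your approach is correct and essentially identical to the paper's: both reduce via isotropy separation (Proposition \ref{prop: convergence on fixed points}) to the $\eta$-completeness of each $\Sigma^\infty_+\B_\mot W$ in $\SH(\C)$, then combine the $2$-adic fracture square with Theorem \ref{thm: BH eta comp} on the $2$-complete corner and the vanishing/nilpotence of $\eta$ after inverting $2$ on the remaining corners. One phrasing correction: the arithmetic fracture statement is the pullback square $X \simeq X^\wedge_2 \times_{X^\wedge_2[1/2]} X[1/2]$, not a cofibre sequence, and you should note that the third corner $X^\wedge_2[1/2]$ is also $\eta$-complete for the same reason as $X[1/2]$.
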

\begin{proof}
    By \cref{lem: bb+eta complete implies same for stratification} it suffices to prove that this object is cellularly $\eta$-complete. As in the proof of this lemma, we are reduced to verifying that the nonequivariant motivic spectra of the form $\Th_{\B_\mot A}(\widetilde{\cE})$ arising in the stratification by adjacent isotropy are $\eta$-complete. Using the standard $2$-adic fracture square of the form
    \[\begin{tikzcd}
     \Th_{\B_\mot A}(\widetilde{\cE})\arrow[r]\arrow[d]&\Th_{\B_\mot A}(\widetilde{\cE})^\wedge_2\arrow[d]\\\Th_{\B_\mot A}(\widetilde{\cE})[1/2]\arrow[r]&\Th_{\B_\mot A}(\widetilde{\cE})^\wedge_2[1/2],
    \end{tikzcd}\]
    it suffices to prove that the three bottom vertices are $\eta$-complete.
    When $2$ is inverted, this follows immediately as $\eta$ is then nilpotent (see \cite[Lemma 3.3.1]{mantovani2023localizations} for a general argument). In the case $p=2$, we use \cref{thm: BH eta comp} with Morel--Voevodsky's very effective model for motivic classifying spaces recalled in \cref{ex: model for classifying space}.
\end{proof}

\newpage
\section{Geometric fixed points and universes}\label{appendix: gfp2}
In \cite[\S 4.3]{gepner2023tom} and \cite[\S 4]{bachmann2022motivic}, the authors set up a theory of motivic geometric fixed point which, in our setting, accounts for the case of finite groups. The goal of this section is to set up a theory of geometric fixed points for general nice abelian groups over $\C$, generically denoted $A$. The techniques and proofs are not original and we will frequently be able to mimic the arguments in \cite{gepner2023tom}. As in op. cit. we state our results in the generality of partially stabilised categories of equivariant motivic spectra which we will encode in the notion of an $A$-universe.
\begin{defn}\label{def: A-universe}
    An $A$-universe is a countably infinite-dimensional $A$-representation $\cU$ which contains infinitely many summands of the form $\epsilon$, the trivial representation of $A$. An $A$-universe is complete if every character of $A$ appears as a summand countably infinitely many times, in which case it will generically be denoted by $\cU_A$.
\end{defn}
One can define a category of $A$-equivariant motivic spectra indexed on an arbitrary universe as follows.
\begin{defn}\label{def: partial A-spectra}
    Let $\cU$ be an $A$-universe, and let $\cT(\cU)$ denote the set of objects in $\spcs^A(\C)_\ast$ of the form $\Th(V)$ for $V$ a finite-dimensional subrepresentation of $\cU$. Then the category of $A$-equivariant motivic spectra indexed on $\cU$ is the inversion
\[\SH^A(\C)_{\cU}=\spcs^A(\C)_\ast[\cT(\cU)^{\otimes -1}].\]
\end{defn}
\begin{remark}
    The inversion operation in the definition above is described in \cite[\S 6.1]{hoyois_six_2017}. Further, note that all objects of $\cT(\cU)$ are symmetric by \cite[Lemma 6.3]{hoyois_six_2017} so that this inversion is presented by the mapping telescope in $\PrL$ and thus admits a unique $\spcs^A(\C)_\ast$-algebra structure. In fact, the assumption that $\epsilon$ appears infinitely many times in $\cU$ tells us that $\cT(\cU)$ is \emph{stabilising} in the sense of \cite[Definition 2.34]{gepner2023equivariant}.
\end{remark}
\begin{remark}\label{rem: Pic in incomplete over general base}
    For any $A$-scheme $X$ and $A$-universe $\cU$ we may further define
    \[\SH^A(X)_\cU=\spcs^A(X)_\ast\otimes_{\spcs^A(\C)_\ast}\SH^A(\C)_\cU.\]
    When $\cU=\cU_A$ is complete, this recovers $\SH^A(X)$ by \cite[Corollary 6.7]{hoyois_six_2017}, while in general only bundles on $X$ whose fibres appear as summands in $\cU$ will give rise to invertible elements; see the proof of \cite[Proposition 6.5]{hoyois_six_2017} for how to reduce the analogous claim for $X=\Spec(\C)$ where it is true per construction.
\end{remark}
It is clear that $\SH^A(\C)_{\cU_A}$ recovers $\SH^A(\C)$, and construction is additionally functorial in inclusions of universes: given an inclusion of universes $\cU\subset \cU'$ we denote the corresponding suspension spectrum functor by
\[\Sigma^{\cU'-\cU}\colon \SH^A(\C)_\cU\to \SH^A(\C)_{\cU'}.\]
\begin{construction}\label{cons: universe fixed points}
    Let $K$ a subgroup of $A$ and $\cU$ an $A/K$-universe, then inflation induces a symmetric monoidal left adjoint
    \[\SH^{A/K}(\C)_{\cU}\to \SH^A(\C)_{\mathrm{Inf}^A_K\cU}\]
    where $\mathrm{Inf}^A_{A/K}\cU$ is the $A$-universe with inflated action. Let $\cU'$ be an $A$-universe containing $\mathrm{Inf}^A_{A/K}\cU$ giving rise to a suspension spectrum functor $\Sigma^{\cU'-\mathrm{Inf}^A_{A/K}\cU}$, then one defines the $K$-fixed points functor indexed on $\cU'$
    \[(-)^K_{\cU'}\colon \SH^A(\C)_{\cU'}\to \SH^{A/K}(\C)_{\cU}\]
    as the composite right adjoint two the two aforementioned left adjoints.
\end{construction}
The condition that the $A$-universe $\cU'$ contains a copy of the inflated $A/K$-universe is always satisfied if the $A/K$-universe was trivial, or if $\cU'$ is complete. We now turn towards the construction of geometric fixed points functors. As is standard, the functor of (relative) geometric fixed points is defined by extending the unstable fixed point functor.
\begin{construction}\label{cons: gfp as Lan extension}
    Let $A$ be a nice abelian group with subgroup $K$, let $\cU$ be an $A$-universe, and let $\cU'$ be any $A/K$-universe such that for every finite-dimensional representation $V$ contains in $\cU$, the $A/K$-representation $V^K$ is contained in $\cU'$. Then there exists a unique filler $\Phi^K_{\cU}$ in the following diagram of symmetric monoidal left adjoints
    \[\begin{tikzcd}
    \spcs^{A}(\C)_\ast\arrow[r, "(-)^K"]\arrow[d, "\Sigma^\cU"]&\spcs^{A/K}(\C)_\ast\arrow[d, "\Sigma^{\cU'}"]\\
    \SH^A(\C)_{\cU}\arrow[r, "\Phi^K_\cU"]&\SH^{A/K}(\C)_{\cU'}.
    \end{tikzcd}\]
    Indeed, we just note that the composite $\Sigma^{\cU'}\circ(-)^K$ preserves colimits and sends a Thom object of the form $\Th(V)$ to $\Sigma^{\cU'}\Th(V^K)$ per construction. If $V$ arises as a summand of $\cU$, then $\Sigma^{\cU'}\Th(V^K)$ is tensor-invertible by our assumption on $\cU'$. We conclude that the composite $\Sigma^{\cU'}\circ (-)^K$ must factor uniquely over $\Sigma^\cU$. This functor $\Phi^K_{\cU}$ is the relative $K$-geometric fixed points functor indexed on the $A$-universe $\cU$.
\end{construction}
\begin{remark}
    Note that $\cU'$ can be arbitrarily large, as long as it contains the $K$-fixed points of $A$-representations in $\cU$. We therefore suppress $\cU'$ from notation since the relative $K$-geometric fixed points functor based on an $A/K$ universe $\cU''\supset \cU'$ would simply be given by $\Sigma^{\cU''-\cU'}\Phi^K_\cU$.
    In practice, it is common to assume that $\cU'$ is a complete $A/K$-universe.
\end{remark}
The remainder of this section is dedicated to describing the effect of this fixed-point functor in more detail. In particular, we show that it arises as a composite of a smashing localisation and the genuine fixed point functor.
\begin{construction}\label{cons: univ space of family}
    Let $A$ be a nice abelian group and $\cF$ a family of subgroups of $A$. Define a presheaf $\E\cF$ on $\Smooth^A_\C$ by sending an $A$-scheme $X$ to the point if $X$ has isotropy in $\cF$ and the empty set otherwise. Following \cite[Proposition 3.3]{gepner2023tom} this defines an object of $\spcs^A(\C)$ and we will further denote its image in $\SH^A(\C)_{\epsilon^\infty}$ by $\E\cF_+$. Note that this comes with a collapse map $\E\cF_+\to \mathbb{1}_A$ whose cofibre will be denoted $\widetilde{\E}\cF$.
\end{construction}
A key example in the remainder of this section is the family $\cF[K]$ of subgroups of $A$ that do not contain a chosen subgroup $K$. In the case $K=A$ this is the family of proper subgroups.
\begin{remark}
    Let $\Smooth^{A,\cF}_\C$ denote the full subcategory of $\Smooth^A_\C$ on schemes with isotropy contained in $\cF$. Then the inclusion of this subcategory induces a left Kan extension on product-preserving presheaves which we denote
    \[j_\sharp\colon \cP_\Sigma(\Smooth^{A,\cF}_\C)\to \cP_\Sigma(\Smooth^A_{\C}).\]
    It is clear from the construction that we may identify $\E\cF\simeq j_\sharp(\ast)$ where $\ast$ denotes the constant presheaf on a point. In particular, any $X\in \Smooth^{A,\cF}_\C$ is such that the collapse map $X_+\otimes \E\cF_+\to X_+$ is an equivalence in $\SH^A(\C)_{\epsilon^\infty}$.
\end{remark}
\begin{proposition}\label{prop: geometric subcat trivial spheres}
    The $\E_0$-algebra $\widetilde{\E}\cF[K]$ in $\SH^{A}(\C)_{\epsilon^\infty}$ is idempotent, and the $\widetilde{\E}\cF[K]$-local subcategory may be identified with $\SH^{A/K}(\C)_{\epsilon^\infty}$ via the inflation functor.
\end{proposition}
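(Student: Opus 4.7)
The plan is to adapt the proof of \cite[Proposition 4.8]{gepner2023tom}, which handles finite groups in the complete universe, to the present setting of nice abelian groups over $\C$ with the trivial universe $\epsilon^\infty$. The argument splits naturally into two parts: verifying idempotency of $\widetilde{\E}\cF[K]$, and identifying the resulting localization with $\SH^{A/K}(\C)_{\epsilon^\infty}$ via inflation.

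For idempotency, I would show that $\widetilde{\E}\cF[K]\otimes \E\cF[K]_+\simeq 0$ in $\SH^{A}(\C)_{\epsilon^\infty}$, which by the defining cofibre sequence is equivalent to the idempotency of the endofunctor $-\otimes\widetilde{\E}\cF[K]$. Writing $\E\cF[K]$ as a colimit of smooth $A$-schemes $X$ with isotropy in $\cF[K]$ and using cocontinuity of the tensor product, this reduces to showing that the collapse map $\E\cF[K]_+\otimes X_+\to X_+$ is an equivalence for each such $X$. This is immediate from the definition of $\E\cF[K]$: after pullback along $X\to \B A$, the sheaf $\E\cF[K]$ becomes the universal $A$-space for a family containing all isotropy subgroups of $X$, and is therefore contractible.

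For the second part, consider the inflation-fixed points adjunction $p^*\dashv(-)^K_{\epsilon^\infty}$ of \cref{cons: universe fixed points}, with $p\colon \B A\to \B(A/K)$. Post-composing $p^*$ with the localization $L=-\otimes\widetilde{\E}\cF[K]$ yields a symmetric monoidal functor
\[L\circ p^*\colon \SH^{A/K}(\C)_{\epsilon^\infty}\to \SH^{A}(\C)_{\epsilon^\infty}^{\widetilde{\E}\cF[K]\text{-loc}}\]
whose right adjoint is the restriction of $(-)^K_{\epsilon^\infty}$, which by the universal property of \cref{cons: gfp as Lan extension} coincides with $\Phi^K_{\epsilon^\infty}$. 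Fully faithfulness of $L\circ p^*$ then amounts to verifying the projection-formula identity $\Phi^K_{\epsilon^\infty}\circ p^*\simeq\id$ on $\SH^{A/K}(\C)_{\epsilon^\infty}$, which can be checked on unstable generators by tracing through the left Kan extension in \cref{cons: gfp as Lan extension}, where inflation and $K$-fixed points are tautologically inverse to one another on $A/K$-spaces. Essential surjectivity then requires showing that for any smooth $A$-scheme $Y$, the local object $L(Y_+)$ lies in the essential image of $L\circ p^*$, which I would prove by exhibiting an equivalence $L(Y_+)\simeq L(p^*\, Y^K_+)$ where $Y^K$ carries its residual $A/K$-action.

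The main obstacle is precisely this last equivalence. The plan is to stratify $Y$ by the closed inclusion $Y^K\hookrightarrow Y$ and observe that the open complement $Y\setminus Y^K$ has isotropy strictly contained in $\cF[K]$, since no point of it is fixed by all of $K$, and is therefore annihilated by $L$. Homotopy invariance applied to the (smooth) normal bundle of $Y^K$ in $Y$ then yields the desired identification. For nice abelian groups this requires knowing that the $K$-fixed locus of a smooth $A$-scheme is itself smooth and admits an equivariant tubular neighborhood; this is standard for actions of reductive group schemes over $\C$ and extends unchanged to all nice abelian groups. Once in place, the adjunction together with fully faithfulness ensures that every local object arises from inflation, completing the identification.
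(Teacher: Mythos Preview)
Your idempotency argument is correct and matches the paper's. Your essential surjectivity step --- stratifying $Y$ by the closed immersion $Y^K\hookrightarrow Y$, killing the open complement, and using purity on the normal bundle --- is also the geometric core of the paper's proof, though the paper is more explicit that it is the \emph{punctured} normal bundle $\mathbb{V}(\cN_i)\setminus 0$ (not just the open complement $Y\setminus Y^K$) whose isotropy lies in $\cF[K]$; your appeal to ``homotopy invariance'' elides this point but the idea is right.

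The gap is in your fully faithfulness argument. You assert that the right adjoint to $L\circ p^\ast$, namely the restriction of $(-)^K_{\epsilon^\infty}$ to the local subcategory, ``by the universal property of \cref{cons: gfp as Lan extension} coincides with $\Phi^K_{\epsilon^\infty}$.'' But the universal property in that construction characterises $\Phi^K_{\epsilon^\infty}$ as a symmetric monoidal \emph{left} adjoint extending unstable fixed points; it says nothing about the right adjoint $(-)^K_{\epsilon^\infty}$. The identification $(\widetilde{\E}\cF[K]\otimes -)^K\simeq \Phi^K$ is exactly the content of the corollary that \emph{follows} this proposition in the paper, and its proof invokes the proposition. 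So your reference is circular. A direct repair is available: inflation $p^\ast$ is already fully faithful (since unstably $(p^\ast Z)^K=Z$ and we are only inverting the trivial sphere), so it suffices to show that $\map(p^\ast Z_+, p^\ast W_+\otimes \E\cF[K]_+)=0$, which follows because any point of $p^\ast W\times\E\cF[K]$ has isotropy contained in a member of $\cF[K]$ and hence cannot be $K$-fixed.

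The paper organises the second half differently: rather than working with inflation directly, it shows that the $\widetilde{\E}\cF[K]$-equivalences coincide with the $j^\ast$-equivalences for the site inclusion $j\colon\Smooth^{A,\cF[K]}_\C\hookrightarrow\Smooth^A_\C$, and then identifies the $j^\ast$-local category with $\SH^{A/K}(\C)_{\epsilon^\infty}$ via the site equivalence given by quotienting out the free $K$-action. This sidesteps any computation of the adjoint and makes the identification manifest at the level of sites.
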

The proof of this proposition can be adapted from \cite[Proposition 3.21]{gepner2023tom}, but we present the argument here.
\begin{proof}
    For the first part, the argument of \cite[Lemma 2.38]{gepner2023tom} tells us that the endofunctor $\E\cF[K]_+\otimes -$ is a colocalisation of $\SH^A(\C)_{\epsilon^\infty}$. Per construction, $\widetilde{\E}\cF[K]$ then classifies the complimentary smashing localisation. In order to identify the local category, let $j^\ast$ denote the pullback functor in $\SH(-)_{\epsilon^\infty}$ along the inclusion of sites
    \[j\colon \Smooth_\C^{A,\cF[K]}\to \Smooth^A_\C,\]
    we claim that the $\widetilde{\E}\cF[K]$-equivalences are precisely $j^\ast$-equivalences. One implication is obvious once one notes that $j^\ast$ is strong symmetric monoidal and the construction of $\widetilde{\E}\cF[K]$ is such that $j^\ast\widetilde{\E}\cF[K]$ is the tensor unit. For the reverse implication, let $f$ be a morphisms such that $j^\ast f$ is an equivalence. It then suffices to prove that for all $X$ in $\Smooth^A_\C$ the map $\map(X_+,\widetilde{\E}\cF[K]\otimes f)$ is an equivalence. Let $i\colon X^K\to X$ denote the inclusion of the $K$-fixed points of $X$, which is a closed immersion with open complement denoted $X(\cF[K])$. Then the purity cofibre sequence of \cite[Theorem 3.23]{hoyois_cdh_2020} provides us with a cofibre sequence in $\SH^A(\C)_{\epsilon^\infty}$ of the form
    \[X(\cF[K])_+\to X_+\to \Th_{X^K}(\cN_i)\]
    where $\cN_i$ is the normal bundle of $i$. Per construction, $X(\cF[K])$ is in $\Smooth^{A,\cF[K]}_\C$ so we find that the localisation $\widetilde{\E}\cF[K]\otimes X(\cF[K])_+$ vanishes and it therefore suffices to prove that $\widetilde{\E}\cF[K]\otimes f$ becomes an equivalence after mapping out of $\Th_{X^K}(\cN_i)$. Since $A$ is assumed nice, we may split $\cN_i$ into its $K$-fixed point bundle $\cN_i^K$ and a complement $\cN_i'$. Now, once again, $\mathbb{V}(\cN_i')\setminus 0$ is contained in $\Smooth^{A,\cF[K]}_\C$ so that we obtain an identification
    \[
        \widetilde{\E}\cF[K]\otimes \Th_{X^K}(\cN_i)\simeq \widetilde{\E}\cF[K]\otimes X^K_+\otimes \Th_{X^K}(\cN_i^K).
    \]
    Now it is clear that $X^K_+\otimes \Th_{X^K}(\cN_i^K)$ is in the image of the left Kan extension functor $j_\sharp$ which is left adjoint to $j^\ast$. Unraveling the chain of equivalences, we see that $\widetilde{\E}\cF[K]\otimes f$ is an equivalence if $j^\ast(\widetilde{\E}\cF[K]\otimes f)$ is an equivalence. As in the proof of the obvious implication, this is satisfied if and only if $j^\ast f$ is an equivalence. Finally, the $j^\ast$-local subcategory may be identified with $\SH^{A/K}(\C)_{\epsilon^\infty}$ by noting that $K$ acts freely on every object of $\Smooth^{A,\cF[K]}_\C$ so that taking the quotient provides us with an equivalence of sites between the former and $\Smooth^{A/K}_\C$.
\end{proof}
We note that the result above can easily be extended to larger universes since the process of tensor-inverting Thom objects simplifies drastically in the local subcategory.
\begin{lemma}\label{lem: geometric local subcat generally}
Let $\cU$ be an $A$-universe and and $\cU^K$ the $A/K$-universe obtained by taking $K$-fixed points, then the equivalence of \cref{prop: geometric subcat trivial spheres} extends to an equivalence of the form
\[\Mod(\SH^A(\C)_\cU;\widetilde{\E}\cF[K])\simeq \SH^{A/K}(\C)_{\cU^K}\]
\end{lemma}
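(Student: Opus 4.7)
The plan is to deduce this from \cref{prop: geometric subcat trivial spheres} by commuting the module-category construction past the formation of the stabilised categories $\SH^A(\C)_\cU$ and $\SH^{A/K}(\C)_{\cU^K}$. The main point is that after tensoring with $\widetilde{\E}\cF[K]$, every Thom object $\Th(V)$ for $V\subset \cU$ becomes equivalent to $\Th(V^K)$, which is precisely the class of objects that gets inverted on the $A/K$-side.

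First I would establish the key computation: for any finite-dimensional subrepresentation $V\subset \cU$, there is an equivalence
\[\Th(V)\otimes \widetilde{\E}\cF[K]\simeq \Th(V^K)\otimes \widetilde{\E}\cF[K]\]
in $\SH^A(\C)_\cU$. The argument mirrors the one already used in \cref{lem: gfp from less euler classes}: decompose $V=V^K\oplus V'$ where $V'$ has no $K$-fixed vectors, note that $\mathbb{V}(V')\setminus 0$ has isotropy contained in $\cF[K]$, and conclude from the purity cofibre sequence defining $\Th(V')$ that $\Th(V')\otimes\widetilde{\E}\cF[K]\simeq \widetilde{\E}\cF[K]$. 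Tensoring with $\Th(V^K)$ yields the claim.

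Next I would commute the formation of $\widetilde{\E}\cF[K]$-modules with the inversion of $\cT(\cU)$. Since every $\Th(V)\in \cT(\cU)$ is symmetric (\cite[Lemma 6.3]{hoyois_six_2017}), the inversion is modelled as a filtered colimit in $\CAlg(\PrL)$ and thus commutes with the formation of module categories over an $\E_\infty$-algebra. Combining this observation with \cref{def: partial A-spectra} gives a chain of equivalences
\begin{align*}
\Mod(\SH^A(\C)_\cU;\widetilde{\E}\cF[K]) &\simeq \Mod(\SH^A(\C)_{\epsilon^\infty};\widetilde{\E}\cF[K])[\cT(\cU)^{\otimes -1}]\\
&\simeq \SH^{A/K}(\C)_{\epsilon^\infty}[\cT(\cU)^{\otimes -1}],
\end{align*}
where the last step applies \cref{prop: geometric subcat trivial spheres} and $\cT(\cU)$ is now interpreted via its image under this equivalence.

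Finally I would identify the right-hand side with $\SH^{A/K}(\C)_{\cU^K}$. The image of $\Th(V)$ under the composite $\SH^A(\C)_{\epsilon^\infty}\to \Mod(\SH^A(\C)_{\epsilon^\infty};\widetilde{\E}\cF[K])\simeq \SH^{A/K}(\C)_{\epsilon^\infty}$ is $\Th(V^K)$ by the first step, so inverting $\cT(\cU)$ on the $A/K$-side amounts to inverting the Thom objects associated to the summands of $\cU^K$; by construction this recovers $\SH^{A/K}(\C)_{\cU^K}$, and the resulting equivalence is evidently symmetric monoidal. The main obstacle is to rigorously justify the commutation of module-formation and Thom-inversion in the first displayed chain; I expect this to follow in a structured way from the description of $\SH^A(\C)_\cU$ as a mapping telescope in $\PrL$ and the fact that $\Mod(-;\widetilde{\E}\cF[K])$ preserves colimits of presentable symmetric monoidal categories, but some care is needed to ensure the universal properties are aligned.
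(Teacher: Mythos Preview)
Your proposal is correct and follows essentially the same approach as the paper: the key computation that $\Th(V)\otimes\widetilde{\E}\cF[K]\simeq\Th(V^K)\otimes\widetilde{\E}\cF[K]$ via the decomposition $V=V^K\oplus V'$ and the observation that $\mathbb{V}(V')\setminus 0$ has isotropy in $\cF[K]$ is exactly what the paper uses. The paper's proof is considerably terser---it records only this key computation and the remark that $\Th(V^K)$ is already invertible on the $A/K$-side---leaving the commutation of module-formation with Thom-inversion implicit, whereas you spell out this step explicitly and correctly flag it as the point requiring care.
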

\begin{proof}
    Let $V$ be a finite-dimensional subrepresentation of $\cU$, then one can again split off the $K$-fixed points of $V$ so that $\Th(V)$ is $\widetilde{\E}\cF[K]$-locally equivalent to $\Th(V^K)$ since its complement $V'=V-V^K$ is such that $\mathbb{V}(V')\setminus 0$ is contained in $\Smooth^{A,\cF[K]}_\C$. Now $\Th(V^K)$ is already tensor-invertible since $V^K$ appears as a summand of $\cU^K$ per construction.
\end{proof}
We are now on equal footing with the construction of geometric fixed points provided in \cref{cons: gfp as Lan extension} and it is not hard to see that the two resulting functors agree.
\begin{corollary}
    Let $\cU$ be an $A$-universe and $\cU'$ an $A/K$-universe as in \cref{cons: gfp as Lan extension}, then the functor
    \[(\widetilde{\E}\cF[K]\otimes-)^{K}_\cU\colon \SH^A(\C)_\cU\to \SH^{A/K}(\C)_{\cU'}\]
    arising from the equivalence in \cref{lem: geometric local subcat generally} agrees with the functor $\Phi^K_\cU$.
\end{corollary}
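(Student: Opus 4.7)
The plan is to verify that the two constructions of geometric fixed points agree by reducing to suspension spectra of motivic spaces via the universal property of $\Sigma^\cU$. Both $\Phi^K_\cU$ and the functor $G\coloneq (\widetilde{\E}\cF[K]\otimes-)^K_\cU$ are colimit-preserving and symmetric monoidal: for $\Phi^K_\cU$ this is immediate from its construction as a symmetric monoidal Kan extension in \cref{cons: gfp as Lan extension}, while $G$ is the composite of the smashing symmetric monoidal localization $\widetilde{\E}\cF[K]\otimes-$ (by \cref{prop: geometric subcat trivial spheres}), the symmetric monoidal equivalence with $\SH^{A/K}(\C)_{\cU^K}$ from \cref{lem: geometric local subcat generally}, and if needed the further symmetric monoidal suspension $\Sigma^{\cU'-\cU^K}$.

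The universal property of $\SH^A(\C)_\cU$ as the $\cT(\cU)$-inversion of $\spcs^A(\C)_\ast$ in presentably symmetric monoidal categories now implies that any two symmetric monoidal left adjoints out of $\SH^A(\C)_\cU$ agree as soon as they agree after precomposition with $\Sigma^\cU$. Both precomposites are colimit-preserving, so by further cocontinuity we may restrict our comparison to $X_+$ for $X$ a smooth $A$-scheme. By construction we have $\Phi^K_\cU(\Sigma^\cU X_+)\simeq \Sigma^{\cU'}(X^K_+)$, and it remains to check that $G(\Sigma^\cU X_+)$ carries the same value.

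For this, consider the purity cofibre sequence for the closed immersion $i\colon X^K\hookrightarrow X$,
\[X(\cF[K])_+\to X_+\to \Th_{X^K}(\cN_i),\]
viewed inside $\SH^A(\C)_\cU$. At any geometric point $x\in X^K$ the action of $K$ on $T_xX$ has invariants $T_xX^K$, so the normal bundle $\cN_i$ has no nonzero $K$-fixed vectors, i.e.\ $\cN_i^K=0$. After tensoring with $\widetilde{\E}\cF[K]$, the first term vanishes since $X(\cF[K])$ has isotropy in $\cF[K]$, while $\mathbb{V}(\cN_i)\setminus 0$ likewise has isotropy in $\cF[K]$, so the Thom space collapses to $X^K_+$ locally. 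Hence the zero section induces an equivalence $\widetilde{\E}\cF[K]\otimes\Sigma^\cU X^K_+\xrightarrow{\sim} \widetilde{\E}\cF[K]\otimes\Sigma^\cU X_+$.

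Because $X^K$ carries trivial $K$-action, $\Sigma^\cU X^K_+$ is inflated from $\Sigma^{\cU^K}X^K_+\in\SH^{A/K}(\C)_{\cU^K}$, so under the inflation equivalence of \cref{lem: geometric local subcat generally} the local object $\widetilde{\E}\cF[K]\otimes\Sigma^\cU X^K_+$ corresponds to $\Sigma^{\cU^K}X^K_+$. A final application of $\Sigma^{\cU'-\cU^K}$ yields $\Sigma^{\cU'}X^K_+$, matching $\Phi^K_\cU(\Sigma^\cU X_+)$. The main technical subtlety is the last identification: one must verify that the inflation equivalence of \cref{lem: geometric local subcat generally} is compatible with suspensions by Thom objects of summands of $\cU$ (which may not be pulled back from $\cU^K$), which is precisely the content of the proof that $\Th(V)$ becomes $\Th(V^K)$ in the local subcategory as observed in the proof of that lemma.
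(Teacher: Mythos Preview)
Your proof is correct and follows essentially the same strategy as the paper: reduce via the universal property of $\Sigma^\cU$ to checking that both functors agree with the unstable $K$-fixed points on suspension spectra. The paper's argument is terser, simply citing that \cref{prop: geometric subcat trivial spheres} identifies the $\widetilde{\E}\cF[K]$-local subcategory with $\SH^{A/K}(\C)_{\epsilon^\infty}$ via inflation and concluding that $(\widetilde{\E}\cF[K]\otimes-)^K$ is therefore induced from the unstable fixed points; you instead unpack this by redoing the purity cofibre sequence computation from the proof of that proposition, which makes the identification $G(\Sigma^\cU X_+)\simeq\Sigma^{\cU'}X^K_+$ explicit rather than leaving it implicit.
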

\begin{proof}
    By the result above, the functor $(\widetilde{\E}\cF[K]\otimes-)^{K}$ is left Kan extended from its effect on $\SH^A(\C)_{\epsilon^\infty}$ along suspension spectrum functors. It therefore suffices to note that it agrees with $\Phi^K_\cU$ on this subcategory, which is true per construction since both are induced from the unstable fixed points functor, i.e. the right adjoint to inflation.
\end{proof}
In particular, we may interpret the $K$-geometric fixed points functor as the localisation which precisely kills all schemes with isotropy in $\cF[K]$. In the case $K=A$, so that $\cF[A]$ is the family of proper subgroups, this localisation therefore precisely kills all $A$-schemes with a nontrivial $A$-action. We note an immediate corollary.
\begin{corollary}\label{cor: gfp of Thom}
    Let $X$ be an $A$-scheme and $V-W$ an virtual vector bundle over it so that all fibres of $W$ appear as summands of an $A$-universe $\cU$, then we have an equivalence in $\SH^{A/K}(\C)_{\cU^K}$ of the form
    \[\Phi^K_\cU\Th_X(V-W)\simeq \Th_{X^K}(V^K-W^K).\]
\end{corollary}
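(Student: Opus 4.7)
The plan is to reduce to the case of a genuine positive vector bundle via symmetric monoidality of $\Phi^K_\cU$, and then directly compute using the purity cofibre sequence defining Thom spaces together with the explicit description of $\Phi^K_\cU$ as the smashing localisation $(\widetilde{\E}\cF[K]\otimes -)^K_\cU$ provided by \cref{lem: geometric local subcat generally}.

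First I would reduce to an honest bundle. By hypothesis, the fibres of $W$ appear as summands of $\cU$, so $\Th_X(W)\in \SH^A(\C)_\cU$ is $\otimes$-invertible; hence the (motivic) Thom spectrum $\Th_X(V-W)$ is defined as $\Th_X(V)\otimes \Th_X(W)^{-1}$. Correspondingly, $W^K$ appears as a summand of $\cU^K$, so $\Th_{X^K}(V^K-W^K)=\Th_{X^K}(V^K)\otimes\Th_{X^K}(W^K)^{-1}$ in $\SH^{A/K}(\C)_{\cU^K}$. Since $\Phi^K_\cU$ is symmetric monoidal, it therefore suffices to establish the equivalence $\Phi^K_\cU \Th_X(V)\simeq \Th_{X^K}(V^K)$ for an honest bundle $V$ (applied separately to $V$ and $W$).

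Next I would record the case $V=0$, i.e.\ the identification $\Phi^K_\cU X_+\simeq X^K_+$ for a smooth $A$-scheme $X$. This is essentially contained in the proof of \cref{prop: geometric subcat trivial spheres}: the purity cofibre sequence
\[X(\cF[K])_+\to X_+\to \Th_{X^K}(\cN_i)\]
associated to the closed immersion $i\colon X^K\hookrightarrow X$ has first term with isotropy in $\cF[K]$, hence killed by $\widetilde{\E}\cF[K]\otimes-$. Using that $A$ is nice so that $\cN_i^K=0$, the sub-scheme $\mathbb{V}(\cN_i)\setminus 0_{X^K}$ also lies in $\Smooth^{A,\cF[K]}_\C$; applying the purity cofibre sequence defining $\Th_{X^K}(\cN_i)$ and using $\A^1$-invariance of $\mathbb{V}(\cN_i)_+\simeq X^K_+$ gives $\widetilde{\E}\cF[K]\otimes \Th_{X^K}(\cN_i)\simeq \widetilde{\E}\cF[K]\otimes X^K_+$. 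Under the equivalence of \cref{lem: geometric local subcat generally}, this reads $\Phi^K_\cU X_+\simeq X^K_+$.

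For the general case, I would apply $\Phi^K_\cU$ to the defining cofibre sequence
\[(\mathbb{V}(V)\setminus X)_+\to \mathbb{V}(V)_+\to \Th_X(V).\]
Since $\Phi^K_\cU$ preserves colimits, the image is a cofibre sequence in $\SH^{A/K}(\C)_{\cU^K}$. The previous paragraph applied to the smooth $A$-schemes $\mathbb{V}(V)$ and $\mathbb{V}(V)\setminus X$ identifies the first two terms with $(\mathbb{V}(V)\setminus X)^K_+$ and $\mathbb{V}(V)^K_+$ respectively. A direct inspection of fixed loci gives $\mathbb{V}(V)^K=\mathbb{V}(V^K)$ over $X^K$, and $(\mathbb{V}(V)\setminus X)^K=\mathbb{V}(V^K)\setminus X^K$ (a nonzero $K$-fixed vector in $V_x$ forces $x\in X^K$ and the vector to lie in the fixed subspace $V^K_x$). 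Thus the cofibre sequence becomes the purity sequence defining $\Th_{X^K}(V^K)$, yielding the desired identification.

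The main obstacle is the verification that $\Phi^K_\cU X_+\simeq X^K_+$ in the correct universe $\cU^K$, in particular that the chain of identifications in step two lives over a base $X^K$ rather than just over $\Spec(\C)$; this requires knowing that the purity/$\A^1$-invariance arguments of \cref{prop: geometric subcat trivial spheres} and the identification of the normal bundle $\cN_i$ have $\cN_i^K=0$ in the relative setting. Both reduce to the niceness of $A$ (so that $K$-representations split as $V^K\oplus V'$ with $(V')^K=0$) and to standard $\A^1$-invariance, and then everything else is routine manipulation of cofibre sequences.
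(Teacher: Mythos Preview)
Your argument is correct, but it is more elaborate than necessary given the setup of the appendix. The paper treats this as an ``immediate corollary'' because of \cref{cons: gfp as Lan extension}: by construction, $\Phi^K_\cU$ is the unique symmetric monoidal left adjoint making the square with the unstable fixed points functor $(-)^K$ and the suspension spectrum functors commute. Thus for any pointed motivic $A$-space $Y$ one has $\Phi^K_\cU(\Sigma^\cU Y)\simeq \Sigma^{\cU'}(Y^K)$ by definition, and in particular $\Phi^K_\cU X_+\simeq X^K_+$ and $\Phi^K_\cU\Th_X(V)\simeq (\Th_X(V))^K\simeq \Th_{X^K}(V^K)$ follow directly without passing through the localisation description or reproving the identification via purity. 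The virtual case is then handled by symmetric monoidality, exactly as you do.

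Your route via the smashing localisation $(\widetilde{\E}\cF[K]\otimes -)^K_\cU$ and the purity cofibre sequence is a faithful replay of the mechanism in the proof of \cref{prop: geometric subcat trivial spheres}, and it works; it just re-derives at the stable level what the paper already encoded at the unstable level in the construction of $\Phi^K_\cU$. The computation of fixed loci $(\mathbb{V}(V)\setminus X)^K=\mathbb{V}(V^K)\setminus X^K$ is the common core of both arguments. One could streamline your proof by invoking the commuting square of \cref{cons: gfp as Lan extension} in place of your step~2.
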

When $K=A$ we additionally have a geometric model for $\widetilde{\E}\cF[A]$ which describes the corresponding smashing localisation explicitly. Given any nice abelian group $A$ and a countably infinite-dimensional $A$-representation $\cV$ such that each isotpyical summand appears countably infinitely many times (but not necessarily a universe), we say that a \emph{saturated flag} of $\cV$ is a filtered diagram $\{V_i\}_{i\in I}$ of finite-dimensional subrepresentations of $\cV$ and equivariant embeddings indexed by a poset $I$ such that its colimit is $\cV$ and such that for all $i\in I$ there exists some element $2i\in I$ such that $i\leq 2i$ and the resulting embedding $V_i\to V_{2i}$ may be identified with the embedding $V_i\to V_i\oplus V_i$ of the first component.
\begin{remark}
    Following \cite[Example 2.3]{hoyois_cdh_2020}, let $\chi(\cV)\subset A^\vee$ denote the set of characters appearing in the isotypical decomposition of $\cV$, and let $I$ be the poset of functions from $\chi(\cV)$ to $\mathbb{N}$ with only finitely many nonzero values we may set $V_i=\bigoplus_{\alpha\in \chi(\cV)}\alpha^{i(\alpha)}$ with the obvious transition maps. This defines a saturated flag of $\cV$.
\end{remark}
\begin{lemma}\label{lem: cellular model for gfp}
    Let $\cU_{A/K}, \cU_A$ denote complete $A/K$- and $A$-universes respectively. 
    Let $\{V_i\}_{i\in I}$ denote a saturated flag of the complement $\cU_A-\mathrm{Inf}^A_{A/K}\cU_{A/K}$. Then there is an equivalence
    \[\widetilde{\E}\cF[K]\simeq \varinjlim_{i\in I}\Th(V_i)\]
    with transition maps given by pre-Euler classes of vector bundle inclusions.
\end{lemma}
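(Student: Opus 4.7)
The plan is to construct a cofibre sequence of the form $\E\cF[K]_+\to \mathbb{1}_A\to \varinjlim_i\Th(V_i)$ and compare it with the cofibre sequence defining $\widetilde{\E}\cF[K]$ in \cref{cons: univ space of family}.

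First, I would start from the motivic purity cofibre sequence obtained from \cref{cons: Thom space} applied to the vector bundle $V_i$ on the base stack: this produces
\[(\mathbb{V}(V_i)\setminus 0)_+\to \mathbb{V}(V_i)_+\to \Th(V_i),\]
and since $\mathbb{V}(V_i)\simeq \mathrm{pt}$ in $\spcs^A(\C)$ by $\A^1$-invariance, the second map identifies with the pre-Euler class $a_{V_i}\colon \mathbb{1}_A\to \Th(V_i)$ of \cref{def: Euler class}. Given an inclusion $V_i\hookrightarrow V_{i'}$ from the saturated flag with complementary summand $W$, the induced map $\Th(V_i)\to \Th(V_{i'})\simeq \Th(V_i)\otimes \Th(W)$ is $\Th(V_i)\otimes a_W$ by monoidality of the Thom space construction. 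These arrange into a filtered diagram of cofibre sequences whose colimit produces
\[\varinjlim_i(\mathbb{V}(V_i)\setminus 0)_+\to \mathbb{1}_A\to \varinjlim_i\Th(V_i),\]
where the transition maps on the right are indeed the claimed pre-Euler classes.

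Second, I would identify $\varinjlim_i (\mathbb{V}(V_i)\setminus 0)$ with $\E\cF[K]$ in $\spcs^A(\C)$ by an isotropy-theoretic argument. A nonzero point of $\mathbb{V}(V_i)$ has stabiliser given by the intersection of kernels of those characters whose isotypical components it has nonzero projection onto; since by hypothesis every character appearing in $V_i$ is a summand of $\cU_A-\mathrm{Inf}^A_{A/K}\cU_{A/K}$, i.e. is nontrivial on $K$, all such stabilisers lie in $\cF[K]$. Conversely, for any $H\in \cF[K]$ (i.e. $H\not\supset K$) Pontryagin duality supplies a character $\chi\in A^\vee$ nontrivial on $K$ but trivial on $H$, and such characters appear as summands of $\cU_A-\mathrm{Inf}^A_{A/K}\cU_{A/K}$ by completeness. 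Combined with the saturation condition on $\{V_i\}$, this is enough to check that the presheaf $X\mapsto \varinjlim_i\mathrm{Map}(X,\mathbb{V}(V_i)\setminus 0)$ is contractible for every $X\in \Smooth_\C^{A,\cF[K]}$ and empty on test schemes with isotropy outside $\cF[K]$, matching the definition of $\E\cF[K]$ in \cref{def : universal motivic spaces}. This is the same kind of argument as in \cref{lem : cell model for Fn} and essentially amounts to invoking \cite[Corollary 2.10]{hoyois_cdh_2020}.

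Putting the two steps together gives a cofibre sequence $\E\cF[K]_+\to \mathbb{1}_A\to \varinjlim_i\Th(V_i)$, which identifies $\varinjlim_i\Th(V_i)$ with $\widetilde{\E}\cF[K]$ by construction. The main obstacle is the second step, namely verifying that the geometric ind-scheme model has the correct isotropy behaviour: one must simultaneously kill the presheaf on schemes with isotropy in $\cF[K]$ and confirm that nothing unwanted remains at subgroups containing $K$. Modulo translating Hoyois--Krishna's cellularity results into this exact equivariant motivic setting, no further ingredients are needed.
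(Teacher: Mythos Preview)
Your proposal is correct and follows essentially the same route as the paper: both reduce to showing that $\varinjlim_i(\mathbb{V}(V_i)\setminus 0)\simeq \E\cF[K]$ in $\spcs^A(\C)$, and then pass to Thom objects via the purity cofibre sequence. The only substantive difference is in how that equivalence is verified. The paper invokes the criterion of \cite[Proposition 3.7]{gepner2023tom} (together with \cite[Lemmas 2.4, 2.6]{hoyois_cdh_2020}), which reduces the motivic equivalence to the statement that every \emph{affine} smooth $A$-scheme with isotropy in $\cF[K]$ admits an equivariant morphism into some $\mathbb{V}(V_i)\setminus 0$; this is then supplied by the equivariant embedding lemma. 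Your fixed-point/character argument is the topological analogue of this and morally correct, but in the motivic setting it does not by itself certify the equivalence of motivic spaces: knowing that the $H$-fixed loci are nonempty for $H\in\cF[K]$ is not the operative criterion here. Your reference to \cite[Corollary 2.10]{hoyois_cdh_2020} is also off---that result concerns the Gra{\ss}mannian model for $\cK^\circ$; the relevant inputs are the embedding lemma \cite[Lemma 2.4]{hoyois_cdh_2020} and the saturation criterion \cite[Lemma 2.6]{hoyois_cdh_2020}. Once you swap in those references and the embedding criterion, your argument becomes the paper's.
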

\begin{proof}
    Per construction, the complement of the zero section of every $V_i$ has stabilisers contained in $\cF[K]$, so we obtain a map
    \[\varinjlim_{i\in I}\mathbb{V}(V_i)\setminus 0\to \E\cF[A]\]
    in $\spcs^A(\C)$. By the purity formula for Thom objects and the definition of $\widetilde{\E}\cF[A]$, it suffices to prove that this is an equivalence. By \cite[Proposition 3.7]{gepner2023tom} (see \cite[Lemma 2.6]{hoyois_cdh_2020}), it suffices to prove that any \emph{affine} smooth $A$-scheme $X$ with isotropy in $\cF[K]$ admits an embedding into $\mathbb{V}(V_i)\setminus 0$ for $i$ sufficiently large. We can always find an embedding into some sufficiently large $A$-representation by \cite[Lemma 2.4]{hoyois_cdh_2020}, but the assumption that $X$ has isotropy in $\cF[K]$ guarantees that it avoids both the zero section and any summands on which $K$ acts trivially, i.e. inflated summands. Since $\cU_A-\mathrm{Inf}^A_{A/K}\cU_{A/K}$ contains all such representations, we may therefore find an embedding into some $\mathbb{V}(V_i)\setminus 0$.
\end{proof}
\begin{remark}
    Heuristically, the proof above says that every smooth $A$-scheme with isotropy in $\cF[K]$ (i.e. point of $\mathbb{E}\cF[K]$) can be embedded into some $\mathbb{V}(V_i)\setminus 0$. The saturated condition on the flag then asserts that the spaces of such embeddings are manifestly contractible after motivic localisation so that this collection of embeddings assembles into an inverse to the comparison map above.
\end{remark}
\begin{corollary}\label{cor: euler model for gfp}
    The localisation of $\SH^A(\C)$ associated to $\widetilde{\E}\cF[K]$ is precisely the localisation that inverts all pre-Euler classes of characters $\alpha$ of $A$ that are not inflated from $A/K$.
\end{corollary}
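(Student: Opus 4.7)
The strategy is to unpack the colimit presentation of $\widetilde{\E}\cF[K]$ provided by \cref{lem: cellular model for gfp} and identify it with a telescope of multiplications by pre-Euler classes. Concretely, let $\chi_K \subset A^\vee$ denote the set of characters of $A$ that are not inflated from $A/K$, i.e.\ those $\alpha$ such that $K \not\subset \ker(\alpha)$; these are exactly the characters whose isotypical summands appear in $\cU_A - \mathrm{Inf}^A_{A/K}\cU_{A/K}$. The plan is then to pick a saturated flag $\{V_i\}_{i \in I}$ of this complement built one character at a time (e.g.\ index $I$ by finitely-supported functions $i : \chi_K \to \mathbb{N}$ and set $V_i = \bigoplus_{\alpha \in \chi_K} \alpha^{i(\alpha)}$). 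For such a flag, each transition $V_i \hookrightarrow V_{i'}$ is a direct sum inclusion of a single character, and its induced map on Thom objects is, by definition (see \cref{def: Euler class} and the monoidality of $\Th$), tensor product with the pre-Euler class $a_\alpha$ of the freshly added character.

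Next, I would translate this description into a characterization of the $\widetilde{\E}\cF[K]$-local objects. Since the symmetric monoidal structure is compatible with colimits, for any $X \in \SH^A(\C)$ we have
\[
  X \otimes \widetilde{\E}\cF[K] \simeq \varinjlim_{i \in I} X \otimes \Th(V_i),
\]
and the transition maps in this filtered colimit are precisely multiplications by the pre-Euler classes $a_\alpha$ with $\alpha \in \chi_K$. By the usual identification of a colimit along a single multiplication map with the corresponding localization (after tensor-inverting the already invertible Thom spectrum $\Th(\alpha)$, the pre-Euler class may be interpreted as a homotopy class $a_\alpha \in \pi^A_{0,0}\mathbb{1}_A$ up to the canonical Thom identification, cf.\ the remark following \cref{thm: 6FF omnibus}), this colimit computes exactly the localization of $X$ that inverts all $a_\alpha$ for $\alpha \in \chi_K$. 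Since $\widetilde{\E}\cF[K]$ is idempotent by \cref{prop: geometric subcat trivial spheres}, $X$ is $\widetilde{\E}\cF[K]$-local iff the unit map $X \to X \otimes \widetilde{\E}\cF[K]$ is an equivalence iff multiplication by every $a_\alpha$ for $\alpha \in \chi_K$ is an equivalence on $X$.

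The two localizations agree on objects, and the argument above upgrades to the natural symmetric monoidal equivalence of the smashing localizations themselves since $\widetilde{\E}\cF[K]$ is precisely the $\E_0$-ring obtained by this Euler-class telescoping. The only genuine bookkeeping is to make sure the saturated flag can indeed be chosen so that each elementary step adds a single character summand, which is exactly the construction recalled in the remark preceding \cref{lem: cellular model for gfp}. I do not expect a serious obstacle: all the work is already in the geometric model, and the corollary is essentially a re-packaging of that model in terms of pre-Euler class inversion.
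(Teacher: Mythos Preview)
Your proposal is correct and follows exactly the line the paper intends: the corollary is stated without proof precisely because it is an immediate consequence of the colimit presentation in \cref{lem: cellular model for gfp}, and your unpacking of that colimit via the explicit saturated flag indexed by finitely-supported functions $\chi_K \to \mathbb{N}$ is the same construction the paper records in the remark preceding that lemma. There is nothing to add.
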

\begin{remark}
    Let $V$ be an $A$-representation appearing in $\cU_A-\mathrm{Inf}^A_{A/K}\cU_{A/K}$, i.e. not inflated from $K$. Then this cellular model for $\widetilde{\E}\cF[K]$ allows us to reinterpret the equivalence $\Th(V)\otimes\widetilde{\E}\cF[K]\simeq \widetilde{\E}\cF[K]$ as a simple consequence of the colimit formula and the observation that Thom objects of vector bundles are symmetric objects.
\end{remark}
\newpage
\printbibliography
\end{document}